\documentclass[11pt,reqno,a4paper]{amsart}

\usepackage[T1]{fontenc}
\usepackage[utf8]{inputenc}
\usepackage{lmodern}
\usepackage{microtype}
\usepackage{geometry}
\usepackage{amsmath,amssymb,amsthm,mathtools,mathrsfs,bm}
\usepackage{enumitem}
\usepackage{booktabs,array,tabularx}
\usepackage{aliascnt}
\usepackage[numbers,sort&compress]{natbib}
\usepackage[colorlinks=true,linkcolor=blue,citecolor=blue,urlcolor=blue]{hyperref}
\usepackage[nameinlink,noabbrev]{cleveref}

\allowdisplaybreaks
\setlist[itemize]{topsep=4pt,itemsep=2pt,parsep=1pt}
\setlist[enumerate]{topsep=4pt,itemsep=2pt,parsep=1pt}

\numberwithin{equation}{section}
\newtheorem{theorem}{Theorem}[section]
\newaliascnt{proposition}{theorem}
\newtheorem{proposition}[proposition]{Proposition}
\aliascntresetthe{proposition}
\newaliascnt{lemma}{theorem}
\newtheorem{lemma}[lemma]{Lemma}
\aliascntresetthe{lemma}
\newaliascnt{corollary}{theorem}
\newtheorem{corollary}[corollary]{Corollary}
\aliascntresetthe{corollary}
\newaliascnt{assumption}{theorem}
\newtheorem{assumption}[assumption]{Assumption}
\aliascntresetthe{assumption}
\theoremstyle{definition}
\newaliascnt{definition}{theorem}
\newtheorem{definition}[definition]{Definition}
\aliascntresetthe{definition}
\theoremstyle{remark}
\newaliascnt{remark}{theorem}
\newtheorem{remark}[remark]{Remark}
\aliascntresetthe{remark}

\crefname{theorem}{theorem}{theorems}
\Crefname{theorem}{Theorem}{Theorems}
\crefname{proposition}{proposition}{propositions}
\Crefname{proposition}{Proposition}{Propositions}
\crefname{lemma}{lemma}{lemmas}
\Crefname{lemma}{Lemma}{Lemmas}
\crefname{corollary}{corollary}{corollaries}
\Crefname{corollary}{Corollary}{Corollaries}
\crefname{definition}{definition}{definitions}
\Crefname{definition}{Definition}{Definitions}
\crefname{remark}{remark}{remarks}
\Crefname{remark}{Remark}{Remarks}
\crefname{assumption}{assumption}{assumptions}
\Crefname{assumption}{Assumption}{Assumptions}

\newcommand{\R}{\mathbb R}
\newcommand{\N}{\mathbb N}
\newcommand{\Z}{\mathbb Z}
\newcommand{\E}{\mathbb E}
\newcommand{\Prob}{\mathbb P}
\newcommand{\Hh}{\mathcal H}
\newcommand{\Hs}[1]{\mathcal H^{#1}}
\newcommand{\Es}[1]{\mathcal E_{#1}}

\newcommand{\Vv}{\mathcal V}

\newcommand{\cA}{\mathcal A}

\newcommand{\cE}{\mathcal E}

\newcommand{\cK}{\mathcal K}
\newcommand{\cL}{\mathcal L}

\newcommand{\cS}{\mathcal S}
\newcommand{\dd}{\mathrm d}
\newcommand{\e}{\mathrm e}
\newcommand{\Id}{\operatorname{Id}}
\newcommand{\Dom}{\operatorname{Dom}}
\newcommand{\Ran}{\operatorname{Ran}}

\newcommand{\Tr}{\operatorname{Tr}}
\newcommand{\Ent}{\operatorname{Ent}}
\newcommand{\Law}{\operatorname{Law}}
\newcommand{\supp}{\operatorname{supp}}

\newcommand{\spanop}{\operatorname{span}}
\newcommand{\one}{\mathbf 1}
\newcommand{\Wass}{\mathsf W}
\newcommand{\norm}[1]{\left\lVert #1\right\rVert}
\newcommand{\abs}[1]{\left\lvert #1\right\rvert}
\newcommand{\ip}[2]{\left\langle #1,#2\right\rangle}
\newcommand{\rank}{\operatorname{rank}}

\title[Spectral gap for 3D damped cubic NLW]{Spectral gap for the three-dimensional damped cubic wave equation with degenerate noise}

\author{Rongchang Liu}
\address{School of Mathematics, Sichuan University, Chengdu 610064, China}
\email{rcliu@scu.edu.cn}

\author{Kening Lu}
\address{School of Mathematics, Sichuan University, Chengdu 610064, China}
\email{keninglu@scu.edu.cn}

\date{}

\hypersetup{
 pdftitle={Unique Ergodicity and a Weighted Wasserstein Spectral Gap for the Three-Dimensional Damped Cubic Wave Equation with Degenerate Noise},
 pdfauthor={Rongchang Liu and Kening Lu},
 pdfsubject={Unique ergodicity, sharp saturation geometry, and a weighted Wasserstein spectral gap for a stochastic cubic wave equation},
 pdfkeywords={stochastic wave equation, finite rank noise, saturation, Fourier lattice, Malliavin calculus, unique ergodicity, Wasserstein spectral gap}
}

\subjclass[2020]{Primary 60H15, 37A25; Secondary 35L71, 60J25, 93E20}
\keywords{stochastic nonlinear wave equation, finite rank Brownian forcing, saturating noise, Malliavin calculus, stable--compact coupling, unique ergodicity, weighted Wasserstein spectral gap, exponential mixing}
\thanks{This work was supported by the Fundamental Research Funds for
the Central Universities and the National Natural Science Foundation of China
(Grant Nos.~12090010 and 12090013).}

\begin{document}

\begin{abstract}
We establish a weighted Wasserstein spectral gap for the three-dimensional  
damped cubic wave equation with genuinely finite rank Brownian forcing.
Under a saturation condition, the gap holds with respect to the negative
phase topology $\mathcal E_s=H^{-s}\times H^{-1-s}$ for every
$0<s<1/2$, from which we deduce unique ergodicity and exponential mixing
in the energy topology.  Sharp geometric characterizations of the
saturation condition are also obtained.

The method we develop is a stable--compact asymptotic coupling mechanism
for hypoelliptic dissipative SPDEs beyond the parabolic setting.  Instead
of relying on positive time smoothing or asymptotic gradient estimates, it reduces the infinite-dimensional obstruction to contraction to
a compact defect in a weaker coupling topology.  Dense Malliavin range
then permits this defect to be compensated by a finite dimensional
Cameron--Martin shift, producing a finite distance contraction on
bounded Lyapunov cores.  Together with a separate high energy contraction
from dissipation, this yields a global weighted Wasserstein spectral gap.
\end{abstract}

\maketitle
\tableofcontents
\section{Introduction}

The statistical theory of fully developed turbulence initiated by
Kolmogorov's K41 theory
\cite{Kolmogorov1941a,Kolmogorov1941b,Frisch1995} and its weak wave
counterpart developed by Zakharov and collaborators
\cite{ZakharovLvovFalkovich1992,NewellRumpf2011} seek universal
statistical laws in stationary cascade regimes.  In the standard direct cascade picture,
statistically stationary cascades are sustained by a separation between
injection and dissipation scales, with energy injected at low wave numbers,
transferred through an inertial range, and eventually dissipated at high
wave numbers.  In the
weakly nonlinear regime, the wave spectrum is described at kinetic
scales by the wave kinetic equation, which has been rigorously
justified for the cubic nonlinear
Schr\"odinger equation in the works of Deng and Hani
\cite{DengHani2023,DengHani2026}. In experimental and numerical studies, damped--driven models are widely
used, with forcing and dissipation sustaining a nonequilibrium steady
state. Grande and Hani \cite{GrandeHani2026} recently established the corresponding
kinetic description for a stochastically forced and viscously dissipated
nonlinear Schr\"odinger model. However, an analogous rigorous kinetic derivation remains an active
problem for continuum nonlinear wave equations.

At the level of the underlying nonlinear dynamics, a complementary
statistical viewpoint, emphasized by Kolmogorov and recalled by Sinai
\cite{Sinai1989}, is to describe stationary turbulence regimes through invariant
measures with sufficiently strong mixing properties.  Ruelle subsequently developed this viewpoint within the dynamical systems
approach to dissipative turbulence, where turbulent statistical states are
expected to be described by SRB (Sinai-Ruelle-Bowen) measures
\cite{Ruelle1978,EckmannRuelle1985}.  This leads to the basic question
whether, in a damped--driven setting, low dimensional random forcing can select a unique invariant measure with
quantitative mixing.

We address this question for the damped cubic nonlinear wave equation.  On
$\mathbb T^3=(\R/2\pi\Z)^3$ we consider
\begin{align*}
 \dd u=v\,\dd t,\qquad \dd v=(\Delta u-a(x)v-u^3)\,\dd t+B\,\dd W_t,
\end{align*}
where $a$ is smooth and strictly positive, $W$ is a finite dimensional
Brownian motion, and the range of $B$ is generated by smooth spatial
profiles.  The precise phase space and the cubic saturation condition are
given in \Cref{sec:setting}.

Our first result is a weighted Wasserstein spectral gap under the saturation
condition.  For every $0<s<1/2$, the contraction is measured by a cost
induced from the negative phase topology
\begin{align*}
 \Es{s}=H^{-s}(\mathbb T^3)\times H^{-1-s}(\mathbb T^3).
\end{align*}
We then deduce the existence of a unique invariant probability measure and
exponential mixing in the energy topology.  We also obtain sharp geometric characterizations of the saturation
condition.  In particular, four smooth forcing profiles suffice in the
general geometric construction, while in the Fourier-mode class the
minimal saturating systems are determined exactly.

Furthermore, when the forcing generates a proper Fourier sector and the
damping preserves it, the same spectral gap and mixing conclusions hold
for the restricted dynamics.  This
in particular yields the corresponding results for the cubic stochastic wave
equation in dimensions one and two. The precise statements are given
in \Cref{thm:weighted-gap,thm:main,thm:sector-and-lower-dimensional}.

A main contribution of the paper is a new abstract
stable--compact spectral gap criterion for dissipative Markov systems with
Gaussian driver.  The Markov phase space $H$ is continuously and densely
embedded in a weaker coupling space $\mathbb H$ where contraction is
measured, while the nonlinear dynamics need not be wellposed on $\mathbb H$.
This gives a finite distance route to Wasserstein spectral gaps for
hypoelliptic systems distinct from the parabolic frameworks of
\cite{EckmannHairer2001,HairerMattingly2006,HairerMattingly2008,HairerMattingly2011,GerasimovicsHairer2019}.
For the parabolic applications in this circle of ideas, positive time
smoothing or a Foia\c{s}--Prodi mechanism, together with Malliavin
integration by parts, leads to asymptotic gradient estimates.  Our starting
point is instead a finite distance structure for the exact two point
dynamics that places parabolic smoothing and hyperbolic stability
within a common framework.

On one time block, the exact solution difference is represented by a secant
propagator
\begin{align*}
 \Phi(\mathsf y,\omega)-\Phi(\mathsf x,\omega)
 =\mathcal J_{\mathsf x,\mathsf y,\omega}(\mathsf y-\mathsf x),
 \qquad \mathcal J=S+\mathcal K,
\end{align*}
where $S$ is strictly contractive on $\mathbb H$ and $\mathcal K$ is
compact.  On compact Lyapunov driver cores, dense range of the Malliavin
derivative allows the compact defect to be compensated approximately by finitely many Cameron--Martin directions.
This compact--dense approximation was used  in
\cite{LianLiuLu2026} for the projective Navier--Stokes dynamics, where
compactness is supplied by parabolic smoothing.  A suitable finite
dimensional shift then contracts the endpoint, while a Gaussian change of variables and coupling recover the correct Markov
marginal.  Since only a fixed
contraction margin is needed, no asymptotically exact inversion of the
Malliavin covariance is required.  This yields a finite distance
Wasserstein contraction on bounded Lyapunov cores without a semigroup
gradient estimate.

The compact core contraction is complemented by a separate high energy
mechanism.  A Lyapunov structure produces a high--low \emph{energy}
decomposition in which the sufficiently high energy region is contractive
under synchronous coupling.  This is distinct from a high--low \emph{mode}
decomposition based on a Foia\c{s}--Prodi structure.  Uniform accessibility then 
turns a bounded Lyapunov set into a small set for the resulting weak
distance.  The low energy stable--compact contraction, the high energy
synchronous contraction, and the Lyapunov drift then close a weighted weak
Harris argument \cite{HairerMattinglyScheutzow2011}.
The resulting sampling time spectral gap is upgraded to continuous time by
a finite time bound for the same weighted cost.

For the wave equation, the abstract inputs arise from the following specific
structures.  First, the exact difference of two solutions satisfies a
linear wave equation with a lower order random potential.  Relative to the
exponentially stable damped wave semigroup, this produces a compact defect
on $\Es{s}$, despite the absence of positive
time smoothing. Second, the weaker coupling topology places the dual
terminal data in a regularity class for which an exact backward
zero-propagation argument closes along every energy trajectory. Brownian
separation then propagates the resulting annihilation relations through
the cubic saturation algebra, yielding dense Malliavin range.  Third, the same
saturation algebra is generated deterministically by fast controls, in the
spirit of
\cite{AgrachevSarychev2005,AgrachevSarychev2006,GlattHoltzHerzogMattingly2018},
which gives fixed time full support and, after a compactness argument,
uniform accessibility.  The remaining high energy
contraction follows directly from the dissipative energy structure and an
exponential occupation estimate. 

We next compare the result with the stochastic wave literature.  For
white in time Gaussian forcing, Martirosyan proved exponential mixing for
three dimensional damped nonlinear wave equations under spatially
nondegenerate noise \cite{Martirosyan2014}; the uniqueness and mixing
argument there assumes the nonlinearity $|f'(u)|\lesssim 1+|u|^\rho$ with $\rho<2$, and therefore excludes
cubic growth.  Tolomeo established unique ergodicity for a class of
stochastic hyperbolic equations driven by space--time white noise,
including the one dimensional wave equation \cite{Tolomeo2020}, while
Forlano--Tolomeo treated a two dimensional cubic wave equation with
additive noise slightly smoother than space--time white noise
\cite{ForlanoTolomeo2024}.

A different theory treats highly degenerate bounded randomness through
controllability.  Kuksin, Nersesyan and Shirikyan developed a general mixing theory for bounded decomposable random forces,
including forces acting on only finitely many Fourier modes
\cite{KuksinNersesyanShirikyan2020a,KuksinNersesyanShirikyan2020b}.  More
recently, \cite{LiuWeiXiangZhangZhao2024} obtained exponential mixing for
cubic nonlinear wave equations with weak dissipation and physically
localized bounded noise by combining asymptotic compactness, stability and
localized control.  The finite rank Brownian forcing considered here lies
outside this bounded noise setting: its law on each time block is Gaussian
and unbounded.  Accordingly, the finite distance correction is implemented
through Cameron--Martin quasi-invariance and Malliavin range, rather than
through transformations of bounded noise coordinates.  To our knowledge, this regime has not previously
been treated for the three dimensional cubic wave equation in the energy
phase space.

\section{Settings and main results}\label{sec:setting}

In this section we give the necessary functional setting and state the principal results of the paper in detail.  Let $\mathbb T^3=(\R/2\pi\Z)^3$ carry normalized Lebesgue measure.  We consider
\begin{align}\label{eq:main-spde}
 \dd u=v\,\dd t,\qquad \dd v=(\Delta u-a(x)v-u^3)\,\dd t+B\,\dd W_t,
\end{align}
where $W=(W^1,\ldots,W^m)$ is a standard $\R^m$-valued Brownian motion over the probability space $(\Omega,\mathcal F, (\mathcal F_t),\mathbb P)$, and $Be_j=b_j,\,1\leq j\leq m$ so that  
\begin{align*}
B\,\dd W_t=\sum_{j=1}^m b_j\,\dd W_t^j.
\end{align*}
The noise amplitudes are incorporated into the smooth profiles $b_j$.  The state space is
\begin{align*}
\Hh=H^1(\mathbb T^3)\times L^2(\mathbb T^3),\qquad \norm{(u,v)}_{\Hh}^2=\norm u_{H^1}^2+\norm v_{L^2}^2.
\end{align*}
We write $P_t$ for the Markov semigroup and let probability measures act on the right.

Set $\Lambda=(I-\Delta)^{1/2}$
and, for $s\in\R$, define the Hilbert scale
\begin{align*}
\Hs{s}=H^{1+s}(\mathbb T^3)\times H^s(\mathbb T^3),\qquad \norm{(u,v)}_{\Hs{s}}^2=\norm{\Lambda^{1+s}u}_{L^2}^2+\norm{\Lambda^sv}_{L^2}^2.
\end{align*}
Thus $\Hs{0}=\Hh$.  For every $s\in\R$, we shall repeatedly use the unitary phase identification
\begin{align}\label{eq:phase-identification}
 \mathfrak I:\Hs{s}\longrightarrow H^s(\mathbb T^3)\times H^s(\mathbb T^3),
 \qquad
 \mathfrak I(u,v)=(\Lambda u,v).
\end{align}
Throughout the paper,
\begin{align}\label{eq:damping-assumption}
 a\in C^\infty(\mathbb T^3),\qquad 0<a_0\le a(x)\le a_1<\infty,
\end{align}
where $a_0,a_1$ are constants and we will denote by $X_t=(u_t,v_t)$ the solution to \eqref{eq:main-spde}. We denote initial data in $\Hh$ by $\mathsf x$ to distinguish them from the spatial variable $x\in\mathbb T^3$.

The natural energy and modified energy are
\begin{align}\label{eq:E-energy}
\cE(u,v)&=\frac12\norm{\nabla u}_{L^2}^2+\frac12\norm v_{L^2}^2
            +\frac14\norm u_{L^4}^4,\\\label{eq:V-energy}
 \Vv(u,v)&=1+\cE(u,v)+\eta\ip uv_{L^2}
 +\frac\eta2\int_{\mathbb T^3}a(x)u(x)^2\,\dd x,
\end{align}
where $\eta>0$ is fixed sufficiently small chosen as in
\Cref{lem:periodic-energy-coercivity}.  The modified energy is equivalent to
$1+\norm u_{H^1}^2+\norm v_{L^2}^2+\norm u_{L^4}^4$ and satisfies a
Foster drift up to an additive constant; see
\Cref{lem:periodic-energy-coercivity} and \Cref{prop:lyapunov}.

\subsection{Cubic saturation}
We state the saturation condition here, which is the only remaining
assumption in the main theorems besides the regularity of the forcing
profiles and the damping condition \eqref{eq:damping-assumption}.

\begin{definition}[Cubic saturation]\label{def:saturation}
Assume
\begin{align}\label{eq:smooth-profiles}
 b_j\in C^\infty(\mathbb T^3;\R),\qquad 1\le j\le m.
\end{align}
Set $\cS_0=\{b_1,\ldots,b_m\}$
and define recursively
\begin{align}\label{eq:saturation-closure}
 \cS_{n+1}
 =\cS_n\cup
 \{\phi b_jb_k:\phi\in\cS_n,\ 1\le j,k\le m\}.
\end{align}
Finally, set
\begin{align}\label{eq:saturation-space}
 \cS_\infty
 =\spanop_{\R}\biggl(\bigcup_{n\ge0}\cS_n\biggr).
\end{align}
The forcing profiles are called \emph{cubically saturating} if
\begin{align}\label{eq:saturation}
 \overline{\cS_\infty}^{\,H^1(\mathbb T^3)}
 =H^1(\mathbb T^3).
\end{align}
\end{definition}
\begin{remark}
Sharp geometric characterizations of the saturation condition are given
in \Cref{section:geometric-saturation}.  For general smooth forcing, four
profiles suffice for saturation, and this number is optimal for the
general geometric construction developed there; see
\eqref{eq:four-profile-embedding}.  In the Fourier-mode class, the
smallest saturating system containing the constant profile consists of
seven real Fourier profiles; an example is
\begin{align*}
 B\,\dd W_t
 =\dd W_t^0+\sum_{i=1}^3
 \bigl(\cos x_i\,\dd W_t^{i,c}+\sin x_i\,\dd W_t^{i,s}\bigr).
\end{align*}
Without the constant profile, eight real profiles are necessary and
sufficient; see \Cref{cor:minimal-pure-mode-systems}.  In dimensions one
and two, the corresponding minima among Fourier-mode systems containing
the constant profile are three and five real profiles, respectively; see
\Cref{cor:lower-dimensional-unique-ergodicity}.
\end{remark}

\subsection{Main results}
To keep the present section focused on the concrete equation,
we defer the precise formulation and proof of the abstract stable--compact spectral
gap criterion to
\Cref{sec:spectral-gap-criterion}.

Fix $0<s<1/2$ and set
\begin{align*}
 \Es{s}=H^{-s}(\mathbb T^3)\times H^{-1-s}(\mathbb T^3)=\Hs{-1-s}.
\end{align*}
Define the following distance-like function on $\mathcal H$,
\begin{align}\label{gap:eq:basic-cost}
 d_0(\mathsf x,\mathsf y)=1\wedge
 \left[
  \frac{\norm{\mathsf x-\mathsf y}_{\Es{s}}}{r_0}
  \exp\{\chi(\Vv(\mathsf x)+\Vv(\mathsf y))\}
 \right]^\alpha, \quad \mathsf x,\mathsf y\in \mathcal H, 
\end{align}
where $\Vv\ge1$ is the modified wave energy defined in \eqref{eq:V-energy} and $r_0,\alpha,\chi>0$ are constants that will be chosen in the proof.  

For any nonnegative lower semicontinuous distance-like function $\delta$ and probability measures $\mu_1,\mu_2$ on $\Hh$, we write
\begin{align}\label{gap:eq:Wasserstein-definition}
 \Wass_\delta(\mu_1,\mu_2)
 =\inf_{\Gamma\in\mathsf C(\mu_1,\mu_2)}
   \int_{\Hh\times\Hh}\delta(\mathsf x,\mathsf y)\,\Gamma(\dd \mathsf x,\dd \mathsf y),
\end{align}
where $\mathsf C(\mu_1,\mu_2)$ denotes the set of couplings.  

Our first main result is a weighted Wasserstein spectral gap for a cost
built from the negative phase topology $\mathcal E_s$.

\begin{theorem}\label{thm:weighted-gap}
Assume \eqref{eq:damping-assumption}, \eqref{eq:smooth-profiles}, and the cubic saturation condition \eqref{eq:saturation}.  For every $0<s<1/2$ there exist  constants $\alpha,r_0,\chi,\beta,\lambda>0$, and $C,     \varrho>0$ such that, with
\begin{align}\label{gap:eq:weighted-cost}
 d(\mathsf x,\mathsf y)
 =\sqrt{d_0(\mathsf x,\mathsf y)
 \left(1+\beta\e^{\lambda\Vv(\mathsf x)}
          +\beta\e^{\lambda\Vv(\mathsf y)}\right)},
\end{align}
one has
\begin{align}\label{gap:eq:skeleton-gap}
 \Wass_d(\mu_1P_{t},\mu_2P_{t})
 \le C\e^{-\varrho t}\Wass_d(\mu_1,\mu_2),
 \qquad t\ge0,
\end{align}
for all probability measures with finite $\e^{\lambda\Vv}$ moment.
\end{theorem}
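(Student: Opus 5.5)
The plan is to verify the hypotheses of the abstract stable--compact criterion of \Cref{sec:spectral-gap-criterion} for the wave equation, with $H=\Hh$ and coupling space $\mathbb H=\Es{s}$. Granting those hypotheses, the conclusion follows from the weighted weak Harris theorem of \cite{HairerMattinglyScheutzow2011}. The argument has three parts: a Lyapunov drift, a contraction estimate for the basic cost $d_0$, and a smallness property of bounded sublevel sets.

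\textbf{Step 1 (exponential Lyapunov bound).} First I would use \Cref{prop:lyapunov} together with It\^o's formula for $\e^{\lambda\Vv}$. Since the noise has finite rank and $b_j\in C^\infty$, the quadratic variation of $\Vv(X_t)$ is bounded by $C\Vv$. For $\lambda$ small this gives
\[
\E\,\e^{\lambda\Vv(X_T)}\le \gamma\,\e^{\lambda\Vv(\mathsf x)}+K, \qquad \gamma<1,
\]
and also an exponential occupation bound $\E\exp\{\kappa\int_0^T\Vv(X_r)\,\dd r\}\le \e^{\kappa'\Vv(\mathsf x)+CT}$.

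\textbf{Step 2 (contraction of $d_0$).} I would split into a high energy regime and a bounded core.

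In the high energy regime I would use the synchronous coupling. The difference $\rho=X^{\mathsf y}-X^{\mathsf x}$ solves a damped linear wave equation with potential $-(u^2+uu'+u'^2)$. An $\Es{s}$ energy estimate, with Sobolev bounds of the potential in terms of $\Vv$, gives
\[
\norm{\rho_t}_{\Es{s}}\le \exp\Bigl\{-ct+C\int_0^t\Vv\Bigr\}\norm{\rho_0}_{\Es{s}}.
\]
The weight $\exp\{\chi(\Vv(\mathsf x)+\Vv(\mathsf y))\}$ in $d_0$ absorbs this growth, and the Step 1 drift of the weight then yields $\E d_0(X_T^{\mathsf x},X_T^{\mathsf y})\le \tfrac12 d_0(\mathsf x,\mathsf y)$ whenever $d_0<1$ and $\Vv(\mathsf x)+\Vv(\mathsf y)$ is large. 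This is the usual $d$-contracting property.

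On a bounded Lyapunov core I would write the secant propagator as $\mathcal J=S+\mathcal K$. Here $S$ is the damped wave semigroup, which is contractive on $\Es{s}$ for $T$ large. The operator $\mathcal K$ is the Duhamel term of the potential, which is compact from $\Es{s}$ to $\Es{s}$ because the potential maps $H^{-s}$ into $H^{-1-s'}$ with a gain, by the smoothing $H^1\subset L^6$ and $s<1/2$. Dense Malliavin range, which comes from saturation and backward zero-propagation, lets me approximate $\mathcal K(\mathsf y-\mathsf x)$ by $\mathcal D\Phi\, h$ with $h$ a finite-dimensional Cameron--Martin shift. Girsanov and a maximal coupling of the shifted Gaussian then give a one-step coupling with $\E\, d_0\le (1-\epsilon)d_0$ when $d_0<1$.

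\textbf{Step 3 (smallness of sublevel sets).} Uniform accessibility from fast controls, combined with the core contraction, shows that $\{\Vv\le R\}$ is $d_0$-small: $\Wass_{d_0}(P_T(\mathsf x,\cdot),P_T(\mathsf y,\cdot))\le 1-\epsilon$ for $\mathsf x,\mathsf y$ in the sublevel set. Combining Steps 1--3 with the abstract theorem gives \eqref{gap:eq:skeleton-gap} at times $t=nT$. For intermediate times $t\in[0,T]$ I would bound $\Wass_d(\mu_1P_t,\mu_2P_t)\le C\,\Wass_d(\mu_1,\mu_2)$ using the synchronous estimate of Step 2 and the moment bound of Step 1.

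The main obstacle is the core contraction. It requires uniformity of the finite-rank compact approximation over a compact set of drivers and initial data. This is where dense range is converted, through compactness of $\mathcal K$ and lower semicontinuity, into a uniform finite-dimensional approximation with controlled Cameron--Martin norm, and hence into a controlled total-variation cost.
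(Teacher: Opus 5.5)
Your architecture matches the paper's: verify the hypotheses of \Cref{thm:stable-compact-spectral-gap} with $H=\Hh$ and $\mathbb H=\Es{s}$, then pass from sampling times to all $t\ge0$ through a finite-time bound. The gap is in your Lyapunov input, and through it in your high-energy step.

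A Foster drift $\E\e^{\lambda\Vv(X_T)}\le\gamma\e^{\lambda\Vv(\mathsf x)}+K$ does not let the weight absorb the Gronwall growth. The synchronous bound in $\Es{s}$, raised to the power $\alpha$, costs a factor $\exp\{C_0\alpha\int_0^T[\Vv(X_t)+\Vv(Y_t)]\,\dd t\}$. The occupation integral is of order $\Vv(\mathsf x)+\Vv(\mathsf y)$, since damping dissipates energy at a bounded rate. So any exponential-moment bound on this factor is at least of size $\e^{c\alpha[\Vv(\mathsf x)+\Vv(\mathsf y)]}$ with $c>0$. Combining it by H\"older with a terminal weight controlled only by $\gamma\e^{\alpha\chi\Vv(\mathsf x)}$ leaves $\gamma^{1/p}\e^{c\alpha[\Vv(\mathsf x)+\Vv(\mathsf y)]}\mathsf D_\chi(\mathsf x,\mathsf y)^\alpha$. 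This is not contractive at high energy, however small $\gamma$ is.

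What is needed is that the exponent itself contracts, $P_T\e^{\sigma\Vv}\le C\exp\{\sigma\e^{-c_1T}\Vv\}$. This, and not a Foster drift, is the form required in \Cref{ass:QS-Lyapunov}. More precisely, you need a joint Feynman--Kac bound for $\exp\{\sigma\Vv(X_T)+r\int_0^T\Vv(X_t)\,\dd t\}$ governed by the Riccati flow \eqref{gap:eq:riccati}. With $\chi\ge\chi_0=2C_0/\kappa$ and $\alpha$ small, that flow gives $\tfrac12q_T(2\alpha\chi,2C_0\alpha)\le\alpha\chi-\delta\alpha$ for some $\delta>0$. So the initial weight pays for the occupation cost; this is \Cref{gap:prop:high}. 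The bound $q_t\le2\alpha\chi$ from the same flow is also what your intermediate-time estimate needs (\Cref{gap:cor:finite-time-weighted-bound}).

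The same sub-unit exponent is what \Cref{lem:QS-block-selection} uses. The core radius $R_T=O(T)$ is forced by the high-energy threshold, and the core step needs $\rho(T)^\alpha\sup_{\mathbb V_{R_T}}P_T\e^{2\alpha\chi\Vv}<1$. Your ``$\E d_0\le(1-\epsilon)d_0$ on the core'' takes this for granted.

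Two further points. First, your stated reason for compactness of $\mathcal K_T$ is wrong. With only $u,\widetilde u\in H^1$, the estimate \eqref{gap:eq:negative-trilinear} is scaling critical (test with $\lambda^{1/2}\psi(\lambda\,\cdot)$). So multiplication by $\mathscr W_t$ gives no gain from $H^{-s}$ into $H^{-1-s'}$ with $s'<s$. In \Cref{prop:wave-stable-compact-structure} the gain comes from the Strichartz control $u\in L^5_tL^{10}_x$ of \Cref{prop:strichartz}, which yields $\mathcal K_T:\Hh\to\Hs{\theta_0}$. This is then interpolated against boundedness on $\Es{s_1}$ with $s_1>s$.

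Compactness alone could be rescued softly, by approximating $u_t,\widetilde u_t$ in $H^1$ by smooth functions. But the gain is really needed for the uniformity you flag as the main obstacle. Energy sublevels are compact only in the $\Es{s}$ topology, so the finite-rank compensation must be spread over $\mathbb V_R^2\times G$ with that topology. That requires operator-norm continuity of $\mathcal K_T$ and $\mathcal D\Phi_T$ under $\Es{s}$-convergence of the base points (\Cref{gap:lem:weak-core-operators}). It is proved by a lossy coefficient estimate whose loss the same energy-level gain recovers, and lower semicontinuity would not suffice.

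Second, fast controls give $P_T(\mathsf x,O)>0$ only pointwise, and $\mathbb V_R$ is not compact in $\Hh$. The uniform bound \eqref{gap:eq:uniform-small-ball-access} additionally needs the decomposition $X_t=S_a(t)\mathsf x+Y_t$ with $Y_t$ uniformly tight in $\Hh$ (\Cref{gap:prop:uniform-accessibility}).
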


The contraction \eqref{gap:eq:skeleton-gap} is the weighted Wasserstein spectral gap.  It is a spectral gap for a cost built from the negative phase $\Es{s}$. 

Our second main result below shows that the system has a unique invariant measure that is exponentially mixing on the energy space $\mathcal H$, which is obtained from the negative phase spectral gap by interpolation and stationary Sobolev regularity gain.  

\begin{theorem}\label{thm:main}
Assume \eqref{eq:damping-assumption}, \eqref{eq:smooth-profiles}, and the cubic saturation condition \eqref{eq:saturation}.  Then the Markov semigroup $P_t$ generated by \eqref{eq:main-spde} has a unique invariant probability measure $\mu$ with $\supp\mu=\Hh$, and there is $\lambda_*>0$ for which
\begin{align}\label{eq:main-exp-moment}
 \int_{\Hh}\exp\{\lambda_*\Vv(\mathsf x)\}\,\mu(\dd \mathsf x)<\infty.
\end{align}
Furthermore, for every $\frac12<s_*<1$, every invariant probability measure $\nu$ satisfies
\begin{align}\label{eq:main-strong-support}
 \nu(\Hs{s_*})=1,
 \qquad
 \int_{\Hs{s_*}}\norm{\mathsf x}_{\Hs{s_*}}^p\,\nu(\dd \mathsf x)<\infty,
 \qquad 1\le p<\infty.
\end{align}
There are $0<\eta<1$ and $C,\varrho,\lambda>0$ such that, for
\begin{align*}
 d_{\Hh,\eta}(\mathsf x,\mathsf y)=1\wedge\norm{\mathsf x-\mathsf y}_{\Hh}^{\eta},
\end{align*}
one has
\begin{align}\label{gap:eq:energy-topology-gap}
 \Wass_{d_{\Hh,\eta}}(P_t(\mathsf x,\cdot),\mu)
 \le C\e^{-\varrho t}\bigl(1+\e^{\lambda\Vv(\mathsf x)}\bigr),
 \qquad \mathsf x\in\Hh,\quad t\ge0.
\end{align}
\end{theorem}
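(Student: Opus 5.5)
Theorem~\ref{thm:main} will be deduced from the negative-phase gap of \Cref{thm:weighted-gap}, together with three further inputs: the Lyapunov structure of \Cref{prop:lyapunov}, a stationary regularity gain, and full support from the control argument.

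\emph{Existence, uniqueness and the exponential moment.} I would first derive an exponential-moment Foster bound from the Itô formula for $\Vv$. By \Cref{prop:lyapunov}, $\dd\Vv\le(-\gamma\Vv+K)\dd t+\dd M_t$, and the quadratic variation of $M$ is bounded by $C\norm{v}_{L^2}^2\lesssim\Vv$. Applying Itô to $\e^{\lambda\Vv}$ with $\lambda$ small then gives
\begin{align*}
P_t\e^{\lambda\Vv}\le \e^{-\gamma' t}\e^{\lambda\Vv}+C.
\end{align*}
Krylov--Bogoliubov applied to time averages of $\delta_0P_t$ needs tightness in $\Hh$. Tightness in $\Hh$ itself is not immediate, so I would obtain it from the regularity gain described below, which bounds $\Hs{s_*}$ moments of the averages. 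Compactness of $\Hs{s_*}\hookrightarrow\Hh$ on $\mathbb T^3$ then gives tightness and hence an invariant $\mu$. Lower semicontinuity of the moment bound gives \eqref{eq:main-exp-moment}, and the same argument shows that every invariant $\nu$ has a finite $\e^{\lambda\Vv}$ moment. For two invariant measures $\mu,\nu$, \Cref{thm:weighted-gap} gives $\Wass_d(\mu,\nu)\le C\e^{-\varrho t}\Wass_d(\mu,\nu)$, and the right side is finite by the moments. Hence $\Wass_d(\mu,\nu)=0$. Since $d=0$ forces $\mathsf x=\mathsf y$ in $\Es{s}$, and $\Hh\hookrightarrow\Es{s}$ is injective, we get $\mu=\nu$. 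Full support follows from the fixed-time full support of $P_t(\mathsf x,\cdot)$ given by the Agrachev--Sarychev control argument under saturation: $\mu(U)=\int P_t(\mathsf x,U)\mu(\dd\mathsf x)>0$ for every open $U$.

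\emph{Regularity \eqref{eq:main-strong-support}.} I would split $X_t=Z_t+Y_t$. Here $Z$ solves the linear damped stochastic wave equation with smooth $B$, so $Z$ is Gaussian with all moments in every $\Hs{r}$ once started from $0$. The remainder $Y$ solves a damped wave equation with forcing $-(Y+Z_u)^3$ and initial data $\mathsf x$. For $s_*<1$, the Sobolev product estimates in 3D give $\norm{u^3}_{H^{s_*}}\lesssim\norm{u}_{H^1}^2\norm{u}_{H^{1+s_*}}$-type bounds. These are too strong, so I would bootstrap in steps of size less than $1/2$, using $\norm{u^3}_{H^{\sigma}}\lesssim\norm u_{H^1}^3$ for $\sigma<1/2$ (via $H^1\subset L^6$ and fractional Leibniz). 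Duhamel with the exponentially stable damped semigroup $\e^{-ct}$ then yields
\begin{align*}
\norm{Y_t-\text{(free part)}}_{\Hs{\sigma}}\lesssim\int_0^t\e^{-c(t-r)}\norm{X_r}_{\Hh}^3\,\dd r .
\end{align*}
The free part decays like $\e^{-ct}\norm{\mathsf x}_{\Hh}$ in $\Hh$, but it does not gain regularity. Stationarity removes this: the free part tends to $0$ in $\Hh$, so the $\Hs{\sigma}$ moments of the remaining parts are controlled by the $\Hh$ moments under $\nu$, which are finite by the exponential moment. A second iteration, with $\norm{u^3}_{H^{s_*}}$ controlled by $H^{1+\sigma}$ norms, reaches any $s_*<1$. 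Passing from decay of the free part to a statement about $\nu$ uses a weak lower-semicontinuity argument. The range $s_*>1/2$ is what interpolation needs.

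\emph{Energy-topology mixing \eqref{gap:eq:energy-topology-gap}.} Take a coupling $\Gamma_t$ of $P_t(\mathsf x,\cdot)$ and $\mu$ that is near-optimal for $\Wass_d$, so that $\int d\,\dd\Gamma_t\le C\e^{-\varrho t}(1+\e^{\lambda\Vv(\mathsf x)})$. I would interpolate between $\Es{s}=\Hs{-1-s}$ and $\Hs{s_*}$:
\begin{align*}
\norm{z}_{\Hh}\le\norm{z}_{\Es{s}}^{\theta}\norm{z}_{\Hs{s_*}}^{1-\theta},
\qquad \theta=\tfrac{s_*}{1+s+s_*}.
\end{align*}
On $\{\norm{z}_{\Es{s}}\le r_0\}$ we have $d\gtrsim\norm{z}_{\Es{s}}^{\alpha/2}$, because the weights are at least $1$. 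Hölder's inequality with a small exponent $\eta$ then bounds $\int 1\wedge\norm{z}_{\Hh}^\eta\,\dd\Gamma_t$ by a power of $\int d\,\dd\Gamma_t$ times $\Hs{s_*}$ moments of both marginals. For $\mu$ these moments come from \eqref{eq:main-strong-support}. For $P_t(\mathsf x,\cdot)$ the difficulty is that $\Hs{s_*}$ moments at small $t$ depend on $\mathsf x\notin\Hs{s_*}$.

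\textbf{Main obstacle.} I expect the hardest step to be this non-stationary regularity. My first approach would be to split $P_t(\mathsf x,\cdot)$ into a smooth part plus the free linear flow of $\mathsf x$. The free flow is $\Hh$-small, of size $\e^{-ct}\norm{\mathsf x}_{\Hh}$, and it can be absorbed directly into the $\Hh$ distance with the same exponential rate. Only the regular part goes through the interpolation, and its $\Hs{\sigma}$ moments are bounded by $\e^{\lambda\Vv(\mathsf x)}$ through the Lyapunov bound. Finally, powers $\e^{-\varrho\eta' t}$ are renamed as $\e^{-\varrho t}$.
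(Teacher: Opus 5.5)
Your treatment of uniqueness, full support, the exponential moment and the final interpolation follows the paper. The regularity gain, however, has a genuine gap, and your existence argument, \eqref{eq:main-strong-support} and the energy-topology mixing all rest on it.

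The estimate $\norm{u^3}_{H^\sigma}\lesssim\norm u_{H^1}^3$ is false for every $\sigma>0$ in three dimensions. Take a bump $\phi$ supported in a small ball and set $u_\lambda(x)=\lambda^{1/2}\phi(\lambda x)$. Then $\norm{u_\lambda}_{H^1}$ stays bounded as $\lambda\to\infty$, while $\norm{u_\lambda^3}_{H^\sigma}\sim\lambda^\sigma\to\infty$. Fractional Leibniz would need $u\in W^{\sigma,6}$ alongside $u\in L^6$, which $H^1$ does not supply; pointwise in time the cubic term maps $H^1$ into $L^2$ and no better. Since $S_a(t)$ does not smooth, your Duhamel bound $\int_0^t\e^{-c(t-r)}\norm{X_r}_{\Hh}^3\,\dd r$ controls $Y_t$ only in $\Hh$, so the bootstrap never starts. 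As a result, three steps are unsupported: the tightness needed for Krylov--Bogoliubov, the stationary gain, and the $\Hs{\sigma}$ moments of the regular part of $P_t(\mathsf x,\cdot)$.

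The missing idea is space--time integrability from Strichartz estimates. The paper applies the $(5,10)$ Strichartz estimate \eqref{eq:unit-block-Strichartz} on unit blocks to $X$ minus the stochastic convolution. This gives, in \Cref{prop:strichartz}, the bound $\E[\norm u_{L^5(I;L^{10})}^p\mid\mathcal F_r]\le C_p(1+\Vv(X_r))^{3p/2}$. It then uses $\norm{u^3}_{H^{s_0}}\lesssim\norm u_{L^{10}}^2\norm u_{H^1}$ for $s_0<2/5$, which follows from $H^1\hookrightarrow W^{s_0,10/3}$. Only the time integral of $\norm{u}_{L^{10}}^2$ is controlled, so the gain appears in $\int_r^{r+1}\norm{u^3}_{H^{s_0}}\,\dd t$ rather than pointwise. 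Summing exponentially weighted blocks gives $\sup_t\E\norm{Y_t}_{\Hs{s_0}}^p\le C_p(1+\Vv(\mathsf x))^{N_p}$ in \Cref{lem:strong-asymptotic-compactness}.

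That single estimate yields tightness, since the orbit $\{S_a(t)\mathsf x\}\cup\{0\}$ is compact. It also gives the first stationary gain and the moments for the mixing step, where interpolating between $\Es{s}$ and $\Hs{s_0}$ already suffices. Your second iteration is then correct: stationarity gives $H^{1+s_0}$ moments of $u_t$ itself, and $\norm{u^3}_{H^{s_*}}\lesssim\norm u_{H^{1+s_0}}^3$ for $s_*<3s_0$ reaches $(1/2,1)$. This step genuinely needs stationarity, because for $P_t(\mathsf x,\cdot)$ the rough free part sits inside $u_t$. So in the mixing step you should interpolate with the first-step space rather than $\Hs{s_*}$.

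Finally, lower semicontinuity along Krylov--Bogoliubov averages only gives the exponential moment for the constructed $\mu$. Uniqueness needs it for an arbitrary invariant $\nu$; for that, use invariance together with a truncated Foster bound, as in \Cref{prop:lyapunov}.
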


Our third main result  shows that, in the Fourier-mode forcing case, the same conclusions hold on the exact invariant Fourier sector generated by the forcing.  Let $\cK=\{k_1,\ldots,k_N\}\subset\mathbb Z^3\setminus\{0\}$, $N\ge1$, satisfy $\cK\cap(-\cK)=\varnothing$, and assume that
\begin{align}\label{eq:Fourier-mode-forcing-main}
 \cS_0
 =
 \{1:\varepsilon=1\}\cup
 \{\cos(k\cdot x),\sin(k\cdot x):k\in\cK\},
 \qquad \varepsilon\in\{0,1\},
\end{align}
where the first set is empty when $\varepsilon=0$.  Set
\begin{align*}
 \cA=\{\pm k:k\in\cK\}\cup\{0:\varepsilon=1\},
 \qquad
 \Gamma_{\mathrm{pair}}
 =\spanop_{\mathbb Z}\{k+k':k,k'\in\cA\}.
\end{align*}
For $e^\circ\in\cA$, put $\mathscr C=e^\circ+\Gamma_{\mathrm{pair}}$, which is independent of the choice of $e^\circ$.  For $r\in\mathbb R$, define
\begin{align*}
 H_{\mathscr C}^r
 =\{f\in H^r(\mathbb T^3;\mathbb R):
       \widehat f(\ell)=0\text{ for }\ell\notin\mathscr C\},
\end{align*}
where $\widehat f$ is  the Fourier transform of $f$ and put
\begin{align*}
 \mathcal H_{\mathscr C}
 =H_{\mathscr C}^1\times H_{\mathscr C}^0,
 \qquad
 \mathcal H_{\mathscr C}^{r}
 =H_{\mathscr C}^{1+r}\times H_{\mathscr C}^{r},
 \qquad
 \mathcal E_{s,\mathscr C}
 =H_{\mathscr C}^{-s}\times H_{\mathscr C}^{-1-s}.
\end{align*}
By \Cref{thm:sharp-Fourier-criterion},
\begin{align*}
 \cS_\infty^{\mathbb C}
 =\spanop_{\mathbb C}
 \{\e^{\mathrm i\ell\cdot x}:\ell\in\mathscr C\},
\end{align*}
and hence $\overline{\cS_\infty}^{\,H^r}=H_{\mathscr C}^r$
for every finite $r$.

\begin{theorem}\label{thm:sector-and-lower-dimensional}
Assume \eqref{eq:damping-assumption} and \eqref{eq:Fourier-mode-forcing-main}.  If $\supp\widehat a\subset\Gamma_{\mathrm{pair}}$, then $\mathcal H_{\mathscr C}$ is invariant under \eqref{eq:main-spde}.  For every $0<s<1/2$, the restricted Markov semigroup satisfies the weighted Wasserstein spectral gap conclusion of \Cref{thm:weighted-gap}, with $\Hh$ and $\Es{s}$ replaced by $\mathcal H_{\mathscr C}$ and $\mathcal E_{s,\mathscr C}$, respectively.  It also satisfies all conclusions of \Cref{thm:main} in the relative energy phase space; in particular, its invariant probability measure is unique, has full support in $\mathcal H_{\mathscr C}$, and the restricted dynamics is exponentially mixing in the relative energy topology.

For $d\in\{1,2\}$, consider the corresponding cubic stochastic wave equation on $\mathbb T^d$ with smooth strictly positive damping and Fourier-mode forcing of the form \eqref{eq:Fourier-mode-forcing-main}, with the pair sum lattice $\Gamma_{\mathrm{pair}}^{(d)}\subset\mathbb Z^d$ defined analogously.  If $\Gamma_{\mathrm{pair}}^{(d)}=\mathbb Z^d$, then the conclusions of \Cref{thm:weighted-gap,thm:main} hold on $H^1(\mathbb T^d)\times L^2(\mathbb T^d)$.
\end{theorem}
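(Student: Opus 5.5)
The plan is to reduce \Cref{thm:sector-and-lower-dimensional} to the machinery behind \Cref{thm:weighted-gap,thm:main}. We run that machinery on the closed subspace $\mathcal H_{\mathscr C}$ in place of $\Hh$, and we verify that every structural input survives the restriction. The first step is invariance of $\mathcal H_{\mathscr C}$. The key identities are
\begin{align*}
\mathscr C+\Gamma_{\mathrm{pair}}=\mathscr C,\qquad
\mathscr C+\mathscr C+\mathscr C\subset\mathscr C .
\end{align*}
The second holds because $(e^\circ+\gamma_1)+(e^\circ+\gamma_2)+(e^\circ+\gamma_3)=e^\circ+(2e^\circ+\gamma_1+\gamma_2+\gamma_3)$ and $2e^\circ=e^\circ+e^\circ\in\Gamma_{\mathrm{pair}}$. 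Also $-\mathscr C=\mathscr C$, since $-e^\circ=e^\circ-2e^\circ$, so real-valuedness is preserved.

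These identities give three facts. Multiplication by $a$ maps $H^r_{\mathscr C}$ into itself, because $\supp\widehat a\subset\Gamma_{\mathrm{pair}}$. The cube $u\mapsto u^3$ maps $H^1_{\mathscr C}$ into $H^0_{\mathscr C}$. The Laplacian is diagonal in Fourier. The profiles $b_j$ lie in $H_{\mathscr C}^\infty$, since $\cA\subset\mathscr C$.

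Invariance then follows from the well-posedness theory. We apply the projection $\Pi_{\mathscr C}^\perp$ onto the complementary Fourier modes. For data in $\mathcal H_{\mathscr C}$, the component $w=\Pi^\perp_{\mathscr C}X_t$ satisfies a linear damped wave equation with zero data and zero forcing. Here we use that $a$ and the cube map $\mathscr C$-supported functions into $\mathscr C$-supported ones, while $\mathscr C^c+\Gamma_{\mathrm{pair}}\subset\mathscr C^c$. Together with pathwise uniqueness, this gives $w\equiv0$.

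Next we rerun the abstract criterion of \Cref{sec:spectral-gap-criterion} with $H=\mathcal H_{\mathscr C}$ and $\mathbb H=\mathcal E_{s,\mathscr C}$, checking its hypotheses in turn.

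\begin{enumerate}
\item \emph{Lyapunov drift and exponential moments.} These come from \Cref{prop:lyapunov} by restriction, since $\Vv$ is defined on all of $\Hh$.
\item \emph{Secant decomposition $\mathcal J=S+\mathcal K$.} The damped wave semigroup and the random potential $3\int_0^1(\theta u_y+(1-\theta)u_x)^2\,\dd\theta$ both preserve $\mathscr C$-supported differences. The potential preserves them because it has Fourier support in $\mathscr C+\mathscr C\subset\Gamma_{\mathrm{pair}}$. So $S$ and $\mathcal K$ restrict, with the same contraction and compactness on $\mathcal E_{s,\mathscr C}$.
\item \emph{High energy synchronous contraction.} This is identical to the unrestricted case.
\item \emph{Dense Malliavin range.} This is the main point. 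The backward zero-propagation argument yields annihilation of $\cS_\infty$ by the dual terminal data. Dense range in the full space is no longer available. What we have instead is $\overline{\cS_\infty}^{H^r}=H^r_{\mathscr C}$, by \Cref{thm:sharp-Fourier-criterion}. Since the dual data now range only over the dual of $\mathcal E_{s,\mathscr C}$, they vanish, so the range is dense in $\mathcal H_{\mathscr C}$-type topologies.
\item \emph{Uniform accessibility and full support.} The Agrachev--Sarychev fast control argument generates exactly $\cS_\infty$, hence is approximately controllable in $\mathcal H_{\mathscr C}$.
\end{enumerate}

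With these inputs, the weighted Wasserstein gap of \Cref{thm:weighted-gap} follows verbatim. The conclusions of \Cref{thm:main} then follow by the same interpolation and stationary regularity, since the Sobolev gain estimates are invariant under Fourier projections.

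I expect the main obstacle to be step (4), together with its compatibility with the duality pairings. One must check that every place in the proof of \Cref{thm:main} that used density of $\cS_\infty$ in the full $H^1$ is applied only to test functions in the $\mathscr C$-sector. Concretely, the adjoint linearized equation must preserve $\mathscr C$-support as well, which holds for the same lattice reasons.

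For $d\in\{1,2\}$, the condition $\Gamma^{(d)}_{\mathrm{pair}}=\mathbb Z^d$ gives $\mathscr C=\mathbb Z^d$, so the sector is the whole space. By the $d$-dimensional analogue of \Cref{thm:sharp-Fourier-criterion}, cubic saturation holds. The proof of \Cref{thm:weighted-gap,thm:main} then transfers with $\mathbb T^3$ replaced by $\mathbb T^d$: the cubic nonlinearity is subcritical there, the Sobolev embeddings are only better, and $0<s<1/2$ still gives compactness of the defect. Alternatively, one can embed $\mathbb T^d$-dynamics into $\mathbb T^3$ as functions independent of the extra variables. This is a Fourier sector $\mathbb Z^d\times\{0\}$ invariant under the dynamics, to which the first part of the argument applies with the appropriate sector.
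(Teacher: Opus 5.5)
Your proposal is correct and follows the paper's route. The paper obtains the sector statement from \Cref{prop:invariant-Fourier-sector}: stability of the coset $\mathscr C$ under $\mathsf M_a$, the Laplacian and the cube, then restriction of every ingredient, with $\overline{\cS_\infty}^{\,H^r}=H^r_{\mathscr C}$ supplying the relative dense Malliavin range and controllability. It obtains the low-dimensional statement from \Cref{cor:lower-dimensional-unique-ergodicity} through exactly your embedding alternative $\mathscr C=\iota_d(\mathbb Z^d)$. You also make explicit points the paper leaves implicit, such as sector invariance of the secant potential (supported in $\mathscr C+\mathscr C=\Gamma_{\mathrm{pair}}$) and of the backward adjoint equation.

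One small slip is in your invariance step. Writing $u=u_{\mathscr C}+w$, the off-sector part of $u^3$ is $3u_{\mathscr C}^2w+\Pi^\perp_{\mathscr C}(3u_{\mathscr C}w^2+w^3)$, so the equation for $w$ is not literally linear. It becomes linear only after freezing the coefficient $3u_{\mathscr C}^2+3u_{\mathscr C}w+w^2$ along the given solution; alternatively, solve in the closed sector and invoke uniqueness in $\Hh$. Either way, the conclusion $w\equiv0$ is unaffected.
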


The Fourier-sector assertions are proved in \Cref{prop:invariant-Fourier-sector}, and the lower-dimensional reduction is given in \Cref{cor:lower-dimensional-unique-ergodicity}.

\section{A stable--compact criterion for spectral gap}\label{sec:spectral-gap-criterion}

In this section we formulate and prove the abstract stable--compact
spectral gap criterion used in the sequel.  Its assumptions separate the
Lyapunov and stable--compact structure from the two controllability inputs:
dense Malliavin range and uniform accessibility.

We first formulate the setting and assumptions.  The sampling time is not
fixed in advance: a sufficiently large block will be selected below, and
once this block has been chosen, all endpoint and coupling objects are
frozen and the proof reduces to an ordinary one block argument.

Let $H$ and $\mathbb H$ be separable Hilbert spaces with a continuous
dense embedding $H\hookrightarrow\mathbb H$, and let $(P_t)_{t\ge0}$ be
a Feller Markov semigroup on $H$.  We use the Wasserstein notation
$\Wass_\delta$ from \eqref{gap:eq:Wasserstein-definition}.  For every
$T>0$, let $(E_T,H_{W,T},\gamma_T)$ be an abstract Wiener space with
Cameron--Martin embedding $\iota_T:H_{W,T}\hookrightarrow E_T$, and let
$\Phi_T:H\times E_T\to H$ be a Borel endpoint representation of $P_T$, which in applications is the time $T$ solution map of the SPDE.

The first assumption below provides the Lyapunov structure.

\begin{assumption}
\label{ass:QS-Lyapunov}
There are $\lambda_0>0$, $C<\infty$, and a nonnegative function
$\vartheta(t)=o(t^{-1})$ as $t\to\infty$ such that
$\Vv:H\to[1,\infty)$ is continuous with compact sublevels in
$\mathbb H$ and
\begin{align}\label{eq:QS-Lyapunov}
 P_t\e^{s\Vv}(\mathsf x)
 \le C\exp\{s\vartheta(t)\Vv(\mathsf x)\},
 \qquad 0<s\le\lambda_0,\quad t\ge0.
\end{align}
\end{assumption}

For $R<\infty$, set
\begin{align}\label{eq:QS-energy-core}
 \mathbb V_R=\{\mathsf x\in H:\Vv(\mathsf x)\le R\}.
\end{align}
For the next two assumptions, fix a sufficiently large block length $T$
and suppress the block subscript from
$E_T,H_{W,T},\gamma_T,\iota_T,\Phi_T$ and all endpoint derivatives.

The assumption below gives the stable-compact structure: the solution difference is
stable modulo a compact defect, and the endpoint depends regularly on the
Cameron--Martin driver on bounded cores.
\begin{assumption}
\label{ass:QS-stable-compact}
We write $\mathsf z=(\mathsf x,\mathsf y)\in H^2$ for convenience.  
\begin{enumerate}[label=\textup{(\roman*)}]
\item There are strongly measurable $S_{\mathsf z,\omega},\mathcal K_{\mathsf z,\omega}\in\cL(\mathbb H)$ such that, writing $\mathcal J_{\mathsf z,\omega}=S_{\mathsf z,\omega}+\mathcal K_{\mathsf z,\omega}$,
\begin{align}\label{eq:QS-secant}
 \Phi(\mathsf y,\omega)-\Phi(\mathsf x,\omega)=\mathcal J_{\mathsf z,\omega}(\mathsf y-\mathsf x),\qquad
 \norm{S_{\mathsf z,\omega}}_{\cL(\mathbb H)}\le\rho,\qquad
 \mathcal K_{\mathsf z,\omega}\in\cK(\mathbb H),
\end{align}
where the deterministic $\rho=\rho(T)$ satisfies $\rho(T)\to0$ as $T\to\infty$. 
\item The Malliavin derivative $\mathcal D\Phi(\mathsf x,\omega)\in\cL(H_W,\mathbb H)$ is defined on $H\times E$ and strongly measurable, and for every $R<\infty$ and compact $G\subset E$, the maps
\begin{align*}
 (\mathsf z,\omega)\mapsto\mathcal K_{\mathsf z,\omega}\in\cL(\mathbb H),\quad 
 (\mathsf x,\omega)\mapsto\mathcal D\Phi(\mathsf x,\omega)\in\cL(H_W,\mathbb H)
\end{align*}
are continuous on $\mathbb V_R^2\times G$ and $\mathbb V_R\times G$, respectively, where $\mathbb V_R$ carries the $\mathbb H$-topology.

\item There is $p_0>0$ independent of $T$ such that, for every $R<\infty$, there are measurable $C_R:H\times E\to[1,\infty)$ and $r_R:H\times E\to(0,1]$ satisfying
\begin{align}\label{eq:QS-driver-Taylor}
 \norm{\Phi(\mathsf x,\omega+\iota h)-\Phi(\mathsf x,\omega)-\mathcal D\Phi(\mathsf x,\omega)h}_{\mathbb H}
 \le C_R(\mathsf x,\omega)\norm h_{H_W}^2
\end{align}
whenever $\mathsf x\in\mathbb V_R$ and $\norm h_{H_W}\le r_R(\mathsf x,\omega)$, and
\begin{align}\label{eq:QS-core-moments}
 \sup_{\mathsf z\in\mathbb V_R^2}\int_E\left[
 C_R(\mathsf x,\omega)^{p_0}+r_R(\mathsf x,\omega)^{-p_0}
 +\norm{\mathcal D\Phi(\mathsf x,\omega)}^{p_0}
 +\norm{\mathcal J_{\mathsf z,\omega}}^{p_0}
 \right]\gamma(\dd\omega)<\infty.
\end{align}
For every finite dimensional $F\subset\iota^*(E^*)$ and compact $G\subset E$, \eqref{eq:QS-driver-Taylor} holds on $\mathbb V_R\times G$ with a deterministic constant and radius depending on $T,R,G,F$.
\end{enumerate}
\end{assumption}

The next assumption provides the Malliavin controllability needed to
compensate the compact defect by finitely many Cameron--Martin directions.
\begin{assumption}
\label{ass:QS-Malliavin}
There is a Borel set $\Omega_*\subset E$ of full $\gamma$-measure such that
\begin{align}\label{eq:QS-dense-range}
 \overline{\Ran\mathcal D\Phi(\mathsf x,\omega)}^{\,\mathbb H}=\mathbb H
\end{align}
for every $\mathsf x\in H$ and $\omega\in\Omega_*$.  This is required for every sufficiently large block under the suppressed notation above.
\end{assumption}

For $\chi>0$, define the premetric
\begin{align}\label{eq:QS-premetric}
 \mathsf D_{\chi}(\mathsf x,\mathsf y)
 =\e^{\chi[\Vv(\mathsf x)+\Vv(\mathsf y)]}\norm{\mathsf x-\mathsf y}_{\mathbb H}.
\end{align}
The next assumption is the dissipative estimate from which the high energy region is selected. For concrete SPDEs, this estimate is typically obtained by combining the
dissipative structure with suitable a priori control relative to the
Lyapunov function.  For the wave equation, see
\Cref{rem:highcontraction} and \Cref{gap:prop:high}
\begin{assumption}
\label{ass:QS-synchronous}
There are $\alpha_*\in(0,1]$ and $\chi,c_*,C_*>0$ such that, for every
sufficiently large $T$, there is $C_T\ge1$ satisfying
\begin{align*}
 \log C_T\le C_*(1+T)
\end{align*}
and for every $\mathsf x\ne\mathsf y$,
\begin{align}\label{eq:QS-weighted-synchronous}
 \int_E\mathsf D_{\chi}
 \bigl(\Phi_T(\mathsf x,\omega),\Phi_T(\mathsf y,\omega)\bigr)^{\alpha_*}\gamma_T(\dd\omega)
 \le C_T\e^{-c_*[\Vv(\mathsf x)+\Vv(\mathsf y)]}
 \mathsf D_{\chi}(\mathsf x,\mathsf y)^{\alpha_*}.
\end{align}
\end{assumption}

The final assumption provides the uniform accessibility that allows bounded
Lyapunov sets to be turned into small sets in the Harris argument.
\begin{assumption}
\label{ass:QS-accessibility}
There is $\mathsf x_*\in H$ such that, for every $R<\infty$ and $\varepsilon>0$, there is $t_{R,\varepsilon}<\infty$ for which
\begin{align}\label{eq:QS-accessibility}
 \inf_{\mathsf x\in\mathbb V_R}P_t\bigl(\mathsf x,B_H(\mathsf x_*,\varepsilon)\bigr)>0,\qquad t\ge t_{R,\varepsilon}.
\end{align}
\end{assumption}

We are now ready to state the main result of this section. 
\begin{theorem}[Stable--compact spectral gap criterion]
\label{thm:stable-compact-spectral-gap}
Suppose that \Cref{ass:QS-Lyapunov,ass:QS-stable-compact,ass:QS-Malliavin,ass:QS-synchronous,ass:QS-accessibility} hold.  Then there are $\alpha\in(0,\alpha_*]$, a sampling time $\tau>0$, $r_0,\beta>0$, $0<\lambda\le\lambda_0$, and $\varrho\in(0,1)$ such that, with $\mathsf D_{\chi}$ from \eqref{eq:QS-premetric} and
\begin{align}\label{eq:QS-costs}
 \begin{aligned}
 d_0(\mathsf x,\mathsf y)&=1\wedge\left[\frac{\mathsf D_{\chi}(\mathsf x,\mathsf y)}{r_0}\right]^\alpha,\\
 d(\mathsf x,\mathsf y)&=\sqrt{d_0(\mathsf x,\mathsf y)\left(1+\beta\e^{\lambda\Vv(\mathsf x)}+\beta\e^{\lambda\Vv(\mathsf y)}\right)},
 \end{aligned}
\end{align}
one has
\begin{align}\label{eq:QS-weighted-gap}
 \Wass_d(\mu_1P_{n\tau},\mu_2P_{n\tau})\le\varrho^n\Wass_d(\mu_1,\mu_2),\qquad n\ge0,
\end{align}
for all probability measures $\mu_1,\mu_2$ with finite $\e^{\lambda\Vv}$ moment.
\end{theorem}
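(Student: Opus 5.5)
The plan is to verify the three hypotheses of the weighted weak Harris theorem of \cite{HairerMattinglyScheutzow2011} for the one-step kernel $P_\tau$ and the premetric $d_0$, with Lyapunov function $\e^{\lambda\Vv}$. The theorem then yields \eqref{eq:QS-weighted-gap} for $d=\sqrt{d_0(1+\beta\e^{\lambda\Vv}+\beta\e^{\lambda\Vv})}$. The three hypotheses are:
\begin{enumerate}[label=\textup{(\alph*)}]
\item a geometric drift $P_\tau\e^{\lambda\Vv}\le\gamma_0\e^{\lambda\Vv}+K$ with $\gamma_0<1$;
\item $d_0$-contractivity, i.e.\ $\Wass_{d_0}(P_\tau(\mathsf x,\cdot),P_\tau(\mathsf y,\cdot))\le\tfrac12 d_0(\mathsf x,\mathsf y)$ whenever $d_0(\mathsf x,\mathsf y)<1$;
\item $d_0$-smallness of a level set $\mathbb V_R$, i.e.\ $\Wass_{d_0}(P_\tau(\mathsf x,\cdot),P_\tau(\mathsf y,\cdot))\le1-\epsilon$ for $\mathsf x,\mathsf y\in\mathbb V_R$.
\end{enumerate}
Since $\mathsf D_\chi^\alpha$ is not a metric, I would first check that $d_0$ satisfies the weak triangle inequality needed in that argument. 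This holds up to a constant, because $\e^{\chi\Vv(\mathsf z)}\ge1$ for the intermediate point $\mathsf z$, and $1\wedge(\cdot)^\alpha$ is subadditive for $\alpha\le1$.

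The drift (a) follows from \eqref{eq:QS-Lyapunov}. Take $\tau$ large enough that $\vartheta(\tau)\le1/2$. Using $\e^{s\Vv/2}\le\epsilon\e^{s\Vv}+C_\epsilon$, this gives $P_\tau\e^{\lambda\Vv}\le\gamma_0\e^{\lambda\Vv}+K$ with $\gamma_0<1$. The fact that $\vartheta(t)=o(t^{-1})$ is used later, to absorb factors $\e^{C_*T}$ against $\e^{-c\Vv}$ once $\Vv$ is large.

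For (c), combine \Cref{ass:QS-accessibility} with the trivial bound $d_0\le1$. Both $P_\tau(\mathsf x,\cdot)$ and $P_\tau(\mathsf y,\cdot)$ put mass at least $p>0$ on $B_H(\mathsf x_*,\varepsilon)$. On that ball $\Vv$ is bounded by continuity, so $d_0\le(\varepsilon C/r_0)^\alpha<1/2$ there. Coupling these masses together gives $\Wass_{d_0}\le1-p/2$. The order of constant choices matters: $r_0$ is fixed first, then $\varepsilon$, then $t_{R,\varepsilon}$, and finally $\tau$ is taken larger than all of these.

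The core of the proof, and the main obstacle, is (b). I would split it by energy level. In the \emph{high energy} case, $\Vv(\mathsf x)+\Vv(\mathsf y)\ge R$, use synchronous coupling. Applying Jensen with $\alpha\le\alpha_*$ to \eqref{eq:QS-weighted-synchronous} gives $\E d_0\le C_T^{\alpha/\alpha_*}\e^{-c_*\alpha R/\alpha_*}d_0(\mathsf x,\mathsf y)$, and this is at most $\tfrac12 d_0$ once $R\ge R(T)$. In the \emph{low energy} case, $\mathsf x,\mathsf y\in\mathbb V_R$ with $\norm{\mathsf y-\mathsf x}_{\mathbb H}\le r_0$ small, I would build a shifted coupling, as follows.
\begin{enumerate}[label=\textup{(\roman*)}]
\item By tightness, restrict $\omega$ to a compact $G\subset\Omega_*$ of $\gamma$-measure at least $1-\delta$.
\item On the compact set $\mathbb V_R^2\times G$ (compactness from the compact sublevels in $\mathbb H$), the family $\{\mathcal K_{\mathsf z,\omega}\}$ is compact in $\cL(\mathbb H)$ by the continuity in \Cref{ass:QS-stable-compact}(ii). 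Dense range \eqref{eq:QS-dense-range} and a finite-net argument then give a single finite dimensional $F\subset\iota^*(E^*)$ and a continuous linear choice $h_{\mathsf z,\omega}=A_{\mathsf z,\omega}(\mathsf y-\mathsf x)\in F$ with $\norm{\mathcal K_{\mathsf z,\omega}-\mathcal D\Phi(\mathsf x,\omega)A_{\mathsf z,\omega}}\le\rho$. For this I would approximate the range of $\mathcal K$ by the image of a finite-rank operator and use finite-dimensional dense-range inversion.
\item Couple $\mathsf y$ driven by $\omega$ with $\mathsf x$ driven by $\omega+\iota h$. Then $\Phi(\mathsf x,\omega+\iota h)-\Phi(\mathsf y,\omega)=-(S+\mathcal K-\mathcal D\Phi A)(\mathsf y-\mathsf x)+O(\norm h^2)$, so by \eqref{eq:QS-secant} and \eqref{eq:QS-driver-Taylor} its norm is at most $(2\rho+Cr_0)\norm{\mathsf y-\mathsf x}$.
\item Repair the marginal by the Cameron--Martin/Girsanov change of variables on the finite dimensional $F$. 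Since $\norm h\lesssim\norm{\mathsf y-\mathsf x}$, the total variation cost of the shift is $O(\norm{\mathsf y-\mathsf x})$, and a maximal coupling gives failure probability $O(\norm{\mathsf x-\mathsf y}_{\mathbb H})$, which is $\le O(d_0^{1/\alpha})\ll d_0$ in the $d_0$-cost for $\alpha<1$.
\item Control the off-$G$ event using the moments \eqref{eq:QS-core-moments} and Hölder, paying $\delta^{1-\alpha/p_0}$ times a moment bound; choosing $\alpha$ small makes this acceptable.
\item Handle the weight $\e^{\chi\Vv}$ at the endpoints by the exponential moments from \eqref{eq:QS-Lyapunov} with $\chi\alpha\le\lambda_0$.
\end{enumerate}
Taking $T$ large so that $\rho(T)$ is small, then $r_0$ small, gives factor $\tfrac12$. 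The remaining intermediate region, where $\mathsf x,\mathsf y$ are in the core but $\mathsf D_\chi>r_0$, has $d_0=1$ and is covered by (c). The delicate point is the ordering of constants: $T$ must be fixed before $R(T)$, which must be fixed before $F,G,r_0$, in a way that stays consistent with the smallness set. Finally, assembling via \cite{HairerMattinglyScheutzow2011} yields $\varrho<1$.
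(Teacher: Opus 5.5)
Your architecture is the paper's: weak Harris for $d_0$ with $\e^{\lambda\Vv}$, synchronous coupling above $R(T)$, finite-dimensional compensation plus a Cameron--Martin shift and maximal-coupling repair below $R(T)$, and accessibility for smallness. The genuine gap is the change of measure in your steps (iii)--(iv).

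\emph{The shift and the change of measure.} Your shift $h_{\mathsf z,\omega}=A_{\mathsf z,\omega}(\mathsf y-\mathsf x)$ is defined only for $\omega\in G$, where it is merely continuous, and it depends on the whole driver, so Girsanov is unavailable. The inference ``$\norm{h}_{H_W}\lesssim\norm{\mathsf y-\mathsf x}_{\mathbb H}$, hence total variation cost $O(\norm{\mathsf y-\mathsf x}_{\mathbb H})$'' holds for deterministic shifts but fails for $\omega$-dependent ones. For example, on $(\R,\gamma_1)$ the map $z\mapsto a\lfloor z/a\rfloor$ moves every point by less than $a$, yet sends $\gamma_1$ to a purely atomic law.

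What you need is a globally defined smooth cylindrical $U_{\mathsf z}:E\to F$ with $\sup\norm{\mathcal DU_{\mathsf z}}_{\rm op}\le\frac12$ and $\sup\norm{\mathcal DU_{\mathsf z}}_{\rm HS}\le C_\delta r$. Then $\omega\mapsto\omega+\iota U_{\mathsf z}(\omega)$ is a diffeomorphism of a finite-dimensional Gaussian factor, with relative entropy $O(r^2)$ and an $L^2$ density (\Cref{lem:Gaussian-entropy}). The paper builds this map in Step~2 of \Cref{prop:QS-core-compensated-contraction}. It replaces the partition of unity by smooth cylindrical functions (Stone--Weierstrass) and multiplies by a cylindrical cutoff equal to one on $G$ and vanishing outside an open $\mathcal O\supset G$ with $\gamma(\mathcal O\setminus G)<\delta$.

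This creates a collar $\mathcal O\setminus G$ where the shift acts but no contraction holds. Your step (v) does not reach it, because \eqref{eq:QS-core-moments} only controls unshifted drivers. The paper controls the collar through the density bound \eqref{eq:QS-density-moments}, which gives \eqref{eq:QS-shifted-envelope}, and only then takes $\delta$ small. This closes only because $F$ and $M$ are fixed before $\delta$, and the normalization $(M+C_\delta)r_\delta\le1$ makes the density bound independent of $\delta$.

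\emph{Ordering.} The ordering you call delicate is actually unresolved. You prove (b) for the block $P_T$ with $r_0$ depending on $T$. But (c) needs $\varepsilon$ chosen after $r_0$ and then $\tau\ge t_{R,\varepsilon}$. For a single kernel this is circular, and you never say how the contraction passes from $P_T$ to $P_\tau$. The paper's fix is to observe that the contraction for $d_0<1$ makes $P_{T_*}$ globally nonexpanding in $\Wass_{d_0}$, since the case $d_0=1$ is trivial. Hence $P_{NT_*}$ inherits the same contraction factor, and $N$ is chosen last, for accessibility and drift.

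\emph{Smaller points.} Since $\mathbb V_{R(T)}$ grows linearly in $T$, the endpoint weight must be controlled uniformly in $T$. This is where the paper uses $\vartheta=o(t^{-1})$: it keeps $M_{\rm en}$ bounded so that $\rho^\alpha M_{\rm en}<1$ (\Cref{lem:QS-block-selection}). It is not used in the high-energy step, which only needs $R(T)\ge c^{-1}\log(4C_T)$. Alternatively, you can keep the weight of $d_0(\mathsf x,\mathsf y)$ in the comparison and obtain a factor $Cq^\alpha$, with the $T$-independent $C$ of \eqref{eq:QS-Lyapunov}. Finally, $d_0$ has no weak triangle inequality in general: points $\mathsf x,\mathsf y$ with large $\Vv$ can be $\mathbb H$-close to some $\mathsf z$ with small $\Vv$. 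You can simply drop that check, since \cite[Theorem~4.8]{HairerMattinglyScheutzow2011} does not use it.
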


The proof of the theorem will be given at the end of this section.  We
first select a sufficiently long coupling block.

\begin{lemma}[Selection of a coupling block]
\label{lem:QS-block-selection}
Under \Cref{ass:QS-Lyapunov,ass:QS-stable-compact,ass:QS-synchronous},
there exist $\alpha\in(0,\alpha_*]$, a sufficiently large block length
$T_*$, and constants $R_*,M_{\rm en}<\infty$ such that, after fixing this
block and suppressing its subscript,
\begin{align}\label{eq:QS-selected-high-energy}
 \int_E\mathsf D_{\chi}\bigl(\Phi(\mathsf x,\omega),\Phi(\mathsf y,\omega)\bigr)^\alpha\,\gamma(\dd\omega)
 \le\frac14\mathsf D_{\chi}(\mathsf x,\mathsf y)^\alpha
\end{align}
whenever $\Vv(\mathsf x)+\Vv(\mathsf y)\ge R_*$, and
\begin{align}\label{eq:QS-selected-core-margin}
 M_{\rm en}:=\sup_{\mathsf x\in\mathbb V_{R_*}}P\e^{2\alpha\chi\Vv}(\mathsf x)<\infty,
 \qquad \rho^\alpha M_{\rm en}<1.
\end{align}
Moreover, for every prescribed $p>1$, the preceding objects may be chosen so that
\begin{align}\label{eq:alpha-p}
 2\alpha p<p_0,\qquad 4\alpha\chi p<\lambda_0.
\end{align}
\end{lemma}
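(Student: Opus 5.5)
The plan is to fix the exponent $\alpha$ first, then to define $R_*$ as an explicit function of the block length $T$, and only at the end to take $T$ large. The apparent circularity, in which $M_{\rm en}$ depends on $R_*$ and $R_*$ depends on $T$ through $C_T$, is broken by the decay $\vartheta(t)=o(t^{-1})$ in \Cref{ass:QS-Lyapunov} together with the bound $\log C_T\le C_*(1+T)$.

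\emph{Step 1: choice of $\alpha$.} Given $p>1$, choose
\begin{align*}
 0<\alpha<\min\Bigl\{\alpha_*,\ \frac{p_0}{2p},\ \frac{\lambda_0}{4\chi p}\Bigr\}.
\end{align*}
This gives \eqref{eq:alpha-p} and $\alpha\le\alpha_*$. Since $p>1$, it also gives $2\alpha\chi<\lambda_0$, so that \eqref{eq:QS-Lyapunov} may be applied with $s=2\alpha\chi$.

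\emph{Step 2: high energy contraction for a given $T$.} Fix $T$ large enough that \Cref{ass:QS-synchronous} applies. Since $\alpha/\alpha_*\le1$, Jensen's (or H\"older's) inequality applied to \eqref{eq:QS-weighted-synchronous} gives
\begin{align*}
 \int_E\mathsf D_\chi(\Phi(\mathsf x,\omega),\Phi(\mathsf y,\omega))^\alpha\,\gamma(\dd\omega)
 \le C_T^{\alpha/\alpha_*}\e^{-c_*(\alpha/\alpha_*)[\Vv(\mathsf x)+\Vv(\mathsf y)]}\mathsf D_\chi(\mathsf x,\mathsf y)^\alpha .
\end{align*}
For $\mathsf x=\mathsf y$ both sides vanish, so nothing needs to be checked there. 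Define
\begin{align*}
 R_*(T)=\frac{1}{c_*}\Bigl(\log C_T+\frac{\alpha_*}{\alpha}\log 4\Bigr)\le \frac{1}{c_*}\Bigl(C_*(1+T)+\frac{\alpha_*}{\alpha}\log4\Bigr).
\end{align*}
Then the prefactor above is at most $\tfrac14$ whenever $\Vv(\mathsf x)+\Vv(\mathsf y)\ge R_*$, which is \eqref{eq:QS-selected-high-energy}.

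\emph{Step 3: core moment bound.} For $\mathsf x\in\mathbb V_{R_*}$, apply \eqref{eq:QS-Lyapunov} with $s=2\alpha\chi$ and $t=T$:
\begin{align*}
 P_T\e^{2\alpha\chi\Vv}(\mathsf x)\le C\exp\{2\alpha\chi\,\vartheta(T)R_*(T)\}.
\end{align*}
By Step 2, $R_*(T)=O(1+T)$ with $\alpha$ already fixed. Since $\vartheta(T)=o(T^{-1})$, the product $\vartheta(T)R_*(T)$ tends to $0$ as $T\to\infty$. Hence $M_{\rm en}(T)\le 2C$ for all sufficiently large $T$; in particular $M_{\rm en}$ is finite.

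\emph{Step 4: fixing the block.} By \Cref{ass:QS-stable-compact}(i), $\rho(T)\to0$, and $\alpha>0$ is fixed, so $\rho(T)^\alpha\cdot 2C<1$ for $T$ large. Take $T_*$ large enough that Steps 2–4 all apply, and set $R_*=R_*(T_*)$ and $M_{\rm en}=M_{\rm en}(T_*)$. Then \eqref{eq:QS-selected-core-margin} holds, and \eqref{eq:alpha-p} holds by Step 1. Because $\alpha$ was chosen before $T$, the constraint \eqref{eq:alpha-p} is unaffected by the choice of block.

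The only delicate point is the ordering: $\alpha$ must be fixed before $T$, so that $R_*$ grows at most linearly in $T$ and the $o(T^{-1})$ decay of $\vartheta$ absorbs it. Everything else is direct.
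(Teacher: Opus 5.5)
Your proposal is correct and follows the paper's proof essentially verbatim. You reduce the synchronous bound to exponent $\alpha$ by Jensen, fix $\alpha$ from the constraints \eqref{eq:alpha-p} before choosing the block, and take $R_*(T)=c_*^{-1}\bigl(\log C_T+(\alpha_*/\alpha)\log 4\bigr)$, which is the paper's $R_T$ up to the harmless $1\vee$; you then use $\vartheta(T)R_*(T)\to0$ and $\rho(T)\to0$ to select $T_*$, with exactly the same ordering of choices as the paper.
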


\begin{proof}
For $0<\alpha\le\alpha_*$, set $\theta_\alpha=\alpha/\alpha_*$.  Since
$\mathsf D_{\chi}^{\alpha}=(\mathsf D_{\chi}^{\alpha_*})^{\theta_\alpha}$,
Jensen's inequality shows that \eqref{eq:QS-weighted-synchronous} remains
valid with exponent $\alpha$, with $C_T$ replaced by $C_T^{\theta_\alpha}$ and $c_*$ by
$\theta_\alpha c_*$.  Fix any $p>1$ and choose $0<\alpha\le\alpha_*$ sufficiently small so that
\begin{align*}
 2\alpha p<p_0,\qquad 4\alpha\chi p<\lambda_0.
\end{align*}
With no change of notation, write $C_T$ and $c$ for the resulting
constants and set
\begin{align*}
 R_T=1\vee c^{-1}\log(4C_T),
\end{align*}
which is $O(T)$ since $\log C_T=O(T)$.  By
\eqref{eq:QS-Lyapunov} and $2\alpha\chi<\lambda_0$,
\begin{align*}
 \sup_{\mathsf x\in\mathbb V_{R_T}}P_T\e^{2\alpha\chi\Vv}(\mathsf x)
 \le C\exp\{2\alpha\chi\vartheta(T)R_T\}.
\end{align*}
Since $\vartheta(T)=o(T^{-1})$ and $R_T=O(T)$, the right-hand side is
uniformly bounded for all sufficiently large $T$.  As $\rho(T)\to0$, we
may therefore choose $T_*$ sufficiently large and set $ R_*=R_{T_*}.$
Then
\begin{align*}
 \rho(T_*)^\alpha
 \sup_{\mathsf x\in\mathbb V_{R_*}}P_{T_*}\e^{2\alpha\chi\Vv}(\mathsf x)<1.
\end{align*}
The definition of $R_T$ gives \eqref{eq:QS-selected-high-energy}, while
the last display gives \eqref{eq:QS-selected-core-margin}.
\end{proof}

Fix once and for all some $p>2$, and let $\alpha,T_*,R_*,M_{\rm en}$ be supplied by \Cref{lem:QS-block-selection} for this choice of $p$.  Write
\begin{align*}
 P=P_{T_*},\qquad \Phi=\Phi_{T_*},\qquad E=E_{T_*},\qquad H_W=H_{W,T_*},\qquad
 \gamma=\gamma_{T_*},\qquad \iota=\iota_{T_*}.
\end{align*}
We suppress the block subscript from $S,\mathcal K,\mathcal J,\mathcal D\Phi,C_R,r_R,\Omega_*$ and write $\rho=\rho(T_*)$.  All constants below may depend on this fixed block.

\subsection{Uniform compensated contraction on compact cores}
The next proposition provides the finite dimensional compensation needed
for the low energy coupling.  On a compact energy driver core, the compact
part of the solution difference can be uniformly absorbed by finitely many
Cameron--Martin directions, leading to a strict contraction of the endpoint.

In what follows, if $F$ is a finite dimensional subspace of the Cameron-Martin space $H_W$, a
smooth cylindrical map $U:E\to F$ means a map of the form
\begin{align*}
 U(\omega)=\varphi\bigl(\ell_1(\omega),\ldots,\ell_N(\omega)\bigr),
\end{align*}
for some $N\ge1$, $\ell_1,\ldots,\ell_N\in E^*$, and
$\varphi\in C^\infty(\mathbb R^N;F)$.

\begin{proposition}
\label{prop:QS-core-compensated-contraction}
Suppose that \Cref{ass:QS-stable-compact,ass:QS-Malliavin} hold.
Fix $R<\infty$, a compact set $G\subset\Omega_*$, and $q\in(\rho,1)$.
Then there are a finite dimensional space
$F\subset\iota^*(E^*)$ and $M<\infty$ such that, for every $\delta>0$,
there are an open neighborhood $\mathcal O\supset G$,
a constant $C_\delta<\infty$, and $0<r_\delta\leq \frac{1}{2(M+C_\delta)}$ satisfying
\begin{align*}
 \gamma(\mathcal O\setminus G)<\delta,
\end{align*}
with the following property.

For every $\mathsf z=(\mathsf x,\mathsf y)\in\mathbb V_R^2$, with
$\mathbb V_R$ as in \eqref{eq:QS-energy-core}, and
$r=\norm{\mathsf y-\mathsf x}_{\mathbb H}\le r_\delta$, there is a
smooth cylindrical map $U_{\mathsf z}:E\to F$, vanishing on
$\mathcal O^c$, such that
\begin{align}\label{eq:QS-shift-bounds}
 \sup_\omega\norm{U_{\mathsf z}(\omega)}_{H_W}
 &\le Mr,\qquad
 \sup_\omega\norm{\mathcal D U_{\mathsf z}(\omega)}_{\rm HS}
 \le C_\delta r,\\
 \sup_\omega\norm{\mathcal D U_{\mathsf z}(\omega)}_{\rm op}
 &\le\frac12,
 \nonumber
\end{align}
and
\begin{align}\label{eq:QS-good-core-contraction}
 \norm{\Phi(\mathsf y,\omega+\iota U_{\mathsf z}(\omega))
 -\Phi(\mathsf x,\omega)}_{\mathbb H}
 \le qr,\qquad \omega\in G.
\end{align}
Note that the space $F$ and the constant $M$ are independent of $\delta$.
\end{proposition}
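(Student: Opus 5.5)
We would prove \Cref{prop:QS-core-compensated-contraction} by a compactness argument for the family of compact defects, followed by approximately inverting $\mathcal D\Phi$ on a finite-dimensional range and then correcting the resulting Taylor remainder.

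\emph{Step 1: finite-dimensional approximation of the defect.} By \Cref{ass:QS-stable-compact}(ii), the set $\mathfrak K=\{\mathcal K_{\mathsf z,\omega}:(\mathsf z,\omega)\in\mathbb V_R^2\times G\}$ is compact in $\cL(\mathbb H)$, because $\mathbb V_R$ is compact in $\mathbb H$ (\Cref{ass:QS-Lyapunov}) and $G$ is compact; every element of $\mathfrak K$ is a compact operator. A compact family of compact operators is collectively compact, so there is a finite-rank orthogonal projection $\Pi$ with $\sup_{\mathfrak K}\norm{(I-\Pi)\mathcal K}\le\varepsilon_1$. Fix an orthonormal basis $e_1,\dots,e_n$ of $\Ran\Pi$. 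For each $\omega\in G\subset\Omega_*$ and $\mathsf x\in\mathbb V_R$, \eqref{eq:QS-dense-range} provides $h_i\in H_W$ with $\mathcal D\Phi(\mathsf x,\omega)h_i\approx -e_i$. Since $\iota^*(E^*)$ is dense in $H_W$ and $\mathcal D\Phi$ is bounded, we may take $h_i\in\iota^*(E^*)$. Continuity of $(\mathsf x,\omega)\mapsto\mathcal D\Phi$ in operator norm then makes each such choice work on a neighbourhood of $(\mathsf x,\omega)$. Compactness of $\mathbb V_R\times G$ yields a finite subcover, and we let $F$ be the span of all the $h_i$ used. Then on $\mathbb V_R\times G$ there are right inverses $L_{\mathsf x,\omega}:\Ran\Pi\to F$ with norm at most $M_0$ and $\norm{\mathcal D\Phi(\mathsf x,\omega)L_{\mathsf x,\omega}+I}\le\varepsilon_1$ on $\Ran\Pi$. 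A partition of unity subordinate to the cover makes these choices depend continuously on $(\mathsf x,\omega)$.

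\emph{Step 2: the linear shift.} Set $U^0_{\mathsf z}(\omega)=L_{\mathsf x,\omega}\Pi\mathcal K_{\mathsf z,\omega}(\mathsf y-\mathsf x)$, so $\norm{U^0}\le M_0\sup\norm{\mathcal K}\,r$; this fixes $M$, which is independent of $\delta$. Using \eqref{eq:QS-secant} and \eqref{eq:QS-driver-Taylor} with the deterministic constant on $\mathbb V_R\times G$ for this $F$,
\begin{align*}
 \Phi(\mathsf y,\omega+\iota U)-\Phi(\mathsf x,\omega)
 =S(\mathsf y-\mathsf x)+(I-\Pi)\mathcal K(\mathsf y-\mathsf x)+(\mathcal D\Phi L+I)\Pi\mathcal K(\mathsf y-\mathsf x)+\mathrm{Rem}.
\end{align*}
The remainder $\mathrm{Rem}$ consists of $\Phi(\mathsf y,\omega+\iota U)-\Phi(\mathsf y,\omega)-\mathcal D\Phi(\mathsf y,\omega)U$, which is $O(r^2)$, together with $[\mathcal D\Phi(\mathsf y,\omega)-\mathcal D\Phi(\mathsf x,\omega)]U$. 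The second piece is $o(r)$ uniformly, by uniform continuity of $\mathcal D\Phi$ on the compact set $\mathbb V_R\times G$ combined with $\norm U\le Mr$. The total is bounded by $(\rho+2\varepsilon_1+o(1))r$, which is at most $qr$ once $\varepsilon_1$ and $r_\delta$ are small.

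\emph{Step 3: cylindrical regularization and cut-off.} This is the main obstacle. The map $\omega\mapsto U^0_{\mathsf z}(\omega)$ is only continuous, whereas the proposition requires a smooth cylindrical map supported in $\mathcal O$ with derivative bounds. Since $G$ is compact, the $\mathcal O$ given by the inner regularity of $\gamma$ may be chosen as a small neighbourhood $\{\operatorname{dist}(\omega,G)<\epsilon\}$ with $\gamma(\mathcal O\setminus G)<\delta$. We then do three things.
\begin{itemize}
\item Approximate $\omega$ by $\Pi_N\omega=\sum_{k\le N}\ell_k(\omega)\iota e_k$, where the $\ell_k\in E^*$, $e_k\in H_W$ are orthonormal. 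This converges uniformly on compacts.
\item Replace $U^0$ by $\varphi(\ell_1(\omega),\dots,\ell_N(\omega))$, with $\varphi$ a mollification of the continuous function $(\mathsf z,\xi)\mapsto U^0_{\mathsf z}(\sum_k\xi_k\iota e_k)$ on a compact set of $\xi$.
\item Multiply by a smooth cylindrical cut-off $\psi_\delta(\ell(\omega))$ that equals $1$ on $G$ and vanishes off $\mathcal O$.
\end{itemize}
The map $U^0$ is linear in $\mathsf y-\mathsf x$ through the continuous operator factor $L\Pi\mathcal K$, so all of these operations act on an operator-valued continuous function times $r$. This gives $\sup\norm{\mathcal DU}_{\rm HS}\le C_\delta r$, since the HS norm of the derivative of a finite-dimensional cylindrical map is controlled by the gradient of $\varphi\psi_\delta$. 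It also gives $\norm{\mathcal DU}_{\rm op}\le C_\delta r\le\tfrac12$ because $r\le r_\delta\le 1/(2(M+C_\delta))$. The uniform approximation errors on $G$ are absorbed into the margin $q-\rho$. The delicate point is ensuring that the mollification preserves the bound $Mr$, with $M$ independent of $\delta$; this holds because mollification and a cut-off with $0\le\psi\le1$ do not increase sup norms.
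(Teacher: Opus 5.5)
Your Steps 1--2 are sound. They reorganize the paper's first step: you take one global finite-rank projection $\Pi$ from the norm-compactness of $\{\mathcal K_{\mathsf z,\omega}\}$, together with continuous approximate right inverses of $\mathcal D\Phi(\mathsf x,\omega)$ on $\Ran\Pi$. The paper instead approximates each $\mathcal K_a$ locally by $\mathcal D\Phi(\mathsf y,\omega)R_a$, with fixed finite-rank $R_a$, and glues these with a partition of unity; working at $\mathsf y$ also removes your extra term $[\mathcal D\Phi(\mathsf y,\omega)-\mathcal D\Phi(\mathsf x,\omega)]U$.

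The genuine gap is in Step 3. You take $\mathcal O=\{\operatorname{dist}(\omega,G)<\epsilon\}$ and then ask for a smooth cylindrical cut-off $\psi_\delta(\ell_1(\omega),\ldots,\ell_n(\omega))$ that equals $1$ on $G$ and vanishes off $\mathcal O$. No such function exists in infinite dimensions. It is invariant under translation by the finite-codimensional subspace $\bigcap_k\ker\ell_k$, so if it equals $1$ at $\omega_0\in G$, it equals $1$ at points $\omega_0+v$ arbitrarily far from the bounded set $G$. Then $U_{\mathsf z}$ does not vanish on $\mathcal O^c$, which is exactly what the low-energy coupling uses. The neighbourhood must itself be cylindrical, $\mathcal O=\mathbb L_n^{-1}(V)$ with $\mathbb L_n=(\lambda_1,\ldots,\lambda_n)$. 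The missing idea is why such a set can satisfy $\gamma(\mathcal O\setminus G)<\delta$. For a countable separating family $(\lambda_j)\subset E^*$, compactness of $G$ gives $\mathbb L_n^{-1}(\mathbb L_n(G))\downarrow G$. Outer regularity of $(\mathbb L_n)_\#\gamma$ then supplies an open $V\supset\mathbb L_n(G)$ with the required small excess measure.

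The regularization step itself is also not justified, for two reasons. First, $U^0_{\mathsf z}(\omega)$ is only defined and continuous for $\omega\in G$: both your $L_{\mathsf x,\omega}$ and the continuity in \Cref{ass:QS-stable-compact}\textup{(ii)} live on $\mathbb V_R^2\times G$. The points $\sum_k\xi_k\iota e_k\in\iota(H_W)$ at which you evaluate it generally lie outside $G$. Second, $\Pi_N\omega\to\omega$ uniformly on compacts fails for a general orthonormal system in an abstract Wiener space. It\^o--Nisio gives only $\gamma$-a.s.\ convergence; you would need something like a Schauder basis, which is available for $C_0([0,T];\R^m)$ but not in the abstract criterion.

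The paper avoids both problems by applying Stone--Weierstrass only to the scalar partition functions $\varphi_i$ on the compact set $\mathbb V_R^2\times G$. The algebra generated by $C(\mathbb V_R^2)$ and smooth cylindrical functions of $\omega$ contains constants and separates points, because $E^*$ separates points. So each $\varphi_i$ is uniformly approximated on $\mathbb V_R^2\times G$ by a cylindrical $\psi_i$. After a fixed smooth truncation, $\abs{\psi_i}\le2$ with bounded $\mathcal D\psi_i$ on all of $E$. Because the $R_i$ are fixed finite-rank maps into $F$, this gives $M=2\sum_i\norm{R_i}$ independently of $\delta$. Only the cut-off $\chi\circ\mathbb L_n$ then contributes to $C_\delta$.
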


\begin{proof}
Put $\eta=(q-\rho)/4$.  We divide the proof into three steps.

\medskip
\noindent\emph{Step 1. Finite-factor compensation.}
For $a=(\mathsf z,\omega)\in\mathbb V_R^2\times G$, compactness of
$\mathcal K_a$ gives a finite rank operator
\begin{align*}
 K^{(0)}h=\sum_{j=1}^m\ip h{e_j}_{\mathbb H}v_j
\end{align*}
such that $\norm{\mathcal K_a-K^{(0)}}<\eta/4$.  Since the Cameron--Martin
embedding $\iota:H_W\hookrightarrow E$ is injective, $\iota^*(E^*)$ is
dense in $H_W$.  Hence \eqref{eq:QS-dense-range} implies
\begin{align*}
 \overline{\mathcal D\Phi(\mathsf y,\omega)
 \bigl(\iota^*(E^*)\bigr)}^{\,\mathbb H}
 =\mathbb H.
\end{align*}
We may therefore choose $g_1,\ldots,g_m\in\iota^*(E^*)$ sufficiently
close to preimages of $v_1,\ldots,v_m$ so that
\begin{align*}
 R_a h=\sum_{j=1}^m\ip h{e_j}_{\mathbb H}g_j
\end{align*}
satisfies
\begin{align*}
 \norm{\mathcal K_a-\mathcal D\Phi(\mathsf y,\omega)R_a}<\frac\eta2.
\end{align*}
In particular, $R_a$ has finite rank and
$\Ran R_a\subset\iota^*(E^*)$.

Fix such an $R_a$.  By the operator norm continuity in
\Cref{ass:QS-stable-compact}\textup{(ii)}, there is a neighborhood
$U_a$ of $a$ in $\mathbb V_R^2\times G$ such that the same fixed operator
$R_a$ satisfies
\begin{align*}
 \norm{\mathcal K_{\mathsf z',\omega'}
 -\mathcal D\Phi(\mathsf y',\omega')R_a}<\eta,
 \qquad
 (\mathsf z',\omega')\in U_a.
\end{align*}
Compactness of $\mathbb V_R^2\times G$ gives a finite subcover
$U_{a_1},\ldots,U_{a_N}$.  Set $R_i=R_{a_i}$ and choose a continuous
partition of unity $(\varphi_i)_{i=1}^N$ subordinate to this cover.  Then
\begin{align*}
 R^0(\mathsf z,\omega)
 =\sum_{i=1}^N\varphi_i(\mathsf z,\omega)R_i
\end{align*}
satisfies
\begin{align}\label{eq:QS-R0-compensation}
 \sup_{\mathbb V_R^2\times G}
 \norm{\mathcal K_{\mathsf z,\omega}
 -\mathcal D\Phi(\mathsf y,\omega)R^0(\mathsf z,\omega)}
 <\eta.
\end{align}
Set
\begin{align}\label{eq:QS-factor-FM}
 F=\spanop\bigcup_{i=1}^N\Ran R_i,
 \qquad
 M=2\sum_{i=1}^N\norm{R_i}.
\end{align}
Then $F\subset\iota^*(E^*)$ is finite dimensional.  The space $F$ and
the constant $M$ remain fixed throughout the rest of the proof.

\medskip
\noindent\emph{Step 2. Cylindrical localization.}
We first replace the continuous field $R^0$ by a smooth cylindrical one.
Since $E^*$ separates points of $G$, the algebra on
$\mathbb V_R^2\times G$ generated by continuous functions of $\mathsf z$
and smooth cylindrical functions of $\omega$ separates points and contains
the constants.  By the Stone--Weierstrass theorem, for every
$\varepsilon>0$ and $1\le i\le N$ there is a function
\begin{align*}
 \psi_i(\mathsf z,\omega)
 =\sum_{k=1}^{L_i}f_{ik}(\mathsf z)
 \theta_{ik}\bigl(\ell_1(\omega),\ldots,\ell_d(\omega)\bigr),
\end{align*}
where $f_{ik}\in C(\mathbb V_R^2)$,
$\theta_{ik}\in C_c^\infty(\mathbb R^d)$, and
$\ell_1,\ldots,\ell_d\in E^*$, such that
\begin{align*}
 \sup_{\mathbb V_R^2\times G}\abs{\psi_i-\varphi_i}<\varepsilon\, \text{ and } \quad  \sup_{\mathbb V_R^2\times E}\abs{\psi_i}\le2,
 \quad
 \sup_{\mathbb V_R^2\times E}
 \norm{\mathcal D\psi_i}_{H_W}<\infty
\end{align*}
upon composing with a fixed smooth truncation. 
Define
\begin{align*}
 R^1(\mathsf z,\omega)
 =\sum_{i=1}^N\psi_i(\mathsf z,\omega)R_i.
\end{align*}
Then $R^1(\mathsf z,\omega)\in\cL(\mathbb H,F)$ and, by
\eqref{eq:QS-factor-FM},
\begin{align}\label{eq:QS-R1-bound}
 \sup_{\mathbb V_R^2\times E}\norm{R^1}\le M.
\end{align}
Moreover, \Cref{ass:QS-stable-compact}\textup{(ii)} and compactness give
\begin{align*}
 B_{R,G}:=
 \sup_{\mathbb V_R\times G}
 \norm{\mathcal D\Phi(\mathsf y,\omega)}_{\cL(H_W,\mathbb H)}
 <\infty,
\end{align*}
which combined with \eqref{eq:QS-R0-compensation} and the construction of $R^1$ gives 
\begin{align}\label{eq:QS-R1-compensation}
 \sup_{\mathbb V_R^2\times G}
 \norm{\mathcal K_{\mathsf z,\omega}
 -\mathcal D\Phi(\mathsf y,\omega)R^1(\mathsf z,\omega)}
 <2\eta.
\end{align}
For $\norm h_{\mathbb H}\le1$,
\begin{align*}
 \mathcal D\bigl[R^1(\mathsf z,\cdot)h\bigr]
 =\sum_{i=1}^N
 (R_i h)\otimes\mathcal D\psi_i(\mathsf z,\cdot),
\end{align*}
and hence the preceding cylindrical bounds give
\begin{align}\label{eq:QS-R1-derivative}
 \sup_{\mathbb V_R^2\times E}
 \sup_{\norm h_{\mathbb H}\le1}
 \norm{\mathcal D\bigl[R^1(\mathsf z,\cdot)h\bigr]}_{\rm HS}
 <\infty.
\end{align}

We now localize $R^1$ near $G$.  Choose a countable separating family
$(\lambda_j)_{j\ge1}\subset E^*$ and set
\begin{align*}
 \mathbb L_n(\omega)
 =(\lambda_1(\omega),\ldots,\lambda_n(\omega)),
 \qquad
 C_n=\mathbb L_n^{-1}\bigl(\mathbb L_n(G)\bigr).
\end{align*}
Then $C_n\downarrow G$ and therefore for some $n$,
\begin{align*}
 \gamma(C_n\setminus G)<\frac\delta2.
\end{align*}
Since $\mathbb L_n(G)$ is compact and $(\mathbb L_n)_\#\gamma$ is a
Radon probability on $\mathbb R^n$, there is an open set
$V\supset\mathbb L_n(G)$ such that
\begin{align*}
 \gamma\bigl(\mathbb L_n^{-1}(V)\setminus C_n\bigr)
 <\frac\delta2.
\end{align*}
Choose $\chi\in C_c^\infty(\mathbb R^n;[0,1])$ with
$\chi=1$ on $\mathbb L_n(G)$ and $\supp\chi\subset V$, and put
\begin{align*}
 \mathcal O=\mathbb L_n^{-1}(V),\qquad
 \mathscr R(\mathsf z,\omega)
 =\chi(\mathbb L_n(\omega))R^1(\mathsf z,\omega).
\end{align*}
Then $\mathcal O$ is an open neighborhood of $G$,
$\gamma(\mathcal O\setminus G)<\delta$, and
$\mathscr R=0$ on $\mathcal O^c$.  Since $\chi=1$ on $G$,
\eqref{eq:QS-R1-compensation} gives
\begin{align}\label{eq:QS-core-compensation}
 \sup_{\mathbb V_R^2\times G}
 \norm{\mathcal K_{\mathsf z,\omega}
 -\mathcal D\Phi(\mathsf y,\omega)\mathscr R(\mathsf z,\omega)}
 <2\eta.
\end{align}
Moreover, \eqref{eq:QS-R1-bound} gives
\begin{align*}
 \sup_{\mathbb V_R^2\times E}\norm{\mathscr R}\le M.
\end{align*}
For $\norm h_{\mathbb H}\le1$, the product rule gives
\begin{align*}
 \mathcal D\bigl[\mathscr R(\mathsf z,\cdot)h\bigr]
 &=
 \mathcal D[\chi\circ\mathbb L_n]\otimes
 R^1(\mathsf z,\cdot)h\\
 &\quad
 +(\chi\circ\mathbb L_n)
 \mathcal D\bigl[R^1(\mathsf z,\cdot)h\bigr].
\end{align*}
Since $\chi$ is smooth with compact support, $\mathbb L_n$ has finite
rank, and \eqref{eq:QS-R1-bound}--\eqref{eq:QS-R1-derivative} hold, there
is $C_\delta<\infty$ such that
\begin{align}\label{eq:QS-selector-derivative}
 \sup_{\mathbb V_R^2\times E}
 \sup_{\norm h_{\mathbb H}\le1}
 \norm{\mathcal D\bigl[\mathscr R(\mathsf z,\cdot)h\bigr]}_{\rm HS}
 \le C_\delta.
\end{align}

For $h=\mathsf y-\mathsf x$ and
$r=\norm h_{\mathbb H}$, define
\begin{align*}
 U_{\mathsf z}(\omega)
 =-\mathscr R(\mathsf z,\omega)h.
\end{align*}
By \eqref{eq:QS-factor-FM},
\eqref{eq:QS-R1-bound}, and
\eqref{eq:QS-selector-derivative}, the map $U_{\mathsf z}$ takes values
in $F$ and satisfies
\begin{align*}
 \sup_\omega\norm{U_{\mathsf z}(\omega)}_{H_W}\le Mr,
 \qquad
 \sup_\omega\norm{\mathcal DU_{\mathsf z}(\omega)}_{\rm HS}
 \le C_\delta r.
\end{align*}
After decreasing $r_\delta$ so that
$(M+C_\delta) r_\delta\le1/2$, we also have
\begin{align*}
 \sup_\omega\norm{\mathcal DU_{\mathsf z}(\omega)}_{\rm op}
 \le\frac12.
\end{align*}
Thus \eqref{eq:QS-shift-bounds} holds.

\medskip
\noindent\emph{Step 3. Endpoint contraction.}
By \Cref{ass:QS-stable-compact}\textup{(iii)}, applied to the fixed
finite dimensional space $F$ in \eqref{eq:QS-factor-FM} and the compact
driver core $G$, there are $c_*,C_* >0$ such that
\begin{align*}
 \norm{\Phi(\mathsf y,\omega+\iota u)-\Phi(\mathsf y,\omega)
 -\mathcal D\Phi(\mathsf y,\omega)u}_{\mathbb H}
 \le C_*\norm u_{H_W}^2
\end{align*}
whenever $\mathsf y\in\mathbb V_R$, $\omega\in G$, $u\in F$, and
$\norm u_{H_W}\le c_*$.  Decrease $r_\delta$ further so that
$Mr_\delta\le c_*$. Since $\rho+2\eta=\frac{\rho+q}{2}<q$, for $\omega\in G$, the identity
\eqref{eq:QS-secant}, \eqref{eq:QS-core-compensation}, and
$U_{\mathsf z}=-\mathscr Rh$ give
\begin{align*}
 \norm{\Phi(\mathsf y,\omega+\iota U_{\mathsf z}(\omega))
 -\Phi(\mathsf x,\omega)}_{\mathbb H}
 &\le
 \norm{S_{\mathsf z,\omega}h+
 \bigl[\mathcal K_{\mathsf z,\omega}
 -\mathcal D\Phi(\mathsf y,\omega)\mathscr R(\mathsf z,\omega)\bigr]h}_{\mathbb H} + C_*M^2r^2\\
 &\le (\rho+2\eta)r+C_*M^2r^2 \le qr,
\end{align*}
which is \eqref{eq:QS-good-core-contraction}.
\end{proof}

\subsection{Coupling at low energy}
The preceding proposition contracts the shifted endpoint on a compact Wiener core, but the shifted endpoint does not yet have the exact second Markov marginal.  The next proposition uses the finite factor Gaussian transformation and a maximal repair to obtain a true coupling of the two transition probabilities.

Recall the distance
\begin{align}\label{eq:QS-basic-cost-internal}
 d_0(\mathsf x,\mathsf y)=1\wedge\left[\frac{\mathsf D_{\chi}(\mathsf x,\mathsf y)}{r_0}\right]^\alpha
 =1\wedge\left[\frac{\norm{\mathsf x-\mathsf y}_{\mathbb H}}{r_0}
 \e^{\chi(\Vv(\mathsf x)+\Vv(\mathsf y))}\right]^\alpha,
\end{align}
as in \eqref{eq:QS-costs} and \eqref{eq:QS-premetric}, and the radius $R_*$, exponent $p$ and constant $M_{\rm en}$ from \Cref{lem:QS-block-selection}.  
\begin{proposition}
\label{prop:QS-low-energy-contraction}
There are $r_0>0$ and $\theta_{\rm lo}\in(0,1)$ such that
\begin{align}\label{eq:QS-low-contraction}
 \Wass_{d_0}(P(\mathsf x,\cdot),P(\mathsf y,\cdot))
 \le\theta_{\rm lo}d_0(\mathsf x,\mathsf y)
\end{align}
whenever $\Vv(\mathsf x)+\Vv(\mathsf y)\le R_*$ and
$d_0(\mathsf x,\mathsf y)<1$.
\end{proposition}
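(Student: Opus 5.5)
We construct an explicit coupling of $P(\mathsf x,\cdot)$ and $P(\mathsf y,\cdot)$ on one block. The first marginal is driven by $\omega$. The second is driven by the shifted driver $T_{\mathsf z}(\omega)=\omega+\iota U_{\mathsf z}(\omega)$ from \Cref{prop:QS-core-compensated-contraction}, repaired by a maximal coupling so that its law is exactly $\gamma$.

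\emph{Reductions.} Fix $\mathsf x,\mathsf y$ with $\Vv(\mathsf x)+\Vv(\mathsf y)\le R_*$ and $d_0(\mathsf x,\mathsf y)<1$, and put $r=\norm{\mathsf y-\mathsf x}_{\mathbb H}$. Since $\Vv\ge1$, we get
\begin{align*}
r\le r_0\e^{-2\chi},\qquad d_0(\mathsf x,\mathsf y)\ge (r/r_0)^\alpha\e^{2\alpha\chi}\ge (r/r_0)^\alpha .
\end{align*}
Hence it suffices to show
\begin{align*}
\int d_0\,\dd\Gamma\le\theta_{\rm lo}(r/r_0)^\alpha .
\end{align*}
The parameters are chosen in a fixed order.

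\begin{enumerate}[label=\textup{(\arabic*)}]
\item By \eqref{eq:QS-selected-core-margin}, choose $q\in(\rho,1)$ with $q^\alpha M_{\rm en}<1$.
\item Since $\gamma$ is Radon and $\gamma(\Omega_*)=1$, choose a compact $G\subset\Omega_*$ with $\gamma(G^c)<\varepsilon$.
\item Apply \Cref{prop:QS-core-compensated-contraction} with $R=R_*$, $G$, and $q$. This gives $F$ and $M$; then choose $\delta$, which gives $\mathcal O$, $C_\delta$, and $r_\delta$.
\item Finally take $r_0\le r_\delta$ so small that the error terms below are absorbed.
\end{enumerate}

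\emph{Gaussian transformation and coupling.} $U_{\mathsf z}$ is smooth, cylindrical, $F$-valued, satisfies $\norm{\mathcal DU_{\mathsf z}}_{\rm op}\le\frac12$, and vanishes off $\mathcal O$. Hence $T_{\mathsf z}$ is a bijection of $E$ which reduces to a finite dimensional map on the factor generated by $F$ and the cylinder coordinates. The finite dimensional (Ramer--Kusuoka) change of variables formula then gives a density of $(T_{\mathsf z})_\#\gamma$ with respect to $\gamma$. The Carleman--Fredholm determinant and the Girsanov exponent are controlled by $\norm{U_{\mathsf z}}_{H_W}$ and $\norm{\mathcal DU_{\mathsf z}}_{\rm HS}$. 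This yields
\begin{align*}
\norm{(T_{\mathsf z})_\#\gamma-\gamma}_{\rm TV}\le C(M+C_\delta)r .
\end{align*}
Take a maximal coupling $(\omega,\omega')$ in which $\omega\sim\gamma$, $\omega'\sim\gamma$, and $\omega'=T_{\mathsf z}(\omega)$ outside an event $N$ with $\Prob(N)\le C(M+C_\delta)r$. Then $(\Phi(\mathsf x,\omega),\Phi(\mathsf y,\omega'))$ is a coupling of $P(\mathsf x,\cdot)$ and $P(\mathsf y,\cdot)$.

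\emph{Estimating the cost.} We split into three events.

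\begin{itemize}
\item On $\{\omega\in G\}\setminus N$, \eqref{eq:QS-good-core-contraction} bounds the endpoint distance by $qr$. Then
\begin{align*}
d_0\le (qr/r_0)^\alpha\exp\bigl\{\alpha\chi[\Vv(\Phi(\mathsf x,\omega))+\Vv(\Phi(\mathsf y,\omega'))]\bigr\}.
\end{align*}
By Cauchy--Schwarz, with each marginal exactly Markov, and \eqref{eq:QS-selected-core-margin}, this contributes at most $q^\alpha M_{\rm en}(r/r_0)^\alpha$.
\item On $\{\omega\notin\mathcal O\}\setminus N$ we have $U_{\mathsf z}=0$, so the coupling is synchronous. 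By \eqref{eq:QS-secant} the endpoint distance is $\norm{\mathcal J_{\mathsf z,\omega}}r$. On $\{\omega\in\mathcal O\setminus G\}\setminus N$ the shift is nonzero, and we add the Taylor bound \eqref{eq:QS-driver-Taylor} using $\norm{U_{\mathsf z}}_{H_W}\le Mr$. The moment bound \eqref{eq:QS-core-moments} gives finite $p_0$-moments of $C_{R_*}$, $r_{R_*}^{-1}$, $\norm{\mathcal D\Phi}$, and $\norm{\mathcal J}$. Hölder's inequality with exponent $p$ from \eqref{eq:alpha-p} (both $2\alpha p<p_0$ and $4\alpha\chi p<\lambda_0$) then bounds this contribution by $C(r/r_0)^\alpha\gamma(G^c)^{1-2/p}\le C\varepsilon^{1-2/p}(r/r_0)^\alpha$. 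The event where $r$ exceeds the Taylor radius $r_{R_*}$ has probability $\le r^{p_0}\E[r_{R_*}^{-p_0}]$, which is harmless.
\item On $N$, the cost is at most $1$, so this contributes at most $C(M+C_\delta)r$.
\end{itemize}

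Summing,
\begin{align*}
\int d_0\,\dd\Gamma\le\Bigl[q^\alpha M_{\rm en}+C\varepsilon^{1-2/p}+C(M+C_\delta)r_0^\alpha r^{1-\alpha}\Bigr](r/r_0)^\alpha .
\end{align*}
Here $q$ is fixed first. The constant in the middle term is independent of $\varepsilon$, because it depends only on \eqref{eq:QS-core-moments} and on $M$ through $Mr\le Mr_0$. So we choose $\varepsilon$ small, and then $r_0$ small, using $r\le r_0$ and $\alpha<1$ so that $r_0^\alpha r^{1-\alpha}\le r_0$. This yields some $\theta_{\rm lo}<1$.

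\emph{Main obstacle.} The delicate step is the total variation bound for the non-adapted, $\omega$-dependent shift $T_{\mathsf z}$. The proof must verify invertibility and the determinant expansion, uniformly in $\mathsf z$, from the cylindrical structure and $\norm{\mathcal DU_{\mathsf z}}_{\rm op}\le\frac12$. A second point is keeping the $\varepsilon$-dependence (through $G$, then $\delta$ and $C_\delta$) separate from the constants in the uncompensated region, so that the order of choices closes.
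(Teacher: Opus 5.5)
\textbf{There is a genuine gap: the order of choices on the collar $\mathcal O\setminus G$ does not close.}

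Your coupling itself is fine. Maximally coupling $(T_{\mathsf z})_\#\gamma$ with $\gamma$ and gluing with $(\omega,T_{\mathsf z}\omega)$ is a legitimate variant of the paper's endpoint-level repair. It even buys something: $\Phi(\mathsf y,\omega')$ has the exact law $P(\mathsf y,\cdot)$, so the Lyapunov weight of the second endpoint is controlled directly. You therefore do not need the density bound \eqref{eq:QS-density-moments}, which the paper uses for $\Vv(\Phi(\mathsf y,T_{\mathsf z}\omega))$. Your treatment of the contracted region $G\setminus N$ and of $N$ is also correct. The failure is your claim that the constant in the middle term is independent of $\varepsilon$.

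On $\mathcal O\setminus G$ the shift is active, and \eqref{eq:QS-driver-Taylor} gives
\begin{align*}
 \norm{\Phi(\mathsf y,T_{\mathsf z}\omega)-\Phi(\mathsf y,\omega)}_{\mathbb H}
 \le\norm{\mathcal D\Phi(\mathsf y,\omega)}Mr+C_{R_*}(\mathsf y,\omega)M^2r^2 .
\end{align*}
After dividing by $r$, the factor $M\norm{\mathcal D\Phi(\mathsf y,\omega)}$ remains. So $M$ enters linearly, not through $Mr$, and your middle constant has size $(1+M)^\alpha$ times core moments.

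But $M$ comes from \Cref{prop:QS-core-compensated-contraction} applied to $G$, and $G$ was chosen from $\varepsilon$. As $\varepsilon\downarrow0$, $G$ must exhaust the noncompact set $\Omega_*$. Nothing keeps $M(G)$ bounded, since it is built from approximate preimages under an operator with merely dense range. Hence $C(M(G_\varepsilon))\,\varepsilon^{1-2/p}$ need not be small. A symptom of this is that your $\delta$ is chosen but never constrained.

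\textbf{The fix.} It is what the paper does, and it is why \Cref{prop:QS-core-compensated-contraction} stresses that $F$ and $M$ do not depend on $\delta$. Split $G^c=\mathcal O^c\cup(\mathcal O\setminus G)$.
\begin{enumerate}[label=\textup{(\roman*)}]
\item On $\mathcal O^c$ the coupling is synchronous, and the cost is at most $(r/r_0)^\alpha\norm{\mathcal J_{\mathsf z,\omega}}^\alpha\e^{\alpha\chi[\Vv(\Phi(\mathsf x,\omega))+\Vv(\Phi(\mathsf y,\omega'))]}$. Its $L^p$ norm depends only on \eqref{eq:QS-core-moments} and the Lyapunov bound, not on $G$. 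So choosing $G$ first with $\gamma(G^c)$ small works there.
\item On the collar, the constant $C_M$ depends on $M$ but not on $\delta$. Bound this contribution by $C_M\gamma(\mathcal O\setminus G)^{1-2/p}\le C_M\delta^{1-2/p}$, with $\delta$ chosen after $M$ is fixed.
\end{enumerate}
The correct order is $q\to G\to(F,M)\to\delta\to(C_\delta,r_\delta)\to r_0$.

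At the last step, two terms are absorbed by taking $r_0$ small. One is the total-variation term $C(M+C_\delta)r$. The other is the Taylor-failure term $(Mr)^{p_0}\sup_{\Vv(\mathsf y)\le R_*}\int_E r_{R_*}(\mathsf y,\omega)^{-p_0}\gamma(\dd\omega)$, which is absorbed because $p_0>\alpha$. You should also keep this second term in your final display rather than dropping it.
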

\begin{proof}
By \eqref{eq:QS-selected-core-margin}, choose $q\in(\rho,1)$ and $\varepsilon_0>0$ so that
\begin{align}\label{eq:QS-q-choice}
 q^\alpha M_{\rm en}+4\varepsilon_0<1.
\end{align}

\medskip
\noindent\emph{Step 1. Compact-core shift and exceptional sets.}
Recall that $\mathcal J_{\mathsf z,\omega}=S_{\mathsf z,\omega}+\mathcal K_{\mathsf z,\omega}$ and set
\begin{align*}
 \mathscr E_{\mathsf z}(\omega)=\norm{\mathcal J_{\mathsf z,\omega}}^\alpha
 \e^{\alpha\chi[\Vv(\Phi(\mathsf x,\omega))+\Vv(\Phi(\mathsf y,\omega))]}.
\end{align*}
Cauchy--Schwarz, \eqref{eq:QS-Lyapunov} at $t=T_*$, \eqref{eq:QS-core-moments}, and \eqref{eq:alpha-p} give
\begin{align*}
 \sup_{\mathsf z\in\mathbb V_{R_*}^2}\int_E\mathscr E_{\mathsf z}(\omega)^p\,\gamma(\dd\omega)<\infty.
\end{align*}
Since $\gamma(\Omega_*)=1$ and $\gamma$ is Radon, choose a compact $G\subset\Omega_*$ such that
\begin{align}\label{eq:QS-good-driver-tail}
 \sup_{\mathsf z\in\mathbb V_{R_*}^2}\int_{G^c}\mathscr E_{\mathsf z}\,\dd\gamma
 \le\left(\sup_{\mathsf z\in\mathbb V_{R_*}^2}\int_E\mathscr E_{\mathsf z}^p\,\dd\gamma\right)^{1/p}
 \gamma(G^c)^{1-1/p}<\varepsilon_0.
\end{align}

Apply \Cref{prop:QS-core-compensated-contraction} with $R=R_*$ and the above $G,q$.  This fixes a finite dimensional space $F\subset\iota^*(E^*)$ and a constant $M<\infty$, both independent of $\delta$.  Fix $1<p_1<p/2$.  By \eqref{eq:alpha-p},
\begin{align}\label{eq:QS-p1-margin}
 4\alpha p_1<p_0,\qquad 4\alpha\chi p_1<\lambda_0.
\end{align}
For the moment let $\delta\in(0,1)$ be arbitrary, and let $\mathcal O,C_\delta,r_\delta$ and $U_{\mathsf z}$ be supplied by \Cref{prop:QS-core-compensated-contraction}, with
\begin{align}\label{eq:QS-radius-normalization}
 r_\delta\le(M+C_\delta)^{-1}.
\end{align}
For $r=\norm{\mathsf y-\mathsf x}_{\mathbb H}\le r_\delta$, set
\begin{align*}
 T_{\mathsf z}(\omega)=\omega+\iota U_{\mathsf z}(\omega).
\end{align*}
By \eqref{eq:QS-shift-bounds} and \eqref{eq:QS-radius-normalization},
\begin{align*}
 \sup_\omega\left(\norm{U_{\mathsf z}(\omega)}_{H_W}+\norm{\mathcal DU_{\mathsf z}(\omega)}_{\rm HS}\right)
 \le Mr+C_\delta r\le1,\qquad
 \sup_\omega\norm{\mathcal DU_{\mathsf z}(\omega)}_{\rm op}\le\frac12.
\end{align*}
Hence \Cref{lem:Gaussian-entropy} gives
\begin{align}\label{eq:QS-density-moments}
 \sup_{\mathsf z\in\mathbb V_{R_*}^2,\,r\le r_\delta}
 \norm{\frac{\dd(T_{\mathsf z})_\#\gamma}{\dd\gamma}}_{L^2(\gamma)}\le C_F,
\end{align}
where $C_F$ is independent of $\delta$.  Put
\begin{align*}
 \mathcal A_r=\{\omega\in E:\norm{U_{\mathsf z}(\omega)}_{H_W}\le r_{R_*}(\mathsf y,\omega)\}.
\end{align*}
For $\Vv(\mathsf y)\le R_*$, on $\mathcal A_r$, \eqref{eq:QS-driver-Taylor}, \eqref{eq:QS-secant}, and $\norm{U_{\mathsf z}(\omega)}_{H_W}\le Mr$ give
\begin{align*}
 \frac{\norm{\Phi(\mathsf y,T_{\mathsf z}\omega)-\Phi(\mathsf x,\omega)}_{\mathbb H}}r
 \le C_M\mathscr Q_{\mathsf z}(\omega),\qquad
 \mathscr Q_{\mathsf z}(\omega)=1+\norm{\mathcal J_{\mathsf z,\omega}}+\norm{\mathcal D\Phi(\mathsf y,\omega)}+C_{R_*}(\mathsf y,\omega).
\end{align*}
Then
\begin{align*}
 &\int_E\one_{\mathcal A_r}(\omega)
 \left[\frac{\mathsf D_{\chi}\bigl(\Phi(\mathsf x,\omega),\Phi(\mathsf y,T_{\mathsf z}\omega)\bigr)}r\right]^{\alpha p_1}\gamma(\dd\omega)\\
 &\qquad\le C_M\int_E\mathscr Q_{\mathsf z}(\omega)^{\alpha p_1}
 \e^{\alpha\chi p_1\Vv(\Phi(\mathsf x,\omega))}
 \e^{\alpha\chi p_1\Vv(\Phi(\mathsf y,T_{\mathsf z}\omega))}\,\gamma(\dd\omega).
\end{align*}
By Cauchy--Schwarz, the last integral is bounded by
\begin{align*}
 C_M\left(\int_E\mathscr Q_{\mathsf z}^{2\alpha p_1}
 \e^{2\alpha\chi p_1\Vv(\Phi(\mathsf x,\omega))}\,\dd\gamma\right)^{1/2}
 \left(\int_E\e^{2\alpha\chi p_1\Vv(\Phi(\mathsf y,T_{\mathsf z}\omega))}\,\dd\gamma\right)^{1/2}.
\end{align*}
A second application of Cauchy--Schwarz, together with \eqref{eq:QS-core-moments}, \eqref{eq:QS-Lyapunov}, and \eqref{eq:QS-p1-margin}, gives
\begin{align*}
 \int_E\mathscr Q_{\mathsf z}^{2\alpha p_1}
 \e^{2\alpha\chi p_1\Vv(\Phi(\mathsf x,\omega))}\,\dd\gamma
 \le\left(\int_E\mathscr Q_{\mathsf z}^{4\alpha p_1}\,\dd\gamma\right)^{1/2}
 \left(P\e^{4\alpha\chi p_1\Vv}(\mathsf x)\right)^{1/2}\le C.
\end{align*}
For the shifted factor, writing $L_{\mathsf z}=\frac{\dd(T_{\mathsf z})_\#\gamma}{\dd\gamma}$, \eqref{eq:QS-density-moments}, Cauchy--Schwarz, \eqref{eq:QS-Lyapunov}, and \eqref{eq:QS-p1-margin} give
\begin{align*}
 \int_E\e^{2\alpha\chi p_1\Vv(\Phi(\mathsf y,T_{\mathsf z}\omega))}\,\gamma(\dd\omega)
 &=\int_E\e^{2\alpha\chi p_1\Vv(\Phi(\mathsf y,\omega))}L_{\mathsf z}(\omega)\,\gamma(\dd\omega)\\
 &\le C_F\left(P\e^{4\alpha\chi p_1\Vv}(\mathsf y)\right)^{1/2}\le C.
\end{align*}
Thus there is $C_{\rm sh}<\infty$, independent of $\delta$, such that
\begin{align}\label{eq:QS-shifted-envelope}
 \sup_{\mathsf z\in\mathbb V_{R_*}^2,\,r\le r_\delta}
 \int_E\one_{\mathcal A_r}(\omega)
 \left[\frac{\mathsf D_{\chi}\bigl(\Phi(\mathsf x,\omega),\Phi(\mathsf y,T_{\mathsf z}\omega)\bigr)}r\right]^{\alpha p_1}
 \gamma(\dd\omega)\le C_{\rm sh}^{p_1}.
\end{align}

Choose now $\delta>0$ so small that
\begin{align}\label{eq:QS-collar-small}
 C_{\rm sh}\delta^{1-1/p_1}<\varepsilon_0,
\end{align}
and fix the corresponding $\mathcal O,C_\delta,r_\delta$ and $U_{\mathsf z}$.  In particular,
\begin{align*}
 \gamma(\mathcal O\setminus G)<\delta.
\end{align*}
Since $\norm{U_{\mathsf z}(\omega)}_{H_W}\le Mr$, \eqref{eq:QS-core-moments} gives
\begin{align}\label{eq:QS-Taylor-failure}
 \sup_{\mathsf z\in\mathbb V_{R_*}^2}\gamma(\mathcal A_r^c)
 \le M^{p_0}r^{p_0}\sup_{\Vv(\mathsf y)\le R_*}
 \int_Er_{R_*}(\mathsf y,\omega)^{-p_0}\,\gamma(\dd\omega)\le Cr^{p_0}.
\end{align}

On $\mathcal O^c$ one has $U_{\mathsf z}=0$, and hence
\begin{align*}
 d_0\bigl(\Phi(\mathsf x,\omega),\Phi(\mathsf y,\omega)\bigr)
 \le\left(\frac r{r_0}\right)^\alpha\mathscr E_{\mathsf z}(\omega).
\end{align*}
Since $\mathcal O^c\subset G^c$, \eqref{eq:QS-good-driver-tail} yields
\begin{align*}
 \int_{\mathcal O^c}d_0\bigl(\Phi(\mathsf x,\omega),\Phi(\mathsf y,T_{\mathsf z}\omega)\bigr)\,\gamma(\dd\omega)
 <\varepsilon_0\left(\frac r{r_0}\right)^\alpha.
\end{align*}
On $\mathcal O\setminus G$, \eqref{eq:QS-basic-cost-internal}, \eqref{eq:QS-shifted-envelope}, H\"older's inequality, and \eqref{eq:QS-collar-small} give
\begin{align*}
 \int_{\mathcal O\setminus G}\one_{\mathcal A_r}(\omega)
 d_0\bigl(\Phi(\mathsf x,\omega),\Phi(\mathsf y,T_{\mathsf z}\omega)\bigr)\,\gamma(\dd\omega)
 <\varepsilon_0\left(\frac r{r_0}\right)^\alpha.
\end{align*}
Consequently,
\begin{align}\label{eq:QS-bad-driver-cost}
 \int_{G^c}\one_{\mathcal A_r}(\omega)
 d_0\bigl(\Phi(\mathsf x,\omega),\Phi(\mathsf y,T_{\mathsf z}\omega)\bigr)\,\gamma(\dd\omega)
 \le2\varepsilon_0\left(\frac r{r_0}\right)^\alpha.
\end{align}

\medskip
\noindent\emph{Step 2. Exact marginal repair.}
Define
\begin{align*}
 X(\omega)=\Phi(\mathsf x,\omega),\qquad Z(\omega)=\Phi(\mathsf y,T_{\mathsf z}\omega).
\end{align*}
By \eqref{eq:QS-shift-bounds} and \Cref{lem:Gaussian-entropy},
\begin{align*}
 \Ent((T_{\mathsf z})_\#\gamma\mid\gamma)\le C_\delta r^2,
\end{align*}
where from now on $C_\delta$ denotes a finite constant which may change from line to line.  Since
\begin{align*}
 Z_\#\gamma=\Phi(\mathsf y,\cdot)_\#(T_{\mathsf z})_\#\gamma,\qquad
 P(\mathsf y,\cdot)=\Phi(\mathsf y,\cdot)_\#\gamma,
\end{align*}
the monotonicity of relative entropy under measurable pushforwards gives
\begin{align*}
 \Ent(Z_\#\gamma\mid P(\mathsf y,\cdot))\le C_\delta r^2.
\end{align*}
By Pinsker's inequality and maximal coupling, after enlarging the probability space if necessary, we may extend $(X,Z)$ to a triple $(X,Z,Y)$.  We retain the notation $\omega$ for the original driver coordinate, whose law is still $\gamma$, and write $\E$ for expectation on the enlarged space.  Then
\begin{align}\label{eq:QS-repair}
 \Law X=P(\mathsf x,\cdot),\qquad
 \Law Y=P(\mathsf y,\cdot),\qquad
 \E\one_{\{Y\ne Z\}}\le C_\delta r.
\end{align}
Indeed, one first maximally couples $Z_\#\gamma$ and $P(\mathsf y,\cdot)$ and then applies the gluing lemma with the joint law of $(X,Z)$.

\medskip
\noindent\emph{Step 3. Low-energy contraction.}
On $\{\omega\in G,Y=Z\}$, \eqref{eq:QS-good-core-contraction} gives
\begin{align*}
 d_0(X,Y)\le q^\alpha\left(\frac r{r_0}\right)^\alpha
 \e^{\alpha\chi[\Vv(X)+\Vv(Y)]}.
\end{align*}
Since $X$ and $Y$ have the exact marginals and $\mathsf x,\mathsf y\in\mathbb V_{R_*}$,
\begin{align*}
 \E\e^{\alpha\chi[\Vv(X)+\Vv(Y)]}
 \le\left(P\e^{2\alpha\chi\Vv}(\mathsf x)P\e^{2\alpha\chi\Vv}(\mathsf y)\right)^{1/2}
 \le M_{\rm en}.
\end{align*}
Since the $\omega$-marginal remains $\gamma$, \eqref{eq:QS-bad-driver-cost} and \eqref{eq:QS-Taylor-failure} remain valid on the enlarged space.  Using these estimates together with \eqref{eq:QS-repair} and $d_0\le1$, we obtain
\begin{align}\label{eq:QS-low-cost-preliminary}
 \E d_0(X,Y)
 \le\left[q^\alpha M_{\rm en}+2\varepsilon_0\right]\left(\frac r{r_0}\right)^\alpha+C_\delta r+Cr^{p_0}.
\end{align}

It remains to choose $r_0$.  If $d_0(\mathsf x,\mathsf y)<1$, then \eqref{eq:QS-basic-cost-internal} gives
\begin{align*}
 d_0(\mathsf x,\mathsf y)
 =\left(\frac r{r_0}\right)^\alpha\e^{\alpha\chi[\Vv(\mathsf x)+\Vv(\mathsf y)]}
 \ge\left(\frac r{r_0}\right)^\alpha,
\end{align*}
and hence $r<r_0$.  Since $0<\alpha\le1$, $p_0>\alpha$, and $r\le r_0$,
\begin{align*}
 \frac r{d_0(\mathsf x,\mathsf y)}\le r_0,\qquad
 \frac{r^{p_0}}{d_0(\mathsf x,\mathsf y)}\le r_0^{p_0}.
\end{align*}
Choose $r_0\le r_\delta$ sufficiently small that
\begin{align*}
 C_\delta r_0+Cr_0^{p_0}<2\varepsilon_0.
\end{align*}
Then \eqref{eq:QS-low-cost-preliminary} and \eqref{eq:QS-q-choice} give
\begin{align*}
 \E d_0(X,Y)
 \le\left(q^\alpha M_{\rm en}+4\varepsilon_0\right)d_0(\mathsf x,\mathsf y)
 =:\theta_{\rm lo}d_0(\mathsf x,\mathsf y),
\end{align*}
where $\theta_{\rm lo}<1$.  Since $(X,Y)$ is a coupling of $P(\mathsf x,\cdot)$ and $P(\mathsf y,\cdot)$, this proves \eqref{eq:QS-low-contraction}.
\end{proof}

\subsection{Proof of the spectral gap criterion}
\begin{proof}[Proof of \Cref{thm:stable-compact-spectral-gap}]
Let $r_0$ and $d_0$ be supplied by \Cref{prop:QS-low-energy-contraction}.

\medskip
\noindent\emph{Step 1. Global $d_0$-contraction.}
If $\Vv(\mathsf x)+\Vv(\mathsf y)\le R_*$ and $d_0(\mathsf x,\mathsf y)<1$, \Cref{prop:QS-low-energy-contraction} gives
\begin{align*}
 \Wass_{d_0}(P(\mathsf x,\cdot),P(\mathsf y,\cdot))
 \le\theta_{\rm lo}d_0(\mathsf x,\mathsf y).
\end{align*}
If $\Vv(\mathsf x)+\Vv(\mathsf y)\ge R_*$ and $d_0(\mathsf x,\mathsf y)<1$, synchronous coupling and \eqref{eq:QS-selected-high-energy} give
\begin{align*}
 \Wass_{d_0}(P(\mathsf x,\cdot),P(\mathsf y,\cdot))
 \le\frac14d_0(\mathsf x,\mathsf y).
\end{align*}
Thus, with $\theta=\max\{\theta_{\rm lo},1/4\}<1$,
\begin{align*}
 \Wass_{d_0}(P(\mathsf x,\cdot),P(\mathsf y,\cdot))
 \le\theta d_0(\mathsf x,\mathsf y),\qquad d_0(\mathsf x,\mathsf y)<1.
\end{align*}
For $d_0(\mathsf x,\mathsf y)=1$, the bound by one is automatic.  Hence $P$ is globally nonexpanding in $\Wass_{d_0}$, and so is every power $P^n$.

\medskip
\noindent\emph{Step 2. A $d_0$-small Lyapunov set.}
Set $L=\e^{\lambda_0\Vv}$.  Since
$\vartheta(t)\to0$, choose $t_1<\infty$ so that
$\vartheta(t)\le1/2$ for $t\ge t_1$.  Then
\eqref{eq:QS-Lyapunov} and Young's inequality give
\begin{align}\label{eq:QS-selected-Foster}
 P_tL\le C L^{1/2}\le\frac12L+K,
 \qquad t\ge t_1,
\end{align}
where $K=C^2/2$.  Choose
\begin{align*}
 L_*>\max\{\e^{\lambda_0},8K\},\qquad
 \ell_*=\lambda_0^{-1}\log L_*.
\end{align*}
Let $\mathsf x_*$ be the point in \Cref{ass:QS-accessibility}.  By
continuity of $\Vv$ on $H$ and the continuous embedding
$H\hookrightarrow\mathbb H$, choose $\varepsilon>0$ so small that,
with $B=B_H(\mathsf x_*,\varepsilon)$,
\begin{align*}
 \operatorname{diam}_{d_0}B<1.
\end{align*}
By \eqref{eq:QS-accessibility}, choose $N\in\N$ so large that
\begin{align*}
 NT_*\ge\max\{t_1,t_{\ell_*,\varepsilon}\}.
\end{align*}
Then
\begin{align*}
 p_*:=\inf_{L(\mathsf x)\le L_*}P^N(\mathsf x,B)>0.
\end{align*}
Since $P^N=P_{NT_*}$ is $d_0$-nonexpanding, coupling mass $p_*$ inside
$B$ and the remaining mass arbitrarily gives
\begin{align*}
 \sup_{L(\mathsf x),L(\mathsf y)\le L_*}
 \Wass_{d_0}(P^N(\mathsf x,\cdot),P^N(\mathsf y,\cdot))
 \le1-p_*\bigl(1-\operatorname{diam}_{d_0}B\bigr)<1.
\end{align*}
Thus $\{L\le L_*\}$ is $d_0$-small for $Q=P^N$.

\medskip
\noindent\emph{Step 3. Weighted weak Harris.}
By \eqref{eq:QS-selected-Foster} and the choice of $N$,
\begin{align*}
 QL\le\frac12L+K.
\end{align*}
The $d_0$-contraction from Step~1, the preceding Foster estimate, and the
$d_0$-smallness from Step~2 are precisely the hypotheses of the weighted
weak Harris theorem \cite[Theorem~4.8]{HairerMattinglyScheutzow2011}.
Hence there are $\beta>0$, $0<\lambda\le\lambda_0$, and
$\varrho\in(0,1)$ such that the weighted cost $d$ in
\eqref{eq:QS-costs} satisfies
\begin{align*}
 \Wass_d(\mu_1Q^n,\mu_2Q^n)\le\varrho^n\Wass_d(\mu_1,\mu_2),\qquad n\ge0.
\end{align*}
Taking $\tau=NT_*$ proves \eqref{eq:QS-weighted-gap}.
\end{proof}

\section{Spectral gap of the wave equation}\label{sec:wave-application}

This section is devoted to applying \Cref{thm:stable-compact-spectral-gap} to \eqref{eq:main-spde}.  The argument follows the same order as the abstract criterion.  We first keep the existence and invariant-law regularity theory separate from the coupling.  We then verify the stable--compact structure and the dense range of the Malliavin derivative on the weak phase $\Es{s}$, establish approximate controllability and the uniform irreducibility needed for the Harris step, and give the geometric characterizations of saturation.  The final subsection verifies the remaining quantitative hypotheses and proves the main theorems.

\subsection{Existence and invariant-law regularity}
We first establish existence by norm topology asymptotic compactness and deduce the stronger Sobolev regularity enjoyed by invariant laws.  The latter result is not used to obtain the spectral gap or uniqueness; it enters only in the final bootstrap to the energy topology.  

\subsubsection{Asymptotic compactness and existence}
Existence of invariant measures for closely related damped stochastic wave
equations with polynomial nonlinearities is classical; see
\cite{BarbuDaPrato2002,Kim2004,BrzezniakOndrejatSeidler2016} for example.  We include
the argument below because the present periodic setting with variable
damping admits the stronger norm topology asymptotic compactness statement
in \Cref{lem:strong-asymptotic-compactness}.

We set (recall that $\Lambda=(I-\Delta)^{1/2}$)
\begin{align}\label{eq:AFB}
\mathbb A\binom uv=\binom v{-\Lambda^2u-av},
 \qquad
 \mathbb F\binom uv=\binom0{u-u^3},
 \qquad
 \mathbb B h=\binom0{Bh},
\end{align}
and $S_a(t)$ the semigroup generated by $\mathbb A$. 

For a scalar function $f$, write $\mathsf M_f$ for the multiplication operator $g\mapsto fg$.

\begin{lemma}
\label{lem:linear-semigroup}
For every $-2\le s\le2$, the operator $\mathbb A$ generates a strongly
continuous semigroup $S_a(t)$ on $\Hs{s}$.  There are $M_s\ge1$ and
$\varpi_s>0$ such that
\begin{align}\label{eq:semigroup-decay-scale}
 \norm{S_a(t)}_{\cL(\Hs{s})}\le M_s\e^{-\varpi_st},
 \qquad t\ge0.
\end{align}
The constants may be chosen locally uniformly for $s$ in this interval.
\end{lemma}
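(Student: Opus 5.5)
Through the unitary identification $\mathfrak I$ of \eqref{eq:phase-identification}, I will view $\mathbb A$ on $\Hs{s}$ as the operator matrix
\[
\mathfrak I\mathbb A\mathfrak I^{-1}=\begin{pmatrix}0&\Lambda\\-\Lambda&-\mathsf M_a\end{pmatrix}
\]
on $H^s\times H^s$. The skew part $\mathbb A_0=\begin{pmatrix}0&\Lambda\\-\Lambda&0\end{pmatrix}$ is skew-adjoint on $H^s\times H^s$ for every $s$, because $\Lambda$ commutes with $\Lambda^s$. By Stone's theorem it generates a unitary group. Multiplication by the smooth function $a$ is bounded on $H^s$ for $|s|\le2$, with norm at most $C\norm a_{C^3}$. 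Hence $\mathbb A=\mathbb A_0-\mathrm{diag}(0,\mathsf M_a)$ is a bounded perturbation, and the bounded perturbation theorem gives a $C_0$-semigroup $S_a(t)$ on each $\Hs{s}$. These semigroups are consistent across $s$, since they agree on the smooth functions, which are dense in every space.

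For decay I will first treat $s=0$ with the standard Lyapunov-type modified energy for the linear damped equation,
\[
\mathcal L(u,v)=\tfrac12\norm{\Lambda u}^2+\tfrac12\norm v^2+\eta\ip uv+\tfrac\eta2\ip{au}u .
\]
Differentiating along $\dot u=v$, $\dot v=-\Lambda^2u-av$ gives
\[
\tfrac{\dd}{\dd t}\mathcal L=-\ip{av}v+\eta\norm v^2-\eta\norm{\Lambda u}^2,
\]
because the cross terms $\pm\eta\ip{au}v$ cancel. For $\eta<a_0/2$ the right side is at most $-c\mathcal L$, and $\mathcal L$ is equivalent to $\norm\cdot_{\Hh}^2$. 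This yields \eqref{eq:semigroup-decay-scale} at $s=0$. Since $\Lambda^2=I-\Delta$ is strictly positive, there is no zero mode and no unique continuation issue.

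The same computation works in $\Hs{s}$ with the conjugated energy, where every norm is replaced by its $\Lambda^s$ version. Now $\ip{av}v$ becomes $\ip{\Lambda^s(av)}{\Lambda^sv}$, and $\ip{au}u$ is handled similarly. The error is a commutator $[\Lambda^s,\mathsf M_a]$, which is of order $s-1$. It therefore produces terms bounded by $C\norm v_{H^{s-1}}\norm v_{H^s}$ and $C\norm u_{H^{s}}\norm{\cdot}$, all of which are lower order. These cannot be absorbed directly, so I will combine the energy estimate with compactness. One option is the standard argument: the perturbation is compact relative to the $s=0$ decay, so the spectrum satisfies $\sigma(\mathbb A)$ independent of $s$. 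A cleaner route uses the elliptic structure instead. For $s\in[0,2]$ I will use $S_a(t)$ commuting with $\mathbb A$: for $s=1$, $\norm{\mathsf x}_{\Hs{1}}\simeq\norm{\mathbb A\mathsf x}_{\Hh}$. This holds because $\mathbb A:\Hs{1}\to\Hh$ is an isomorphism, since $\mathbb A$ is invertible (given $(f,g)$, set $v=f$ and solve $\Lambda^2u=-g-af$). Then
\[
\norm{S_a(t)\mathsf x}_{\Hs1}\simeq\norm{S_a(t)\mathbb A\mathsf x}_{\Hh}\le M_0\e^{-\varpi_0t}\norm{\mathsf x}_{\Hs1},
\]
and likewise for $s=2$ using $\mathbb A^2$, where smoothness of $a$ gives $\Hs2$ as the domain of $\mathbb A^2$. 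Negative $s$ follow by the same trick with $\mathbb A^{-1},\mathbb A^{-2}$, which are isomorphisms $\Hs{s}\to\Hs{s+1}$. Alternatively, duality can be used: the adjoint of $\mathbb A$ in the $\Hh$ pairing is the same type of operator, $\begin{pmatrix}0&-\Lambda\\\Lambda&-\mathsf M_a\end{pmatrix}$ after $\mathfrak I$, which is also exponentially stable by the same energy argument. Intermediate $s$ then follow by complex interpolation between integer levels, and this interpolation gives the local uniformity of the constants.

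The main obstacle is verifying that $\mathbb A^k$ realises the isomorphisms $\Hs{s}\to\Hs{s-k}$ uniformly for $|s|\le2$ with nonconstant $a$. This reduces to elliptic invertibility of $\Lambda^2$ combined with boundedness of $\mathsf M_a$ on $H^{\sigma}$ for $|\sigma|\le2$, and I expect it to be routine but bookkeeping-heavy.
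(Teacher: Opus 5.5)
Your proof is correct. It departs from the paper in the step that transfers decay from the energy level to the rest of the scale.

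\textbf{The paper's route.} The paper commutes the equation with $\Lambda^k$, $k=1,2$. It treats the commutator $[\Lambda^k,\mathsf M_a]$, of order $k-1$, as an exponentially decaying source fed by the previous level. The resulting $C(1+t)\e^{-ct}$ Duhamel factor is absorbed by lowering the rate. The negative levels are then obtained by duality from the identity $\widetilde{\mathbb A}^{\,*}=\mathbb J\widetilde{\mathbb A}\mathbb J$.

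\textbf{Your route.} You commute with $\mathbb A$ itself, which commutes exactly with $S_a(t)$. You use that $\mathbb A:\Hs{j}\to\Hs{j-1}$ is an isomorphism for $j\in\{-1,0,1,2\}$. The explicit inverse is $(f,g)\mapsto(-\Lambda^{-2}(g+af),f)$, which needs only boundedness of $\mathsf M_a$ on $H^j$. Consequently $\Hs{k}$, $-2\le k\le2$, is $D(\mathbb A^k)$, or the corresponding extrapolation space, with equivalent norms.

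\textbf{Comparison.} Your argument removes all commutator bookkeeping. It treats positive and negative levels in one stroke. It keeps the same rate $\varpi_0$ on every level, with only norm-equivalence constants entering $M_s$. Invertibility of $\mathbb A$ is not an extra input, since exponential stability on $\Hh$ already forces $0\in\rho(\mathbb A)$. The paper's commutator route loses a little in the rate. Its advantage is that it survives when there is no fixed generator to commute with, for example with time-dependent or nonlinear coefficients.

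\textbf{Two minor remarks.} The ``main obstacle'' you anticipate is needed only at the integer levels, where it is immediate from the explicit inverse; interpolation does the rest. Your aside that $\sigma(\mathbb A)$ is independent of $s$ would not by itself control the growth bound, since that would require Gearhart--Pr\"uss-type resolvent bounds. You discard it in favour of the $\mathbb A^k$ argument, so it plays no role in the proof.
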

\begin{proof}
Generation follows from the standard massive wave group and the bounded
perturbation theorem, see
\cite{Pazy1983}.  Since $a\ge a_0>0$, the
energy level damped wave semigroup is exponentially stable, which follows
from the usual modified-energy argument and is also a special case of the
classical stabilization theory on compact manifolds
\cite{Lebeau1996}.

For the positive scale, commute the equation successively with
$\Lambda^k$, $k=1,2$.  The commutator
$[\Lambda^k,\mathsf M_a]$ has order $k-1$.  The energy estimate for the
commuted equation and induction in $k$ therefore give exponential decay
on $\Hs{k}$; the convolution of the lower-order exponentially decaying
commutator source with the energy semigroup contributes at most
$C(1+t)\e^{-ct}$ and can be absorbed by decreasing the decay rate.
Complex interpolation gives the assertion for $0\le s\le2$.

For the negative scale, use the phase identification
\eqref{eq:phase-identification}.  In these coordinates the generator is
\begin{align*}
 \widetilde{\mathbb A}
 =\begin{pmatrix}0&\Lambda\\-\Lambda&-\mathsf M_a\end{pmatrix}
 \quad\text{on }H^s\times H^s.
\end{align*}
Its $L^2\times L^2$ adjoint satisfies
\begin{align*}
 \widetilde{\mathbb A}^{\,*}
 =\mathbb J\widetilde{\mathbb A}\mathbb J,
 \qquad
 \mathbb J(p,q)=(p,-q).
\end{align*}
Thus the already proved positive scale estimate applies equally to the
adjoint semigroup.  Duality between $H^s\times H^s$ and
$H^{-s}\times H^{-s}$ then gives exponential decay on $\Hs{-s}$ for
$0\le s\le2$.  Local uniformity follows from interpolation on compact
subintervals.
\end{proof}

We need the following conditional block moments for later use.  For a real
starting time $r$, put $I=[r,r+1]$ and
\begin{align*}
 U_r=\norm u_{L^5(I;L^{10})},
 \qquad
 H_r=\sup_{t\in I}(1+\Vv(X_t)),
 \qquad
 L_r=1+\Vv(X_r).
\end{align*}
where recall that $X_r=(u_r,v_r)$ and  $\Vv$ is the modified energy \eqref{eq:V-energy}.
\begin{proposition}\label{prop:strichartz}
For every real $p\ge1$ and every starting time $r$ for which the solution
is defined on $I = [r,r+1]$,
\begin{align}\label{eq:block-moment-estimates}
 \E[H_r^p\mid\mathcal F_r]\le C_pL_r^p,
 \qquad
 \E[U_r^p\mid\mathcal F_r]\le C_pL_r^{3p/2},
\end{align}
with constant $C_p$ independent of $r$.
\end{proposition}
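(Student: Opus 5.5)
We prove the two bounds in \eqref{eq:block-moment-estimates} separately. The energy supremum bound comes from It\^o's formula for $\Vv$ combined with Burkholder--Davis--Gundy. The Strichartz bound comes from a deterministic Strichartz estimate on the unit block, with the cubic term controlled through the energy.

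\emph{Step 1: energy supremum.} Apply It\^o's formula to $\Vv(X_t)$ on $I$. Using the coercivity of \Cref{lem:periodic-energy-coercivity}, the drift is bounded by $-c\Vv+K$ with $K$ depending on $\sum_j\norm{b_j}_{L^2}^2$ and $\eta$. The martingale part is
\begin{align*}
 \sum_j\int\ip{v_t+\eta u_t}{b_j}\,\dd W^j_t,
\end{align*}
whose quadratic variation is $\lesssim\int_I\Vv(X_t)\,\dd t$. For $q\ge 1$, apply It\^o to $\Vv^q$, or equivalently raise the pathwise bound
\begin{align*}
 \sup_I\Vv(X_t)\le \Vv(X_r)+K+\sup_I|M_t|
\end{align*}
to the power $p$ and take conditional expectations. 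BDG gives
\begin{align*}
 \E[\sup_I|M|^p\mid\mathcal F_r]\le C\,\E\Bigl[\bigl(\textstyle\int_I\Vv\bigr)^{p/2}\Bigm|\mathcal F_r\Bigr]\le \tfrac12\E[H_r^p\mid\mathcal F_r]+C.
\end{align*}
The first term is absorbed, which yields $\E[H_r^p\mid\mathcal F_r]\le C_pL_r^p$. To justify the absorption, localize with stopping times $\tau_N=\inf\{t:\Vv(X_t)>N\}$ and pass to the limit by monotone convergence. By the Markov property, the constants depend only on $p$, $a$ and $b_j$, not on $r$.

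\emph{Step 2: Strichartz norm.} Split $u=u^{\rm lin}+w$. Here $w$ solves the damped wave equation driven by the smooth noise from zero data on $I$. Since $b_j\in C^\infty$, $w$ has all Gaussian moments in $C(I;H^2)\subset L^5(I;L^{10})$, uniformly in $r$. The remainder $\tilde u=u-w$ solves the deterministic equation
\begin{align*}
 \tilde u_{tt}-\Delta\tilde u+a\tilde u_t=-u^3
\end{align*}
with data $X_r$, where the bounded damping term $a\tilde u_t$ is treated as a source. The $3$D wave Strichartz estimate at $\dot H^1$ level for the admissible pair $L^5_tL^{10}_x$, on a unit interval, gives
\begin{align*}
 \norm{\tilde u}_{L^5(I;L^{10})}\lesssim \norm{X_r}_{\Hh}+\norm{a\tilde u_t}_{L^1(I;L^2)}+\norm{u^3}_{L^1(I;L^2)}.
\end{align*}
We bound $\norm{u^3}_{L^2}=\norm{u}_{L^6}^3\lesssim\norm{u}_{H^1}^3$, so that $\norm{u^3}_{L^1(I;L^2)}\lesssim\sup_I\Vv^{3/2}\le H_r^{3/2}$. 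The remaining terms are $\lesssim H_r^{1/2}$. Hence
\begin{align*}
 U_r\lesssim H_r^{3/2}+\norm w_{L^5L^{10}},
\end{align*}
and Step 1 with exponent $3p/2$ gives $\E[U_r^p\mid\mathcal F_r]\le C_pL_r^{3p/2}$. This avoids any bootstrap, since the cubic term is handled crudely by Sobolev embedding, which is why the power $3/2$ appears.

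\emph{Main obstacle.} The main care point is Step 1. We must check that the cross term $\eta\ip uv$ and the weighted mass $\frac\eta2\int au^2$ in $\Vv$ produce a genuine dissipative drift, including the contribution $-\eta\norm u_{L^4}^4$ from $\ip u{-u^3}$. We also need a rigorous It\^o formula at energy regularity, where $u^3$ is only in $L^2$. We would obtain it via Galerkin or a smoothing of the data and pass to the limit using the uniform bounds.
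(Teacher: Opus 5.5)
Your proposal is correct and follows essentially the same route as the paper. The first bound is the conditional (Markov) form of the finite-horizon energy moment estimate, obtained by It\^o, BDG and absorption. The second comes from subtracting the finite-rank stochastic convolution, applying a unit-block Strichartz estimate for the $L^5_tL^{10}_x$ pair with the damping moved into the source, and bounding the cubic term crudely by $H^1\hookrightarrow L^6$ to get $H_r^{3/2}$. The differences are cosmetic: the paper uses the massive operator $\Lambda^2$ with the Klein--Gordon Strichartz estimate of Cacciafesta--Danesi--Meng and keeps $u-u^3$ as the source. There is one bookkeeping point: your source $a\tilde u_t$ also contains the velocity component of the stochastic convolution, so the bound for $U_r$ should carry a term like $\norm{(w,\partial_t w)}_{C(I;\Hh)}$. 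That term has moments of all orders independent of $\mathcal F_r$, so it is harmless.
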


\begin{proof}
The first estimate is the conditional form of
\eqref{eq:finite-horizon-energy-moment}.  On $[r,r+1]$, split
$X=Y+Z_r$, where
\begin{align*}
Z_r(t)=\int_r^tS_a(t-s)\mathbb B\,\dd W_s,
 \qquad r\le t\le r+1.
\end{align*}
Write $Z_r=(Z_r^{(1)},Z_r^{(2)})$ and set
$Y=X-Z_r=(y,\partial_t y)$, where
$y=u-Z_r^{(1)}$ and
$\partial_t Z_r^{(1)}=Z_r^{(2)}$, then 
\begin{align*}
 \partial_t^2 y+\Lambda^2y+a\partial_t y=u-u^3,
 \qquad (y,\partial_t y)(r)=X_r,
 \qquad u=y+Z_r^{(1)}.
\end{align*}

Theorem~1 of \cite{CacciafestaDanesiMeng2024} gives, for the
wave-admissible pair $(p,q)=(5,10)$ in dimension three,
\begin{align*}
\norm z_{L^5(I;L^{10})}
 \le C\Bigl(
 \norm{(z,\partial_t z)(r)}_{\Hh}
 +\norm G_{L^1(I;L^2)}
 \Bigr)
\end{align*}
for $\partial_t^2z+\Lambda^2z=G$. 
Taking $G=F-a\partial_tz$  yields
\begin{align}\label{eq:unit-block-Strichartz}
\norm z_{L^5(I;L^{10})}
 \le C\Bigl(
 \norm{(z,\partial_tz)(r)}_{\Hh}
 +\norm F_{L^1(I;L^2)}
 \Bigr),
\end{align}
where $C$ depends on the fixed damping $a$ but is independent of $r$.

Choose $\sigma\in(1/5,1)$.  Since the forcing profiles are smooth, the
finite rank stochastic convolution $Z_r$ has moments of every order in
$C(I;\Hs{\sigma})$. The Sobolev
embedding $H^{1+\sigma}(\mathbb T^3)\hookrightarrow L^{10}(\mathbb T^3)$
gives
\begin{align*}
\norm{Z_r^{(1)}}_{L^5(I;L^{10})}
\le C\norm{Z_r}_{C(I;\Hs{\sigma})}.
\end{align*}
By $H^1(\mathbb T^3)\hookrightarrow L^6(\mathbb T^3)$ and
\Cref{lem:periodic-energy-coercivity},
\begin{align*}
\norm{u-u^3}_{L^2}\le C\Vv(X)^{3/2}.
\end{align*}
It then follows from 
\eqref{eq:unit-block-Strichartz}  that 
\begin{align*}
U_r\le C\bigl(1+L_r^{1/2}+H_r^{3/2}
+\norm{Z_r}_{C(I;\Hs{\sigma})}\bigr).
\end{align*}
Therefore, for every $p\ge1$,
\begin{align*}
\E[U_r^p\mid\mathcal F_r]
&\le C_p\Bigl(
 1+L_r^{p/2}
 +\E[H_r^{3p/2}\mid\mathcal F_r]
 +\E[\norm{Z_r}_{C(I;\Hs{\sigma})}^p\mid\mathcal F_r]
 \Bigr) \le C_pL_r^{3p/2}.
\end{align*}
\end{proof}

The following lemma gives the compactness needed for the existence of
invariant measures.  The argument is a unit block version of the classical
decaying--regular decomposition used in the asymptotic compactness theory
for damped wave equations.

\begin{lemma}
\label{lem:strong-asymptotic-compactness}
Fix $\mathsf x\in\Hh$ and $0<s_0<2/5$.  For every $\varepsilon>0$ there exists a
compact set $K_{\mathsf x,\varepsilon}\subset\Hh$ such that
\begin{align}\label{eq:strong-asymptotic-tightness}
 \inf_{t\ge0}P_t(\mathsf x,K_{\mathsf x,\varepsilon})\ge1-\varepsilon.
\end{align}
Moreover, for every $\mathsf y\in\Hh$, writing
\begin{align*}
 X_t=S_a(t)\mathsf y+Y_t,
\end{align*}
for every $1\le p<\infty$ there are $C_p,N_p<\infty$ such that
\begin{align}\label{eq:strong-remainder-energy-uniform}
 \sup_{t\ge0}\E\norm{Y_t}_{\Hs{s_0}}^p
 \le C_p(1+\Vv(\mathsf y))^{N_p}.
\end{align}
\end{lemma}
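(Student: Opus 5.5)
The plan is to prove the remainder estimate \eqref{eq:strong-remainder-energy-uniform} first and then read off tightness from it. By Duhamel's formula,
\begin{align*}
 Y_t=\int_0^tS_a(t-\sigma)\mathbb F(X_\sigma)\,\dd\sigma+\int_0^tS_a(t-\sigma)\mathbb B\,\dd W_\sigma .
\end{align*}
The stochastic convolution causes no difficulty. The profiles are smooth and $S_a$ decays exponentially on $\Hs{s_0}$ by \Cref{lem:linear-semigroup}, so it has moments of every order in $\Hs{s_0}$, uniformly in $t$, by a BDG/Gaussian estimate.

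For the nonlinear term we split the time interval into unit blocks $[t-k-1,t-k]$, $k=0,1,\dots$. On each block we bound
\begin{align*}
 \Bigl\|\int_{t-k-1}^{t-k} S_a(t-\sigma)\mathbb F(X_\sigma)\,\dd\sigma\Bigr\|_{\Hs{s_0}}
 \le M\e^{-\varpi k}\int_{t-k-1}^{t-k}\norm{u_\sigma-u_\sigma^3}_{H^{s_0}}\,\dd\sigma .
\end{align*}
The key product estimate is $\norm{u^3}_{H^{s_0}}\lesssim \norm{u}_{L^{10}}^2\norm{u}_{H^{1}}$ for $s_0<2/5$, or a variant of it. One way to obtain it is the fractional Leibniz rule: $\norm{u^3}_{H^{s_0}}\lesssim\norm{u^2}_{L^{q_1}}\norm{\Lambda^{s_0}u}_{L^{q_2}}$ with $1/q_1+1/q_2=1/2$, where $\Lambda^{s_0}u\in L^{q_2}$ follows from $u\in H^1$ by Sobolev embedding provided $1/q_2\ge 1/2-(1-s_0)/3$. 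Choosing $q_1=5$ (so $u^2\in L^5$ since $u\in L^{10}$) gives $1/q_2=3/10$, and the condition becomes $s_0\le 2/5$. Alternatively, one interpolates between $L^{10}$ and $H^1$.

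Integrating over the block and applying Hölder in time then gives a bound of order $U_r^2 H_r^{1/2}$, with $r=t-k-1$. Conditional moments of this quantity are controlled by \Cref{prop:strichartz} by a power of $L_r=1+\Vv(X_r)$. By the Lyapunov bound \eqref{eq:QS-Lyapunov}, or the Foster drift of \Cref{prop:lyapunov}, $\sup_r\E L_r^N\le C(1+\Vv(\mathsf y))^N$. Summing the geometric series in $k$ with Minkowski's inequality in $L^p(\Omega)$ yields \eqref{eq:strong-remainder-energy-uniform}.

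For tightness, take
\begin{align*}
 K=\overline{\{S_a(t)\mathsf x:t\ge0\}\cup\{0\}}+\{\norm{z}_{\Hs{s_0}}\le \rho\}.
\end{align*}
The first set is compact in $\Hh$: the orbit is continuous and tends to $0$, so its closure is compact. The second set is compact in $\Hh$ by the Rellich embedding $\Hs{s_0}\Subset\Hh$ on $\mathbb T^3$. The sum of two compact sets is compact, and Chebyshev's inequality with \eqref{eq:strong-remainder-energy-uniform} gives $P_t(\mathsf x,K^c)\le\rho^{-1}C_1(1+\Vv(\mathsf x))^{N_1}<\varepsilon$ for $\rho$ large, uniformly in $t$.

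The main obstacle is the cubic product estimate in $H^{s_0}$ with only energy plus $L^5L^{10}$ Strichartz control. I expect this is where the restriction $s_0<2/5$ enters, via the exponent bookkeeping above. If the clean inequality fails, I would instead use $L^5_tL^{10}_x\cap L^\infty_tH^1$ with $H^1\hookrightarrow L^6$ to place $u^3$ in $L^1_tH^{s_0}$ after a time-Hölder step. I would also check that the resulting powers $U_r^2H_r^{1/2}$ remain integrable via the conditional moments of \Cref{prop:strichartz}.
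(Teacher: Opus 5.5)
Your proposal is correct and follows the paper's proof essentially step for step. The shared ingredients are:
\begin{itemize}
\item the Duhamel splitting $X_t=S_a(t)\mathsf x+Y_t$;
\item the fractional Leibniz estimate $\|u^3\|_{H^{s_0}}\lesssim\|u\|_{L^{10}}^2\|u\|_{H^1}$, obtained from $H^1\hookrightarrow W^{s_0,10/3}$, which is exactly where $s_0<2/5$ enters;
\item unit-block bounds of the form $H_r^{1/2}(1+U_r^2)$, controlled by \Cref{prop:strichartz} together with the polynomial Foster bound \eqref{eq:poly-Foster};
\item an exponentially weighted sum over blocks with Minkowski's inequality;
\item tightness from the compact orbit closure plus a Rellich-compact $\Hs{s_0}$-ball, combined with Chebyshev's inequality.
\end{itemize}
The only bookkeeping point is that the last block must be truncated at time $0$, as the paper does with $(t-k-1)\vee0$, so that \Cref{prop:strichartz} is only applied on intervals starting at $r\ge0$.
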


\begin{proof}
Let $X_t=(u_t,v_t)$ be the solution issued from $\mathsf x$ and write
\begin{align*}
 X_t=Z_t+Y_t,
 \qquad
 Z_t=S_a(t)\mathsf x,
\end{align*}
where
\begin{align*}
Y_t
={}&
 \int_0^tS_a(t-s)\mathbb F(X_s)\,\dd s
 +\int_0^tS_a(t-s)\mathbb B\,\dd W_s.
\end{align*}
We first show that, for every $1\le p<\infty$,
\begin{align}\label{eq:strong-remainder-uniform-moment}
 \sup_{t\ge0}\E \norm{Y_t}_{\Hs{s_0}}^p<\infty.
\end{align}
By the fractional Leibniz rule \cite{KatoPonce1988} and
\begin{align*}
 H^1(\mathbb T^3)\hookrightarrow W^{s_0,10/3}(\mathbb T^3),
 \qquad s_0<2/5,
\end{align*}
we have
\begin{align*}
 \norm{u^3}_{H^{s_0}}
 \le C\norm u_{L^{10}}^2\norm u_{H^1}.
\end{align*}
Hence, on every unit interval $I=[r,r+1]$,
\begin{align*}
\int_I\norm{\mathbb F(X_s)}_{\Hs{s_0}}\,\dd s
&\le
 C\left(
 H_r^{1/2}+H_r^{1/2}U_r^2
 \right).
\end{align*}
By H\"older's inequality, \eqref{eq:block-moment-estimates}, and
\eqref{eq:poly-Foster}, there are $C_p,N_p<\infty$ such that
\begin{align*}
\sup_{r\ge0}
\E \left(
 \int_r^{r+1}\norm{\mathbb F(X_s)}_{\Hs{s_0}}\,\dd s
\right)^p
\le C_p(1+\Vv(\mathsf x))^{N_p}.
\end{align*}
The constants are independent of the starting time.
Using \eqref{eq:semigroup-decay-scale}, we therefore obtain
\begin{align*}
\norm{
 \int_0^tS_a(t-s)\mathbb F(X_s)\,\dd s
}_{\Hs{s_0}}
&\le
 C\sum_{k\ge0}\e^{-\varpi_{s_0}k}
 \int_{(t-k-1)\vee0}^{(t-k)\vee0}
 \norm{\mathbb F(X_s)}_{\Hs{s_0}}\,\dd s.
\end{align*}
Every nonempty interval on the right has length at most one and is
contained in a unit interval.  Minkowski's inequality and the preceding
uniform block bound give
\begin{align*}
\sup_{t\ge0}
\E \norm{
 \int_0^tS_a(t-s)\mathbb F(X_s)\,\dd s
}_{\Hs{s_0}}^p<\infty.
\end{align*}
This proves \eqref{eq:strong-remainder-uniform-moment} and
\eqref{eq:strong-remainder-energy-uniform}, since the stochastic
convolution has moments of every order uniformly in $t$.  Since
$\Hs{s_0}\Subset\Hh$, the set
\begin{align*}
 K_R^1
 =
 \overline{\{z\in\Hs{s_0}:\norm z_{\Hs{s_0}}\le R\}}^{\,\Hh}
\end{align*}
is compact in $\Hh$.  By
\eqref{eq:strong-remainder-uniform-moment} and Markov's inequality, $R$
may be chosen so that
\begin{align}\label{eq:strong-remainder-compact-probability}
 \inf_{t\ge0}\Prob \{Y_t\in K_R^1\}\ge1-\varepsilon.
\end{align}

It remains to treat the rough homogeneous part.  Set
\begin{align*}
 K_{\mathsf x}^0=\{S_a(t)\mathsf x:t\ge0\}\cup\{0\}.
\end{align*}
This set is compact in $\Hh$.  Indeed, for any sequence
$S_a(t_n)\mathsf x$, either $(t_n)$ has a bounded subsequence, in which case
strong continuity of $S_a$ gives a convergent subsequence, or
$t_n\to\infty$, in which case \eqref{eq:semigroup-decay-scale} gives
\begin{align*}
 \norm{S_a(t_n)\mathsf x}_{\Hh}\longrightarrow0.
\end{align*}
Thus $Z_t=S_a(t)\mathsf x\in K_{\mathsf x}^0$ for every $t\ge0$. Set
\begin{align*}
 K_{\mathsf x,\varepsilon}=K_{\mathsf x}^0+K_R^1,
\end{align*}
which is compact in $\Hh$.  Since
$X_t=Z_t+Y_t$ and $Z_t\in K_{\mathsf x}^0$ deterministically,
\eqref{eq:strong-remainder-compact-probability} gives
\begin{align*}
 P_t(\mathsf x,K_{\mathsf x,\varepsilon})
 \ge\Prob \{Y_t\in K_R^1\}
 \ge1-\varepsilon
\end{align*}
for every $t\ge0$.  This proves
\eqref{eq:strong-asymptotic-tightness}.
\end{proof}

\begin{proposition}\label{prop:existence}
The semigroup $P_t$ of \eqref{eq:main-spde} has at least one invariant probability measure.
\end{proposition}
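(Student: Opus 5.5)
The plan is the classical Krylov--Bogoliubov argument, fed by the norm-topology tightness of \Cref{lem:strong-asymptotic-compactness}. Fix any initial datum, for instance $\mathsf x=0\in\Hh$, and form the time averages
\begin{align*}
 \mu_T=\frac1T\int_0^TP_t(\mathsf x,\cdot)\,\dd t,\qquad T\ge1.
\end{align*}
These are well defined probability measures on $\Hh$: measurability of $t\mapsto P_t(\mathsf x,A)$ follows from continuity of trajectories in $\Hh$, and the integral makes sense because the solution is global (this is implicit in the Lyapunov bound \eqref{eq:QS-Lyapunov}/\Cref{prop:lyapunov}).

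First I will establish tightness. Fix $\varepsilon>0$ and take the compact set $K_{\mathsf x,\varepsilon}\subset\Hh$ from \eqref{eq:strong-asymptotic-tightness}. Since the bound there is uniform in $t\ge0$, averaging in $t$ gives $\mu_T(K_{\mathsf x,\varepsilon})\ge1-\varepsilon$ for every $T$. So $\{\mu_T\}_{T\ge1}$ is tight in $\Hh$, and Prokhorov's theorem yields a sequence $T_n\to\infty$ with $\mu_{T_n}\to\mu$ weakly for some probability measure $\mu$ on $\Hh$. Note that it is precisely the strong (norm) compactness that lets me work in $\Hh$ itself rather than in a weaker space, so no separate identification of the limit's support is needed.

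Next I will show that $\mu$ is invariant, using the Feller property of $P_t$ on $\Hh$. That property comes from continuous dependence of the solution on initial data in $\Hh$, which holds pathwise for the additive-noise equation: subtract the stochastic convolution and apply deterministic local well-posedness plus the energy bounds. For $\varphi\in C_b(\Hh)$ and fixed $s\ge0$, $P_s\varphi\in C_b(\Hh)$, so
\begin{align*}
 \int P_s\varphi\,\dd\mu_{T_n}-\int\varphi\,\dd\mu_{T_n}
 =\frac1{T_n}\Bigl(\int_{T_n}^{T_n+s}-\int_0^s\Bigr)P_t\varphi(\mathsf x)\,\dd t,
\end{align*}
which is bounded by $2s\norm{\varphi}_\infty/T_n\to0$. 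Passing to the limit on both sides gives $\int P_s\varphi\,\dd\mu=\int\varphi\,\dd\mu$, that is, $\mu P_s=\mu$.

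The only step that needs care is the Feller property. If it has not already been recorded, I would justify it by the pathwise difference estimate: two solutions driven by the same noise differ by a solution of the damped wave equation with cubic difference term. This term is controlled in $L^1_tL^2_x$ by $\norm{u}_{L^6}^2+\norm{\tilde u}_{L^6}^2$ times $\norm{u-\tilde u}_{L^6}$, and Gronwall on $[0,s]$ with the pathwise energy bounds then gives continuity in probability. Dominated convergence turns this into continuity of $P_s\varphi$.
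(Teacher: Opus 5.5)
Your proposal is correct and follows the paper's proof essentially verbatim: Krylov--Bogoliubov averages of $P_t(\mathsf x,\cdot)$, tightness in the norm topology of $\Hh$ from \Cref{lem:strong-asymptotic-compactness}, Prokhorov's theorem, and passage to the limit. The only difference is that you write out the standard invariance computation and justify the Feller property, both of which the paper leaves implicit; the Feller property is recorded at the start of \Cref{app:analytic}.
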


\begin{proof}
Fix $\mathsf x\in\Hh$ and define the Krylov--Bogoliubov averages
\begin{align*}
 \nu_T=\frac1T\int_0^T P_t(\mathsf x,\cdot)\,\dd t,
 \qquad T>0.
\end{align*}
By \Cref{lem:strong-asymptotic-compactness}, for every
$\varepsilon>0$ there exists a compact set
$K_{\mathsf x,\varepsilon}\subset\Hh$ such that
\begin{align*}
 \inf_{t\ge0}P_t(\mathsf x,K_{\mathsf x,\varepsilon})\ge1-\varepsilon.
\end{align*}
Consequently,
\begin{align*}
 \nu_T(K_{\mathsf x,\varepsilon})
 =\frac1T\int_0^T
 P_t(\mathsf x,K_{\mathsf x,\varepsilon})\,\dd t
 \ge1-\varepsilon
\end{align*}
for every $T>0$.  Hence the family
$\{\nu_T:T>0\}$ is tight in the norm topology of $\Hh$.  Prokhorov's theorem yields a sequence
$T_n\to\infty$ and a probability measure $\nu$ such that
\begin{align*}
 \nu_{T_n}\Longrightarrow\nu.
\end{align*}
Then $\nu$ is an invariant
probability measure for $P_t$.
\end{proof}

\subsubsection{Stationary Sobolev regularization}
The wave semigroup does not smooth a deterministic initial condition at
positive time.  For an invariant law, however, the homogeneous component
can be pushed to arbitrarily large positive times, where exponential
stability makes it disappear, while the forced remainder is uniformly
bounded in a stronger Sobolev space.  A second multiplication bootstrap
then raises the stationary regularity above one half.

\begin{proposition}
\label{prop:stationary-regularization}
For each $s_*\in(1/2,1)$, every invariant
probability $\nu$ satisfies \eqref{eq:main-strong-support}.
\end{proposition}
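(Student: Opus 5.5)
The plan is to exploit stationarity: draw $X_0\sim\nu$, so that $X_t\sim\nu$ for every $t$, and decompose $X_t=S_a(t)X_0+Y_t$ as in \Cref{lem:strong-asymptotic-compactness}. As a preliminary, we need that $\nu$ has finite polynomial (indeed exponential) moments of $\Vv$. This follows from the Lyapunov/Foster drift \eqref{eq:poly-Foster} by a standard truncation argument: apply invariance to $\Vv^N\wedge K$, then let $K\to\infty$.

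\emph{First stage (regularity $s_0<2/5$).} Integrating \eqref{eq:strong-remainder-energy-uniform} against $\nu$ gives
$\sup_t\E_\nu\norm{Y_t}_{\Hs{s_0}}^p\le C_p\int(1+\Vv)^{N_p}\dd\nu<\infty$. Meanwhile $\norm{S_a(t)X_0}_{\Hh}\le M_0\e^{-\varpi_0t}\norm{X_0}_{\Hh}\to0$ in $L^p$. Since the $\Hs{s_0}$-norm is lower semicontinuous on $\Hh$ and $\Law(X_t)=\nu$, taking $t\to\infty$ along a sequence and applying Fatou, or weak compactness of bounded sets in $\Hs{s_0}$, yields $\nu(\Hs{s_0})=1$ and $\int\norm{\mathsf x}_{\Hs{s_0}}^p\,\dd\nu<\infty$. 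More concretely, $X_{t_n}-Y_{t_n}\to0$ in probability in $\Hh$, so the laws of $Y_{t_n}$ converge to $\nu$, and the uniform moment bound passes to the limit by lower semicontinuity.

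\emph{Second stage (bootstrap to $s_*\in(1/2,1)$).} We now rerun the same Duhamel argument, using that the trajectory itself is stationary in $\Hs{s_0}$ with all moments. The key estimate is a multiplication bound $\norm{u^3}_{H^{s_*}}\lesssim\norm u_{H^{1+s_0}}^{3}$, or a version with mixed norms. In three dimensions, $H^{1+s_0}\hookrightarrow W^{s_*,q}\cap L^{r}$ for suitable exponents, and the fractional Leibniz rule \cite{KatoPonce1988} gives $\norm{u^3}_{H^{s_*}}\lesssim\norm u_{W^{s_*,q}}\norm u_{L^{r}}^2$ with $1/q+2/r=1/2$. Checking exponents: $H^{1+s_0}\hookrightarrow W^{s_*,q}$ needs $1/q=1/2-(1+s_0-s_*)/3$, and $L^r$ with $1/r=1/2-(1+s_0)/3$. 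Then $1/q+2/r=3/2-(3+3s_0-s_*)/3=1/2+(s_*-3s_0)/3$, which is $\le1/2$ iff $s_*\le3s_0$. With $s_0$ close to $2/5$ this allows every $s_*<6/5$, in particular all $s_*<1$. If $3s_0<s_*$, we iterate the step finitely many times.

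Then $\int_r^{r+1}\norm{\mathbb F(X_s)}_{\Hs{s_*}}\dd s\lesssim\int_r^{r+1}(1+\norm{X_s}_{\Hs{s_0}}^3)\dd s$. Under stationarity its $p$-th moment is bounded uniformly in $r$ by $\int\norm{\mathsf x}_{\Hs{s_0}}^{3p}\dd\nu$. The exponentially weighted block sum with $\varpi_{s_*}$ from \Cref{lem:linear-semigroup}, together with the smooth finite-rank stochastic convolution, bounded in every $\Hs{\sigma}$, gives $\sup_t\E\norm{Y_t}^p_{\Hs{s_*}}<\infty$. The homogeneous part decays in $\Hh$, and the same limiting argument yields \eqref{eq:main-strong-support}.

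\emph{Main obstacle.} The delicate point is justifying the passage from uniform moments of $Y_t$ to moments under $\nu$ without circularity. Measurability of $\Hs{s}$ in $\Hh$ is fine, and the $\Hs{s}$-norm is lower semicontinuous there. The integrability of $\int(1+\Vv)^{N_p}\dd\nu$ must come from the drift, not from uniqueness. A secondary subtlety is that in the second stage the bound on $\mathbb F$ uses the stationary process in $\Hs{s_0}$ pathwise, with moments controlled at each fixed time; Fubini then handles the time integral.
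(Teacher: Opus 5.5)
Your proposal is correct and follows the paper's proof essentially step by step. The shared steps are the stationary splitting $X_t=S_a(t)X_0+Y_t$, the $\Hs{s_0}$ gain from \eqref{eq:strong-remainder-energy-uniform} integrated against $\nu$ using the drift-derived moments of \Cref{prop:lyapunov}, the Portmanteau and lower-semicontinuity limit, and the cubic bootstrap in $\Hs{s_*}$ under the same constraint $s_*<3s_0$. The only difference is cosmetic: you prove $\norm{u^3}_{H^{s_*}}\lesssim\norm u_{H^{1+s_0}}^3$ by a three-factor Kato--Ponce bound rather than the paper's two-step product through $H^\beta$, and your remark about iterating is unnecessary, since $s_*<1<6/5$ means you can choose $s_0\in(s_*/3,2/5)$ from the outset, as the paper does.
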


\begin{proof}
Fix $1/2<s_*<1$ and choose
\begin{align*}
 \frac{s_*}{3}<s_0<\frac25.
\end{align*}
Then $s_0>1/6$ and $s_*<3s_0$.  Let $X_0$ have law $\nu$, be independent of $W$, and let $X$ be the corresponding forward solution.  By invariance, $X$ is stationary.
Write
\begin{align*}
 X_t=S_a(t)X_0+Y_t,
\end{align*}
where
\begin{align*}
Y_t
={}&
 \int_0^tS_a(t-r)\mathbb F(X_r)\,\dd r
 +\int_0^tS_a(t-r)\mathbb B\,\dd W_r.
\end{align*}

\medskip
\noindent\emph{Step 1. Gain below $2/5$.}
By \eqref{eq:strong-remainder-energy-uniform}, conditioning on
$X_0\sim\nu$ and using the polynomial moments of invariant laws from
\Cref{prop:lyapunov}, for every $p\ge1$,
\begin{align}\label{eq:forward-remainder-s0}
 \sup_{t\ge0}\E \norm{Y_t}_{\Hs{s_0}}^p
 &\le C_p\int_{\Hh}(1+\Vv(\mathsf x))^{N_p}\,\nu(\dd\mathsf x)<\infty.
\end{align}
On the other hand, \eqref{eq:semigroup-decay-scale} and
\Cref{prop:lyapunov} give
\begin{align*}
 \E\norm{S_a(t)X_0}_{\Hh}^p
 \le C_p\e^{-p\varpi_0t}\E\norm{X_0}_{\Hh}^p
 \longrightarrow0.
\end{align*}
Since $\Law(X_t)=\nu$ and
$X_t=S_a(t)X_0+Y_t$, it follows that
\begin{align*}
 \Law(Y_t)\Longrightarrow\nu
 \qquad\text{in }\Hh.
\end{align*}
For $s>0$, extend $\mathsf x\mapsto\norm{\mathsf x}_{\Hs{s}}^p$
to $\Hh$ by $+\infty$ outside $\Hs{s}$.  This function is lower
semicontinuous on $\Hh$, so Portmanteau's theorem and
\eqref{eq:forward-remainder-s0} yield
\begin{align}\label{eq:intermediate-stationary-gain}
 \nu(\Hs{s_0})=1,\qquad
 \int_{\Hs{s_0}}\norm{\mathsf x}_{\Hs{s_0}}^p\,\nu(\dd\mathsf x)<\infty,
 \qquad 1\le p<\infty.
\end{align}

\medskip
\noindent\emph{Step 2. Bootstrap above one half.}
Since $s_*<3s_0$, choose $\beta$ such that
\begin{align*}
 s_*+\frac12-s_0<\beta<\frac12+2s_0.
\end{align*}
Standard Sobolev multiplication estimates in dimension three
give
\begin{align*}
 \norm{u^2}_{H^\beta}
 \le C\norm u_{H^{1+s_0}}^2,
 \qquad
 \norm{u^3}_{H^{s_*}}
 \le C\norm{u^2}_{H^\beta}\norm u_{H^{1+s_0}},
\end{align*}
and hence
\begin{align*}
 \norm{u^3}_{H^{s_*}}
 \le C\norm u_{H^{1+s_0}}^3.
\end{align*}
By \eqref{eq:intermediate-stationary-gain}, stationarity, and H\"older's
inequality, for every unit interval $I=[r,r+1]$,
\begin{align*}
 \sup_{r\ge0}\E\left(
 \int_r^{r+1}\norm{u_t^3}_{H^{s_*}}\,\dd t
 \right)^p<\infty.
\end{align*}
Since $s_*<1$, the linear part satisfies $u\in H^1\hookrightarrow H^{s_*}$,
and the stochastic convolution is smooth in space.  Repeating the
exponentially weighted unit block estimate in $\Hs{s_*}$ gives
\begin{align}\label{eq:forward-remainder-sstar}
 \sup_{t\ge0}\E\norm{Y_t}_{\Hs{s_*}}^p<\infty.
\end{align}
Using again $\Law(Y_t)\Longrightarrow\nu$ in $\Hh$ and the lower
semicontinuity of the extended $\Hs{s_*}$-norm, Portmanteau's theorem and
\eqref{eq:forward-remainder-sstar} give \eqref{eq:main-strong-support}.
\end{proof}

\subsection{Stable--compact structure}

Fix $0<s<1/2$ and a block length $T>0$.  We use the canonical Wiener and Cameron--Martin spaces
\begin{align}\label{eq:wiener-cm}
 E_T=C_0([0,T];\R^m),\qquad H_T=\{h\in H^1([0,T];\R^m):h(0)=0\},
\end{align}
with Wiener measure $\gamma_T$, canonical Cameron--Martin embedding $\iota_T:H_T\hookrightarrow E_T$, and norm $\norm h_{H_T}=\norm{\partial_t h}_{L^2}$.  Subtracting the additive path, $\bar v=v-B\omega$, turns \eqref{eq:main-spde} into
\begin{align}\label{eq:pathwise-shifted-wave}
 \partial_tu=\bar v+B\omega,\qquad \partial_t\bar v=-\Lambda^2u+u-a(\bar v+B\omega)-u^3.
\end{align}
For $\mathsf x=(u_0,v_0)\in\Hh$ and $\omega\in E_T$, let $X=(u,v)$ be the corresponding pathwise solution of \eqref{eq:pathwise-shifted-wave}, with $v=\bar v+B\omega$, and set
\begin{align*}
 \Phi_T(\mathsf x,\omega)=X_T.
\end{align*}
Then $\Phi_T:\Hh\times E_T\to\Hh$ is Borel and agrees $\gamma_T$-almost surely with the stochastic endpoint.

Let $J_{r,t}$ denote the homogeneous linearized solution operator along $X$:
\begin{align*}
 \partial_t^2z+\Lambda^2z+a\partial_tz+(3u_t^2-1)z=0.
\end{align*}
The Malliavin derivative is
\begin{align}\label{eq:Malliavin-endpoint}
 \mathcal D\Phi_T(\mathsf x,\omega)h=\cA_Th=\int_0^TJ_{r,T}\binom0{B\partial_rh(r)}\,\dd r,\qquad h\in H_T.
\end{align}

For $\mathsf x,\mathsf y\in\Hh$ and a common driver $\omega$, let $X=(u,v)$ and $\widetilde X=(\widetilde u,\widetilde v)$ be the corresponding solutions and set
\begin{align*}
 \mathscr W=u^2+u\widetilde u+\widetilde u^2.
\end{align*}
The exact difference $w=\widetilde u-u$ solves
\begin{align}\label{gap:eq:exact-difference}
 \ddot w-\Delta w+a\dot w+\mathscr W w=0.
\end{align}
Once
$(\mathsf x,\mathsf y,\omega)$ is fixed, the coefficient $\mathscr W$ is
frozen and \eqref{gap:eq:exact-difference} is a linear wave equation.  For
$h\in\Es{s}$, let $z^h$ denote its solution with initial phase
$(z^h_0,\partial_t z^h_0)=h$, and define the corresponding endpoint propagator by
\begin{align*}
 \mathcal J_T(\mathsf x,\mathsf y,\omega)h
 =(z_T^h,\partial_t z_T^h).
\end{align*}
In particular, taking $h=\mathsf y-\mathsf x$ gives the exact endpoint
difference identity
\begin{align*}
 \Phi_T(\mathsf y,\omega)-\Phi_T(\mathsf x,\omega)
 =\mathcal J_T(\mathsf x,\mathsf y,\omega)(\mathsf y-\mathsf x).
\end{align*}
Relative to the massive damped-wave semigroup,
\begin{align}\label{gap:eq:weak-stable-compact}
 \mathcal J_T(\mathsf x,\mathsf y,\omega)
 =S_a(T)+\mathcal K_T(\mathsf x,\mathsf y,\omega),\qquad
 \mathcal K_T(\mathsf x,\mathsf y,\omega)h
 =\int_0^TS_a(T-t)\binom0{(1-\mathscr W_t)z_t^h}\,\dd t.
\end{align}

The weak phase $\Es{s}=\Hs{-1-s}$ is adapted to the exact difference equation. Since $s\in(0,1/2)$, for $f,g\in H^1$ and $h\in H^{-s}$,
\begin{align}\label{gap:eq:negative-trilinear}
 \norm{fgh}_{H^{-1-s}}\le C_s\norm f_{H^1}\norm g_{H^1}\norm h_{H^{-s}}.
\end{align}
Equivalently, by duality,
\begin{align}\label{gap:eq:positive-trilinear}
 \norm{fg\phi}_{H^s}\le C_s\norm f_{H^1}\norm g_{H^1}\norm\phi_{H^{1+s}}.
\end{align}
These estimates follow from the fractional Leibniz rule and Sobolev embedding; for instance one combines $H^1\hookrightarrow L^6$ with $H^1\hookrightarrow W^{s,6/(1+2s)}$.

The next proposition gives the stable--compact decomposition required in
\Cref{ass:QS-stable-compact}\textup{(i)}.  Strong measurability in
\Cref{ass:QS-stable-compact}\textup{(i)} follows from the Borel dependence
of the pathwise solution and the representation
\eqref{gap:eq:weak-stable-compact}.  To keep the main argument focused, the verification of
parts~\textup{(ii)}--\textup{(iii)} is deferred to
\Cref{gap:lem:weak-core-operators,gap:lem:wave-twojet-weak}.

\begin{proposition}\label{prop:wave-stable-compact-structure}
For every $0<s<1/2$, the semigroup $S_a(t)$ is exponentially stable on $\Es{s}$, and for every $T>0$, $\mathsf x,\mathsf y\in\Hh$, and $\omega\in E_T$, the operator $\mathcal K_T(\mathsf x,\mathsf y,\omega)$ in \eqref{gap:eq:weak-stable-compact} is compact on $\Es{s}$.
\end{proposition}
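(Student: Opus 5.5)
The first assertion is immediate from \Cref{lem:linear-semigroup}. Since $\Es{s}=\Hs{-1-s}$ and $-1-s\in(-3/2,-1)\subset[-2,2]$, the bound \eqref{eq:semigroup-decay-scale} gives $\norm{S_a(t)}_{\cL(\Es{s})}\le M_{-1-s}\e^{-\varpi_{-1-s}t}$.

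For compactness, fix $T,\mathsf x,\mathsf y,\omega$. Then $u,\widetilde u\in C([0,T];H^1)$, so $\mathscr W_t=u_t^2+u_t\widetilde u_t+\widetilde u_t^2$ is a sum of products of two functions bounded in $H^1$, uniformly in $t\in[0,T]$.

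\emph{Step 1: the propagator is bounded on $\Es{s}$.} By the negative trilinear estimate \eqref{gap:eq:negative-trilinear}, the map $z\mapsto(0,(1-\mathscr W_t)z)$ is bounded from $\Es{s}$ to $\Es{s}$, because it sends $z\in H^{-s}$ into $H^{-1-s}$. Its norm is bounded uniformly in $t$ by $C(1+\sup_t\norm{u_t}_{H^1}^2+\sup_t\norm{\widetilde u_t}_{H^1}^2)$. Measurability in $t$ follows from the continuity of $t\mapsto u_t,\widetilde u_t$ in $H^1$. Duhamel's formula
\begin{align*}
 z_t=S_a(t)h+\int_0^tS_a(t-r)\binom0{(1-\mathscr W_r)z_r^{(1)}}\dd r
\end{align*}
is then a Volterra equation with bounded potential in $C([0,T];\Es{s})$. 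Gronwall's inequality gives
\begin{align*}
 \sup_{t\le T}\norm{z_t^h}_{\Es{s}}\le C_T\norm h_{\Es{s}}.
\end{align*}
This also shows that $\mathcal J_T$ is well defined on $\Es{s}$ and that \eqref{gap:eq:weak-stable-compact} holds.

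\emph{Step 2: gain of regularity in the source.} I expect this step to be the main obstacle, because there is no smoothing from $S_a$. The compactness has to come from the potential itself. The key is that multiplication by an $H^1\cdot H^1$ function maps $H^{-s}$ into a space slightly better than $H^{-1-s}$. Pick $s'\in(s,1/2)$. Then \eqref{gap:eq:negative-trilinear} with $s'$ in place of $s$, together with $H^{-s}\hookrightarrow H^{-s'}$, would only give $H^{-1-s'}$, which is worse. So I would use a different estimate, namely
\begin{align*}
 \norm{fgh}_{H^{-1-s+\epsilon}}\le C\norm f_{H^1}\norm g_{H^1}\norm h_{H^{-s}}
\end{align*}
for some small $\epsilon>0$. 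By duality this is equivalent to $\norm{fg\phi}_{H^s}\le C\norm f_{H^1}\norm g_{H^1}\norm\phi_{H^{1+s-\epsilon}}$. Since $s<1/2$, there is room to prove this. The space $H^{1+s-\epsilon}$ still embeds into $L^\infty$-type spaces or into $W^{s,p}$ with large $p$, provided $1+s-\epsilon>\ldots$. Concretely, one uses $fg\in W^{s,3/(1+s)}$, which holds because $H^1\hookrightarrow L^6$ and $H^1\hookrightarrow W^{s,6/(1+2s)}$. One also uses $\phi\in H^{1+s-\epsilon}\hookrightarrow L^\infty\cap W^{s,q}$ with $q$ large, which is valid since $1+s-\epsilon>1/2+s$ for small $\epsilon$. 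The fractional Leibniz rule then closes the estimate. Note that the statement \eqref{gap:eq:positive-trilinear} itself was stated with room to spare, so this strengthening should follow the same way.

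\emph{Step 3: conclude.} By Steps 1 and 2, if $(h_n)$ is bounded in $\Es{s}$, then the sources $\binom0{(1-\mathscr W_t)z_t^{h_n}}$ are bounded in $\Hs{-1-s+\epsilon}$, uniformly in $t$ and $n$. \Cref{lem:linear-semigroup} on that scale then shows that $\mathcal K_T h_n$ is bounded in $\Hs{-1-s+\epsilon}$. The embedding $\Hs{-1-s+\epsilon}\Subset\Hs{-1-s}=\Es{s}$ is compact by Rellich on $\mathbb T^3$. Hence $\mathcal K_T$ is compact.

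An alternative that avoids the sharpened product estimate is to approximate $\mathscr W$ in $C([0,T];L^{3/2+})$-type norms by smooth potentials. For smooth potentials the defect operator is clearly compact. One would then pass to the limit in operator norm using \eqref{gap:eq:negative-trilinear} and the fact that compact operators form a closed set. I would keep this as a backup.
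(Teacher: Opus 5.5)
Your treatment of exponential stability and your Step~1 are correct; the paper does Step~1 the same way, at level $s_1$. Step~2, however, rests on a false estimate. The bound $\norm{fgh}_{H^{-1-s+\epsilon}}\le C\norm f_{H^1}\norm g_{H^1}\norm h_{H^{-s}}$ fails for every $\epsilon>0$.

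Far from having room to spare, \eqref{gap:eq:positive-trilinear} is exactly scale invariant in dimension three. The indices $\sigma-3/2$ of $H^1$, $H^1$, $H^{1+s}$ add up to $s-3/2$, which is the index of $H^s$. Concretely, fix bumps $\psi,\chi$ with $\Theta=\psi^2\chi\ne0$, and set $f=g=\lambda^{1/2}\psi(\lambda x)$ and $\phi=\lambda^{1/2-s+\epsilon}\chi(\lambda x)$. These are bounded in $H^1$ and $H^{1+s-\epsilon}$ respectively. But $fg\phi=\lambda^{3/2-s+\epsilon}\Theta(\lambda x)$ has $H^s$ norm $\gtrsim\lambda^{\epsilon}$.

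Your sketched proof breaks at $H^{1+s-\epsilon}\hookrightarrow L^\infty$. In three dimensions this needs $1+s-\epsilon>3/2$, which is impossible for $s<1/2$. The space $H^{1+s}$ only reaches $L^{6/(1-2s)}$, and this exactly balances $fg\in W^{s,3/(1+s)}$. So fixed-time $H^1$ control of $u,\widetilde u$ yields no gain, and Step~3 has nothing to stand on.

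The missing ingredient is integrability beyond the energy scale. The paper uses the Strichartz bound $u\in L^5(0,T;L^{10})$ to get $\int_0^T\norm{\mathscr W_tz^h_t}_{H^{\theta_0}}\,\dd t\lesssim\norm h_{\Hh}$, hence $\mathcal K_T:\Hh\to\Hs{\theta_0}$. Interpolating this with boundedness on $\Es{s_1}$, $s<s_1<1/2$, gives $\mathcal K_T:\Es{s}\to\Hs{-1-s+\gamma}$ for some $\gamma>0$.

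Your backup argument does give a correct and genuinely different proof, but not in the norm you name. No Lebesgue norm of $V$ controls multiplication $H^{-s}\to H^{-1-s}$ when $s>0$. For instance, take $V=z=\cos(Nx_1)$: the constant mode of $Vz$ gives $\norm{Vz}_{H^{-1-s}}\ge1/2$, while $\norm z_{H^{-s}}\sim N^{-s}$.

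Instead, mollify $u$ and $\widetilde u$ in space. Since the trajectories are compact in $H^1$, this converges in $C([0,T];H^1)$.
\begin{itemize}
\item By \eqref{gap:eq:negative-trilinear} and Gronwall, $\mathcal K^n_T\to\mathcal K_T$ in $\cL(\Es{s})$, because $\mathcal K^n_T-\mathcal K_T=\mathcal J^n_T-\mathcal J_T$.
\item Each $\mathcal K^n_T$ maps $\Es{s}$ boundedly into $\Hs{-s}=H^{1-s}\times H^{-s}\Subset\Es{s}$, since multiplication by the smooth $1-\mathscr W^n_t$ preserves $H^{-s}$.
\item Compact operators are closed in operator norm, so $\mathcal K_T$ is compact.
\end{itemize}

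This route is purely qualitative and avoids Strichartz estimates entirely. The paper's route pays for the Strichartz input but produces quantitative smoothing bounds. Those bounds are reused in \Cref{gap:lem:weak-core-operators} to prove operator-norm continuity of $\mathcal K_T$ on compact cores, which your approximation argument would not supply by itself.
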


\begin{proof}
The exponential stability of $S_a(t)$ on
$\Es{s}=\Hs{-1-s}$ follows from \Cref{lem:linear-semigroup}.  It remains
to prove compactness of $\mathcal K_T$.  Fix
\begin{align*}
 s<s_1<\frac12,\qquad 0<\theta_0<\frac15.
\end{align*}
Writing the exact difference equation \eqref{gap:eq:exact-difference}
relative to $S_a(t)$ gives, for
$\mathcal J_th=(z_t^h,\partial_tz_t^h)$,
\begin{align*}
 \mathcal J_th
 =S_a(t)h+\int_0^tS_a(t-r)
 \binom0{(1-\mathscr W_r)z_r^h}\,\dd r.
\end{align*}
At the weak level, \eqref{gap:eq:negative-trilinear} yields
\begin{align*}
 \norm{(1-\mathscr W_r)z_r^h}_{H^{-1-s_1}}
 \le C[1+\Vv(X_r)+\Vv(\widetilde X_r)]
 \norm{\mathcal J_rh}_{\Es{s_1}}.
\end{align*}
Since $S_a(t)$ is bounded on $\Es{s_1}$, Gronwall's inequality gives
\begin{align*}
 \sup_{t\le T}\norm{\mathcal J_t}_{\cL(\Es{s_1})}\le C\exp\left\{
 C_T\int_0^T[1+\Vv(X_t)+\Vv(\widetilde X_t)]\,\dd t
 \right\}<\infty,
\end{align*}
and hence $\mathcal K_T:\Es{s_1}\to\Es{s_1}$. The same argument at the energy level uses
$H^1\hookrightarrow L^6$ and
\begin{align*}
 \norm{\mathscr W_rz_r^h}_{L^2}
 \le C\bigl(\norm{u_r}_{H^1}^2+\norm{\widetilde u_r}_{H^1}^2\bigr)
 \norm{z_r^h}_{H^1},
\end{align*}
and therefore gives
\begin{align*}
 \sup_{t\le T}\norm{\mathcal J_th}_{\Hh}
 \le C_{T,\mathsf x,\mathsf y,\omega}\norm h_{\Hh}.
\end{align*}

We now gain positive regularity in the defect.  The fractional Leibniz
rule, together with
$H^1\hookrightarrow W^{\theta_0,10/3}\cap W^{\theta_0,30/7}$ for
$\theta_0<1/5$, gives
\begin{align*}
 \norm{\mathscr W_tz_t^h}_{H^{\theta_0}}
 \le C_{\theta_0}\Bigl(
 \norm{u_t}_{L^{10}}^2+\norm{\widetilde u_t}_{L^{10}}^2
 +\norm{u_t}_{H^1}^2+\norm{\widetilde u_t}_{H^1}^2
 \Bigr)\norm{z_t^h}_{H^1}.
\end{align*}
For the fixed trajectories $X$ and $\widetilde X$, the finite time energy
bounds and the Strichartz estimates in \Cref{prop:strichartz} make the coefficient on the
right integrable on $[0,T]$.  Together with the preceding energy level
bound for $\mathcal J_th$, this gives
\begin{align*}
 \int_0^T\norm{(1-\mathscr W_t)z_t^h}_{H^{\theta_0}}\,\dd t
 \le C_{T,\mathsf x,\mathsf y,\omega}\norm h_{\Hh}.
\end{align*}
Hence, by \eqref{gap:eq:weak-stable-compact} and the boundedness of
$S_a(t)$ on $\Hs{\theta_0}$,
\begin{align*}
 \norm{\mathcal K_T(\mathsf x,\mathsf y,\omega)h}_{\Hs{\theta_0}}
 \le C_{T,\mathsf x,\mathsf y,\omega}\norm h_{\Hh},
\end{align*}
Interpolating this bound with
$\mathcal K_T:\Es{s_1}\to\Es{s_1}$ at
$\lambda=(1+s)/(1+s_1)$ yields
\begin{align*}
 \mathcal K_T:\Es{s}\longrightarrow
 H^{-s+\gamma}\times H^{-1-s+\gamma},
 \qquad
 \gamma=(1-\lambda)\theta_0
 =\theta_0\frac{s_1-s}{1+s_1}>0.
\end{align*}
The latter space is compactly embedded into $\Es{s}$ by Rellich's theorem,
and therefore $\mathcal K_T(\mathsf x,\mathsf y,\omega)$ is compact on
$\Es{s}$.
\end{proof}

\subsection{Dense range of the Malliavin derivative}

We next verify the qualitative H\"ormander condition required in
\Cref{ass:QS-Malliavin}.  The argument is based on backward exact zero
propagation, while the weaker coupling phase provides the additional
dual regularity needed to close the propagation scheme.

Under the phase identification \eqref{eq:phase-identification}, $\Es{s}$ is mapped isometrically onto $H^{-1-s}\times H^{-1-s}$, so an annihilating terminal functional is represented by $(p_T,q_T)\in H^{1+s}\times H^{1+s}$.  The corresponding backward adjoint equation is
\begin{align}\label{gap:eq:rough-adjoint}
 \dot p=(\Lambda-\Lambda^{-1})q+3\Lambda^{-1}(u^2q),\qquad \dot q=-\Lambda p+aq.
\end{align}

\subsubsection{Exact-zero backward propagation}
We first derive the time regularity needed in the backward propagation
argument.
\begin{lemma}\label{lem:weak-adjoint-time-regularity}
Fix $T>0$, an energy solution $X=(u,v)$ on $[0,T]$, and a solution $(p,q)$ of \eqref{gap:eq:rough-adjoint} with terminal value in $H^{1+s}\times H^{1+s}$.  Writing
\begin{align*}
 \bar v_t=v_t-\sum_{j=1}^mb_jW^j(t),
\end{align*}
one has
\begin{align}\label{gap:eq:energy-time-reg}
 u&\in C H^1\cap W^{1,\infty}L^2,& \bar v&\in C L^2\cap W^{1,\infty}H^{-1},
\end{align}
and
\begin{align*}
 (p,q)\in C(H^{1+s}\times H^{1+s})\cap W^{1,\infty}(H^s\times H^s).
\end{align*}
Consequently, with $h=|t-r|$,
\begin{align}\label{gap:eq:adjoint-rough-time}
 \norm{u_t-u_r}_{H^\rho}&\le Ch^{1-\rho},&&0\le\rho\le1,\notag\\
 \norm{\bar v_t-\bar v_r}_{H^{-\rho}}&\le Ch^\rho,&&0\le\rho\le1,\\
 \norm{p_t-p_r}_{H^\beta}+\norm{q_t-q_r}_{H^\beta}&\le Ch^{1+s-\beta},&&s\le\beta\le1+s.\notag
\end{align}
\end{lemma}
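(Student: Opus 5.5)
The plan is to read off each regularity claim from the equation it concerns and then obtain the increment bounds \eqref{gap:eq:adjoint-rough-time} by interpolating between a uniform bound in a stronger space and a Lipschitz bound in a weaker space.

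\emph{Energy trajectory.} Since $X$ is an energy solution, $u\in CH^1$ and $v\in CL^2$, so $\bar v=v-B\omega\in CL^2$, because $\omega$ is continuous and the $b_j$ are smooth. From \eqref{eq:pathwise-shifted-wave},
\begin{align*}
 \partial_tu=\bar v+B\omega\in L^\infty L^2,\qquad \partial_t\bar v=-\Lambda^2u+u-a(\bar v+B\omega)-u^3 .
\end{align*}
Each term of $\partial_t\bar v$ is bounded in $H^{-1}$ uniformly on $[0,T]$: $\Lambda^2u\in H^{-1}$, and $u^3\in L^2\subset H^{-1}$ by $H^1\hookrightarrow L^6$. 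This gives \eqref{gap:eq:energy-time-reg}. For $u$, interpolate between $\norm{u_t-u_r}_{H^1}\le 2\sup\norm u_{H^1}$ and $\norm{u_t-u_r}_{L^2}\le Ch$; this yields $Ch^{1-\rho}$ in $H^\rho$. For $\bar v$, interpolate between the $L^2$ bound $C$ and the $H^{-1}$ bound $Ch$, which gives $Ch^\rho$ in $H^{-\rho}$.

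\emph{Adjoint well-posedness.} I would treat \eqref{gap:eq:rough-adjoint} as a linear backward system on $H^{1+s}\times H^{1+s}$. Its principal part $(p,q)\mapsto((\Lambda-\Lambda^{-1})q,-\Lambda p+aq)$ is, up to bounded perturbations, the adjoint damped-wave generator $\widetilde{\mathbb A}^{\,*}$ in the phase coordinates of \Cref{lem:linear-semigroup}. That lemma, together with the duality/adjoint remark in its proof, gives a strongly continuous evolution on $H^{\sigma}\times H^{\sigma}$ for $|\sigma|\le 2$. The only potential term is $3\Lambda^{-1}(u^2q)$. By \eqref{gap:eq:positive-trilinear},
\begin{align*}
 \norm{\Lambda^{-1}(u^2q)}_{H^{1+s}}=\norm{u^2q}_{H^s}\le C\norm u_{H^1}^2\norm q_{H^{1+s}},
\end{align*}
so it is a bounded perturbation with coefficient in $L^\infty_t$, and it depends continuously on $t$ in operator norm because $u\in CH^1$. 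A Duhamel fixed point plus Gronwall then gives $(p,q)\in C(H^{1+s}\times H^{1+s})$. I expect this to be the main point needing care, namely checking that the perturbation is strongly continuous in time so the mild solution is continuous. The trilinear estimate is exactly what makes it work at regularity $1+s$ with $s<1/2$.

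\emph{Time regularity of the adjoint.} Reading the equation directly, $\Lambda q$, $\Lambda^{-1}q$, $\Lambda p$ and $aq$ all lie in $L^\infty H^s$. Also $\Lambda^{-1}(u^2q)\in L^\infty H^{1+s}\subset L^\infty H^s$. Hence $(\dot p,\dot q)\in L^\infty(H^s\times H^s)$, so $(p,q)\in W^{1,\infty}(H^s\times H^s)$; to be rigorous, I would justify this by working with the mild formulation and noting that the right-hand side is in $L^\infty(H^s\times H^s)$. Finally, interpolate between the uniform bound in $H^{1+s}$ and the Lipschitz bound $Ch$ in $H^s$: for $\beta=s+\vartheta$ with $\vartheta\in[0,1]$, the increment is bounded by $C h^{1-\vartheta}=Ch^{1+s-\beta}$. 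This is the last line of \eqref{gap:eq:adjoint-rough-time}. All constants depend on $T$, on the fixed trajectory through $\sup_t\norm{X_t}_{\Hh}$ and $\sup_t|\omega(t)|$, and on the terminal data.
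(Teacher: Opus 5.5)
Your proposal is correct and follows essentially the same route as the paper. Like the paper, you read off \eqref{gap:eq:energy-time-reg} from the shifted equation \eqref{eq:pathwise-shifted-wave}. You then use \eqref{gap:eq:positive-trilinear} to treat $3\Lambda^{-1}(u^2q)$ as a bounded perturbation in $H^{1+s}$, solve \eqref{gap:eq:rough-adjoint} backward by variation of constants and Gronwall, and interpolate between the uniform and Lipschitz bounds. Your version only makes explicit details the paper leaves implicit, such as identifying the backward principal part with the adjoint damped-wave evolution from \Cref{lem:linear-semigroup}.
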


\begin{proof}
The shifted equation \eqref{eq:pathwise-shifted-wave} gives \eqref{gap:eq:energy-time-reg}.  The product estimate \eqref{gap:eq:positive-trilinear} yields
\begin{align*}
 \norm{u^2q}_{H^s}\le C_s\norm u_{H^1}^2\norm q_{H^{1+s}},
\end{align*}
so variation of constants and Gronwall applied to \eqref{gap:eq:rough-adjoint} give the asserted adjoint regularity.  The estimates in \eqref{gap:eq:adjoint-rough-time} then follow by interpolation between the corresponding uniform and Lipschitz bounds.  All constants are finite pathwise.
\end{proof}

For each $T>0$, let $\Omega_{\mathrm{sep},T}\subset E_T$ be the coefficient independent full measure event supplied by \Cref{lem:Brownian-affine-separation} on all rational subintervals of $[0,T]$. The next lemma is the exact zero propagation step that closes the saturation recursion.
\begin{lemma}\label{lem:weak-exact-zero-propagation}
Fix $0<s<1/2$.  On $\Omega_{\mathrm{sep},T}$, for every energy solution and every solution $(p,q)$ of \eqref{gap:eq:rough-adjoint}, the following implication holds for every $\phi\in C^\infty(\mathbb T^3)$:
\begin{align*}
 \ip{q_t}\phi=0\quad(0\le t\le T)
\end{align*}
implies
\begin{align}\label{gap:eq:weak-first-zero}
 \ip{p_t}{\Lambda\phi}-\ip{q_t}{a\phi}=0,
 \qquad0\le t\le T,
\end{align}
and
\begin{align}\label{gap:eq:rough-cascade-conclusion}
 \ip{q_t}{\phi b_jb_k}=0,
 \qquad0\le t\le T,\quad1\le j,k\le m.
\end{align}
\end{lemma}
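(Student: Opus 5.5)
Differentiate the vanishing relation twice in time, substituting the adjoint equation \eqref{gap:eq:rough-adjoint} at each stage. The Brownian separation property of \Cref{lem:Brownian-affine-separation} then isolates the coefficient of the noise path.

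\emph{First derivative.} Since $(p,q)\in W^{1,\infty}(H^s\times H^s)$ by \Cref{lem:weak-adjoint-time-regularity}, the map $t\mapsto\ip{q_t}\phi$ is Lipschitz with
\begin{align*}
 \tfrac{\dd}{\dd t}\ip{q_t}\phi=-\ip{p_t}{\Lambda\phi}+\ip{q_t}{a\phi}
\end{align*}
(using self-adjointness of $\Lambda$ and $\mathsf M_a$). This vanishes identically, which is \eqref{gap:eq:weak-first-zero}.

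\emph{Second derivative.} Set $g(t)=\ip{p_t}{\Lambda\phi}-\ip{q_t}{a\phi}\equiv0$. Differentiating with \eqref{gap:eq:rough-adjoint} gives
\begin{align*}
 0=\ip{q_t}{(\Lambda^2-1)\phi}+3\ip{u_t^2q_t}{\phi}+\ip{\Lambda p_t}{a\phi}-\ip{q_t}{a^2\phi}.
\end{align*}
All pairings make sense: $u^2q\in H^s$ by \eqref{gap:eq:positive-trilinear}, and $\phi$ is smooth.

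The key term is $\ip{q_t}{u_t^2\phi}$. Here $u$ is not of the form ``smooth plus Brownian''; only $v=\bar v+B\omega$ is. We therefore integrate by parts once more in time, or equivalently work with $\ip{q}{\phi u}$-type quantities, so that the noise appears linearly.

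\emph{Expanding the quadratic.} The concrete route is to write
\begin{align*}
 u_t=u_r+\int_r^t\bar v\,\dd\tau+\sum_j b_j\int_r^tW^j\,\dd\tau
\end{align*}
on a small interval $[r,t]$ and expand $u_t^2$. This produces terms that are affine and quadratic in the Brownian increments $\int W^j$, with coefficients involving $\ip{q}{\phi b_jb_k}$, $\ip{q}{\phi b_ju}$, and so on.

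\emph{Separation.} On $\Omega_{\mathrm{sep},T}$, \Cref{lem:Brownian-affine-separation} says that a relation of the form (coefficient with the H\"older regularity of \eqref{gap:eq:adjoint-rough-time}) $\times$ (Brownian polynomial) $=$ (regular remainder) forces the top coefficient to vanish on rational subintervals. The H\"older bounds of \Cref{lem:weak-adjoint-time-regularity} are exactly what show that the remainders are of lower order than the Brownian oscillation. The top quadratic coefficient is $\ip{q_t}{\phi b_jb_k}$ (symmetrized in $j,k$), so it vanishes on a dense set of times, and by continuity for all $t$. This gives \eqref{gap:eq:rough-cascade-conclusion}.

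\emph{Main obstacle.} The hardest step is the bookkeeping that puts $\ip{q}{u^2\phi}$ into the form required by \Cref{lem:Brownian-affine-separation}. One must check that the rough dual regularity $q\in H^{1+s}$ and the energy-level regularity of $u$ and $\bar v$ make every non-top term H\"older of order strictly above the Brownian scale. The role of $s>0$ is exactly to gain the extra $h^{s}$ needed there. I would first try to isolate the top term by differentiating the identity a second time and comparing the $h^{1/2}$ and $h$ scales of the increments.
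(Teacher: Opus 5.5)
Your first two steps are correct and agree with the paper. The first derivative gives \eqref{gap:eq:weak-first-zero}, and the second gives $0=\ip{q_t}{\Psi_t}+\ip{p_t}{\Lambda(a\phi)}$ with $\Psi_t=-\Delta\phi-a^2\phi+3u_t^2\phi$. The gap is in how you then extract \eqref{gap:eq:rough-cascade-conclusion}.

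\Cref{lem:Brownian-affine-separation} is strictly affine in $W$. It handles $c_0(t)+\sum_jc_j(t)W^j(t)=0$ with $c_j$ absolutely continuous and $c_0\in C^\alpha$, $\alpha>\frac12$. It says nothing about ``Brownian polynomials''. More seriously, expanding $u_t$ through $I^j_t=\int_r^tW^j\,\dd\tau$ cannot expose the noise at all. These integrated paths are $C^1$ in $t$, so the terms $\ip{q_t}{\phi b_jb_k}I^j_tI^k_t$ are $O(h^2)$ and have vanishing quadratic variation. They are indistinguishable from the regular remainder. Quadratic variation only detects $W$ itself, which enters through $\dot u=v=\bar v+\sum_jb_jW^j$. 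At the level of the second-derivative identity there is no rough term to separate, so your scale comparison has nothing to act on.

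The missing idea is a two-stage cascade with two separate applications of separation. First differentiate the second-order identity once more. Substituting $\dot u=v$ and $v=\bar v+\sum_jb_jW^j$ gives
\begin{align*}
 0&=c_0(t)+6\sum_{j=1}^m\ip{q_t}{u_t\phi b_j}W^j(t),\\
 c_0(t)&=-\ip{p_t}{\Lambda\Psi_t}+\ip{q_t}{a\Psi_t}-\ip{q_t}{\Delta(a\phi)}+3\ip{q_t}{u_t^2a\phi}+6\ip{q_t}{u_t\phi\bar v_t}.
\end{align*}
This is where $s>0$ is genuinely used. By \Cref{lem:weak-adjoint-time-regularity}, the borderline increments (those of $\ip{\Lambda p_t}{u_t^2\phi}$ and $\ip{q_t}{u_t\phi\bar v_t}$) are $O(h^\alpha)$ for some $\frac12<\alpha<\frac12+s$. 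Hence $c_0\in C^\alpha([0,T])$, while each $t\mapsto\ip{q_t}{u_t\phi b_j}$ is absolutely continuous. A first application of separation gives the intermediate relations $\ip{q_t}{u_t\phi b_j}\equiv0$.

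This identity involves only $q$ and $u$, so it can be differentiated again, and $\dot u$ once more brings in $W$ linearly:
\begin{align*}
 0=\ip{-\Lambda p_t+aq_t}{u_t\phi b_j}+\ip{q_t}{\bar v_t\phi b_j}+\sum_{k=1}^m\ip{q_t}{\phi b_jb_k}W^k(t).
\end{align*}
The first two terms are Lipschitz, and a second application of separation yields \eqref{gap:eq:rough-cascade-conclusion}.

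The intermediate step cannot be bypassed. The third-derivative identity already contains $W^j(t)$ and $\bar v_t$, so it cannot be differentiated further until separation has removed them. Your aside about working with $\ip{q}{\phi u}$-type quantities ``so that the noise appears linearly'' points the right way. However, the proposal never establishes $\ip{q_t}{u_t\phi b_j}\equiv0$, and instead relies on the quadratic expansion, which fails.
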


\begin{proof}
Choose $\alpha$ with $\frac12<\alpha<\frac12+s$. 
Since $\langle q_t,\phi\rangle=0$, the second equation in \eqref{gap:eq:rough-adjoint} gives \eqref{gap:eq:weak-first-zero}.  Differentiating once more and using $(\Lambda-\Lambda^{-1})\Lambda=-\Delta$ gives
\begin{align*}
 0=\ip{q_t}{-\Delta\phi-a^2\phi+3u_t^2\phi}+\ip{p_t}{\Lambda(a\phi)}.
\end{align*}
Put $\Psi_t=-\Delta\phi-a^2\phi+3u_t^2\phi$.  A further differentiation, followed by $v=\bar v+\sum_jb_jW^j$, yields
\begin{align*}
 0={}&-\ip{p_t}{\Lambda\Psi_t}+\ip{q_t}{a\Psi_t}-\ip{q_t}{\Delta(a\phi)}+3\ip{q_t}{u_t^2a\phi}+6\ip{q_t}{u_t\phi\bar v_t}\\
 &+6\sum_{j=1}^m\ip{q_t}{u_t\phi b_j}W^j(t).
\end{align*}
The first line belongs to $C^\alpha([0,T])$.  The only borderline increments are those containing $u^2$ and $\bar v$.  By \Cref{lem:weak-adjoint-time-regularity},
\begin{align*}
 \abs{\ip{\Lambda(p_t-p_r)}{u_t^2\phi}}\le Ch^\alpha,\qquad
 \norm{(u_t^2-u_r^2)\phi}_{H^{-s}}\le Ch^{1/2+s},
\end{align*}
and
\begin{align*}
 \abs{\ip{q_t-q_r}{u_t\phi\bar v_t}}+\abs{\ip{q_r}{(u_t-u_r)\phi\bar v_t}}+\abs{\ip{q_ru_r\phi}{\bar v_t-\bar v_r}}\le Ch^\alpha.
\end{align*}
The remaining terms satisfy the same bound directly.  Moreover, each $t\mapsto\langle q_t,u_t\phi b_j\rangle$ is absolutely continuous.  The first application of \Cref{lem:Brownian-affine-separation} therefore gives
\begin{align}\label{gap:eq:weak-intermediate-zero}
 \ip{q_t}{u_t\phi b_j}=0,
 \qquad0\le t\le T,\quad1\le j\le m.
\end{align}
Differentiating \eqref{gap:eq:weak-intermediate-zero} gives
\begin{align*}
 0=\ip{-\Lambda p_t+aq_t}{u_t\phi b_j}+\ip{q_t}{\bar v_t\phi b_j}+\sum_{k=1}^m\ip{q_t}{\phi b_jb_k}W^k(t).
\end{align*}
The first two terms are Lipschitz in time.  For example, with $\chi=\phi b_j$,
\begin{align*}
 \abs{\ip{-\Lambda(p_t-p_r)}{u_t\chi}}+\abs{\ip{-\Lambda p_r}{(u_t-u_r)\chi}}\le Ch,
\end{align*}
and the remaining increments follow directly from \eqref{gap:eq:energy-time-reg}--\eqref{gap:eq:adjoint-rough-time}.  The coefficients $t\mapsto\langle q_t,\phi b_jb_k\rangle$ are absolutely continuous.  A second application of \Cref{lem:Brownian-affine-separation} gives \eqref{gap:eq:rough-cascade-conclusion}; the endpoints follow by continuity.
\end{proof}

\subsubsection{Dense endpoint range}

\begin{theorem}[Dense range of the Malliavin derivative]\label{thm:rough-weak-Hormander}
Assume \eqref{eq:saturation}.  For every $T>0$, every $\mathsf x\in\Hh$, every $0<s<1/2$, and every $\omega\in\Omega_{\mathrm{sep},T}$,
\begin{align}\label{gap:eq:rough-dense-range}
 \overline{\Ran\cA_T(\mathsf x,\omega)}^{\,\Es{s}}=\Es{s}.
\end{align}
In particular, the same exceptional Wiener set works for all energy initial states.
\end{theorem}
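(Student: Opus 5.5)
We argue by duality: a closed subspace of the Hilbert space $\Es{s}$ is everything iff its annihilator vanishes. Under the phase identification \eqref{eq:phase-identification}, a bounded functional on $\Es{s}\cong H^{-1-s}\times H^{-1-s}$ is represented by a pair $(p_T,q_T)\in H^{1+s}\times H^{1+s}$. So suppose $(p_T,q_T)$ annihilates $\Ran\cA_T(\mathsf x,\omega)$; we must show $(p_T,q_T)=0$.

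Let $(p,q)$ solve the backward adjoint equation \eqref{gap:eq:rough-adjoint} on $[0,T]$ with this terminal value. \Cref{lem:weak-adjoint-time-regularity} shows it stays in $C(H^{1+s}\times H^{1+s})$. The duality identity $\ip{J_{r,T}\xi}{(p_T,q_T)}=\ip{\xi}{(p_r,q_r)}$ is checked by differentiating in $r$, first for smooth data and then by density. Inserting this into \eqref{eq:Malliavin-endpoint}, annihilation of $\cA_Th$ for every $h\in H_T$ gives
\begin{align*}
\int_0^T\sum_{j=1}^m\ip{q_r}{b_j}\,\partial_rh_j(r)\,\dd r=0\quad\text{for all }\partial_rh\in L^2([0,T];\R^m).
\end{align*}
Hence $\ip{q_t}{b_j}=0$ for almost every $t$, and by continuity for all $t\in[0,T]$. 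Here one must check that the factor $\Lambda$ from the phase identification is placed consistently, so that the pairing really lands on $\ip{q_t}{b_j}$ up to a harmless isomorphism.

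Next we run the saturation induction. \Cref{lem:weak-exact-zero-propagation} holds on $\Omega_{\mathrm{sep},T}$ and does not depend on the trajectory. Applied repeatedly, it shows that $\ip{q_t}{\phi}=0$ on $[0,T]$ for every $\phi\in\cS_n$, by induction on $n$ through \eqref{eq:saturation-closure}. By linearity this extends to all $\phi\in\cS_\infty$. Since $q_t\in H^{1+s}\subset H^{-1}=(H^1)^*$ and $\cS_\infty$ is dense in $H^1$ by \eqref{eq:saturation}, we get $q_t=0$ for all $t$. The first conclusion \eqref{gap:eq:weak-first-zero} then gives $\ip{p_t}{\Lambda\phi}=\ip{q_t}{a\phi}=0$ for all $\phi\in\cS_\infty$. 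The set $\Lambda\cS_\infty$ is dense in $L^2$, so $p_t=0$. Alternatively, $\dot q=-\Lambda p+aq$ with $q\equiv0$ forces $\Lambda p=0$ directly. In particular $(p_T,q_T)=0$.

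The step needing most care is the rigorous duality identity at this low regularity. The coefficient $u^2$ is only $L^\infty H^1$-type, the forward linearization acts on energy data, and the adjoint is posed in $H^{1+s}$. I would justify the identity by regularizing $u$ (or the data), using the product bound \eqref{gap:eq:positive-trilinear} to pass to the limit, and using the bound on $\mathcal J$ in $\Es{s}$ from \Cref{prop:wave-stable-compact-structure}. Independence of the exceptional set from $\mathsf x$ is automatic, because only $\Omega_{\mathrm{sep},T}$ was used.
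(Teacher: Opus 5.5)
Your proposal is correct and matches the paper's proof step by step. The shared steps are: an annihilator $(p_T,q_T)\in H^{1+s}\times H^{1+s}$ under the phase identification; the backward adjoint solution of \eqref{gap:eq:rough-adjoint}; $\langle q_t,b_j\rangle\equiv0$ from duality with \eqref{eq:Malliavin-endpoint}; induction along the saturation recursion via \Cref{lem:weak-exact-zero-propagation}; and finally $q=0$ by density of $\cS_\infty$ in $H^1$, then $p=0$ by density of $\Lambda\cS_\infty$ in $L^2$.

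The one difference is your plan to justify the duality identity by regularization, whereas the paper simply invokes it. The regularization is unnecessary: the energy-level bounds for the linearization, together with the adjoint regularity $C(H^{1+s}\times H^{1+s})\cap W^{1,\infty}(H^s\times H^s)$ from \Cref{lem:weak-adjoint-time-regularity}, already make the pairing absolutely continuous with the required cancellation.
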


\begin{proof}
By the phase identification \eqref{eq:phase-identification}, it is enough to prove that $\mathfrak I\Ran\cA_T$ is dense in $H^{-1-s}\times H^{-1-s}$.  Let $\zeta=(p_T,q_T)\in H^{1+s}\times H^{1+s}$ annihilate this transformed range and let $(p,q)$ solve \eqref{gap:eq:rough-adjoint}.  Adjoint duality with \eqref{eq:Malliavin-endpoint} gives
\begin{align*}
 \int_0^T\ip{B\partial_th(t)}{q_t}\,\dd t=0,\qquad h\in H_T.
\end{align*}
Since $\partial_th$ ranges over $L^2(0,T;\R^m)$, $\langle q_t,b_j\rangle=0$ for almost every $t$ and every $j$, hence for every $t$ by continuity.  Applying \Cref{lem:weak-exact-zero-propagation} inductively along \eqref{eq:saturation-closure} yields
\begin{align*}
 \ip{q_t}\phi=0,\qquad \ip{p_t}{\Lambda\phi}-\ip{q_t}{a\phi}=0,
 \qquad \phi\in\cS_\infty,\quad0\le t\le T.
\end{align*}
At $t=T$, density of $\cS_\infty$ in $H^1$ gives $q_T=0$.  The second identity then gives $\langle p_T,\Lambda\phi\rangle=0$ for $\phi\in\cS_\infty$; since $\Lambda:H^1\to L^2$ is an isomorphism, $\Lambda\cS_\infty$ is dense in $L^2$, and hence $p_T=0$.  Thus the annihilator is trivial, proving \eqref{gap:eq:rough-dense-range}.
\end{proof}

\subsection{Approximate controllability and uniform irreducibility}
We first prove fixed time full support by approximate controllability through the scaling and uniform saturation
viewpoint of 
\cite{GlattHoltzHerzogMattingly2018}, rooted in the Agrachev--Sarychev method \cite{AgrachevSarychev2005,AgrachevSarychev2006}.  

Consider the controlled equation 
\begin{align*}
 \partial_t u=v,\qquad
 \partial_t v=\Delta u-av-u^3+B\partial_t h,\qquad h\in H_T,
\end{align*}
whose endpoint map is denoted by $\Phi_t^{h}$. For a smooth spatial profile $\phi$, denote the constant phase space fields
\begin{align*}
 G_\phi=(0,\phi),\qquad J_\phi=(\phi,-a\phi),
\end{align*}
and, for $c\in\R$, write
\begin{align}\label{eq:translation-gj}
 \mathsf G_\phi^c(\mathsf x)=\mathsf x+cG_\phi,\qquad
 \mathsf J_\phi^c(\mathsf x)=\mathsf x+cJ_\phi, \quad \mathsf x\in\mathcal H
\end{align}
for the corresponding phase space translations. The controls first generate the velocity translations $\mathsf G_\phi^c$,
while short-time conjugation produces $\mathsf J_\phi^c$ and the cubic flow
closes the profile class under the saturation recursion.  Since
\begin{align*}
 \mathsf G_\psi^1\circ\mathsf J_\phi^1(u,v)
 =(u+\phi,v+\psi-a\phi),
\end{align*}
the saturation condition eventually yields arbitrary phase space
translations.

Recall the operators $\mathbb A, \mathbb F, \mathbb B$ from \eqref{eq:AFB} and put
\begin{align*}
 M_0=\sup_{0\le r\le1}\norm{S_a(r)}_{\cL(\Hh)}.
\end{align*}
The following short-time estimates will be used repeatedly in the
conjugation limits underlying the saturation argument.

\begin{lemma}
Let $0<r\le1$. If $f\in L^\infty(0,r;L^2)$, then
\begin{align}\label{eq:general-short-convolution}
 \norm{\int_0^rS_a(r-s)G_{f(s)}\,\dd s}_{\Hh}
 \le M_0r\norm f_{L^\infty(0,r;L^2)}.
\end{align}
For every smooth $\phi$ there are $\epsilon_\phi(r)\downarrow0$ and
$C_\phi<\infty$ such that,
\begin{align}
 \norm{S_a(r)G_\phi-G_\phi-rJ_\phi}_{\Hh}
 &\le r\epsilon_\phi(r),\label{eq:G-first-order}\\
 \norm{\Pi_1S_a(r)G_\phi-r\phi}_{H^1}
 &\le r\epsilon_\phi(r),\label{eq:G-displacement-Ot}\\
 \norm{S_a(r)J_\phi-J_\phi}_{\Hh}
 +\norm{\Pi_1S_a(r)J_\phi-\phi}_{H^1}
 &\le C_\phi r,\label{eq:J-short-time}\\
 \norm{\Pi_1\int_0^rS_a(r-s)G_\phi\,\dd s}_{H^1}
 &\le C_\phi r^2.\label{eq:smooth-source-displacement}
\end{align}
Moreover, for every compact $K\subset\Hh$,
\begin{align}\label{eq:compact-semigroup-modulus}
 \omega_K(r):=\sup_{\mathsf x\in K}\sup_{0\le s\le r}
 \norm{(S_a(s)-\Id)\mathsf x}_{\Hh}\longrightarrow0.
\end{align}
\end{lemma}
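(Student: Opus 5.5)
The plan is to use Duhamel's formula for the semigroup $S_a$ generated by $\mathbb A$, together with the uniform bound $M_0$ on $[0,1]$. Two ingredients do all the work: strong continuity of $S_a$ on $\Hh$, and the fact that smooth fields lie in $\Dom(\mathbb A)$.

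The first bound \eqref{eq:general-short-convolution} is immediate. Since $\norm{G_{f(s)}}_{\Hh}=\norm{f(s)}_{L^2}$, we get $\norm{S_a(r-s)G_{f(s)}}_{\Hh}\le M_0\norm f_{L^\infty L^2}$, and integrating over an interval of length $r$ gives the factor $M_0r$.

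For \eqref{eq:G-first-order}, note that $G_\phi\in\Dom(\mathbb A)$ with
\begin{align*}
 \mathbb AG_\phi=(\phi,-a\phi)=J_\phi .
\end{align*}
Hence
\begin{align*}
 S_a(r)G_\phi-G_\phi-rJ_\phi=\int_0^r\bigl(S_a(\sigma)-\Id\bigr)J_\phi\,\dd\sigma .
\end{align*}
Strong continuity then gives the $o(r)$ bound, with
\begin{align*}
 \epsilon_\phi(r)=\sup_{\sigma\le r}\norm{(S_a(\sigma)-\Id)J_\phi}_{\Hh},
\end{align*}
which is monotone and tends to $0$.

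For \eqref{eq:G-displacement-Ot}, apply $\Pi_1$ to the same identity. This gives $\Pi_1S_a(r)G_\phi=r\phi+\Pi_1\int_0^r(S_a(\sigma)-\Id)J_\phi\,\dd\sigma$, because $\Pi_1G_\phi=0$ and $\Pi_1J_\phi=\phi$. The $H^1$ norm of the first component is controlled by the $\Hh$ norm, so the same $\epsilon_\phi$ works.

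For \eqref{eq:J-short-time}, note that $J_\phi\in\Dom(\mathbb A)$ as well, since $\phi$ and $a$ are smooth. Then
\begin{align*}
 \norm{S_a(r)J_\phi-J_\phi}_{\Hh}\le\int_0^r\norm{S_a(\sigma)\mathbb AJ_\phi}_{\Hh}\,\dd\sigma\le M_0\norm{\mathbb AJ_\phi}_{\Hh}\,r .
\end{align*}
The $\Pi_1$ part follows from $\Pi_1J_\phi=\phi$ and the same bound.

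For \eqref{eq:smooth-source-displacement}, write $\int_0^rS_a(r-s)G_\phi\,\dd s=\int_0^rS_a(\sigma)G_\phi\,\dd\sigma$. Since $\Pi_1G_\phi=0$, we have
\begin{align*}
 \Pi_1S_a(\sigma)G_\phi=\Pi_1\bigl(S_a(\sigma)G_\phi-G_\phi\bigr),
\end{align*}
whose $H^1$ norm is at most $M_0\norm{J_\phi}_{\Hh}\,\sigma$. Integrating in $\sigma$ gives the $C_\phi r^2$ bound.

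Finally, \eqref{eq:compact-semigroup-modulus} is the standard fact that a strongly continuous, locally bounded family of operators converges uniformly on compact sets. The argument goes as follows. Given $\varepsilon$, cover $K$ by finitely many $\Hh$-balls of radius $\varepsilon/(2(M_0+1))$. Use strong continuity at each center. Then bound $\norm{(S_a(s)-\Id)\mathsf x}_{\Hh}$ by $(M_0+1)\norm{\mathsf x-\mathsf x_i}_{\Hh}$ plus the term at the nearest center $\mathsf x_i$.

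No step is a real obstacle. The only points that need care are checking that the smooth fields lie in $\Dom(\mathbb A)$, which holds since $\phi$ and $a$ are smooth, and checking that $\epsilon_\phi$ can be taken monotone, which holds by its definition as a supremum.
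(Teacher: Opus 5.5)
Your proof is correct and follows the paper's argument essentially line by line. The ingredients are the same: the identity $\mathbb A G_\phi=J_\phi$, the representations $S_a(r)G_\phi-G_\phi-rJ_\phi=\int_0^r(S_a(\sigma)-\Id)J_\phi\,\dd\sigma$ and $S_a(r)J_\phi-J_\phi=\int_0^rS_a(\sigma)\mathbb AJ_\phi\,\dd\sigma$, strong continuity, and a finite-net argument for \eqref{eq:compact-semigroup-modulus}. The only cosmetic difference is in \eqref{eq:smooth-source-displacement}: you bound $\norm{\Pi_1S_a(\sigma)G_\phi}_{H^1}$ directly by $M_0\norm{J_\phi}_{\Hh}\sigma$, whereas the paper obtains the same $O(\sigma)$ integrand from \eqref{eq:G-displacement-Ot}.
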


\begin{proof}
Estimate \eqref{eq:general-short-convolution} follows directly from the
boundedness of $S_a$ on $[0,1]$ by \Cref{lem:linear-semigroup}.  Since $\phi$  is smooth, 
$G_\phi,J_\phi\in\Dom(\mathbb A)$ and
\begin{align*}
 \mathbb A G_\phi=J_\phi.
\end{align*}
Hence
\begin{align*}
 S_a(r)G_\phi-G_\phi-rJ_\phi
 =\int_0^r(S_a(s)-\Id)J_\phi\,\dd s,
\end{align*}
which gives \eqref{eq:G-first-order} by strong continuity.  Since
$\Pi_1G_\phi=0$ and $\Pi_1J_\phi=\phi$, estimate \eqref{eq:G-displacement-Ot} follows from
the same identity.  Similarly,
\begin{align*}
 S_a(r)J_\phi-J_\phi
 =\int_0^rS_a(s)\mathbb A J_\phi\,\dd s,
\end{align*}
and therefore
\begin{align*}
 \norm{S_a(r)J_\phi-J_\phi}_{\Hh}\le C_\phi r.
\end{align*}
Since $\Pi_1J_\phi=\phi$, this also gives \eqref{eq:J-short-time}.
Finally, \eqref{eq:G-displacement-Ot} yields
\begin{align*}
 \norm{\Pi_1\int_0^rS_a(r-s)G_\phi\,\dd s}_{H^1}
 \le C_\phi\int_0^r(r-s)\,\dd s
 \le C_\phi r^2,
\end{align*}
which proves \eqref{eq:smooth-source-displacement}.  The compact uniform
convergence in \eqref{eq:compact-semigroup-modulus} follows from strong
continuity of $S_a$ by a finite-net argument.
\end{proof}

The next lemma shows that the velocity translations generated by the forced profiles arise as small time limits of controlled endpoint maps.
\begin{lemma}\label{lem:forced-velocity-translations}
For every $\phi\in\cS_0$, $c\in\R$, and compact $K\subset\Hh$, there are
Cameron--Martin controls $h_\delta$ on $[0,\delta]$ such that
\begin{align}\label{eq:forced-velocity-scaling}
 \sup_{\mathsf x\in K}
 \norm{\Phi_\delta^{h_\delta}(\mathsf x)-\mathsf G_\phi^c(\mathsf x)}_{\Hh}
 \longrightarrow0
 \qquad\text{as }\delta\downarrow0.
\end{align}
\end{lemma}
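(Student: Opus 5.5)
The plan is the standard impulsive-control scaling argument. Since $\phi\in\cS_0$, we have $\phi=b_j$ for some $j$. Take the control $h_\delta$ with $\partial_t h_\delta=(c/\delta)e_j$ on $[0,\delta]$, so that $B\partial_th_\delta=(c/\delta)b_j$ is constant in time. The controlled solution $X^\delta_t=\Phi_t^{h_\delta}(\mathsf x)$ satisfies the mild formula
\begin{align*}
 X^\delta_\delta=S_a(\delta)\mathsf x+\int_0^\delta S_a(\delta-r)\mathbb F(X^\delta_r)\,\dd r+\frac c\delta\int_0^\delta S_a(\delta-r)G_\phi\,\dd r .
\end{align*}
We compare this with $\mathsf x+cG_\phi$ term by term.

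\emph{Linear terms.} The first term differs from $\mathsf x$ by at most $\omega_K(\delta)\to0$, uniformly on $K$, by \eqref{eq:compact-semigroup-modulus}. For the forcing term, \eqref{eq:G-first-order} integrated over $r$ gives
\begin{align*}
 \frac1\delta\int_0^\delta S_a(s)G_\phi\,\dd s=G_\phi+O(\delta)+o(1),
\end{align*}
so the forcing term equals $cG_\phi+o(1)$ in $\Hh$.

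\emph{Nonlinear term (the main obstacle).} The control is of size $1/\delta$, and we need a uniform a priori bound on $X^\delta$ over $[0,\delta]$. Decompose
\begin{align*}
 X^\delta_t=\Big(\tfrac c\delta\int_0^tS_a(t-r)G_\phi\,\dd r\Big)+Y^\delta_t=:Z^\delta_t+Y^\delta_t .
\end{align*}
Here $\|Z^\delta_t\|_{\Hh}\le M_0|c|\|\phi\|_{L^2}$ by \eqref{eq:general-short-convolution}. By \eqref{eq:smooth-source-displacement}, the displacement component satisfies $\|\Pi_1Z^\delta_t\|_{H^1}\le C_\phi|c|\,t^2/\delta\le C\delta$, so the $u$-part of $Z^\delta$ is small. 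In fact, since $\phi$ is smooth, $\Pi_1 Z^\delta$ is small in every $H^k$. The remainder $Y^\delta$ solves $Y^\delta_t=S_a(t)\mathsf x+\int_0^tS_a(t-r)\mathbb F(\Pi_1 Y^\delta_r+\Pi_1Z^\delta_r)\,\dd r$. Since $\mathbb F$ only involves $u$, and $u=\Pi_1Y^\delta+O_{H^1}(\delta)$, the local Lipschitz bound $\|u^3-w^3\|_{L^2}\le C(\|u\|_{H^1}^2+\|w\|_{H^1}^2)\|u-w\|_{H^1}$ applies. A short-time Gronwall/fixed-point argument on $[0,\delta]$ then gives $\sup_{t\le\delta}\|Y^\delta_t\|_{\Hh}\le C_K$ for $\delta$ small, uniformly for $\mathsf x\in K$, because $K$ is bounded in $\Hh$. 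The point to check is that the existence time of this local argument is uniform over $K$, which holds because it depends only on $\sup_K\|\mathsf x\|_{\Hh}$.

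\emph{Conclusion.} With this bound, $\|\mathbb F(X^\delta_r)\|_{\Hh}\le C(1+\|\Pi_1X^\delta_r\|_{H^1}^3)\le C_K$ for $r\le\delta$, so \eqref{eq:general-short-convolution} shows the nonlinear term is $O(\delta)$. Summing the three contributions yields
\begin{align*}
 \sup_{\mathsf x\in K}\norm{X^\delta_\delta-\mathsf x-cG_\phi}_{\Hh}\le\omega_K(\delta)+o(1)+C_K\delta\to0,
\end{align*}
which is \eqref{eq:forced-velocity-scaling}.
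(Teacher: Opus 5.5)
Your proposal is correct and follows essentially the same route as the paper. The paper uses a ramp $h_\delta^j(t)=c\vartheta(t/\delta)$ with smooth $\vartheta$; your linear ramp is the case $\vartheta(r)=r$. It then shows the control convolution tends to $cG_{b_j}$ while staying uniformly bounded in $\Hh$ on $[0,\delta]$, and bootstraps a bound on $\sup_{s\le\delta}\norm{X_s}_{\Hh}$ that makes the nonlinear Duhamel term $O(\delta)$. Your extra splitting $X^\delta=Z^\delta+Y^\delta$, with $\Pi_1Z^\delta=O(\delta)$ in $H^1$, is harmless but unnecessary, since the uniform $\Hh$ bound on $Z^\delta$ already closes the bootstrap.
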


\begin{proof}
It is enough to take $\phi=b_j$.  Choose
$\vartheta\in C^\infty([0,1])$ with $\vartheta(0)=0$,
$\vartheta(1)=1$, and set
\begin{align*}
 h_\delta^j(t)=c\vartheta(t/\delta),\qquad 0\le t\le\delta,
\end{align*}
with all other components equal to zero.  For $\mathsf x\in K$, write $X=(u,v)$ for the corresponding controlled solution, suppressing its dependence on $\mathsf x$ and $h_\delta$.  Then
\begin{align*}
 \Phi_\delta^{h_\delta}(\mathsf x)
 =S_a(\delta)\mathsf x+
 \int_0^\delta S_a(\delta-s)\mathbb F(X_s)\,\dd s
 +c\int_0^1S_a(\delta(1-r))G_{b_j}\vartheta'(r)\,\dd r.
\end{align*}
By strong continuity of $S_a$ and $\int_0^1\vartheta'=1$,
\begin{align*}
 c\int_0^1S_a(\delta(1-r))G_{b_j}\vartheta'(r)\,\dd r
 \longrightarrow cG_{b_j}
 \quad\text{in }\Hh.
\end{align*}
Moreover, the control convolution is uniformly bounded on
$[0,\delta]$:
\begin{align*}
 \sup_{s\le\delta}
 \norm{\frac c\delta\int_0^s
 S_a(s-r)G_{b_j}\vartheta'(r/\delta)\,\dd r}_{\Hh}
 \le C_{c,b_j,\vartheta}.
\end{align*}
Note that 
\begin{align*} 
\norm{\mathbb F(u,v)}_{\Hh} \le C\bigl(1+\norm{(u,v)}_{\Hh}^3\bigr). 
\end{align*}
The mild equation, the uniform bound on the control convolution, and
compactness of $K$ give
\begin{align*}
Y_\delta(t)
\le A+C\int_0^t\bigl(1+Y_\delta(r)^3\bigr)\,\dd r,
\qquad
Y_\delta(t):=
\sup_{\mathsf x\in K}\sup_{0\le s\le t}
\norm{X_s}_{\Hh},
\end{align*}
for some $A$ independent of $\delta$.  A standard bootstrap continuity
argument therefore yields, for all sufficiently small $\delta$,
\begin{align*}
\sup_{\mathsf x\in K}\sup_{s\le\delta}
\norm{X_s}_{\Hh}\le 2A.
\end{align*}
Hence
\begin{align*}
 \sup_{\mathsf x\in K}
 \norm{\int_0^\delta S_a(\delta-s)
 \mathbb F(X_s)\,\dd s}_{\Hh}
 \le C_{K,c,b_j}\delta.
\end{align*}
Finally, $S_a(\delta)\mathsf x\to \mathsf x$ uniformly on $K$, which proves
\eqref{eq:forced-velocity-scaling}.
\end{proof}

We next show that the displacement translations are generated from the velocity translations through a short-time scaling limit.
\begin{lemma}\label{lem:displacement-descendants}
For every smooth $\phi$, $t>0$, and compact $K\subset\Hh$, if
$\delta_R=t/R$, then
\begin{align*}
 \mathsf G_\phi^{-R}\circ\Phi_{\delta_R}^{0}\circ\mathsf G_\phi^{R}
 \longrightarrow\mathsf J_\phi^t
\end{align*}
uniformly on $K$ as $R\to\infty$.
\end{lemma}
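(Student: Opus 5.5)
The plan is to compute the conjugated map explicitly with the mild formulation and the short-time estimates of the preceding lemma. Fix $\mathsf x\in K$ and put $\mathsf x_R=\mathsf G_\phi^R(\mathsf x)=\mathsf x+RG_\phi$. Let $X$ denote the uncontrolled solution on $[0,\delta_R]$ from $\mathsf x_R$. By variation of constants,
\begin{align*}
 \mathsf G_\phi^{-R}\circ\Phi_{\delta_R}^0\circ\mathsf G_\phi^R(\mathsf x)
 =S_a(\delta_R)\mathsf x+R\bigl(S_a(\delta_R)G_\phi-G_\phi\bigr)
 +\int_0^{\delta_R}S_a(\delta_R-r)\mathbb F(X_r)\,\dd r .
\end{align*}
By \eqref{eq:G-first-order} with $r=\delta_R=t/R$, the middle term equals $tJ_\phi$ up to an error $t\,\epsilon_\phi(t/R)\to0$. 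By \eqref{eq:compact-semigroup-modulus}, $S_a(\delta_R)\mathsf x\to\mathsf x$ uniformly on $K$. So it remains to show that the nonlinear integral tends to zero uniformly on $K$.

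For this, I would decompose the trajectory as $X_r=S_a(r)\mathsf x+RS_a(r)G_\phi+Y_r$, where $Y_r=\int_0^rS_a(r-\sigma)\mathbb F(X_\sigma)\,\dd\sigma$. The key point is that $\mathbb F$ only sees the displacement component $u$. By \eqref{eq:G-displacement-Ot}, $\Pi_1 RS_a(r)G_\phi$ has $H^1$ norm at most $R\bigl(r+r\epsilon_\phi(r)\bigr)\le C t$ for $r\le\delta_R$. Hence the displacement of the large velocity kick stays $O(t)$ in $H^1$, even though the $\Hh$-norm of the kick is of size $R$. Writing $u_r=\Pi_1S_a(r)\mathsf x+\Pi_1RS_a(r)G_\phi+\Pi_1Y_r$, the first two pieces are bounded in $H^1$ by $A=A(K,\phi,t)$ uniformly in $R$.

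The next step is a bootstrap. Since $\norm{\mathbb F(X)}_{\Hh}=\norm{u-u^3}_{L^2}\le C(1+\norm u_{H^1}^3)$, estimate \eqref{eq:general-short-convolution} gives
\[
\sup_{r\le\delta}\norm{Y_r}_{\Hh}\le M_0\delta\, C\Bigl(1+\bigl(A+\sup_{r\le\delta}\norm{Y_r}_{\Hh}\bigr)^3\Bigr).
\]
A continuity argument then gives $\sup_{r\le\delta_R}\norm{Y_r}_{\Hh}\le C\delta_R\to0$ for $R$ large, uniformly on $K$. This also yields local existence on $[0,\delta_R]$, since energy solutions are global anyway. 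Consequently $\norm{\int_0^{\delta_R}S_a(\delta_R-r)\mathbb F(X_r)\,\dd r}_{\Hh}\le C(K,\phi,t)\,t/R\to0$, and combining the three terms proves the uniform convergence.

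The main obstacle is exactly the observation that the $R$-sized velocity kick does not feed the cubic term. The $\Hh$-norm of $X_r$ is of order $R$, so a naive Lipschitz/Gronwall bound in $\Hh$ fails. The argument must therefore run through $\Pi_1$ and use that $\mathbb F$ depends only on $u$, with \eqref{eq:G-displacement-Ot} giving the $O(rR)=O(t)$ displacement bound.
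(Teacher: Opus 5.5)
Your proposal is correct and is essentially the paper's proof. It uses the same mild decomposition into three terms, handles the linear parts with \eqref{eq:G-first-order} and \eqref{eq:compact-semigroup-modulus}, and gets an $O(\delta_R)$ bound on the nonlinear Duhamel term by a bootstrap that uses $R\,\Pi_1S_a(s)G_\phi=O(t)$ in $H^1$ via \eqref{eq:G-displacement-Ot}. The only cosmetic difference is that you bootstrap the remainder $Y$ in $\Hh$, whereas the paper bootstraps the displacement norm $\sup_{s\le r}\norm{u_R(s)}_{H^1}$ directly; the two are equivalent.
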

\begin{proof}
Set
\begin{align*}
 \Psi_R=\mathsf G_\phi^{-R}\circ\Phi_{\delta_R}^{0}
 \circ\mathsf G_\phi^{R},
 \qquad
 \delta_R=\frac{t}{R}.
\end{align*}
The mild formula gives
\begin{align*}
 \Psi_R(\mathsf x)
 =S_a(\delta_R)\mathsf x
 +R\bigl(S_a(\delta_R)G_\phi-G_\phi\bigr)
 +\int_0^{\delta_R}S_a(\delta_R-s)
 \mathbb F(X_R(s))\,\dd s.
\end{align*}
For $\mathsf x\in K$, let $X_R=(u_R,v_R)$ be the solution issued from $\mathsf x+RG_\phi$, with the dependence on $\mathsf x$ suppressed, and set
\begin{align*}
 D_R(r)=\sup_{\mathsf x\in K}\sup_{0\le s\le r}\norm{u_R(s)}_{H^1}.
\end{align*}
By \eqref{eq:G-displacement-Ot}, for $s\le\delta_R=t/R$,
\begin{align*}
 R\norm{\Pi_1S_a(s)G_\phi}_{H^1}
 \le C_\phi Rs
 \le C_\phi t.
\end{align*}
Hence the mild equation and
\begin{align*}
 \norm{u-u^3}_{L^2}
 \le C\bigl(1+\norm u_{H^1}^3\bigr)
\end{align*}
give
\begin{align*}
 D_R(r)\le A+C\int_0^r\bigl(1+D_R(s)^3\bigr)\,\dd s,
 \qquad 0\le r\le\delta_R,
\end{align*}
with $A$ independent of $R$.  A standard bootstrap continuity argument
therefore yields, for all sufficiently large $R$,
\begin{align*}
 \sup_{\mathsf x\in K}\sup_{s\le\delta_R}
 \norm{u_R(s)}_{H^1}\le2A.
\end{align*}
Consequently,
\begin{align}\label{eq:G-conjugation-nonlinear-error}
 \sup_{\mathsf x\in K}
 \norm{\int_0^{\delta_R}S_a(\delta_R-s)
 \mathbb F(X_R(s))\,\dd s}_{\Hh}
 \le C_{K,t,\phi}\delta_R.
\end{align}

We now subtract the limiting translation.  Since
$R\delta_R=t$, the preceding mild identity gives
\begin{align*}
\begin{aligned}
 \Psi_R(\mathsf x)-\mathsf x-tJ_\phi
 ={}&S_a(\delta_R)\mathsf x-\mathsf x+R\bigl(S_a(\delta_R)G_\phi-G_\phi-\delta_RJ_\phi\bigr)\\
 &+\int_0^{\delta_R}S_a(\delta_R-s)
 \mathbb F(X_R(s))\,\dd s.
\end{aligned}
\end{align*}
Therefore, in view of \eqref{eq:translation-gj}, by \eqref{eq:G-first-order},
\eqref{eq:compact-semigroup-modulus} and
\eqref{eq:G-conjugation-nonlinear-error},
\begin{align*}
 \sup_{\mathsf x\in K}
 \norm{\Psi_R(\mathsf x)-\mathsf J_\phi^t(\mathsf x)}_{\Hh}
 \le
 \omega_K(\delta_R)
 +t\epsilon_\phi(\delta_R)
 +C_{K,t,\phi}R^{-1}
 \longrightarrow0.
\end{align*}
\end{proof}

The next scaling limit uses the displacement translations generated in the previous lemma to create new velocity directions through the cubic nonlinearity.
\begin{lemma}\label{lem:cubic-descendants}
For every smooth $\zeta$, $t>0$, and compact $K\subset\Hh$, if
$\delta_R=t/R^3$, then
\begin{align*}
 \mathsf J_\zeta^{-R}\circ\Phi_{\delta_R}^{0}\circ\mathsf J_\zeta^{R}
 \longrightarrow\mathsf G_{\zeta^3}^{-t}
\end{align*}
uniformly on $K$ as $R\to\infty$.
\end{lemma}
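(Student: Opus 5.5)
We follow the template of \Cref{lem:displacement-descendants}. Set $\Psi_R=\mathsf J_\zeta^{-R}\circ\Phi_{\delta_R}^{0}\circ\mathsf J_\zeta^{R}$ with $\delta_R=t/R^3$. For $\mathsf x\in K$, let $X_R=(u_R,v_R)$ be the uncontrolled solution issued from $\mathsf x+RJ_\zeta$. The mild formula gives
\begin{align*}
 \Psi_R(\mathsf x)-\mathsf x
 ={}&\bigl(S_a(\delta_R)\mathsf x-\mathsf x\bigr)
 +R\bigl(S_a(\delta_R)J_\zeta-J_\zeta\bigr)
 +\int_0^{\delta_R}S_a(\delta_R-s)\mathbb F(X_R(s))\,\dd s .
\end{align*}
The first term is at most $\omega_K(\delta_R)\to0$ by \eqref{eq:compact-semigroup-modulus}. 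The second is at most $C_\zeta R\delta_R=C_\zeta tR^{-2}\to0$ by \eqref{eq:J-short-time}. The whole content therefore lies in the nonlinear integral, which we must show converges to $-tG_{\zeta^3}=(0,-t\zeta^3)$ in $\Hh$, uniformly on $K$. Since $\mathbb F=(0,u-u^3)$, the target comes from the term $-R^3\zeta^3$ integrated over a time of length $t/R^3$.

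\textbf{Step 1: a priori bound on the perturbation.} Write $X_R(s)=S_a(s)(\mathsf x+RJ_\zeta)+Y_R(s)$. Thus $u_R(s)=R\,\Pi_1S_a(s)J_\zeta+\Pi_1S_a(s)\mathsf x+\Pi_1Y_R(s)$, and by \eqref{eq:J-short-time} we have $R\Pi_1S_a(s)J_\zeta=R\zeta+O(R^2\delta_R)$ in $H^1$. Here $R^2\delta_R=tR^{-1}$, so this correction is small. Set
\[
D_R(r)=\sup_{\mathsf x\in K}\sup_{s\le r}\norm{Y_R(s)}_{\Hh}.
\]
Using $\norm{u-u^3}_{L^2}\le C(1+\norm u_{H^1}^3)$ and $\norm{u_R}_{H^1}\le CR+C_K+D_R$, we get
\[
D_R(r)\le C\int_0^r\bigl(R+D_R(s)\bigr)^3\,\dd s.
\]
On $[0,\delta_R]$ this is $O(R^3\delta_R)=O(t)$ as long as $D_R\lesssim R$. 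A continuity (bootstrap) argument then shows $D_R(\delta_R)\le C_{K,t,\zeta}$, uniformly for large $R$.

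\textbf{Step 2: identification of the limit.} Expand
\[
u_R^3=R^3\zeta^3+\mathcal{R}_R,\qquad \mathcal{R}_R=O\bigl(R^2(1+\norm{\text{rest}}_{H^1})^3\bigr)\ \text{in }L^2,
\]
where "rest" means $u_R-R\zeta$, which is bounded in $H^1$ by Step 1 together with the correction $tR^{-1}$. Integrating over $[0,\delta_R]$, the remainder contributes $O(R^2\delta_R)=O(R^{-1})$, and the linear term $u_R$ contributes $O(R\delta_R)$. The main term is $-R^3\int_0^{\delta_R}S_a(\delta_R-s)G_{\zeta^3}\,\dd s$. By strong continuity, \eqref{eq:G-first-order}, and $R^3\delta_R=t$, it equals $-tG_{\zeta^3}+t\,O(\epsilon_{\zeta^3}(\delta_R)+\delta_R)\to-tG_{\zeta^3}$. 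Since $\zeta$ is smooth, $\zeta^3$ is smooth and these short-time estimates apply.

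\textbf{Main obstacle.} The delicate point is Step 1. The cubic growth closes only because the time window $t/R^3$ exactly balances the $R^3$ size of the nonlinearity. The bootstrap must therefore be organized as above: first subtract the large free part $R\zeta$, then bound the remainder $Y_R$ in $\Hh$ by $O(1)$, rather than bounding $u_R$ itself by $O(1)$ as in \Cref{lem:displacement-descendants}. Once this uniform $O(1)$ control of $Y_R$ is in hand, the remainder $\mathcal{R}_R$ is of order $R^2$ and vanishes after integration over the window, giving uniform convergence on $K$.
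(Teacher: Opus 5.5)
Your proof is correct and follows the same scheme as the paper:
\begin{itemize}
\item the mild formula for the conjugated flow;
\item a bootstrap keeping $u_R$ within $O(1)$ of $R\zeta$ in $H^1$;
\item the expansion $u_R-u_R^3=-R^3\zeta^3+O(R^2)$ in $L^2$;
\item integration over the window of length $t/R^3$, where $R^3\delta_R=t$ produces exactly $-tG_{\zeta^3}$.
\end{itemize}

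The only real difference is the quantity you bootstrap. The paper bootstraps just the displacement $q_R=u_R-R\zeta$, through the first-coordinate mild equation. It inserts the cubic expansion there and uses \eqref{eq:smooth-source-displacement} to show that the large velocity forcing $-R^3\zeta^3$ moves the displacement by only $O(R^3\delta_R^2)=O(t^2R^{-3})$. As a result $q_R$ stays essentially equal to $\Pi_1S_a(s)\mathsf x$.

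You instead bound the whole Duhamel term $Y_R$ in $\Hh$ crudely by $\delta_R(CR)^3=O(t)$, which already absorbs the $O(1)$ velocity kick. This avoids \eqref{eq:smooth-source-displacement}. It gives only an $O(1)$ bound on the displacement perturbation, but that is all your Step~2 needs for the $O(R^2\delta_R)$ remainder estimate.

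A minor bookkeeping point: by \eqref{eq:J-short-time} the correction $R\Pi_1S_a(s)J_\zeta-R\zeta$ is $O(R\delta_R)=O(tR^{-2})$, not $O(R^2\delta_R)$. Your overestimate is harmless.
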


\begin{proof}
Set
\begin{align*}
 \Theta_R=\mathsf J_\zeta^{-R}\circ\Phi_{\delta_R}^{0}
 \circ\mathsf J_\zeta^{R},
 \qquad
 \delta_R=\frac{t}{R^3}.
\end{align*}
For $\mathsf x\in K$, let $X_R=(u_R,v_R)$ be the solution issued from $\mathsf J_\zeta^R\mathsf x$, suppressing the dependence on $\mathsf x$, and set
\begin{align*}
 q_R(s)=u_R(s)-R\zeta.
\end{align*}
The first-coordinate mild equation is
\begin{align*}
 q_R(s)
 ={}\Pi_1S_a(s)\mathsf x
 +R\bigl(\Pi_1S_a(s)J_\zeta-\zeta\bigr)+\Pi_1\int_0^sS_a(s-r)
 G_{u_R(r)-(u_R(r))^3}\,\dd r.
\end{align*}

Since $u_R=R\zeta+q_R$, we have 
\begin{align}\label{eq:cubic-source-expansion}
 u_R-(u_R)^3
 =-R^3\zeta^3+\rho_R,
\end{align}
where
\begin{align*}
 \rho_R
 =R\zeta+q_R
 -3R^2\zeta^2q_R
 -3R\zeta(q_R)^2-(q_R)^3.
\end{align*}
Fix $A>1+M_0\sup_{\mathsf x\in K}\norm{\mathsf x}_{\Hh}$.  As long as
\begin{align*}
 \sup_{\mathsf x\in K}\norm{q_R(s)}_{H^1}\le2A,
\end{align*}
the embedding $H^1\hookrightarrow L^6$ gives
\begin{align*}
 \norm{\rho_R(s)}_{L^2}
 \le C_{\zeta,A}(R^2+R+1)
 \le C_{\zeta,A}R^2.
\end{align*}
Substituting \eqref{eq:cubic-source-expansion} into the first-coordinate
mild equation and using
\eqref{eq:J-short-time},
\eqref{eq:smooth-source-displacement}, and
\eqref{eq:general-short-convolution}, we obtain, for
$s\le\delta_R=t/R^3$,
\begin{align*}
 \sup_{\mathsf x\in K}\norm{q_R(s)}_{H^1}
 &\le A-1
 +C_\zeta Rs
 +C_{\zeta}R^3s^2
 +C_{\zeta,A}R^2s\\
 &\le A-1
 +C_{\zeta,A,t}
 \bigl(R^{-2}+R^{-3}+R^{-1}\bigr).
\end{align*}
For all sufficiently large $R$ the right-hand side is strictly smaller
than $A$.  A bootstrap continuity argument therefore gives
\begin{align*}
 \sup_{\mathsf x\in K}\sup_{s\le\delta_R}
 \norm{q_R(s)}_{H^1}\le2A.
\end{align*}
Consequently, \eqref{eq:cubic-source-expansion} and the bound
\begin{align}\label{eq:cubic-source-remainder}
 \sup_{\mathsf x\in K,s\le\delta_R}
 \norm{\rho_R(s)}_{L^2}
 \le C_{K,t,\zeta}R^2
\end{align}
hold on the whole interval $[0,\delta_R]$.

We now return to the full mild equation.  Since
\begin{align*}
 \Theta_R(\mathsf x)
 =S_a(\delta_R)\mathsf x
 +R\bigl(S_a(\delta_R)J_\zeta-J_\zeta\bigr)
 +\int_0^{\delta_R}S_a(\delta_R-s)
 G_{u_R(s)-(u_R(s))^3}\,\dd s,
\end{align*}
equation \eqref{eq:cubic-source-expansion} gives
\begin{align*}
\begin{aligned}
 \Theta_R(\mathsf x)-\mathsf x+tG_{\zeta^3}
 ={}&S_a(\delta_R)\mathsf x-\mathsf x
 +R\bigl(S_a(\delta_R)J_\zeta-J_\zeta\bigr)\\
 &+tG_{\zeta^3}
 -R^3\int_0^{\delta_R}
 S_a(\delta_R-s)G_{\zeta^3}\,\dd s\\
 &+\int_0^{\delta_R}
 S_a(\delta_R-s)G_{\rho_R(s)}\,\dd s.
\end{aligned}
\end{align*}
Therefore, by \eqref{eq:J-short-time},
\eqref{eq:general-short-convolution},
\eqref{eq:cubic-source-remainder}, and
\eqref{eq:compact-semigroup-modulus},
\begin{align*}
 \sup_{\mathsf x\in K}
 \norm{\Theta_R(\mathsf x)-\mathsf G_{\zeta^3}^{-t}(\mathsf x)}_{\Hh}
 &\le\omega_K(\delta_R)
 +C_\zeta R\delta_R\\
 &\quad
 +t\sup_{0\le r\le\delta_R}
 \norm{(S_a(r)-\Id)G_{\zeta^3}}_{\Hh}
 +C_{K,t,\zeta}R^2\delta_R
 \longrightarrow0.
\end{align*}
\end{proof}

The preceding scaling limits close the same cubic saturation recursion as
in \Cref{def:saturation}.  The next proposition upgrades the resulting
profile directions to arbitrary phase space translations.  This is the
deterministic controllability mechanism used below to prove fixed time full
support.

\begin{proposition}
\label{prop:small-time-translation-saturation}
Assume \eqref{eq:saturation}. For every $(f,g)\in\Hh$, compact $K\subset\Hh$, $\varepsilon>0$, and
$\tau>0$, there are $0<t<\tau$ and $h\in H_t$ such that
\begin{align}\label{eq:small-time-arbitrary-translation}
 \sup_{\mathsf x\in K}
 \norm{\Phi_t^h(\mathsf x)-(\mathsf x+(f,g))}_{\Hh}<\varepsilon.
\end{align}
\end{proposition}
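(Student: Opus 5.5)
The approach is the Agrachev--Sarychev/Glatt-Holtz--Herzog--Mattingly saturation scheme: identify a class of ``attainable'' translations and show it exhausts $\Hh$. Call a vector $e\in\Hh$ \emph{attainable} if for every compact $K$, $\varepsilon>0$, $\tau>0$ there are $t<\tau$ and $h\in H_t$ with $\sup_K\norm{\Phi_t^h(\mathsf x)-\mathsf x-e}_{\Hh}<\varepsilon$. The goal is to show that the attainable set $\mathcal T$ is all of $\Hh$.

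\textbf{Step 1: closure properties of $\mathcal T$.}

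\emph{Composition and additivity.} If $e_1,e_2\in\mathcal T$, concatenate the controls: first realize $e_1$ on $K$, then $e_2$ on the compact set $K+e_1$ enlarged by an $\varepsilon$-neighbourhood. Concatenations of Cameron--Martin controls are again Cameron--Martin (restart $h$ from its terminal value). Errors propagate through $\Phi^h_t$ by uniform continuity of the flow on compacts over short times. Hence $e_1+e_2\in\mathcal T$.

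\emph{Closedness.} $\mathcal T$ is closed in $\Hh$. Indeed, if $e_n\to e$, then for $n$ large $\mathsf x+e_n$ is uniformly $\varepsilon$-close to $\mathsf x+e$.

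\emph{The free flow is not an issue.} The free flow $\Phi^0_\delta$ converges to the identity uniformly on compacts as $\delta\downarrow0$, which is the bootstrap argument already used in Lemma~\ref{lem:forced-velocity-translations}. So there is no drift to remove, and only the small-time requirement $t<\tau$ must be tracked. Every limit in Lemmas~\ref{lem:forced-velocity-translations}--\ref{lem:cubic-descendants} uses total time $\delta\to0$.

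\emph{Conjugation.} If $cG_\phi\in\mathcal T$ for every $c\in\R$, then the maps $\mathsf G_\phi^{\pm R}$ are uniformly approximable by controlled flows on compacts. Composing $\mathsf G^{-R}_\phi\circ\Phi^0_{\delta_R}\circ\mathsf G^R_\phi$ with approximating controls therefore realizes an approximation of the limit map.

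\textbf{Step 2: run the recursion.}

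From Lemma~\ref{lem:forced-velocity-translations}, $cG_\phi\in\mathcal T$ for all $\phi\in\cS_0$ and $c\in\R$.

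By Lemma~\ref{lem:displacement-descendants} and conjugation, $tJ_\phi\in\mathcal T$ whenever all $cG_\phi\in\mathcal T$. The approximation of the inner maps must be uniform on the compact image set $\Phi_{\delta_R}^0(K+RG_\phi)$. One takes $R$ first, then approximates each factor to accuracy depending on $R$, using continuity of the composed maps.

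By Lemma~\ref{lem:cubic-descendants}, $\pm tJ_\zeta\in\mathcal T$ for all $t$ gives $-tG_{\zeta^3}\in\mathcal T$ for all $t$.

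\textbf{Step 3: polarization to reach $\phi b_jb_k$.} This needs polarization of $\zeta\mapsto\zeta^3$. With $\zeta=\alpha\phi+\beta b_j+\gamma b_k$ for real coefficients $\alpha,\beta,\gamma$, the mixed term $6\alpha\beta\gamma\,\phi b_jb_k$ is a finite linear combination of cubes $\zeta^3$. One first needs $J_\zeta\in\mathcal T$ for such linear combinations, which holds because $J$ is linear in $\zeta$ and $\mathcal T$ is additive. Since $\mathcal T$ is closed under addition and under the scaling $t\in\R$, $cG_{\phi b_jb_k}\in\mathcal T$. By induction, $cG_\psi\in\mathcal T$ for all $\psi\in\cS_\infty$, hence $cJ_\psi\in\mathcal T$ as well.

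\textbf{Step 4: conclusion.} Using $\mathsf G_\psi^1\circ\mathsf J^1_\phi(u,v)=(u+\phi,v+\psi-a\phi)$, all vectors $(\phi,\psi')$ with $\phi,\psi'\in\cS_\infty$ are attainable. For this take $\psi=\psi'+a\phi$; one needs $a\phi\in\cS_\infty$, or else approximate $a\phi$ by elements of $\cS_\infty$ in $L^2$. By \eqref{eq:saturation}, $\cS_\infty$ is dense in $H^1$, hence in $L^2$. Closedness of $\mathcal T$ then gives $\mathcal T=\Hh$.

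\textbf{Main obstacle.} I expect the hard part to be the bookkeeping in the conjugation step. The maps $\mathsf G^{R}_\phi$ have unbounded amplitude $R$, and they are only realized approximately on compacts. The error must remain small after passing through $\Phi^0_{\delta_R}$ and through the inverse conjugation. I would handle this with an $\varepsilon/3$ argument: fix $R$ from the lemma, then use local Lipschitz continuity of $\Phi^0_{\delta_R}$ on bounded subsets of $\Hh$ (finite-time energy well-posedness) to choose the approximation accuracy of the outer controls.
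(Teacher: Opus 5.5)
Your proposal is correct and follows the paper's argument: induction along $V_n=\spanop_{\R}\cS_n$ using \Cref{lem:forced-velocity-translations,lem:displacement-descendants,lem:cubic-descendants}, then cubic polarization, then the final approximation of $(f,g)$ by $\mathsf G^1_{\psi_n}\circ\mathsf J^1_{\phi_n}$ with $\phi_n\to f$ in $H^1$ and $\psi_n\to g+af$ in $L^2$; for the polarization the paper uses the explicit identity $24\phi b_jb_k=\zeta_1^3+\cdots+\zeta_4^3$ with $\zeta_\ell=\pm\phi\pm b_j\pm b_k$. The only difference is that you spell out the compact-uniform transitivity (closedness and additivity of the attainable set, and the $\varepsilon/3$ bookkeeping in the conjugations via local Lipschitz continuity of the fixed-control flow on bounded sets), which the paper simply cites from \cite{GlattHoltzHerzogMattingly2018}.
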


\begin{proof}
For $n\ge0$, put
\begin{align*}
 V_n=\spanop_{\R}\cS_n.
\end{align*}
We claim that, for every $n\ge0$, $\phi\in V_n$, and $c\in\R$, both
$\mathsf G_\phi^c$ and $\mathsf J_\phi^c$ belong to the arbitrarily
small time compact uniform closure of the controlled endpoint maps.

For $n=0$, \Cref{lem:forced-velocity-translations} gives
$\mathsf G_{b_j}^c$ for every $j$ and $c$.  Since constant translations
commute and $G_\phi$ depends linearly on $\phi$, finite composition gives
$\mathsf G_\phi^c$ for every $\phi\in V_0$.  Since $\mathsf G_\phi^c$ belongs to the arbitrarily small time
compact uniform closure for every $c\in\R$, the flow property allows the
approximating controlled trajectories to be concatenated.  By the standard compact uniform transitivity of the saturation procedure
\cite[Section~3]{GlattHoltzHerzogMattingly2018} and
\Cref{lem:displacement-descendants}, this yields
$\mathsf J_\phi^c$ for every $\phi\in V_0$ and $c\in\R$.

Assume the claim holds on $V_n$.  Fix $\phi\in\cS_n$ and $1\le j,k\le m$,
and set
\begin{align*}
 \zeta_1&=\phi+b_j+b_k,&
 \zeta_2&=\phi-b_j-b_k,\\
 \zeta_3&=-\phi+b_j-b_k,&
 \zeta_4&=-\phi-b_j+b_k.
\end{align*}
Since $\zeta_\ell\in V_n$, the induction hypothesis gives all translations
$\mathsf J_{\zeta_\ell}^c$.  By \Cref{lem:cubic-descendants}, it therefore
gives all translations $\mathsf G_{\zeta_\ell^3}^c$.  By the cubic polarization identity, 
\begin{align*}
 24\phi b_jb_k
 =\zeta_1^3+\zeta_2^3+\zeta_3^3+\zeta_4^3,
\end{align*}
and composing the corresponding constant velocity translations, we obtain
$\mathsf G_{\phi b_jb_k}^c$ for every $c\in\R$.  Hence
\Cref{lem:displacement-descendants} also gives
$\mathsf J_{\phi b_jb_k}^c$.

Since
\begin{align*}
 V_{n+1}
 =\spanop_{\R}\left(
 V_n\cup\{\phi b_jb_k:\phi\in\cS_n,\ 1\le j,k\le m\}
 \right),
\end{align*}
linearity and finite composition prove the claim on $V_{n+1}$.  Thus the
claim holds for every
\begin{align*}
 \phi\in\bigcup_{n\ge0}V_n=\cS_\infty.
\end{align*}

Now let $(f,g)\in\Hh$.  By \eqref{eq:saturation}, choose
$\phi_n,\psi_n\in\cS_\infty$ such that
\begin{align*}
 \phi_n\to f\quad\text{in }H^1,
 \qquad
 \psi_n\to g+af\quad\text{in }L^2.
\end{align*}
Since
\begin{align*}
 \mathsf G_{\psi_n}^1\circ\mathsf J_{\phi_n}^1(u,v)
 =(u+\phi_n,v+\psi_n-a\phi_n),
\end{align*}
these translations converge uniformly on $\Hh$ to
\begin{align*}
 (u,v)\longmapsto(u+f,v+g).
\end{align*}
Choose $n$ so large that the translation error is smaller than
$\varepsilon/2$, and then approximate
$\mathsf G_{\psi_n}^1\circ\mathsf J_{\phi_n}^1$ on $K$ within
$\varepsilon/2$ by a controlled endpoint of duration less than $\tau$.
This proves \eqref{eq:small-time-arbitrary-translation}.
\end{proof}

A direct consequence of the approximate controllability is the following topological irreducibility. 
\begin{theorem}\label{thm:full-support}
Under \eqref{eq:saturation}, for every $\mathsf x\in\Hh$, every $T>0$, and every
nonempty open $O\subset\Hh$,
\begin{align*}
 P_T(\mathsf x,O)>0.
\end{align*}
\end{theorem}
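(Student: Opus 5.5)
The plan is the standard support theorem argument: approximate controllability, continuity of the endpoint map in the driver, and full support of Wiener measure. Fix $\mathsf x\in\Hh$, $T>0$, and a ball $B_\Hh(\mathsf y,2\varepsilon)\subset O$. The argument has three steps.

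\emph{Step 1 (control).} I first build a control $h\in H_T$ with $\norm{\Phi_T^h(\mathsf x)-\mathsf y}_{\Hh}<\varepsilon$. Let the uncontrolled flow run freely on $[0,T-\tau]$ for a small $\tau\in(0,T)$, and set $\mathsf x'=\Phi^0_{T-\tau}(\mathsf x)$. Next apply \Cref{prop:small-time-translation-saturation} with $K=\{\mathsf x'\}$ and translation $\mathsf y-\mathsf x'$. This gives $t<\tau$ and a control of duration $t$ steering $\mathsf x'$ to within $\varepsilon/2$ of $\mathsf y$.

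Two gaps remain. The remaining time $\tau-t$ must be absorbed, and I see two options:
\begin{itemize}
\item run the free flow first for time $T-t$ instead, which is possible because \Cref{prop:small-time-translation-saturation} produces \emph{some} $t<\tau$ depending on $K$;
\item more robustly, invoke it with $K=\{\Phi^0_{T-s}(\mathsf x):s\in[0,\tau]\}$, which is compact by continuity of the deterministic flow.
\end{itemize}
Either way we obtain a time $t$ and a control $h_1\in H_t$ with $\Phi^{h_1}_t(\Phi^0_{T-t}(\mathsf x))$ within $\varepsilon$ of $\mathsf y$. Concatenating the zero control on $[0,T-t]$ with $h_1$ gives an element $h\in H_T$, by the flow property of the pathwise solution map.

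\emph{Step 2 (continuity in the driver).} I then show that $\omega\mapsto\Phi_T(\mathsf x,\omega)$ is continuous from $E_T=C_0([0,T];\R^m)$ to $\Hh$. I use the shifted formulation \eqref{eq:pathwise-shifted-wave}: the driver enters only through the additive term $B\omega$ in $L^\infty(0,T)$, with smooth profiles, so the map is a continuous perturbation of an energy-wellposed cubic wave equation. The difference of two solutions $X^\omega$ and $X^{\omega'}$ satisfies a linear wave equation with potential $\mathscr W$ plus a source of size $\norm{\omega-\omega'}_\infty$. I estimate it in $\Hh$ with the $L^2$ bound on $\mathscr W z$ from the proof of \Cref{prop:wave-stable-compact-structure} ($H^1\hookrightarrow L^6$) and Gronwall. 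Energy bounds are uniform for $\omega'$ in a sup-norm neighborhood of $\omega$, since \eqref{eq:pathwise-shifted-wave} has pathwise energy estimates depending on $\norm{\omega}_\infty$. I expect this step to be the main technical obstacle: one must check that the pathwise energy bound holds uniformly on a neighborhood and does not merely follow a.s. from the stochastic Lyapunov estimate. I would take it from the same energy identity used for global existence, which is pathwise for the shifted system. With this, the Cameron--Martin path $\iota h$ has a sup-norm neighborhood $N$ with $\Phi_T(\mathsf x,N)\subset B_\Hh(\mathsf y,2\varepsilon)$, and for such paths $\Phi_T(\mathsf x,\iota h)=\Phi_T^h(\mathsf x)$.

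\emph{Step 3 (conclusion).} Wiener measure has full support in $C_0([0,T];\R^m)$; equivalently, Cameron--Martin quasi-invariance gives positive mass to every sup-norm ball. Hence $\gamma_T(N)>0$. Since $\Phi_T(\mathsf x,\cdot)$ agrees $\gamma_T$-a.s. with the stochastic endpoint, $P_T(\mathsf x,O)\ge\gamma_T(N)>0$.
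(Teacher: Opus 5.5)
Your proposal is correct and follows the paper's proof. Your second option is exactly what the paper does: it applies \Cref{prop:small-time-translation-saturation} uniformly on the compact terminal arc $K_\delta=\{\Phi^0_s(\mathsf x):T-\delta\le s\le T\}$ with the fixed translation $\xi=\mathsf y-\Phi^0_T(\mathsf x)$, taking $\delta$ small enough that $\sup_{z\in K_\delta}\norm{z+\xi-\mathsf y}_{\Hh}<\varepsilon/2$, and then concludes, as you do, from continuity of the pathwise endpoint in the uniform driver topology and full support of Wiener measure. Your first option does not work as stated, because a control built for $\Phi^0_{T-\tau}(\mathsf x)$ gives no information when started from $\Phi^0_{T-t}(\mathsf x)$. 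Drop it, keep the arc version, and make explicit that the translation is fixed and the arc has diameter below $\varepsilon/2$.
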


\begin{proof}
Choose $y\in O$ and $\varepsilon>0$ such that $B_{\Hh}(y,3\varepsilon)\subset O$ and set $\xi=y-\Phi_T^0(\mathsf x)$. 
By continuity of the zero-control trajectory, for all sufficiently small
$\delta<T/2$,
\begin{align*}
 K_\delta=\{\Phi_s^0(\mathsf x):T-\delta\le s\le T\}
\end{align*}
is compact and satisfies
\begin{align}\label{eq:terminal-arc-translation}
 \sup_{z\in K_\delta}\norm{z+\xi-y}_{\Hh}<\frac{\varepsilon}{2}.
\end{align}

Apply \Cref{prop:small-time-translation-saturation} to the translation
$z\mapsto z+\xi$, the compact set $K_\delta$, and the time 
$\delta$.  There are $0<t<\delta$ and $h\in H_t$ such that
\begin{align*}
 \sup_{z\in K_\delta}
 \norm{\Phi_t^h(z)-(z+\xi)}_{\Hh}<\frac{\varepsilon}{2}.
\end{align*}
Since $\Phi_{T-t}^0(\mathsf x)\in K_\delta$, concatenating the zero control on
$[0,T-t]$ with $h$ on $[T-t,T]$ gives a Cameron--Martin driver
$\widehat h\in H_T$ for which, by \eqref{eq:terminal-arc-translation},
\begin{align*}
 \norm{\Phi_T^{\widehat h}(\mathsf x)-y}_{\Hh}<\varepsilon.
\end{align*}

By the shifted formulation \eqref{eq:pathwise-shifted-wave}, the standard
wave energy estimate and $H^1(\mathbb T^3)\hookrightarrow L^6(\mathbb T^3)$ imply that the solution map is
continuous at $\widehat h$ in the uniform driver topology.  Hence there is
$r>0$ such that
\begin{align*}
 \norm{\omega-\widehat h}_{C([0,T];\R^m)}<r
 \quad\Longrightarrow\quad
 \norm{\Phi_T(\mathsf x,\omega)-y}_{\Hh}<2\varepsilon.
\end{align*}
Wiener measure has full support on $C_0([0,T];\R^m)$
\cite{Bogachev1998}, and therefore
\begin{align*}
 \mathbb P\left(
 \norm{W-\widehat h}_{C([0,T];\R^m)}<r
 \right)>0.
\end{align*}
Since $B_{\Hh}(y,2\varepsilon)\subset O$, this yields
\begin{align*}
 P_T(\mathsf x,O)>0.
\end{align*}
\end{proof}

The next proposition combines topological irreducibility with the
decaying--regular decomposition to obtain uniform accessibility on
bounded energy sets, thereby verifying \Cref{ass:QS-accessibility} for
the wave equation.

\begin{proposition}[Uniform irreducibility]
\label{gap:prop:uniform-accessibility}
For every $R<\infty$ and $\varepsilon>0$, there is
$T_{R,\varepsilon}<\infty$ such that
\begin{align}\label{gap:eq:uniform-small-ball-access}
 \inf_{\Vv(\mathsf x)\le R}
 P_T\bigl(\mathsf x,B_{\Hh}(0,\varepsilon)\bigr)>0
\end{align}
for every $T\ge T_{R,\varepsilon}$.
\end{proposition}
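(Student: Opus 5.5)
The plan is to reduce uniform accessibility on the energy sublevel $\{\Vv\le R\}$ to pointwise positivity on a compact set, using the decaying--regular decomposition $X_t=S_a(t)\mathsf x+Y_t$ from \Cref{lem:strong-asymptotic-compactness}. The sublevel set is bounded in $\Hh$ but not compact, so \Cref{thm:full-support} cannot be applied directly with a uniform lower bound.

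\emph{Step 1: reach a compact set in a fixed time.} By \eqref{eq:semigroup-decay-scale}, choose $t_0$ so large that $\norm{S_a(t_0)\mathsf x}_{\Hh}\le M_0\e^{-\varpi_0 t_0}C_R<\varepsilon_1$ uniformly for $\Vv(\mathsf x)\le R$, with $\varepsilon_1$ fixed below. By \eqref{eq:strong-remainder-energy-uniform} and Markov's inequality, pick $\Lambda_0$ such that
\begin{align*}
 \inf_{\Vv(\mathsf x)\le R}\Prob\bigl(\norm{Y_{t_0}}_{\Hs{s_0}}\le\Lambda_0\bigr)\ge\tfrac12 .
\end{align*}
Then with probability at least $1/2$, $X_{t_0}$ lies in the $\varepsilon_1$-neighbourhood of the set $K=\overline{\{\norm{z}_{\Hs{s_0}}\le\Lambda_0\}}^{\Hh}$, which is compact in $\Hh$.

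\emph{Step 2: uniform positivity from a compact set.} For each fixed $T_1>0$, I will show that $\inf_{z\in K_{\varepsilon_1}}P_{T_1}(z,B_{\Hh}(0,\varepsilon))>0$, where $K_{\varepsilon_1}$ is a small closed neighbourhood of $K$. First, for $z\in K$ choose a ball $O_z$ with $B(0,\varepsilon)\supset\overline{O_z}$ around a point reached by a controlled trajectory, as in the proof of \Cref{thm:full-support}. By continuity of the pathwise solution map $\Phi_{T_1}(\cdot,\cdot)$ jointly in the initial datum (in $\Hh$) and the driver (uniform topology), which follows from the energy estimate for \eqref{eq:pathwise-shifted-wave}, there are $r_z,\kappa_z>0$ such that $\Phi_{T_1}(z',\omega)\in B(0,\varepsilon)$ whenever $\norm{z'-z}_{\Hh}<\kappa_z$ and $\norm{\omega-\widehat h_z}_\infty<r_z$. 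Full support of Wiener measure then gives a uniform positive probability on the ball of radius $\kappa_z$ around $z$. A finite subcover of $K$ yields a uniform lower bound $p_1>0$ on a neighbourhood $\{{\rm dist}(\cdot,K)<\varepsilon_1\}$, with $\varepsilon_1$ chosen as the Lebesgue number. Equivalently, $z\mapsto P_{T_1}(z,B(0,\varepsilon))$ is lower semicontinuous by this same continuity and Fatou's lemma, so it has a positive minimum on the compact $K$ and stays positive nearby.

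\emph{Step 3: Markov property and all large times.} For $T=t_0+T_1$, Chapman--Kolmogorov gives $P_T(\mathsf x,B(0,\varepsilon))\ge\frac12p_1$. Since $T_1>0$ was arbitrary, this holds for every $T\ge t_0+T_1$ with $T_1$ ranging over an interval. One way to see this is that $K$, $\varepsilon_1$ and $t_0$ are independent of $T_1$. One must make sure $p_1$ works for every $T_1$ in the relevant range, but it suffices to redo Step 2 for each $T_1$ separately, since only positivity (not uniformity in $T$) is claimed. Alternatively, one can set $t_0=T-1$, which works for all $T\ge t_0^{\min}+1$ with $T_1=1$ fixed. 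I prefer this choice: it gives $T_{R,\varepsilon}=t_0^{\min}+1$. Note that the bound of Step 1 only improves as $t_0$ grows, since \eqref{eq:strong-remainder-energy-uniform} is uniform in $t$.

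The main obstacle is Step 2: the joint continuity of the endpoint map in (initial datum, driver) uniformly near a compact set of $\Hh$. The cubic term requires care, and I would use the shifted formulation with Gronwall on the energy difference via $H^1\hookrightarrow L^6$, applied locally around a fixed bounded trajectory.
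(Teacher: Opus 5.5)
Your proposal is correct and follows the paper's proof essentially step for step. The ingredients match: a $t$-uniform compact set $K$ capturing $Y_t$ with probability at least $1/2$ from \eqref{eq:strong-remainder-energy-uniform}; a positive lower bound for $P_\tau(\cdot,B_{\Hh}(0,\varepsilon))$ on a neighbourhood of $K$ via full support plus lower semicontinuity (your joint-continuity/finite-subcover variant just unpacks the paper's Feller argument); exponential decay of $S_a(t)\mathsf x$ to land in that neighbourhood; and Chapman--Kolmogorov with a fixed final block. Your aside that $\varepsilon_1$ and $t_0$ are independent of $T_1$ is inaccurate, since both depend on $T_1$ through Step 2. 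This is harmless, because you then fix $T_1=1$ and let the first block length range over $[t_0^{\min},\infty)$, exactly as the paper does with a fixed $\tau$.
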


\begin{proof}
For $\mathsf x\in\mathbb V_R$, write
\begin{align*}
 X_t=S_a(t)\mathsf x+Y_t,
\end{align*}
suppressing the dependence of $X$ and $Y$ on $\mathsf x$.
By \eqref{eq:strong-remainder-energy-uniform} and the compact Sobolev
embedding, there is a compact set $K\subset\Hh$ such that
\begin{align}\label{gap:eq:access-compact-remainder}
 \inf_{\Vv(\mathsf x)\le R}\inf_{t\ge0}
 \Prob \{Y_t\in K\}\ge\frac12.
\end{align}
Fix any $\tau>0$ and put $B=B_{\Hh}(0,\varepsilon)$.  By
\Cref{thm:full-support}, $P_\tau(z,B)>0$ for every $z\in\Hh.$
Since $P_\tau$ is Feller and $B$ is open, the map
$z\mapsto P_\tau(z,B)$ is lower semicontinuous.  Compactness of $K$
therefore gives
\begin{align*}
 q:=\inf_{z\in K}P_\tau(z,B)>0.
\end{align*}
Hence there is an open neighborhood $U\supset K$ such that
\begin{align}\label{gap:eq:access-terminal-minorization}
 \inf_{z\in U}P_\tau(z,B)\ge\frac q2.
\end{align}
Let $\delta=\operatorname{dist}_{\Hh}(K,U^c)>0$.  By
\eqref{eq:semigroup-decay-scale} and boundedness of the energy sublevel,
there is $t_0<\infty$ such that
\begin{align*}
 \sup_{\Vv(\mathsf x)\le R}
 \norm{S_a(t)\mathsf x}_{\Hh}<\delta,
 \qquad t\ge t_0.
\end{align*}
Thus, on $\{Y_t\in K\}$, one has
$X_t\in U$, and \eqref{gap:eq:access-compact-remainder}
implies
\begin{align*}
 \inf_{\Vv(\mathsf x)\le R}P_t(\mathsf x,U)\ge\frac12,
 \qquad t\ge t_0.
\end{align*}
Using the Markov property and
\eqref{gap:eq:access-terminal-minorization},
\begin{align*}
 P_{t+\tau}(\mathsf x,B)
 =\int_{\Hh}P_\tau(z,B)\,P_t(\mathsf x,\dd z)
 \ge\frac q2P_t(\mathsf x,U)\ge\frac q4
\end{align*}
uniformly for $\Vv(\mathsf x)\le R$ and $t\ge t_0$.  Taking
$T_{R,\varepsilon}=t_0+\tau$ proves
\eqref{gap:eq:uniform-small-ball-access}.
\end{proof}

\subsection{Geometric characterization of saturation}\label{section:geometric-saturation}
We now give two complementary descriptions of the density condition
\eqref{eq:saturation}.  For general smooth forcing profiles, the saturation
closure admits a geometric characterization through the rank one profile
map.  For Fourier-mode forcing, it admits an exact lattice
characterization.

Let
\begin{align*}
 b=(b_1,\ldots,b_m):\mathbb T^3\longrightarrow\mathbb R^m,
 \qquad
 \Gamma_b(x)=b(x)b(x)^{\mathsf T}\in\operatorname{Sym}_m,
\end{align*}
and write
\begin{align*}
 \mathbb R[z_1,\ldots,z_m]_{\mathrm{odd}}
 =\{P\in\mathbb R[z_1,\ldots,z_m]:P(-z)=-P(z)\}.
\end{align*}

\begin{theorem}
\label{thm:odd-algebra-geometry}
The following statements hold.
\begin{enumerate}[label=\textup{(\roman*)}]
\item The saturation closure has the exact algebraic description
\begin{align}\label{eq:exact-odd-algebra}
 \cS_\infty
 =\{P\circ b:P\in\mathbb R[z_1,\ldots,z_m]_{\mathrm{odd}}\}.
\end{align}

\item One has
\begin{align}\label{eq:C0-geometric-criterion}
 \overline{\cS_\infty}^{\,C^0}=C(\mathbb T^3)
\end{align}
if and only if $b(x)\ne0$ for every $x$ and $\Gamma_b$ is injective.

\item For every integer $k\ge1$, one has
\begin{align}\label{eq:C1-geometric-criterion}
 \overline{\cS_\infty}^{\,C^k}=C^k(\mathbb T^3)
\end{align}
if and only if $\Gamma_b$ is a smooth embedding.  In particular,
\eqref{eq:saturation} holds whenever $\Gamma_b$ is a smooth embedding.
\end{enumerate}
\end{theorem}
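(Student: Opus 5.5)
For (i), I would prove both inclusions by induction on the recursion \eqref{eq:saturation-closure}. Every element of $\cS_n$ is a product $b_{i_0}b_{j_1}b_{k_1}\cdots b_{j_n}b_{k_n}$, hence a monomial of odd degree $2n+1$ evaluated at $b$. Thus $\cS_\infty$ lies in the set of odd polynomials composed with $b$. Conversely, every odd monomial $z^\gamma$ with $|\gamma|=2n+1$ can be written as $z_{i_0}\prod_{l=1}^n z_{j_l}z_{k_l}$, and so $z^\gamma\circ b\in\cS_n$. Taking spans gives \eqref{eq:exact-odd-algebra}.

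For (ii), I would write $\mathcal O=\{P\circ b\}$ with $P$ odd, and use the fact that odd polynomials factor as $z_i\,Q(zz^{\mathsf T})$. More precisely, $\mathcal O=\sum_i b_i\,\mathcal A$, where $\mathcal A$ is the algebra generated by $1$ and the entries of $\Gamma_b$. This $\mathcal A$ is the even subalgebra, since every even polynomial is a polynomial in the products $z_jz_k$. Also $\mathcal O$ is an $\mathcal A$-module.

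\emph{Sufficiency.} If $\Gamma_b$ is injective, Stone--Weierstrass makes $\mathcal A$ dense in $C(\mathbb T^3)$. If moreover $b$ never vanishes, I would choose a finite partition of unity $\{\chi_\alpha\}$ such that on each $\supp\chi_\alpha$ some $b_{i_\alpha}$ is bounded away from $0$. Then $f=\sum_\alpha b_{i_\alpha}\cdot(\chi_\alpha f/b_{i_\alpha})$, and approximating each quotient in $\mathcal A$ gives density of $\mathcal O$.

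\emph{Necessity.} If $b(x_0)=0$, every element of $\mathcal O$ vanishes at $x_0$, so $\mathcal O$ is not dense. If $\Gamma_b(x)=\Gamma_b(y)$ with $x\ne y$, then $b(y)=\pm b(x)$. In the case $b(y)=b(x)$, every element of $\mathcal O$ takes equal values at $x$ and $y$. In the case $b(y)=-b(x)$, every element satisfies $g(y)=-g(x)$. Both relations survive uniform limits and fail for a suitable continuous function.

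For (iii), I expect the $C^k$ version to be the main obstacle. The plan is a Nachbin-type density theorem: a subalgebra of $C^k$ on a compact manifold is $C^k$-dense iff it separates points, has no common zero, and separates tangent vectors. Since $\Gamma_b$ is an embedding, $\mathcal A$ satisfies these conditions. Therefore $\mathcal A$ is $C^k$-dense, and the partition of unity argument goes through in $C^k$. The partition argument needs $b$ nowhere vanishing, which follows because injectivity of $\Gamma_b$ together with immersivity excludes zeros of $b$. At a zero $x_0$ we have $d\Gamma_b(x_0)=d(bb^{\mathsf T})=0$, so $\Gamma_b$ is not immersive there.

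For necessity in (iii), point separation and nonvanishing follow from (ii). Separation of tangent vectors is the delicate step. If $d\Gamma_b(x)v=0$ with $b(x)\ne0$, then $(db\,v)b^{\mathsf T}+b(db\,v)^{\mathsf T}=0$. This forces $db(x)v=0$, since a rank-$\le2$ symmetric matrix $wb^{\mathsf T}+bw^{\mathsf T}$ vanishes only when $w=0$ for $b\neq0$. Then every $P\circ b$ has zero derivative along $v$ at $x$, and this property is closed in $C^1$, so density fails.

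Finally, since $C^k$-density for $k\ge1$ implies $H^1$-density on the torus, \eqref{eq:saturation} follows from the $k=1$ case.
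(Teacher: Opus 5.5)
Your proposal is correct. Parts (i), (ii) and the necessity half of (iii) follow the paper's argument closely: odd monomials, the decomposition $\cS_\infty=\sum_i b_i\mathcal A$, Stone--Weierstrass for the even algebra, the sign relation $F(y)=\pm F(x)$, and $wb^{\mathsf T}+bw^{\mathsf T}=0\Rightarrow w=0$.

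In (ii) you split $f$ with a partition of unity adapted to nonvanishing coordinates of $b$. The paper instead uses the single global identity $f=\sum_i b_ic_i$ with $c_i=fb_i/\abs{b}^2$, which needs no partition and works unchanged in $C^0$ and $C^k$.

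The genuine difference is the sufficiency half of (iii). You invoke Nachbin's theorem for the even algebra $\mathcal A$: its three hypotheses (no common zero, point separation, tangent-vector separation) are exactly unitality, injectivity and immersivity of $\Gamma_b$. The paper proves the needed special case by hand. It extends $c_i\circ\Gamma_b^{-1}$ off the compact submanifold $\Gamma_b(\mathbb T^3)\subset\operatorname{Sym}_m$ via a tubular-neighbourhood retraction and a cutoff, approximates the extension by polynomials on a box in $C^k$, and pulls back by $\Gamma_b$. This works because $\mathcal A$ is generated by the coordinates of an embedding, which is precisely where Nachbin's theorem reduces to ambient polynomial approximation.

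Your route is shorter and makes the three geometric conditions appear symmetrically in both directions. The paper's route is self-contained, using only the tubular neighbourhood theorem and Weierstrass approximation with derivatives.

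You also make explicit something the paper leaves implicit in its sufficiency argument: immersivity of $\Gamma_b$ forces $b$ to be nowhere zero, since $\dd\Gamma_b$ vanishes at zeros of $b$. This is what makes $c_i$ (or your quotients) well defined.

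Two small precision points. First, elements of $\cS_n$ are odd monomials of degree \emph{at most} $2n+1$, not exactly $2n+1$. Second, in the necessity direction you should state the standard fact that an injective immersion of the compact manifold $\mathbb T^3$ is an embedding.
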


\begin{proof}
By the recursion \eqref{eq:saturation-closure} and the definition
\eqref{eq:saturation-space}, $\operatorname{span}_{\mathbb R}\cS_n$ is the real linear span of the monomials in
$b_1,\ldots,b_m$ of odd total degree at most $2n+1$.  Taking the union over $n$ gives
\eqref{eq:exact-odd-algebra}.
Let $\cA_b$ be the unital algebra generated by the entries of $\Gamma_b$,
that is,
\begin{align*}
 \cA_b
 =\mathbb R\bigl[(\Gamma_b)_{jk}:1\le j,k\le m\bigr].
\end{align*}
Since $(\Gamma_b)_{jk}=b_jb_k$ and every odd-degree monomial is one
linear factor times a product of quadratic monomials, the algebraic
description \eqref{eq:exact-odd-algebra} may equivalently be written as
\begin{align}\label{eq:odd-module-decomposition}
 \cS_\infty=\sum_{i=1}^m b_i\cA_b.
\end{align}

Suppose first that $\Gamma_b$ is injective.  Then $\cA_b$ separates
points, and the real Stone--Weierstrass theorem gives
\begin{align}\label{eq:even-algebra-C0-density}
 \overline{\cA_b}^{\,C^0}=C(\mathbb T^3).
\end{align}
If, in addition, $b$ is nowhere zero, then for
$f\in C(\mathbb T^3)$ the functions $c_i=fb_i/|b|^2$ for $1\le i\le m$ 
are continuous and satisfy $f=\sum_i b_ic_i$.  Approximating each $c_i$
by elements of $\cA_b$ in \eqref{eq:even-algebra-C0-density} and using
\eqref{eq:odd-module-decomposition} proves the sufficiency in
\eqref{eq:C0-geometric-criterion}. For necessity, if $b(x_0)=0$, every member of $\cS_\infty$ vanishes at $x_0$.
If $\Gamma_b(x)=\Gamma_b(y)$ and neither vector is zero, equality of the
two rank one matrices gives $b(y)=\sigma b(x)$
for some $\sigma\in\{1,-1\}$.  By
\eqref{eq:exact-odd-algebra}, every $F\in\cS_\infty$ then satisfies
\begin{align*}
 F(y)=\sigma F(x).
\end{align*}
If $\sigma=1$, this prevents point separation; if $\sigma=-1$, it
prevents uniform approximation of the constant function one.  This proves
the necessity in \eqref{eq:C0-geometric-criterion}.

Suppose next that $\Gamma_b$ is a smooth embedding.  Its image is therefore a compact embedded submanifold
of $\operatorname{Sym}_m$.  For $f\in C^\infty(\mathbb T^3)$ the
coefficients $c_i=fb_i/|b|^2$ are smooth, and there exist a tubular neighborhood $U$ of
$\Gamma_b(\mathbb T^3)$ and a smooth retraction
$\pi:U\rightarrow\Gamma_b(\mathbb T^3)$ such that 
$c_i\circ\Gamma_b^{-1}\circ\pi$ is a smooth extension of
$c_i\circ\Gamma_b^{-1}$ to $U$.  Choosing $\chi\in C_c^\infty(U)$ with
$\chi=1$ near $\Gamma_b(\mathbb T^3)$ and extending by zero outside $U$,
we obtain a compactly supported smooth extension of
$c_i\circ\Gamma_b^{-1}$ to $\operatorname{Sym}_m$.  On a box containing
the image, there exist multivariate polynomials $B_n$ approximating this
extension together with all derivatives up to any prescribed finite
order.  Hence, after restriction to $\Gamma_b(\mathbb T^3)$ and pullback
by $\Gamma_b$,
\begin{align*}
 B_n\circ\Gamma_b\longrightarrow c_i
 \qquad\text{in }C^k(\mathbb T^3)
\end{align*}
for every finite $k$. Each $B_n\circ\Gamma_b$ is a polynomial in the entries
$(\Gamma_b)_{jk}=b_jb_k$, yielding 
$B_n\circ\Gamma_b\in\cA_b$.  Thus $c_i$ is approximable in every finite
$C^k$ norm by elements of $\cA_b$.  The identity $f=\sum_i b_ic_i$ and
\eqref{eq:odd-module-decomposition} therefore show that every smooth
function is approximable by $\cS_\infty$ in every finite $C^k$ norm, proving 
\eqref{eq:C1-geometric-criterion} for every integer $k\ge1$.

Conversely, suppose that \eqref{eq:C1-geometric-criterion} holds for some
integer $k\ge1$.  Since convergence in $C^k$ implies convergence in
$C^0$, the preceding part shows that $b$ is nowhere zero and $\Gamma_b$
is injective.  If $0\ne\xi\in T_x\mathbb T^3$ and
$\dd\Gamma_b(x)\xi=0$, put $w=\dd b(x)\xi.$
Then
\begin{align*}
 wb(x)^{\mathsf T}+b(x)w^{\mathsf T}=0.
\end{align*}
Multiplication by $b(x)$, followed by the orthogonal decomposition of $w$
parallel and perpendicular to $b(x)$, gives $w=0$.  Consequently,
\begin{align*}
 \dd(P\circ b)(x)\xi=0
\end{align*}
for every polynomial $P$.  By \eqref{eq:exact-odd-algebra}, the same is
true for every element of $\cS_\infty$.  Since $k\ge1$, convergence in
$C^k$ implies convergence in $C^1$, whereas there exists a smooth
function whose derivative at $x$ in the direction $\xi$ is nonzero, contradicting \eqref{eq:C1-geometric-criterion}.  Thus $\Gamma_b$ is
an injective immersion, and compactness of $\mathbb T^3$ makes it a smooth
embedding.

In particular, taking $k$ sufficiently large and using the continuous
embedding $C^k(\mathbb T^3)\hookrightarrow H^1(\mathbb T^3)$ gives
\eqref{eq:saturation}.
\end{proof}

\begin{proposition}
Four profiles are necessary and sufficient for the smooth-embedding
criterion of \Cref{thm:odd-algebra-geometry}.  More precisely, fix $ 0<r<R,\, L>R+r$ and set, with $x=(x_1,x_2,x_3)$,
\begin{align}
 b_1(x)&=L+(R+r\cos x_3)\cos x_1,\notag\\
 b_2(x)&=(R+r\cos x_3)\sin x_1,\notag\\
 b_3(x)&=(R+r\sin x_3)\cos x_2,\label{eq:four-profile-embedding}\\
 b_4(x)&=(R+r\sin x_3)\sin x_2.\notag
\end{align}
Then $\Gamma_b$ is a smooth embedding, and these four profiles satisfy
\eqref{eq:saturation}. 
\end{proposition}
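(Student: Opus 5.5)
The plan has two parts. For necessity I will use the rank-one structure of $\Gamma_b$ and a dimension count. For sufficiency I will reduce the embedding property of $\Gamma_b$ to that of $b$ itself, then read the explicit formulas \eqref{eq:four-profile-embedding} in polar coordinates. Saturation \eqref{eq:saturation} will then follow from \Cref{thm:odd-algebra-geometry}(iii).

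\emph{Necessity.} Suppose $\Gamma_b$ is a smooth embedding for some $m$. By the $C^0$ part of the proof of \Cref{thm:odd-algebra-geometry}, $b$ is nowhere zero.
\begin{itemize}
\item The map $\Gamma_b$ factors as $\Gamma_b=\kappa\circ b$, where $\kappa(z)=zz^{\mathsf T}$ on $\mathbb R^m\setminus\{0\}$. The map $\kappa$ induces a smooth embedding of $\mathcal C_m=(\mathbb R^m\setminus\{0\})/\{\pm1\}$ onto the rank-one positive semidefinite cone, which is an $m$-dimensional manifold.
\item Hence $\Gamma_b$ is an injective immersion of the compact $3$-manifold $\mathbb T^3$ into the $m$-manifold $\mathcal C_m$. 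An immersion forces $m\ge3$.
\item If $m=3$, an injective immersion between $3$-manifolds is a local diffeomorphism, so its image is open. The image is also compact, hence closed. Since $\mathcal C_3$ is connected, the image would be all of $\mathcal C_3$. But $\mathcal C_3$ is noncompact, a contradiction.
\end{itemize}
Therefore $m\ge4$.

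\emph{Reduction for sufficiency.} Since $L>R+r$, we have $b_1\ge L-R-r>0$, so $b$ is nowhere zero. If $\Gamma_b(x)=\Gamma_b(y)$, then $b(y)=\pm b(x)$ as in the proof of \Cref{thm:odd-algebra-geometry}, and positivity of $b_1$ forces the sign $+$. Hence injectivity of $\Gamma_b$ reduces to injectivity of $b$.

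For the differential, the same proof shows that $\dd\Gamma_b(x)\xi=0$ implies $\dd b(x)\xi=0$. So it suffices that $b$ is an injective immersion. Compactness of $\mathbb T^3$ then upgrades $\Gamma_b$ to a smooth embedding.

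\emph{Injectivity and immersion of $b$.} Set $\rho_1=R+r\cos x_3$ and $\rho_2=R+r\sin x_3$; both are positive because $r<R$. The map $b$ is the composition of
\begin{align*}
 x\mapsto(\rho_1,x_1,\rho_2,x_2)\in(0,\infty)\times\mathbb T\times(0,\infty)\times\mathbb T
\end{align*}
with the polar-coordinate map $(\rho_1,x_1,\rho_2,x_2)\mapsto(L+\rho_1\cos x_1,\rho_1\sin x_1,\rho_2\cos x_2,\rho_2\sin x_2)$. The polar map is a diffeomorphism onto its image, so it suffices to treat the first map.
\begin{itemize}
\item Injectivity: $\rho_1$ and $\rho_2$ determine $\cos x_3$ and $\sin x_3$, hence $x_3$ mod $2\pi$. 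The angles $x_1,x_2$ are read off directly.
\item Immersion: the derivative sends $\partial_{x_1}$ and $\partial_{x_2}$ to the two angular coordinate directions. It sends $\partial_{x_3}$ to $(-r\sin x_3,0,r\cos x_3,0)$ in the $(\rho_1,x_1,\rho_2,x_2)$ coordinates, which is never zero. These three vectors are linearly independent, so the derivative has rank $3$.
\end{itemize}

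Thus $\Gamma_b$ is a smooth embedding, and \Cref{thm:odd-algebra-geometry}(iii) yields \eqref{eq:saturation}. The only delicate point is the $m=3$ exclusion, which uses the open-and-closed image argument together with the noncompactness of $\mathcal C_3$.
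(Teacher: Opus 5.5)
Your proposal is correct and follows the paper's proof essentially step for step: the same dimension count with the open-and-compact-image argument excluding $m=3$, and the same reduction of the embedding property of $\Gamma_b$ to that of $b$ via positivity of $b_1$ and the rank-one kernel computation. The differences are minor. The paper runs the $m=3$ argument for $b$ in $\mathbb R^m$, using $\ker\dd b(x)\subset\ker\dd\Gamma_b(x)$, which avoids needing $b\ne0$ or the manifold structure of the rank-one cone, whereas you run it for $\Gamma_b$ in $\mathcal C_m$; you also verify explicitly that $b$ is an immersion, which the paper leaves implicit in its coordinate-recovery argument.
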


\begin{proof}
If $\Gamma_b$ is an immersion, then $b$ is an immersion as $\ker\dd b(x)\subset\ker\dd\Gamma_b(x)$.  For
$m\le2$ this is excluded by dimension.  For $m=3$, it would make $b$ a
local diffeomorphism, so $b(\mathbb T^3)$ would be both open in
$\mathbb R^3$ and compact, which is impossible.  Thus $m\ge4$.

For \eqref{eq:four-profile-embedding}, one has $b_1>L-(R+r)>0$.  Moreover,
$\sqrt{(b_1-L)^2+b_2^2}=R+r\cos x_3$ and
$\sqrt{b_3^2+b_4^2}=R+r\sin x_3$, so these two quantities recover $x_3$,
while $(b_1-L,b_2)$ and $(b_3,b_4)$ recover $x_1$ and $x_2$.  Hence $b$
is a smooth embedding.  If $\Gamma_b(x)=\Gamma_b(y)$, then
$b(y)=\pm b(x)$, and positivity of the first component excludes the minus
sign.  Since $\Gamma_b$ is also an immersion, it is a smooth embedding.
\end{proof}

We now specialise to Fourier-mode forcing.  Let
$\cK=\{k_1,\ldots,k_N\}\subset\mathbb Z^3\setminus\{0\}$, $N\ge1$,
satisfy $\cK\cap(-\cK)=\varnothing$, and assume that
\begin{align}\label{eq:Fourier-mode-forcing}
 \cS_0
 =
 \{1:\varepsilon=1\}\cup
 \{\cos(k\cdot x),\sin(k\cdot x):k\in\cK\},
 \qquad \varepsilon\in\{0,1\},
\end{align}
where the first set is empty when $\varepsilon=0$.

\begin{theorem}
\label{thm:sharp-Fourier-criterion}
Set
\begin{align*}
 \cA=\{\pm k:k\in\cK\}\cup\{0:\varepsilon=1\},
 \qquad
 \Gamma_{\mathrm{pair}}
 =\spanop_{\mathbb Z}\{k+k':k,k'\in\cA\}.
\end{align*}
Then, for any $k^\circ\in\cA$,
\begin{align}\label{eq:exact-Fourier-coset}
 \cS_\infty^{\mathbb C}
 =
 \spanop_{\mathbb C}\{\mathrm e^{\mathrm i\ell\cdot x}:
 \ell\in k^\circ+\Gamma_{\mathrm{pair}}\}.
\end{align}
Consequently, the following are equivalent:
\begin{enumerate}[label=\textup{(\alph*)}]
\item the forcing is cubically saturating;
\item $\Gamma_{\mathrm{pair}}=\mathbb Z^3$;
\item there are no nonzero $\theta\in\mathbb T^3$ and
$c\in\{1,-1\}$ such that
\begin{align}\label{eq:common-sign-translation}
 \mathrm e^{\mathrm ik\cdot\theta}=c
 \qquad\text{for every }k\in\cK,
\end{align}
with $c=1$ when $\varepsilon=1$.
\end{enumerate}
\end{theorem}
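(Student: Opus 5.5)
The plan is to work with complex exponentials and use \Cref{thm:odd-algebra-geometry}(i), which identifies $\cS_\infty$ with the odd polynomials in $b_1,\ldots,b_m$. Complexifying, $\cS_0^{\mathbb C}=\spanop_{\mathbb C}\{\e^{\mathrm i a\cdot x}:a\in\cA\}$, since $\cos(k\cdot x),\sin(k\cdot x)$ span the same complex space as $\e^{\pm\mathrm ik\cdot x}$, and the constant $1=\e^{\mathrm i0\cdot x}$ is included when $\varepsilon=1$. Odd-degree monomials in the $b_j$ are then complex combinations of products $\e^{\mathrm i(a_1+\cdots+a_{2n+1})\cdot x}$ with $a_i\in\cA$, and conversely every such product lies in $\cS_\infty^{\mathbb C}$. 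Hence $\cS_\infty^{\mathbb C}$ is spanned by the exponentials with frequencies in
\begin{align*}
 \Sigma=\Bigl\{a_1+\cdots+a_{2n+1}:n\ge0,\ a_i\in\cA\Bigr\}.
\end{align*}

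The core step is to show $\Sigma=k^\circ+\Gamma_{\mathrm{pair}}$. The inclusion $\subset$ follows by writing $a_1+\cdots+a_{2n+1}=k^\circ+(a_1-k^\circ)+\sum_{i\ge1}(a_{2i}+a_{2i+1})$. Here $a_1-k^\circ=a_1+(-k^\circ)$ is a pair sum because $\cA=-\cA$, and each $a_{2i}+a_{2i+1}$ is a pair sum as well.

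For $\supset$, note that $\Sigma+(k+k')\subset\Sigma$ for $k,k'\in\cA$, since appending two terms keeps the count odd. So $\Sigma$ is closed under adding the generators of $\Gamma_{\mathrm{pair}}$. It is also closed under subtracting them, since $\Sigma+(-k-k')\subset\Sigma$ uses $-k,-k'\in\cA$. Thus $\Sigma$ is a union of $\Gamma_{\mathrm{pair}}$-cosets. It meets only the coset of $k^\circ$, because every $a\in\cA$ satisfies $a-k^\circ\in\Gamma_{\mathrm{pair}}$. This proves \eqref{eq:exact-Fourier-coset}, and independence from the choice of $k^\circ$ follows from the same observation.

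For the equivalences, (b)$\Rightarrow$(a) is immediate. When the coset is all of $\mathbb Z^3$, trigonometric polynomials are dense in $H^1$, and the real parts of complex combinations lie in $\cS_\infty$ because $\cS_\infty$ is closed under conjugation. For (a)$\Rightarrow$(b), suppose $\Gamma_{\mathrm{pair}}\ne\mathbb Z^3$. Then the single coset $k^\circ+\Gamma_{\mathrm{pair}}$ misses some frequency $\ell$, and the Fourier coefficient at $\ell$ vanishes identically on $\overline{\cS_\infty}^{H^1}$, so saturation fails. Note that $\Gamma_{\mathrm{pair}}$ contains $2k$ for every $k\in\cK$, so it has finite index and the quotient is a finite abelian group.

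For (b)$\Leftrightarrow$(c), use Pontryagin duality: $\Gamma_{\mathrm{pair}}\ne\mathbb Z^3$ exactly when some nontrivial character $\theta\in\mathbb T^3$ is trivial on $\Gamma_{\mathrm{pair}}$. The character is trivial on all $2k$ and $k+k'$ precisely when $\e^{\mathrm ik\cdot\theta}=c$ is a common sign $c\in\{\pm1\}$ for all $k\in\cK$. When $\varepsilon=1$, the pairs $0+k$ force $c=1$. The main obstacle is just bookkeeping the parity and the $\pm$ symmetry of $\cA$ correctly; the duality step only requires identifying the annihilator of $\Gamma_{\mathrm{pair}}$.
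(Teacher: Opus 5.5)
Your proposal is correct and follows the paper's proof essentially step by step. You use the same odd-frequency set and the same two inclusions; your coset-closure phrasing of $\supset$ is the paper's remark that, because $\cA=-\cA$, integer combinations of pair sums are finite sums of pair sums. You also use the same Fourier-basis argument for (a)$\Leftrightarrow$(b) and the same character argument for (b)$\Leftrightarrow$(c).

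One aside is false but unused. $\Gamma_{\mathrm{pair}}$ need not have finite index: for example, $\cK=\{e_1\}$ with $\varepsilon=1$ gives $\Gamma_{\mathrm{pair}}=\mathbb Z e_1$. The duality step does not need finite index, since any proper subgroup of $\mathbb Z^3$ is annihilated by some nontrivial character.
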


\begin{proof}
By \eqref{eq:exact-odd-algebra}, $\cS_\infty$ is the linear span of all
odd products of elements of $\cS_0$, and hence depends only on the real
linear span of the initial forcing profiles.  After complexification,
\begin{align*}
 \spanop_{\mathbb C}\cS_0
 =
 \spanop_{\mathbb C}
 \{\mathrm e^{\mathrm ik\cdot x}:k\in\cA\},
\end{align*}
so we may work with the exponential generators
$\mathrm e^{\mathrm ik\cdot x}$, $k\in\cA$.  The frequencies produced by
odd products are precisely
\begin{align*}
 \mathscr C
 =
 \bigcup_{n\ge0}
 \{\ell_1+\cdots+\ell_{2n+1}:\ell_j\in\cA\}.
\end{align*}

Fix $k^\circ\in\cA$.  If
$\ell=\ell_1+\cdots+\ell_{2n+1}\in\mathscr C$, then
\begin{align*}
 \ell-k^\circ
 =
 (\ell_1-k^\circ)
 +(\ell_2+\ell_3)+\cdots
 +(\ell_{2n}+\ell_{2n+1})
 \in\Gamma_{\mathrm{pair}},
\end{align*}
because $-k^\circ\in\cA$.  Thus
$\mathscr C\subset k^\circ+\Gamma_{\mathrm{pair}}$.

Conversely, since $\cA=-\cA$, the set of pair sums is symmetric under
sign.  Hence every integer linear combination of pair sums can be written
as a finite sum of pair sums.  Adding $k^\circ$ therefore produces an odd
sum of elements of $\cA$, and so
$k^\circ+\Gamma_{\mathrm{pair}}\subset\mathscr C$.  Thus
\begin{align*}
 \mathscr C=k^\circ+\Gamma_{\mathrm{pair}},
\end{align*}
which proves \eqref{eq:exact-Fourier-coset}.

By the Fourier basis,
\begin{align*}
 \overline{\cS_\infty}^{\,H^s}=H^s(\mathbb T^3)
 \quad\Longleftrightarrow\quad
 k^\circ+\Gamma_{\mathrm{pair}}=\mathbb Z^3
 \quad\Longleftrightarrow\quad
 \Gamma_{\mathrm{pair}}=\mathbb Z^3
\end{align*}
for every finite $s$.  In particular, \textup{(a)} and \textup{(b)} are
equivalent.

Suppose that $\Gamma_{\mathrm{pair}}\ne\mathbb Z^3$.  Then there is a
nontrivial character of $\mathbb Z^3$ which is trivial on
$\Gamma_{\mathrm{pair}}$, hence a nonzero $\theta\in\mathbb T^3$ such
that $\mathrm e^{\mathrm i\gamma\cdot\theta}=1$ for every
$\gamma\in\Gamma_{\mathrm{pair}}$.  Since
$k+k'\in\Gamma_{\mathrm{pair}}$ for all $k,k'\in\cA$,
\begin{align*}
 \mathrm e^{\mathrm ik\cdot\theta}
 \mathrm e^{\mathrm ik'\cdot\theta}=1.
\end{align*}
Taking $k=k'$ shows that every
$\mathrm e^{\mathrm ik\cdot\theta}$ belongs to $\{1,-1\}$, while
comparison of two choices of $k$ shows that all these values are equal
to one common sign $c$.  If $0\in\cA$, then necessarily $c=1$.
Restricting to $k\in\cK$ gives \eqref{eq:common-sign-translation}.

Conversely, suppose that \eqref{eq:common-sign-translation} holds, with
$c=1$ when $\varepsilon=1$.  Since $\cA$ contains both $k$ and $-k$,
every $k\in\cA$ has character value $c$.  Hence every pair sum satisfies
\begin{align*}
 \mathrm e^{\mathrm i(k+k')\cdot\theta}=c^2=1.
\end{align*}
Thus $\Gamma_{\mathrm{pair}}$ is contained in the kernel of the
nontrivial character
$\ell\mapsto\mathrm e^{\mathrm i\ell\cdot\theta}$ and is therefore a
proper subgroup of $\mathbb Z^3$.  This proves the equivalence of
\textup{(b)} and \textup{(c)}.
\end{proof}

The next proposition gives useful arithmetic characterizations of
saturation.

\begin{proposition}
\label{cor:Fourier-arithmetic-criterion}
If $\varepsilon=1$, then
\begin{align}\label{eq:constant-mode-lattice-test}
 \Gamma_{\mathrm{pair}}
 =\spanop_{\mathbb Z}\cK,
\end{align}
and hence the forcing is saturating if and only if
$\spanop_{\mathbb Z}\cK=\mathbb Z^3$.

If $\varepsilon=0$, then
\begin{align}\label{eq:no-constant-pair-sum-formula}
 \Gamma_{\mathrm{pair}}
 =\spanop_{\mathbb Z}\{2k_1,k_j-k_1:2\le j\le N\},
\end{align}
and the forcing is saturating if and only if
\begin{align}\label{eq:no-constant-arithmetic-test}
 \spanop_{\mathbb Z}\cK=\mathbb Z^3,
 \qquad
 \sum_{j=1}^Nq_jk_j=0
 \quad\text{for some }q\in\mathbb Z^N
 \text{ with }\sum_{j=1}^Nq_j\text{ odd}.
\end{align}
Under the first condition in \eqref{eq:no-constant-arithmetic-test}, the
second is equivalently the absence of
$\eta\in(\mathbb Z/2\mathbb Z)^3$ satisfying
\begin{align}\label{eq:parity-hyperplane-obstruction}
 \eta\cdot k_j=1\pmod2,\qquad 1\le j\le N.
\end{align}
\end{proposition}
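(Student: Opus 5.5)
The plan is to compute $\Gamma_{\mathrm{pair}}$ in both cases directly from its definition, and then combine this with the equivalence (a)$\Leftrightarrow$(b) of \Cref{thm:sharp-Fourier-criterion}.

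\emph{Case $\varepsilon=1$.} Here $0\in\cA$, so the pair sums include $k+0=k$ for every $k\in\cK$. Hence $\spanop_{\mathbb Z}\cK\subset\Gamma_{\mathrm{pair}}$. Conversely, every pair sum $k+k'$ with $k,k'\in\cA=\{0\}\cup\pm\cK$ is an integer combination of elements of $\cK$. This gives \eqref{eq:constant-mode-lattice-test}, and the saturation criterion follows at once from \Cref{thm:sharp-Fourier-criterion}.

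\emph{Case $\varepsilon=0$.} Here $\cA=\pm\cK$, and the pair sums are $\pm k_i\pm k_j$. Set $\Lambda_0=\spanop_{\mathbb Z}\{2k_1,k_j-k_1\}$. I first show $\Lambda_0\subset\Gamma_{\mathrm{pair}}$: we have $2k_1=k_1+k_1$ and $k_j-k_1=k_j+(-k_1)$. For the reverse inclusion, note that $2k_j=2k_1+2(k_j-k_1)\in\Lambda_0$. Then
\begin{align*}
 k_i+k_j=2k_1+(k_i-k_1)+(k_j-k_1),\qquad k_i-k_j=(k_i-k_1)-(k_j-k_1),
\end{align*}
and the sign-flipped versions follow by negation. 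Both lie in $\Lambda_0$, which proves \eqref{eq:no-constant-pair-sum-formula}.

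An equivalent description will be convenient. We have
\[
\Gamma_{\mathrm{pair}}=\Bigl\{\textstyle\sum_j q_jk_j:\ q\in\mathbb Z^N,\ \sum_j q_j\ \text{even}\Bigr\}.
\]
Indeed, each generator has even coefficient sum. Conversely, any $q$ with even coefficient sum can be written as $\tfrac{s}{2}(2e_1)+\sum_{j\ge2}q_j(e_j-e_1)$, where $s=\sum_j q_j$.

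\emph{Saturation equivalence for $\varepsilon=0$.} Suppose $\Gamma_{\mathrm{pair}}=\mathbb Z^3$. Then $\spanop_{\mathbb Z}\cK=\mathbb Z^3$, since $\Gamma_{\mathrm{pair}}\subset\spanop_{\mathbb Z}\cK$. Moreover, $k_1$ lies in $\Gamma_{\mathrm{pair}}$, so $k_1=\sum q_jk_j$ with $\sum q_j$ even. The vector $q-e_1$ then gives a relation $\sum_j(q-e_1)_jk_j=0$ with odd coefficient sum.

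Conversely, suppose $\spanop_{\mathbb Z}\cK=\mathbb Z^3$ and an odd relation $q$ exists. Take any $\ell=\sum p_jk_j$. If $\sum p_j$ is odd, replace $p$ by $p+q$, which represents the same $\ell$ and has even coefficient sum. Hence $\ell\in\Gamma_{\mathrm{pair}}$, so $\Gamma_{\mathrm{pair}}=\mathbb Z^3$.

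\emph{Parity reformulation.} Assume $\spanop_{\mathbb Z}\cK=\mathbb Z^3$. Then $\Gamma_{\mathrm{pair}}$ has index at most $2$ in $\mathbb Z^3$. It is the kernel of the well-defined map $\sum q_jk_j\mapsto\sum q_j \bmod 2$ exactly when no odd relation exists. In that case this map is a surjective homomorphism $\mathbb Z^3\to\mathbb Z/2$, so it factors through $(\mathbb Z/2)^3$ and has the form $\ell\mapsto\eta\cdot\ell\bmod 2$ for some $\eta$. It satisfies $\eta\cdot k_j=1$, which is \eqref{eq:parity-hyperplane-obstruction}.

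Conversely, given such an $\eta$, any relation $\sum q_jk_j=0$ yields $\sum q_j\equiv\eta\cdot0=0\pmod 2$, so no odd relation exists.

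\emph{Main obstacle.} The only delicate point is the well-definedness of the parity homomorphism in the last step. This is exactly the absence of odd relations, so the argument is bookkeeping rather than a genuine difficulty.
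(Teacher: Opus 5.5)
Your proposal is correct and follows the paper's proof essentially step for step. It uses the same generators $2k_1$, $k_j-k_1$ for $\Gamma_{\mathrm{pair}}$, the same identification of $\Gamma_{\mathrm{pair}}$ with the image of the even-coefficient-sum vectors under $q\mapsto\sum_j q_jk_j$, and the same odd-relation argument. The only difference is in the parity step: you define the parity homomorphism directly on $\spanop_{\mathbb Z}\cK=\mathbb Z^3$, where it is well defined exactly when no odd relation exists, while the paper reduces modulo two and factors through $\bar\rho$; your version avoids the paper's tacit lifting of elements of $\ker\bar\rho$ to integer relations.
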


\begin{proof}
If $\varepsilon=1$, then $0\in\cA$, so every $k\in\cK$ is itself a pair
sum $k+0$.  Conversely, every pair sum belongs to
$\spanop_{\mathbb Z}\cK$, proving
\eqref{eq:constant-mode-lattice-test}.

Suppose that $\varepsilon=0$.  The elements $2k_1$ and $k_j-k_1$ are pair
sums.  Conversely, every pair sum is generated by them, since
\begin{align*}
 k_i-k_j&=(k_i-k_1)-(k_j-k_1),\\
 k_i+k_j&=2k_1+(k_i-k_1)+(k_j-k_1).
\end{align*}
This proves \eqref{eq:no-constant-pair-sum-formula}.

Let $L=\spanop_{\mathbb Z}\cK$ and let
$\rho:\mathbb Z^N\to L$ be the linear map which sends the $j$th coordinate
vector to $k_j$, so that
\begin{align*}
 \rho(q_1,\ldots,q_N)=q_1k_1+\cdots+q_Nk_N.
\end{align*}
Then $L=\rho(\mathbb Z^N)$ and
\begin{align*}
 \Gamma_{\mathrm{pair}}
 =\rho\{q\in\mathbb Z^N:\textstyle\sum_jq_j\text{ is even}\}.
\end{align*}
Thus $\Gamma_{\mathrm{pair}}=L$ if and only if
$\ker\rho$ contains a vector of odd coordinate sum.  Together with
$L=\mathbb Z^3$, this gives \eqref{eq:no-constant-arithmetic-test}.

Finally, assume $L=\mathbb Z^3$ and reduce $\rho$ modulo two.  The
resulting map
$\bar\rho:(\mathbb Z/2\mathbb Z)^N\to(\mathbb Z/2\mathbb Z)^3$ is
surjective.  The absence of an odd relation is equivalent to the parity
functional $q\mapsto\sum_jq_j$ vanishing on $\ker\bar\rho$, and hence to
its factoring through $\bar\rho$.  Thus there exists
$\eta\in(\mathbb Z/2\mathbb Z)^3$ such that
$\eta\cdot k_j=1$ for every $j$, which proves
\eqref{eq:parity-hyperplane-obstruction}.
\end{proof}

Next we deduce the minimal number of forced profiles and explicit
saturating examples.

\begin{corollary}
\label{cor:minimal-pure-mode-systems}
Within the Fourier-mode class \eqref{eq:Fourier-mode-forcing}, the smallest
saturating system containing the constant profile has seven real profiles:
the constant together with the sine--cosine pairs associated with three
wave vectors forming a unimodular basis of $\mathbb Z^3$.  In particular,
one may take
\begin{align*}
 1,\qquad \cos x_i,\qquad \sin x_i,\qquad 1\le i\le3.
\end{align*}
Without the constant profile, at least four wave vectors, hence eight real
profiles, are necessary.  The minimum is attained by
\begin{align*}
 \cK=\{e_1,e_2,e_3,e_1+e_2\}.
\end{align*}
The three coordinate frequency pairs alone fail by an index two parity
obstruction.
\end{corollary}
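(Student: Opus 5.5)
The corollary follows from \Cref{cor:Fourier-arithmetic-criterion} together with a count of real profiles, since each wave vector $k\in\cK$ contributes exactly two real profiles, $\cos(k\cdot x)$ and $\sin(k\cdot x)$. I will treat the two cases $\varepsilon=1$ and $\varepsilon=0$ separately. In each case there is a lower bound on $N=|\cK|$ and an explicit example attaining it.

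\emph{Case $\varepsilon=1$.} By \eqref{eq:constant-mode-lattice-test}, saturation is equivalent to $\spanop_{\mathbb Z}\cK=\mathbb Z^3$. A set of at most two vectors spans a subgroup of rank at most two, so $N\ge3$. For $N=3$, three vectors generate $\mathbb Z^3$ exactly when their determinant is $\pm1$, i.e. when they form a unimodular basis. So the minimum is $1+2\cdot3=7$ real profiles, and the coordinate choice $e_1,e_2,e_3$ attains it.

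\emph{Case $\varepsilon=0$.} Saturation requires both conditions in \eqref{eq:no-constant-arithmetic-test}: spanning, which forces $N\ge3$, and an integer relation $\sum_j q_jk_j=0$ with $\sum_j q_j$ odd. When $N=3$ the vectors must form a basis, so $\rho$ is injective and the only relation is $q=0$, whose coordinate sum is even. Hence $N\ge4$, giving at least eight real profiles. For $\cK=\{e_1,e_2,e_3,e_1+e_2\}$ the vectors span $\mathbb Z^3$, and the relation $e_1+e_2-(e_1+e_2)=0$ has $q=(1,1,0,-1)$ with sum $1$, which is odd. So this system saturates. The same set also satisfies $\cK\cap(-\cK)=\varnothing$, as required.

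\emph{Failure of the three coordinate pairs.} Take $\cK=\{e_1,e_2,e_3\}$ with $\varepsilon=0$. The parity obstruction \eqref{eq:parity-hyperplane-obstruction} holds with $\eta=(1,1,1)$. By \eqref{eq:no-constant-pair-sum-formula}, $\Gamma_{\mathrm{pair}}=\{\ell:\ell_1+\ell_2+\ell_3\text{ even}\}$, which has index two in $\mathbb Z^3$. Equivalently, \eqref{eq:common-sign-translation} holds with $\theta=(\pi,\pi,\pi)$ and $c=-1$.

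All steps are routine lattice arithmetic. The only point that needs care is the injectivity argument in the $N=3$ case with $\varepsilon=0$: a spanning set of three vectors in $\mathbb Z^3$ has determinant $\pm1$, so $\rho$ has trivial kernel and admits no odd relation.
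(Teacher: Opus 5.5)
Your proposal is correct and follows the paper's proof essentially step by step. Both arguments reduce to \eqref{eq:constant-mode-lattice-test} and \eqref{eq:no-constant-arithmetic-test}, use rank and unimodularity when $N=3$, and use linear independence of a basis to exclude an odd relation; the only differences are cosmetic. For $\cK=\{e_1,e_2,e_3,e_1+e_2\}$ you exhibit the explicit odd relation $q=(1,1,0,-1)$, where the paper instead cites the absence of the parity obstruction \eqref{eq:parity-hyperplane-obstruction}, and you verify the index-two obstruction for the three coordinate pairs explicitly via $\eta=(1,1,1)$, which the paper leaves implicit in its general $N=3$ argument.
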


\begin{proof}
Suppose first that the constant profile is forced.  By
\eqref{eq:constant-mode-lattice-test}, saturation is equivalent to
\begin{align*}
 \spanop_{\mathbb Z}\cK=\mathbb Z^3.
\end{align*}
Hence at least three wave vectors are necessary.  If $N=3$, the three
vectors generate $\mathbb Z^3$ if and only if the corresponding
$3\times3$ integer matrix has determinant $\pm1$, that is, if and only if
they form a unimodular basis of $\mathbb Z^3$.  Such a basis is therefore
sufficient.  Since each nonzero wave vector contributes the two real
profiles $\cos(k\cdot x)$ and $\sin(k\cdot x)$, the minimum number of real
profiles is seven.

Suppose now that the constant profile is not forced.  By
\eqref{eq:no-constant-arithmetic-test}, saturation requires both
$\spanop_{\mathbb Z}\cK=\mathbb Z^3$ and the existence of an odd integer
relation.  Again at least three wave vectors are necessary.  If $N=3$
and $k_1,k_2,k_3$ generate $\mathbb Z^3$, then they form a unimodular
basis and are therefore linearly independent over $\mathbb Z$.  Hence
\begin{align*}
 \sum_{j=1}^3q_jk_j=0
\end{align*}
implies $q_1=q_2=q_3=0$, so no odd relation exists.  Thus three wave
vectors can never be saturating when the constant profile is absent.
Four wave vectors suffice, since for
\begin{align*}
 \cK=\{e_1,e_2,e_3,e_1+e_2\},
\end{align*}
the first three vectors generate $\mathbb Z^3$ and the obstruction
\eqref{eq:parity-hyperplane-obstruction} is absent.
\end{proof}

The next proposition identifies the exact invariant Fourier phase space
generated by the forcing and the corresponding condition on the damping.
Fix $k^\circ\in\cA$ and recall from the proof of
\Cref{thm:sharp-Fourier-criterion} that
\begin{align}\label{eq:generated-Fourier-coset}
 \mathscr C=k^\circ+\Gamma_{\mathrm{pair}}.
\end{align}
Since $k-k^\circ=k+(-k^\circ)\in\Gamma_{\mathrm{pair}}$ for every
$k\in\cA$, the set $\mathscr C$ is independent of the choice of
$k^\circ$.  Moreover,
\begin{align}\label{eq:Fourier-coset-closure}
 \cA\subset\mathscr C,\qquad
 -\mathscr C=\mathscr C,\qquad
 \mathscr C+\mathscr C+\mathscr C=\mathscr C.
\end{align}
For $s\ge0$, define the real Fourier subspace
\begin{align*}
 H_{\mathscr C}^s
 =\{f\in H^s(\mathbb T^3;\mathbb R):
       \widehat f(\ell)=0\text{ for }\ell\notin\mathscr C\},
\end{align*}
and put
\begin{align*}
 \mathcal H_{\mathscr C}
 =H_{\mathscr C}^1\times H_{\mathscr C}^0,
 \qquad
 \mathcal H_{\mathscr C}^{s}
 =H_{\mathscr C}^{1+s}\times H_{\mathscr C}^{s}.
\end{align*}

\begin{proposition}
\label{prop:invariant-Fourier-sector}
Assume the Fourier-mode forcing hypothesis
\eqref{eq:Fourier-mode-forcing}.  Then, for every finite $s\ge0$,
\begin{align}\label{eq:relative-Fourier-density}
 \overline{\cS_\infty}^{\,H^s}=H_{\mathscr C}^s.
\end{align}
Suppose in addition that \eqref{eq:damping-assumption} holds and
$\supp\widehat a\subset\Gamma_{\mathrm{pair}}$.  Then
$\mathcal H_{\mathscr C}$ is invariant under the dynamics of
\eqref{eq:main-spde}.  For every $0<s<1/2$, the restricted Markov semigroup
satisfies the weighted Wasserstein spectral gap conclusion of
\Cref{thm:weighted-gap}, with $\Hh$ and $\Es{s}$ replaced by
$\mathcal H_{\mathscr C}$ and $\mathcal E_{s,\mathscr C}$,
respectively.  It also satisfies all conclusions of \Cref{thm:main} in
the relative energy phase space; in particular, it has a unique invariant
probability measure $\mu_{\mathscr C}$ with
\begin{align*}
 \supp_{\mathcal H_{\mathscr C}}\mu_{\mathscr C}
 =\mathcal H_{\mathscr C}.
\end{align*}
Furthermore,
\begin{align}\label{eq:full-space-iff-pair-lattice}
 \mathcal H_{\mathscr C}=\Hh
 \quad\Longleftrightarrow\quad
 \Gamma_{\mathrm{pair}}=\mathbb Z^3.
\end{align}
\end{proposition}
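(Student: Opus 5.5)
The plan has four parts: the density statement, invariance of the sector, a rerun of the whole stable--compact argument inside the sector, and the equivalence \eqref{eq:full-space-iff-pair-lattice}.

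\emph{Density \eqref{eq:relative-Fourier-density}.} By \Cref{thm:sharp-Fourier-criterion}, $\cS_\infty^{\mathbb C}$ is the span of the exponentials $\e^{\mathrm i\ell\cdot x}$ with $\ell\in\mathscr C$. Since $-\mathscr C=\mathscr C$ by \eqref{eq:Fourier-coset-closure}, the real parts of this span are exactly the real trigonometric polynomials with spectrum in $\mathscr C$. Truncating Fourier series then shows that these polynomials are dense in $H^s_{\mathscr C}$ for every finite $s$. The reverse inclusion is immediate, because the $H^s$-limit of functions with spectrum in $\mathscr C$ again has spectrum in $\mathscr C$.

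\emph{Invariance.} The forcing $B\,\dd W$ takes values in $H^\infty_{\mathscr C}$, since $\cS_0\subset\cS_\infty$. The sector is preserved by each remaining term of the equation:
\begin{itemize}
\item The Laplacian is a Fourier multiplier, so it preserves the sector.
\item Multiplication by $a$ preserves it because $\supp\widehat a\subset\Gamma_{\mathrm{pair}}$ and $\mathscr C+\Gamma_{\mathrm{pair}}=\mathscr C$.
\item The cubic term preserves it because $\mathscr C+\mathscr C+\mathscr C=\mathscr C$.
\end{itemize}
So the vector field is tangent to the closed subspace $\mathcal H_{\mathscr C}$. I would make this rigorous by running the pathwise well-posedness argument for \eqref{eq:pathwise-shifted-wave} inside the closed subspace $\mathcal H_{\mathscr C}$ and invoking uniqueness in $\Hh$. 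A Galerkin approximation on $\mathscr C$-modes would work equally well.

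\emph{Spectral gap and mixing on the sector.} I would apply \Cref{thm:stable-compact-spectral-gap} with $H=\mathcal H_{\mathscr C}$ and $\mathbb H=\mathcal E_{s,\mathscr C}$, checking the assumptions one at a time.
\begin{itemize}
\item The Lyapunov estimate, the high-energy contraction and the Strichartz estimates are inherited by restriction from the full space.
\item $S_a(t)$ leaves every sector space invariant. The operator $\mathcal K_T$ maps $\mathcal E_{s,\mathscr C}$ into itself, since $\mathscr W$ has spectrum in $\mathscr C+\mathscr C\subset\Gamma_{\mathrm{pair}}$ and $1$ has spectrum $\{0\}\subset\Gamma_{\mathrm{pair}}$. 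Compactness of $\mathcal K_T$ on $\mathcal E_{s,\mathscr C}$ follows from \Cref{prop:wave-stable-compact-structure}.
\item Dense range: I would repeat the proof of \Cref{thm:rough-weak-Hormander} with annihilators $(p_T,q_T)$ taken in the sector dual. \Cref{lem:weak-exact-zero-propagation} applies verbatim. The final step then uses $\overline{\cS_\infty}^{H^1}=H^1_{\mathscr C}$, and $\Lambda$ maps this isomorphically onto $H^0_{\mathscr C}$, giving $q_T=p_T=0$.
\item Controllability: \Cref{prop:small-time-translation-saturation} gives translations by arbitrary $(f,g)\in\mathcal H_{\mathscr C}$. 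The choice $\psi_n\to g+af$ is legitimate because $af\in H^0_{\mathscr C}$. With this, \Cref{thm:full-support} and \Cref{gap:prop:uniform-accessibility} transfer to the sector.
\end{itemize}
The remaining conclusions of \Cref{thm:main} — uniqueness, full support, moments and energy mixing — then follow by the same deduction as in the full space.

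\emph{The equivalence \eqref{eq:full-space-iff-pair-lattice}.} If $\Gamma_{\mathrm{pair}}=\mathbb Z^3$, then $\mathscr C=\mathbb Z^3$ and the sector is all of $\Hh$. Conversely, if $\mathcal H_{\mathscr C}=\Hh$, then every frequency lies in $\mathscr C$, so $\mathscr C=\mathbb Z^3$. Since $\mathscr C$ is a coset of $\Gamma_{\mathrm{pair}}$, this forces $\Gamma_{\mathrm{pair}}=\mathbb Z^3$.

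The main obstacle is the bookkeeping that every operator in the abstract criterion — $S_a$, $\mathcal K_T$, $\mathcal D\Phi$ and the adjoint dynamics — preserves the sector. It all rests on the lattice identities \eqref{eq:Fourier-coset-closure} and on $\Gamma_{\mathrm{pair}}+\mathscr C=\mathscr C$, and I would check the adjoint equation \eqref{gap:eq:rough-adjoint} most carefully, since that is where the duality pairing must also be restricted consistently.
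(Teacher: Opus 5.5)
Your proposal is correct and follows essentially the same route as the paper. It derives relative density from the exact Fourier coset description, obtains invariance from $\mathscr C+\Gamma_{\mathrm{pair}}=\mathscr C$ and $\mathscr C+\mathscr C+\mathscr C=\mathscr C$, restricts every ingredient of the stable--compact criterion (damped semigroup, exact difference propagator, dense Malliavin range, controllability) to the sector, and uses the coset argument for \eqref{eq:full-space-iff-pair-lattice}. It is also somewhat more explicit than the paper's proof, for instance in noting that $\mathscr W$ has spectrum in $\mathscr C+\mathscr C\subset\Gamma_{\mathrm{pair}}$, so that $\mathcal K_T$ preserves $\mathcal E_{s,\mathscr C}$, and in making invariance rigorous through well-posedness in the closed subspace together with uniqueness.
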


\begin{proof}
Equation \eqref{eq:exact-Fourier-coset} and the Fourier basis give
\eqref{eq:relative-Fourier-density}.  Since
$-\mathscr C=\mathscr C$, these complex Fourier modes define the real
spaces $H_{\mathscr C}^s$.  Moreover,
\begin{align}\label{eq:damping-sector-invariance}
 \mathsf M_aH_{\mathscr C}^s\subset H_{\mathscr C}^s
 \quad\Longleftrightarrow\quad
 \supp\widehat a\subset\Gamma_{\mathrm{pair}},
\end{align}
since $\Gamma_{\mathrm{pair}}$ is precisely the translation stabilizer of
the coset $\mathscr C$ and multiplication by a Fourier mode shifts Fourier
support by its frequency.

The Laplacian preserves Fourier support in $\mathscr C$, and the forced
profiles belong to $H_{\mathscr C}^s$ by
$\cA\subset\mathscr C$.  Moreover,
\eqref{eq:Fourier-coset-closure} gives
\begin{align*}
 u\in H_{\mathscr C}^1
 \quad\Longrightarrow\quad
 u^3\in H_{\mathscr C}^0.
\end{align*}
Indeed, this is immediate for trigonometric polynomials and follows in
general by approximation in $H^1$ and the embedding
$H^1(\mathbb T^3)\hookrightarrow L^6(\mathbb T^3)$.  Together with
\eqref{eq:damping-sector-invariance}, this proves invariance of
$\mathcal H_{\mathscr C}$.

All ingredients in the proof of \Cref{thm:weighted-gap} restrict to
$\mathcal H_{\mathscr C}$.  In particular, the damped linear semigroup and
the exact difference propagator preserve $\mathcal E_{s,\mathscr C}$,
\eqref{eq:relative-Fourier-density} gives the relative dense Malliavin
range and relative controllability, and the Lyapunov, compact core, and
stationary regularization estimates are unchanged.  Hence the restricted
semigroup has the same weighted Wasserstein spectral gap.  The proof of
\Cref{thm:main} then restricts verbatim and gives unique ergodicity,
relative full support, the moment and Sobolev regularity conclusions, and
exponential mixing in the relative energy topology.

Finally, by \eqref{eq:generated-Fourier-coset},
$\mathscr C=\mathbb Z^3$ if and only if
$\Gamma_{\mathrm{pair}}=\mathbb Z^3$.  This is equivalent to
$\mathcal H_{\mathscr C}=\Hh$, proving
\eqref{eq:full-space-iff-pair-lattice}.
\end{proof}

In particular, one has the following spectral gap and mixing result in dimensions one
and two.  Let $d\in\{1,2\}$ and consider on $\mathbb T^d$ the stochastic
cubic wave equation
\begin{align}\label{eq:lower-dimensional-wave}
 \dd u=v\,\dd t,\qquad
 \dd v=(\Delta u-a(x)v-u^3)\,\dd t+B\,\dd W_t,
\end{align}
where
\begin{align*}
 a\in C^\infty(\mathbb T^d),\qquad
 0<a_0\le a(x)\le a_1<\infty.
\end{align*}
As in the three-dimensional setting of
\Cref{thm:sharp-Fourier-criterion}, let
$\cK=\{k_1,\ldots,k_N\}\subset\mathbb Z^d\setminus\{0\}$ satisfy
$\cK\cap(-\cK)=\varnothing$, and suppose that
\begin{align*}
 \cS_0
 =
 \{1:\varepsilon=1\}\cup
 \{\cos(k\cdot x),\sin(k\cdot x):k\in\cK\},
 \qquad \varepsilon\in\{0,1\}.
\end{align*}
Set
\begin{align*}
 \cA_d=\{\pm k:k\in\cK\}\cup\{0:\varepsilon=1\},
 \qquad
 \Gamma_{\mathrm{pair}}^{(d)}
 =\spanop_{\mathbb Z}\{k+k':k,k'\in\cA_d\}.
\end{align*}

\begin{corollary}
\label{cor:lower-dimensional-unique-ergodicity}
If $\Gamma_{\mathrm{pair}}^{(d)}=\mathbb Z^d$, then the Markov semigroup
generated by \eqref{eq:lower-dimensional-wave} satisfies the conclusions
of \Cref{thm:weighted-gap,thm:main} on
$H^1(\mathbb T^d)\times L^2(\mathbb T^d)$.  In particular, it has a unique
invariant probability measure $\mu_d$ with
\begin{align*}
 \supp\mu_d
 =H^1(\mathbb T^d)\times L^2(\mathbb T^d).
\end{align*}
The following finite rank forcings satisfy $\Gamma_{\mathrm{pair}}^{(d)}=\mathbb Z^d$:
\begin{align*}
 d=1:\qquad&
 \{1,\ \cos x,\ \sin x\},
 \qquad\text{or}\qquad
 \{\cos x,\ \sin x,\ \cos(2x),\ \sin(2x)\},\\
 d=2:\qquad&
 \{1,\ \cos x_1,\ \sin x_1,\ \cos x_2,\ \sin x_2\},
\end{align*}
while in dimension two, without the constant profile, one may take
\begin{align*}
 \{\cos x_1,\ \sin x_1,\ 
 \cos x_2,\ \sin x_2,\ 
 \cos(x_1+x_2),\ \sin(x_1+x_2)\}.
\end{align*}
\end{corollary}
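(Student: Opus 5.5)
The plan is to repeat, for $\mathbb T^d$ with $d\in\{1,2\}$, the entire proof chain that yields \Cref{thm:weighted-gap,thm:main}, and to obtain the saturation condition from the lattice hypothesis. \textbf{Saturation.} The argument of \Cref{thm:sharp-Fourier-criterion} uses nothing about the dimension. The odd-product frequencies of $\cA_d$ form the coset $k^\circ+\Gamma^{(d)}_{\mathrm{pair}}$, and $\cS_\infty^{\mathbb C}$ is the span of the corresponding exponentials. Hence $\Gamma^{(d)}_{\mathrm{pair}}=\mathbb Z^d$ gives $\overline{\cS_\infty}^{H^1(\mathbb T^d)}=H^1(\mathbb T^d)$. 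Equivalently, \Cref{prop:invariant-Fourier-sector} with $\mathscr C=\mathbb Z^d$ shows that the sector is the whole space, so no condition on $\widehat a$ is needed.

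\textbf{Examples.} The listed forcings follow from \Cref{cor:Fourier-arithmetic-criterion} in dimension $d$.
\begin{itemize}
\item For $d=1$ with $\{1,\cos x,\sin x\}$, the span is $\spanop_{\mathbb Z}\{1\}=\mathbb Z$.
\item For $d=1$ with $\cK=\{1,2\}$ and no constant profile, $\Gamma_{\mathrm{pair}}=\spanop_{\mathbb Z}\{2,1\}=\mathbb Z$. Equivalently, the relation $2k_1-k_2=0$ has odd coefficient sum.
\item For $d=2$ with the constant profile, $\spanop_{\mathbb Z}\{e_1,e_2\}=\mathbb Z^2$.
\item For $d=2$ with $\cK=\{e_1,e_2,e_1+e_2\}$, the relation $k_1+k_2-k_3=0$ has coefficient sum $1$, which is odd.
\end{itemize}

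\textbf{Analytic inputs.} Next I would check that every analytic ingredient survives in dimension $d\le2$. In each case the relevant embedding is at least as strong as in three dimensions.
\begin{itemize}
\item The linear semigroup bounds of \Cref{lem:linear-semigroup} carry over verbatim.
\item The Lyapunov and Foster structure of \Cref{prop:lyapunov} depends only on the energy identity.
\item The trilinear estimates \eqref{gap:eq:negative-trilinear}--\eqref{gap:eq:positive-trilinear} hold because $H^1(\mathbb T^d)\hookrightarrow L^p$ for all $p<\infty$ when $d\le2$.
\item In the Strichartz step of \Cref{prop:strichartz}, the bound $\norm{u-u^3}_{L^2}\lesssim\Vv^{3/2}$ holds a fortiori. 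The block moments, and hence the compact defect in \Cref{prop:wave-stable-compact-structure}, then follow from Sobolev embeddings alone, with no Strichartz estimate needed.
\item The backward zero propagation of \Cref{lem:weak-exact-zero-propagation} and the dense range of \Cref{thm:rough-weak-Hormander} use only time regularity, the product estimates, and Brownian separation. Both hold unchanged.
\item The control lemmas \Cref{lem:forced-velocity-translations,lem:displacement-descendants,lem:cubic-descendants} and \Cref{prop:small-time-translation-saturation} require only $\norm{u^3}_{L^2}\lesssim\norm u_{H^1}^3$. This gives full support and uniform accessibility, as in \Cref{thm:full-support} and \Cref{gap:prop:uniform-accessibility}.
\end{itemize}

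\textbf{Conclusion.} With these inputs, the high-energy synchronous estimate and the remaining quantitative hypotheses of \Cref{thm:stable-compact-spectral-gap} are verified exactly as in the final subsection, which gives the weighted gap. The bootstrap to the energy topology and the stationary regularity in \Cref{prop:stationary-regularization} then go through. In that bootstrap, the thresholds $s_0<2/5$ and $1/2<s_*<1$ are only easier to meet in lower dimension.

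\textbf{Main obstacle.} I expect the only delicate step to be confirming that no step used three-dimensional specifics in an essential, non-monotone way. The two places that need checking are the Strichartz pair $(5,10)$ and the exponent choices in the stationary bootstrap. My first move would be to replace every Strichartz use by the Sobolev embeddings $H^1\hookrightarrow L^{10}$ and $H^1\hookrightarrow W^{\theta,p}$, which hold for $d\le2$, and then rerun the block moment and compactness estimates.
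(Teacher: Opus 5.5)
Your argument is correct, but it takes a different route from the paper. You rerun the whole chain behind \Cref{thm:weighted-gap,thm:main} intrinsically on $\mathbb T^d$. Each analytic input survives because Sobolev embeddings only improve as $d$ decreases, so the $L^5L^{10}$ Strichartz step can be replaced by $H^1(\mathbb T^d)\hookrightarrow L^{10}$.

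The paper never leaves three dimensions. It lifts functions on $\mathbb T^d$ to $\mathbb T^3$ as functions independent of the remaining variables, i.e.\ it embeds $\mathbb Z^d\hookrightarrow\mathbb Z^3$ by $\iota_d$. Then:
\begin{itemize}
\item $\Gamma_{\mathrm{pair}}=\iota_d(\Gamma^{(d)}_{\mathrm{pair}})=\iota_d(\mathbb Z^d)$, so the generated coset is $\mathscr C=\iota_d(\mathbb Z^d)$ and $H^s_{\mathscr C}\simeq H^s(\mathbb T^d)$.
\item The condition $\supp\widehat a\subset\iota_d(\mathbb Z^d)=\Gamma_{\mathrm{pair}}$ holds automatically, since $a$ depends only on the first $d$ variables.
\item \Cref{prop:invariant-Fourier-sector} then gives all the conclusions at once.
\end{itemize}
The examples are checked by the arithmetic criterion, exactly as you do.

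The reduction is short, and it removes the concern in your ``main obstacle'' paragraph. With normalized measure the lift is an isometry of every $H^s$, and the three-dimensional flows preserve lifted data. So every 3D estimate, including the Strichartz bound, restricts verbatim to its $d$-dimensional counterpart, and nothing needs re-checking. Its cost is that it relies on the sector proposition, whose own proof is a ``restricts verbatim'' argument.

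Your route is self-contained and shows that the lower-dimensional problem is genuinely easier. As written, though, it is a lemma-by-lemma checklist rather than a proof. Invoking the lifting would supply all those analytic inputs in one line instead of re-deriving them.
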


\begin{proof}
Embed $\mathbb Z^d$ into $\mathbb Z^3$ by
\begin{align*}
 \iota_d(k_1,\ldots,k_d)
 =(k_1,\ldots,k_d,0,\ldots,0),
\end{align*}
and regard functions on $\mathbb T^d$ as functions on $\mathbb T^3$
independent of the remaining variables.  Then
\begin{align*}
 \Gamma_{\mathrm{pair}}
 =\iota_d\bigl(\Gamma_{\mathrm{pair}}^{(d)}\bigr)
 =\iota_d(\mathbb Z^d),
\end{align*}
so the Fourier coset $\mathscr C$ of
\Cref{prop:invariant-Fourier-sector} is precisely
$\iota_d(\mathbb Z^d)$.  Hence
\begin{align*}
 H_{\mathscr C}^s\simeq H^s(\mathbb T^d)
\end{align*}
for every $s\in\mathbb R$.  Since the damping depends only on the first $d$
variables,
\begin{align*}
 \supp\widehat a
 \subset\iota_d(\mathbb Z^d)
 =\Gamma_{\mathrm{pair}}.
\end{align*}
Thus \eqref{eq:damping-sector-invariance} holds.  Applying
\Cref{prop:invariant-Fourier-sector} and identifying
$\mathcal H_{\mathscr C}$ with
$H^1(\mathbb T^d)\times L^2(\mathbb T^d)$ gives all the asserted conclusions.

The displayed examples follow directly from the arithmetic criterion.
With the constant profile, the displayed frequencies generate
$\mathbb Z^d$.  Without the constant profile, $\{1,2\}$ in dimension one
and $\{e_1,e_2,e_1+e_2\}$ in dimension two generate the full lattice and
admit an odd integer relation, so saturation follows from
\Cref{cor:Fourier-arithmetic-criterion}.
\end{proof}

\subsection{Proof of the main results}

\begin{proof}[Proof of \Cref{thm:weighted-gap}]
Fix $0<s<1/2$.  We apply \Cref{thm:stable-compact-spectral-gap} to the semigroup $(P_t)_{t\ge0}$ on $H=\Hh$, with coupling space $\mathbb H=\Es{s}$ and the endpoint family $\Phi_T:\Hh\times E_T\to\Hh$ from \eqref{eq:wiener-cm}.

Choose $\chi\ge\chi_0$ in \Cref{gap:prop:high} and fix any $\alpha_*\in(0,\alpha_0(\chi)]$.  We verify the five assumptions for all sufficiently large blocks.

First, by \eqref{gap:eq:uniform-terminal-exp}, after decreasing $\lambda_0>0$ if necessary,
\begin{align*}
 P_t\e^{\sigma\Vv}(\mathsf x)
 \le C\exp\{\sigma\e^{-c_1t}\Vv(\mathsf x)\},
 \qquad 0<\sigma\le\lambda_0.
\end{align*}
Thus \eqref{eq:QS-Lyapunov} holds with $\vartheta(t)=\e^{-c_1t}=o(t^{-1})$.  By \eqref{eq:periodic-V-equivalence}, every energy sublevel is bounded in $\Hh$, while
\begin{align*}
 H^1(\mathbb T^3)\Subset H^{-s}(\mathbb T^3),
 \qquad
 L^2(\mathbb T^3)\Subset H^{-1-s}(\mathbb T^3).
\end{align*}
Together with the weak lower semicontinuity of the quadratic terms and the compact embedding $H^1\Subset L^4$, this shows that the sublevels of $\Vv$ are compact in $\Es{s}$.  Hence \Cref{ass:QS-Lyapunov} holds.

For an arbitrary fixed block, the exact identity \eqref{gap:eq:weak-stable-compact} and \Cref{prop:wave-stable-compact-structure} verify \Cref{ass:QS-stable-compact}\textup{(i)}.  Taking
\begin{align*}
 \rho(T)=\norm{S_a(T)}_{\cL(\Es{s})}
\end{align*}
and using \eqref{eq:semigroup-decay-scale} gives $\rho(T)\to0$.  The operator continuity required in part~\textup{(ii)} is \Cref{gap:lem:weak-core-operators}, while the common exponent $p_0$ and the finite factor Cameron--Martin Taylor estimate in part~\textup{(iii)} are supplied by \Cref{gap:lem:wave-twojet-weak}.  Hence \Cref{ass:QS-stable-compact} holds.  The dense-range assumption \Cref{ass:QS-Malliavin} is exactly \Cref{thm:rough-weak-Hormander}.

For the synchronous estimate, \eqref{gap:eq:high-sync} at $\alpha=\alpha_*$ is precisely \eqref{eq:QS-weighted-synchronous}.  By \eqref{gap:eq:high-constants}, its constants satisfy $\log C_T\le C\alpha_*(1+T)$ and $c_{\alpha_*}>0$, so \Cref{ass:QS-synchronous} holds.  Finally, \Cref{ass:QS-accessibility} follows from \Cref{gap:prop:uniform-accessibility} with $\mathsf x_*=0$.

The abstract criterion therefore yields a sampling time $\tau>0$,
constants $r_0,\beta,\lambda>0$, and $\varrho_0\in(0,1)$ such that
\begin{align*}
 \Wass_d(\mu_1P_{n\tau},\mu_2P_{n\tau})
 \le\varrho_0^n\Wass_d(\mu_1,\mu_2),
 \qquad n\ge0,
\end{align*}
where $d_0$ and $d$ are given by
\eqref{gap:eq:basic-cost} and \eqref{gap:eq:weighted-cost}.  By
\Cref{gap:cor:finite-time-weighted-bound},
\begin{align*}
 \Wass_d(\mu_1P_r,\mu_2P_r)
 \le C_\tau\Wass_d(\mu_1,\mu_2),
 \qquad 0\le r\le\tau.
\end{align*}
Writing $t=n\tau+r$ with $0\le r<\tau$ and using the semigroup property,
we obtain
\begin{align*}
 \Wass_d(\mu_1P_t,\mu_2P_t)
 &\le C_\tau
 \Wass_d(\mu_1P_{n\tau},\mu_2P_{n\tau})\\
 &\le C_\tau\varrho_0^n\Wass_d(\mu_1,\mu_2)
 \le C\e^{-\varrho t}\Wass_d(\mu_1,\mu_2).
\end{align*}
This proves the continuous-time weighted Wasserstein spectral gap.
\end{proof}
\begin{proof}[Proof of \Cref{thm:main}]
Fix $0<s<1/2$ and let $d$ and the corresponding constants be supplied by
\Cref{thm:weighted-gap}.  By \Cref{prop:existence} there is an invariant
probability measure $\mu$, and \Cref{prop:lyapunov} gives every invariant
probability a finite $\e^{\lambda\Vv}$ moment.  Hence, if $\nu$ is another
invariant probability measure, the continuous-time spectral gap gives
\begin{align*}
 \Wass_d(\mu,\nu)
 \le C\e^{-\varrho t}\Wass_d(\mu,\nu),\qquad t\ge0.
\end{align*}
Since $\Wass_d(\mu,\nu)<\infty$, choosing $t$ so that $C\e^{-\varrho t}<1$
gives $\mu=\nu$.  The fixed time support theorem \Cref{thm:full-support}
gives $\supp\mu=\Hh$, the exponential moment
\eqref{eq:main-exp-moment} follows from \Cref{prop:lyapunov}, and
\eqref{eq:main-strong-support} is exactly
\Cref{prop:stationary-regularization}.
Since $d_0\le d$, the spectral gap and the independent coupling imply
\begin{align}\label{eq:d0mixing}
 \Wass_{d_0}(P_t(\mathsf x,\cdot),\mu)\le C\e^{-\varrho t}\Wass_d(\delta_{\mathsf x},\mu)
 \le C\e^{-\varrho t}\bigl(1+\e^{\lambda\Vv(\mathsf x)/2}\bigr),
 \qquad t\ge0,
\end{align}
where the last estimate uses $d_0\le1$ and
\eqref{eq:main-exp-moment}.

It remains to upgrade the convergence to the energy topology.  Fix
$0<s_0<2/5$ and set
\begin{align*}
 \theta=\frac{s_0}{1+s+s_0}\in(0,1).
\end{align*}
Interpolation between $\Es{s}=\Hs{-1-s}$ and $\Hs{s_0}$ gives
\begin{align*}
 \norm z_{\Hh}
 \le C\norm z_{\Es{s}}^\theta
       \norm z_{\Hs{s_0}}^{1-\theta}.
\end{align*}
For $\varepsilon>0$, realise an $\varepsilon$-optimal coupling $(X,Z)$ of
$P_t(\mathsf x,\cdot)$ and $\mu$ so that $X$ is the actual solution
endpoint issued from $\mathsf x$, and write
\begin{align*}
 X=S_a(t)\mathsf x+Y_t.
\end{align*}
Then \eqref{eq:strong-remainder-energy-uniform} gives polynomial moments
of $Y_t$ in $\Hs{s_0}$, while
\Cref{prop:stationary-regularization} gives moments of every order for
$Z$ in $\Hs{s_0}$.  Put
\begin{align*}
 M=1+\norm{Y_t}_{\Hs{s_0}}+\norm Z_{\Hs{s_0}}.
\end{align*}
By \eqref{eq:semigroup-decay-scale} and interpolation,
\begin{align}\label{eq:interpolation}
\begin{split}
\norm{X-Z}_{\Hh}
 &\le C\e^{-\varpi_0t}\norm{\mathsf x}_{\Hh}
 +C\left(
 \norm{X-Z}_{\Es{s}}
 +C\e^{-\varpi_0t}\norm{\mathsf x}_{\Hh}
 \right)^\theta M^{1-\theta}\\
 &\le A_t + C(D+A_t)^{\theta} M^{1-\theta}  
\end{split}
\end{align}
by setting
\begin{align*}
A_t=C\e^{-\varpi_0t}\norm{\mathsf x}_{\Hh},
\qquad
D=\frac{\norm{X-Z}_{\Es{s}}}{r_0}.
\end{align*} 
Moreover, since $(X,Z)$ is $\varepsilon$-optimal for
$\Wass_{d_0}(P_t(\mathsf x,\cdot),\mu)$, in view of \eqref{gap:eq:basic-cost}, one has 
\begin{align*}
 \E\left[
 1\wedge
 \left(\frac{\norm{X-Z}_{\Es{s}}}{r_0}\right)^\alpha
 \right]
 \le
 \Wass_{d_0}(P_t(\mathsf x,\cdot),\mu)+\varepsilon.
\end{align*}
Choose $0<\eta<\min\{1,\alpha/\theta\}$ and then $q>1$ sufficiently close
to one that $q\theta\eta<\alpha$. Writing $q'=q/(q-1)$, the preceding
interpolation estimate \eqref{eq:interpolation}, H\"older's inequality, Jensen's inequality and the moment bounds give some $N$ such that 
\begin{align*}
\begin{split}
 \E\bigl[1\wedge\norm{X-Z}_{\Hh}^{\eta}\bigr]
 &\le C A_t^\eta+C A_t^{\theta\eta}\E M^{(1-\theta)\eta}
 +C\E\bigl[(1\wedge D^{\theta\eta})M^{(1-\theta)\eta}\bigr]\\
 &\le C(1+\Vv(\mathsf x))^N\e^{-\varpi_0\theta\eta t}
 +C\bigl(\E[1\wedge D^{q\theta\eta}]\bigr)^{1/q}
      \bigl(\E M^{q'(1-\theta)\eta}\bigr)^{1/q'}\\
 &\le C(1+\Vv(\mathsf x))^N\left[
 \e^{-\varpi_0\theta\eta t}
 +\bigl(\E[1\wedge D^\alpha]\bigr)^{\theta\eta/\alpha}
 \right]\\
 &\le C(1+\Vv(\mathsf x))^N\left[
 \e^{-\varpi_0\theta\eta t}
 +\bigl(\Wass_{d_0}(P_t(\mathsf x,\cdot),\mu)+\varepsilon\bigr)^{\theta\eta/\alpha}
 \right].
\end{split}
\end{align*}
Set $\kappa_0=\theta\eta/\alpha<1$.  Since
\begin{align*}
 (1+V)^N\bigl(1+\e^{\lambda V/2}\bigr)^{\kappa_0}
 \le C\bigl(1+\e^{\lambda V/2}\bigr),
 \qquad V\ge1,
\end{align*}
the weak-topology estimate \eqref{eq:d0mixing} implies, after letting
$\varepsilon\downarrow0$ and decreasing the exponential rate if necessary,
\begin{align*}
 \Wass_{d_{\Hh,\eta}}(P_t(\mathsf x,\cdot),\mu)
 \le C\e^{-\varrho t}
 \bigl(1+\e^{\lambda\Vv(\mathsf x)/2}\bigr).
\end{align*}
This is \eqref{gap:eq:energy-topology-gap}.
\end{proof}

\begin{proof}[Proof of \Cref{thm:sector-and-lower-dimensional}]
The Fourier-sector assertions are exactly \Cref{prop:invariant-Fourier-sector}, while the lower-dimensional conclusion is \Cref{cor:lower-dimensional-unique-ergodicity}.
\end{proof}

\appendix
\section{Estimates for the stochastic wave equation}\label{app:analytic}
This appendix collects the analytic estimates that support the main argument: the modified energy and Lyapunov bounds, endpoint differentiability, and the weak phase stable--compact estimates used in the spectral gap criterion.

\subsection{Lyapunov structure and high energy dissipation}
Standard energy space theory for stochastic nonlinear damped wave equations with polynomial nonlinearities yields, for every $\mathsf x\in\Hh$, a unique global adapted mild solution $X\in C([0,\infty);\Hh)$ almost surely, with continuous dependence on the initial condition on finite time intervals, and hence defines a Feller Markov semigroup.  We refer to \cite{BarbuDaPrato2002,Kim2004,BrzezniakOndrejatSeidler2016} for the standard construction.

The modified energy introduced in \eqref{eq:V-energy} also controls the constant Fourier mode.  The following coercivity statement explains the additive constant in the Foster drift.

\begin{lemma}
\label{lem:periodic-energy-coercivity}
For $\eta>0$ sufficiently small,
\begin{align}\label{eq:periodic-V-equivalence}
 c\bigl(1+\norm u_{H^1}^2+\norm v_{L^2}^2+\norm u_{L^4}^4\bigr)
 \le\Vv(u,v)\le C\bigl(1+\norm u_{H^1}^2+\norm v_{L^2}^2+\norm u_{L^4}^4\bigr).
\end{align}
Moreover, with
\begin{align*}
\mathsf V(u,v)=\int_{\mathbb T^3}(a-\eta)|v|^2+\eta\norm{\nabla u}_{L^2}^2+\eta\norm u_{L^4}^4,
\end{align*}
there are $\kappa,C>0$ such that
\begin{align}\label{eq:periodic-dissipation-coercivity}
 \mathsf V(u,v)\ge\kappa\Vv(u,v)-C.
\end{align}
\end{lemma}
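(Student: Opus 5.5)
The plan is to prove both inequalities by direct pointwise bounds, with the only non-routine point being control of the constant Fourier mode of $u$, which $\norm{\nabla u}_{L^2}$ does not see. For that I will use the $L^4$ term: on the normalized torus, Hölder gives $\norm u_{L^2}^2\le\norm u_{L^4}^2\le 1+\norm u_{L^4}^4$. Combined with Poincaré--Wirtinger this yields
\begin{align*}
 \norm u_{H^1}^2\le C\bigl(1+\norm{\nabla u}_{L^2}^2+\norm u_{L^4}^4\bigr),
\end{align*}
so $1+\cE$ is already equivalent to $N(u,v):=1+\norm u_{H^1}^2+\norm v_{L^2}^2+\norm u_{L^4}^4$.

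For \eqref{eq:periodic-V-equivalence} I only need to absorb the cross terms. Cauchy--Schwarz gives $\abs{\eta\ip uv}+\frac\eta2\int a u^2\le\eta(1+a_1)\bigl(\norm u_{L^2}^2+\norm v_{L^2}^2\bigr)$, and the previous display bounds $\norm u_{L^2}^2\le 1+\norm u_{L^4}^4$. Hence the cross terms are at most $C\eta N$, while $1+\cE\ge c N$. Choosing $\eta$ small relative to $c$ gives the lower bound $\Vv\ge\frac c2N$. The upper bound is immediate from the same estimates together with the boundedness of $a$.

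For \eqref{eq:periodic-dissipation-coercivity}, take $\eta\le a_0/2$ so that $(a-\eta)\ge a_0/2$. Then
\begin{align*}
 \mathsf V\ge\tfrac{a_0}{2}\norm v_{L^2}^2+\eta\norm{\nabla u}_{L^2}^2+\eta\norm u_{L^4}^4.
\end{align*}
Again using $\norm u_{L^2}^2\le1+\norm u_{L^4}^4$ and Poincaré for the mean, the right-hand side is at least $c_\eta N-C_\eta$. By the upper bound in \eqref{eq:periodic-V-equivalence}, $N\ge\Vv/C$, which yields $\mathsf V\ge\kappa\Vv-C$.

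The main obstacle is only bookkeeping: the smallness of $\eta$ must be fixed uniformly in the first part, while the constants $\kappa$ and $C$ in the second part are allowed to depend on $\eta$. No choice conflicts with another, since each step only requires $\eta$ to be smaller than a constant determined by $a_0$, $a_1$ and the embedding constants.
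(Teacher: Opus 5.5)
Your proof is correct and follows essentially the same route as the paper: Cauchy--Schwarz/Young absorption of the cross term gives the two-sided equivalence, and the dissipation bound comes from $L^4$-control of $\norm u_{L^2}$ on the finite-volume torus combined with the upper equivalence. The only minor difference is in the lower bound, where the paper controls the constant mode through the nonnegative term $\frac\eta2\int a u^2\ge\frac{\eta a_0}{2}\norm u_{L^2}^2$ (requiring $\eta\le a_0/4$), whereas you drop that sign information and use the quartic term instead (requiring $\eta\lesssim(1+a_1)^{-1}$); both choices work.
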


\begin{proof}
Choose $0<\eta\le\min\{1,a_0/4\}$.  The complete estimate for the cross term is
\begin{align}\label{eq:periodic-cross-young}
 \eta\abs{\ip uv}\le\frac14\norm v_{L^2}^2+\eta^2\norm u_{L^2}^2\le\frac14\norm v_{L^2}^2+\frac{\eta a_0}{4}\norm u_{L^2}^2.
\end{align}
Consequently,
\begin{align*}
\Vv(u,v)\ge1+\frac12\norm{\nabla u}_{L^2}^2+\frac14\norm v_{L^2}^2+\frac14\norm u_{L^4}^4+\frac{\eta a_0}{4}\norm u_{L^2}^2.
\end{align*}
The reverse estimate follows from the first inequality in \eqref{eq:periodic-cross-young} and $\frac\eta2\int au^2\le\frac{\eta a_1}{2}\norm u_{L^2}^2$.  This proves \eqref{eq:periodic-V-equivalence}.  Moreover,
\begin{align*}
\mathsf V(u,v)\ge\frac{3a_0}{4}\norm v_{L^2}^2+\eta\norm{\nabla u}_{L^2}^2+\eta\norm u_{L^4}^4.
\end{align*}
On the finite-volume torus,
\begin{align*}
\norm u_{L^2}^2\le\varepsilon\norm u_{L^4}^4+C_\varepsilon.
\end{align*}
Combining this inequality with the upper bound in \eqref{eq:periodic-V-equivalence} gives $\Vv(u,v)\le C_1+C_2\mathsf V(u,v)$, which is \eqref{eq:periodic-dissipation-coercivity}.
\end{proof}

\begin{proposition}
For every $\mathsf x\in\Hh$, the global solution $X=(u,v)$ satisfies for every $t\ge0$,
\begin{align}\label{eq:energy-identity}
 \Vv(X_t)+\int_0^t\mathsf V(X_s)\,\dd s=\Vv(\mathsf x)+b_0t+M_t,
\end{align}
where
\begin{align*}
b_0=\frac12\norm B_{\cL_2(\R^m;L^2)}^2,\qquad M_t=\int_0^t\ip{v_s+\eta u_s}{B\,\dd W_s}_{L^2}.
\end{align*}
There are $\kappa,\beta,C_*>0$ such that
\begin{align}\label{eq:coercive-qv}
 \mathsf V(\mathsf x)\ge\kappa\Vv(\mathsf x)-C_*,
 \qquad \dd\langle M\rangle_t\le\beta\Vv(X_t)\,\dd t.
\end{align}
Moreover, for every $p\ge1$ and $T<\infty$,
\begin{align}\label{eq:finite-horizon-energy-moment}
 \E\sup_{0\le t\le T}(1+\Vv(X_t))^p\le C_{p,T}(1+\Vv(\mathsf x))^p.
\end{align}
\end{proposition}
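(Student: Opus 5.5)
The plan is to apply Itô's formula to the modified energy $\Vv(X_t)$, first for a regularized solution, and then to read off the drift, the martingale and the quadratic variation. Since $X$ is only an energy-space mild solution, I would prove the identity for Galerkin (or smoothed, e.g. $\Lambda^{-\epsilon}$-mollified) approximations and pass to the limit, using continuity of the solution map and the fact that every term is continuous on $\Hh$. The passage to the limit is standard for the defocusing cubic equation, since $H^1\hookrightarrow L^6$.

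For the formal computation, $\dd u=v\,\dd t$ is of bounded variation, so only $v$ carries noise. Differentiating $\cE$ gives
\begin{align*}
\dd\cE=\ip v{\Delta u-av-u^3}\dd t+\ip{-\Delta u+u^3}{v}\dd t+\ip v{B\dd W}+\tfrac12\norm B_{\cL_2}^2\dd t,
\end{align*}
and the conservative terms cancel, leaving $-\int a v^2\,\dd t$. For the cross term,
\begin{align*}
\eta\,\dd\ip uv=\eta\norm v^2\dd t+\eta\ip u{\Delta u-av-u^3}\dd t+\eta\ip u{B\dd W},
\end{align*}
with no Itô correction because $u$ has finite variation. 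The term $-\eta\ip u{av}$ cancels exactly against $\dd\bigl(\frac\eta2\int au^2\bigr)=\eta\int auv\,\dd t$; this is the purpose of that term in $\Vv$. What remains is
\begin{align*}
-\int(a-\eta)v^2-\eta\norm{\nabla u}^2-\eta\norm u_{L^4}^4=-\mathsf V,
\end{align*}
plus $b_0\,\dd t$ plus $\dd M_t$, which is \eqref{eq:energy-identity}.

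For \eqref{eq:coercive-qv}, the first inequality is \eqref{eq:periodic-dissipation-coercivity} from \Cref{lem:periodic-energy-coercivity}. For the second, the bracket satisfies
\begin{align*}
\dd\langle M\rangle_t=\sum_j\ip{v+\eta u}{b_j}^2\dd t\le\norm B_{\cL_2}^2\norm{v+\eta u}_{L^2}^2\dd t,
\end{align*}
and this is bounded by $\beta\Vv$ via \eqref{eq:periodic-V-equivalence}.

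For \eqref{eq:finite-horizon-energy-moment}, I would apply Itô's formula to $(1+\Vv)^p$, or equivalently use the identity directly. Dropping $\int\mathsf V\ge-C_*t$ (note $\mathsf V\ge0$ already by the choice $\eta\le a_0/4$) gives $\Vv(X_t)\le\Vv(\mathsf x)+b_0t+M_t$. Then
\begin{align*}
\E\sup_{t\le T}\Vv^p\le C_p\Bigl(\Vv(\mathsf x)^p+(b_0T)^p+\E\sup_{t\le T}|M_t|^p\Bigr).
\end{align*}
By BDG,
\begin{align*}
\E\sup|M|^p\le C\,\E\Bigl(\int_0^T\beta\Vv\Bigr)^{p/2}\le C_T\,\E\bigl[\sup\Vv^{p/2}\bigr]\le\tfrac12\E\sup\Vv^p+C_T'.
\end{align*}
Absorbing the first term requires finiteness of $\E\sup\Vv^p$, which I would secure by stopping at $\tau_N=\inf\{t:\Vv(X_t)\ge N\}$ and then letting $N\to\infty$ with Fatou. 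The result is $C_{p,T}(1+\Vv(\mathsf x))^p$.

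The main obstacle is the rigorous justification of Itô's formula for the energy-level mild solution. $\Delta u$ is only in $H^{-1}$, so pairings such as $\ip v{\Delta u}$ are not defined pathwise and must cancel before any limit is taken. I would handle this through Galerkin truncation $P_N$ applied to the equation (or the shifted form \eqref{eq:pathwise-shifted-wave}), where the cubic term becomes $P_N(u^3)$ and the cancellation still holds because $\ip{v_N}{P_Nu_N^3}=\frac14\frac{\dd}{\dd t}\norm{u_N}_{L^4}^4$. Convergence of the approximations in $C([0,T];\Hh)$ in probability, together with convergence of the stochastic integrals, then yields \eqref{eq:energy-identity}.
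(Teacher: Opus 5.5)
Your proposal is correct and follows the paper's proof: the same It\^o computation for $\cE$, $\eta\ip uv$ and $\frac\eta2\int au^2$ (with $\eta\int auv$ cancelling), the first bound in \eqref{eq:coercive-qv} taken from \Cref{lem:periodic-energy-coercivity}, and the same quadratic variation estimate. The deviations are minor and sound. You derive \eqref{eq:finite-horizon-energy-moment} directly from the linear identity: drop $\int_0^t\mathsf V\ge0$, apply BDG to $M$ with $\langle M\rangle_T\le\beta T\sup_{t\le T}\Vv(X_t)$, and absorb after localisation. The paper instead applies It\^o's formula to $(1+\Vv)^p$ and uses Gronwall. You also supply a Galerkin justification of It\^o's formula, which the paper takes for granted.
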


\begin{proof}
Recall the energy $\cE$ from \eqref{eq:E-energy}. By It\^o's formula, 
\begin{align*}
 \dd\cE&=-\int_{\mathbb T^3}a|v|^2\,\dd t+b_0\,\dd t+\ip v{B\,\dd W},\\
 \dd\ip uv&=\left(\norm v_{L^2}^2-\norm{\nabla u}_{L^2}^2-\norm u_{L^4}^4-\int_{\mathbb T^3}auv\right)\dd t+\ip u{B\,\dd W},\\
 \frac12\dd\int_{\mathbb T^3}au^2&=\int_{\mathbb T^3}auv\,\dd t.
\end{align*}
Multiplying the last two identities by $\eta$ and adding them to the first gives \eqref{eq:energy-identity}.  The first estimate in \eqref{eq:coercive-qv} is \Cref{lem:periodic-energy-coercivity}, while
\begin{align*}
\dd\langle M\rangle_t=\norm{B^*(v_t+\eta u_t)}_{\R^m}^2\,\dd t\le C_B\bigl(\norm{v_t}_{L^2}^2+\norm{u_t}_{L^2}^2\bigr)\,\dd t\le\beta\Vv(X_t)\,\dd t.
\end{align*}

For the moment estimate, applying It\^o's formula to $Y_t^p$ where $Y_t=1+\Vv(X_t)$ we have 
\begin{align}\label{eq:Ytp}
\dd Y_t^p
\le\left[-p\kappa Y_t^p+pC_0Y_t^{p-1}
+\frac{p(p-1)}{2}\beta Y_t^{p-2}\Vv(X_t)\right]\dd t
+pY_t^{p-1}\,\dd M_t,
\end{align}
where $C_0=\kappa+C_*+b_0$. The required \eqref{eq:finite-horizon-energy-moment} then follows from  \eqref{eq:coercive-qv}, Burkholder--Davis--Gundy's inequality, Young's inequality and Gronwall.
\end{proof}

\begin{proposition}[Lyapunov estimates]\label{prop:lyapunov}
For every $p\ge1$ there are $c_p>0$ and $C_p<\infty$ such that
\begin{align}\label{eq:poly-Foster}
 P_t(1+\Vv)^p(\mathsf x)\le \e^{-c_pt}(1+\Vv(\mathsf x))^p+C_p.
\end{align}
There is $\lambda_0>0$ such that
\begin{align}\label{eq:exp-Foster}
 P_t\e^{\lambda_0\Vv}(\mathsf x)\le \e^{-c_0t}\e^{\lambda_0\Vv(\mathsf x)}+C_0.
\end{align}
Every invariant probability $\nu$ satisfies the corresponding polynomial and exponential moment bounds as well.
\end{proposition}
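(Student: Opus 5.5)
The plan is to derive both bounds from the energy identity \eqref{eq:energy-identity} together with the coercivity and quadratic-variation bounds \eqref{eq:coercive-qv}, applying Itô's formula to the functions $(1+\Vv)^p$ and $\e^{\lambda\Vv}$.

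\emph{Polynomial bound.} Let $Y_t=1+\Vv(X_t)$. Starting from \eqref{eq:Ytp}, absorb the lower-order terms using $\Vv\le Y$ and Young's inequality:
\begin{align*}
 pC_0Y^{p-1}+\tfrac{p(p-1)}2\beta Y^{p-1}\le\tfrac{p\kappa}{2}Y^p+C_p'.
\end{align*}
This gives $\dd Y^p\le(-\tfrac{p\kappa}2Y^p+C_p')\dd t+\dd(\text{local martingale})$. To take expectations, localize with stopping times $\tau_n=\inf\{t:\Vv(X_t)\ge n\}$, and remove the localization using \eqref{eq:finite-horizon-energy-moment} together with Fatou's lemma. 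Gronwall's inequality applied to $\E Y_{t\wedge\tau_n}^p$ then yields \eqref{eq:poly-Foster} with $c_p=p\kappa/2$ and $C_p=2C_p'/(p\kappa)$.

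\emph{Exponential bound.} Itô's formula for $\e^{\lambda\Vv(X_t)}$, using \eqref{eq:energy-identity}, gives
\begin{align*}
 \dd\e^{\lambda\Vv}=\lambda\e^{\lambda\Vv}\Bigl(-\mathsf V+b_0+\tfrac\lambda2\tfrac{\dd\langle M\rangle}{\dd t}\Bigr)\dd t+\lambda\e^{\lambda\Vv}\dd M.
\end{align*}
By \eqref{eq:coercive-qv}, the drift coefficient is at most $-\kappa\Vv+C_*+b_0+\tfrac{\lambda\beta}{2}\Vv$. Choosing $\lambda_0\le\kappa/\beta$ makes it at most $-\tfrac\kappa2\Vv+C$. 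On $\{\Vv\ge R\}$ with $R$ large this is $\le-1$, while on $\{\Vv<R\}$ the term $\lambda\e^{\lambda\Vv}(\cdots)$ is bounded by a constant $C_R$. Hence $\dd\e^{\lambda_0\Vv}\le(-c_0\e^{\lambda_0\Vv}+C)\dd t+\dd(\text{mart})$.

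The main technical point is controlling the martingale term for the exponential functional, since integrability is not given a priori. I would handle this with the same stopping-time localization, then pass to the limit by Fatou's lemma applied to the nonnegative quantity $\e^{\lambda_0\Vv(X_{t\wedge\tau_n})}$. Gronwall then gives \eqref{eq:exp-Foster}.

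The sharper form used later, $P_t\e^{\sigma\Vv}\le C\exp\{\sigma\e^{-c_1t}\Vv\}$, would instead come from applying Itô's formula to $\exp\{\sigma\e^{c_1 s}\Vv(X_s)\}$ on $[0,t]$ with $\sigma\e^{c_1 t}$ kept small relative to $\kappa/\beta$. That statement is not required here.

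\emph{Invariant measures.} For an invariant $\nu$, truncate: apply the bound with $f_N=\min\{(1+\Vv)^p,N\}$, and use $\nu(f_N)=\nu P_tf_N\le\int\min\{\e^{-c_pt}(1+\Vv)^p+C_p,N\}\,\dd\nu$. Letting $t\to\infty$ by dominated convergence gives $\nu(f_N)\le C_p$, and monotone convergence in $N$ gives $\nu((1+\Vv)^p)\le C_p$. The exponential moment follows identically from \eqref{eq:exp-Foster}, giving $\nu(\e^{\lambda_0\Vv})\le C_0$.
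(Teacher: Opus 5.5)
Your proposal is correct and follows essentially the paper's route: It\^o's formula for $(1+\Vv)^p$ via \eqref{eq:Ytp} and Young's inequality, It\^o's formula for $\e^{\lambda\Vv}$ with $\lambda_0\beta\le\kappa$, and a truncation argument for invariant measures. The differences are minor. You add the stopping-time localization that the paper leaves implicit. For invariant measures you let $t\to\infty$ by dominated convergence, obtaining $\int(1+\Vv)^p\,\dd\nu\le C_p$ directly, whereas the paper fixes $t$ and uses $P_tL\le\rho L+C$ with $\rho<1$ to get $(1-\rho)\int_{\{L\le R\}}L\,\dd\nu\le C$.
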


\begin{proof}
Since $\Vv(X_t)\le Y_t$, \eqref{eq:Ytp} and Young's inequality give
\begin{align*}
\dd Y_t^p\le\left[-p\kappa Y_t^p+C_pY_t^{p-1}\right]\dd t+\dd N_t^{(p)}\le\left[-c_pY_t^p+C_p\right]\dd t+\dd N_t^{(p)}
\end{align*}
for a local martingale $N^{(p)}$. 
Taking expectations and applying Gronwall's inequality yields
\begin{align*}
P_t(1+\Vv)^p(\mathsf x)\le\e^{-c_pt}(1+\Vv(\mathsf x))^p+C_p,
\end{align*}
which proves \eqref{eq:poly-Foster}.

For $\lambda>0$, another application of It\^o's formula together with \eqref{eq:energy-identity} and \eqref{eq:coercive-qv} gives
\begin{align*}
\dd\e^{\lambda\Vv(X_t)}
\le\e^{\lambda\Vv(X_t)}
\left[-\lambda\kappa\Vv(X_t)+\lambda(C_*+b_0)
+\frac{\lambda^2\beta}{2}\Vv(X_t)\right]\dd t+\dd N_t^{(\lambda)}.
\end{align*}
Choose $\lambda_0>0$ so that $\lambda_0\beta\le\kappa$.  Then
\begin{align*}
\dd\e^{\lambda_0\Vv(X_t)}
\le\left[-c_0\e^{\lambda_0\Vv(X_t)}+C_0\right]\dd t+\dd N_t^{(\lambda_0)}.
\end{align*}
Taking expectations and applying Gronwall's inequality proves \eqref{eq:exp-Foster}.

Finally, let $\nu$ be invariant and let $L$ denote either $(1+\Vv)^p$ or $\e^{\lambda_0\Vv}$.  Fix $t>0$ and put $\rho=\e^{-ct}<1$, so that
\begin{align*}
 P_tL\le\rho L+C.
\end{align*}
For $R>0$, set $L_R=L\wedge R$.  By invariance and $L_R\le L$,
\begin{align*}
 \int_{\Hh}L_R\,\dd\nu
 =\int_{\Hh}P_tL_R\,\dd\nu
 \le\int_{\{L\le R\}}(\rho L+C)\,\dd\nu+R\nu\{L>R\}.
\end{align*}
Since
\begin{align*}
 \int_{\Hh}L_R\,\dd\nu=\int_{\{L\le R\}}L\,\dd\nu+R\nu\{L>R\},
\end{align*}
we obtain
\begin{align*}
 (1-\rho)\int_{\{L\le R\}}L\,\dd\nu\le C.
\end{align*}
Letting $R\to\infty$ and using monotone convergence gives
\begin{align*}
 \int_{\Hh}L\,\dd\nu\le\frac{C}{1-\rho}.
\end{align*}
This proves the asserted polynomial and exponential moment bounds.
\end{proof}

The preceding energy identity also gives the exponential occupation
estimate below. In particular, it verifies \Cref{ass:QS-Lyapunov} for the wave equation.  By
\eqref{eq:energy-identity} and \eqref{eq:coercive-qv}, after changing the
constants,
\begin{align}\label{eq:energy-semimartingale}
 \dd\Vv(X_t)
 &\le[-\kappa\Vv(X_t)+C_*]\,\dd t+\dd M_t,
 &
 \dd\langle M\rangle_t
 &\le\beta_*\Vv(X_t)\,\dd t.
\end{align}

\begin{lemma}\label{gap:lem:occupation}
There is $r_*>0$ such that for every $r,\sigma\in[0,r_*]$ and $T>0$,
\begin{align}\label{gap:eq:FK}
 \E \exp\left\{
 \sigma\Vv(X_T)+r\int_0^T\Vv(X_t)\,\dd t
 \right\}
 \le C_T\exp\{q_T(\sigma,r)\Vv(\mathsf x)\},
\end{align}
where $q_T(\sigma,r)$ is the value at time $T$ of the solution to the Riccati equation
\begin{align}\label{gap:eq:riccati}
 q'=r-\kappa q+\tfrac12\beta_*q^2,
 \qquad q(0)=\sigma.
\end{align}
In particular, there are $c_1,C>0$ independent of $T$ such that
\begin{align}\label{gap:eq:uniform-terminal-exp}
 P_T\e^{\sigma\Vv}(\mathsf x)
 \le C\exp\{\sigma\e^{-c_1T}\Vv(\mathsf x)\}
\end{align}
for all $\sigma\in [0,r_*]$.
\end{lemma}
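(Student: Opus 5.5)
The plan is a Feynman--Kac/exponential martingale argument with a time-dependent exponent. Fix $r,\sigma\in[0,r_*]$ and $T>0$, and let $q(t)$ solve the Riccati equation \eqref{gap:eq:riccati}. Choose $r_*$ small enough, namely $r_*\le\kappa^2/(8\beta_*)$ and $r_*\le\kappa/(2\beta_*)$, so that the stable fixed point $q_-=\bigl(\kappa-\sqrt{\kappa^2-2\beta_*r}\bigr)/\beta_*\le 2r/\kappa$ exists. Since $\sigma$ is below the unstable fixed point $q_+\ge\kappa/\beta_*$, comparison shows that $q$ stays in $[0,\max\{\sigma,q_-\}]\subset[0,\kappa/\beta_*)$ for all time. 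In particular $q$ is globally defined and bounded uniformly in $T$.

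Reverse time by setting $\tilde q(t)=q(T-t)$, so that $\tilde q(T)=\sigma$ and $\tilde q(0)=q_T(\sigma,r)$, and consider
\begin{align*}
 Z_t=\exp\Bigl\{\tilde q(t)\Vv(X_t)+r\int_0^t\Vv(X_s)\,\dd s-\int_0^t\tilde q(s)C_*\,\dd s\Bigr\}.
\end{align*}
Apply It\^o's formula using the semimartingale inequality \eqref{eq:energy-semimartingale}. Since $\tilde q\ge0$, the drift of $\log$-exponent plus half the quadratic variation is bounded by
\begin{align*}
 \bigl[\tilde q'+r-\kappa\tilde q+\tfrac12\beta_*\tilde q^2\bigr]\Vv(X_t)=\bigl[-q'+q'\bigr]\Vv=0.
\end{align*}
Hence $Z$ is a nonnegative local supermartingale, and therefore a supermartingale. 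This gives $\E Z_T\le Z_0=\exp\{q_T(\sigma,r)\Vv(\mathsf x)\}$. Multiplying by $\exp\{C_*\int_0^T q\}$ produces $C_T=\exp\{C_*\int_0^Tq\}\le\exp\{C_*T\max(\sigma,q_-)\}$, which is \eqref{gap:eq:FK}.

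The one subtle point is that $\Vv(X_t)$ satisfies only an inequality and not an identity. The cleanest route is to work with the exact identity \eqref{eq:energy-identity}, where $\dd\Vv=(b_0-\mathsf V)\dd t+\dd M$, and then use $\mathsf V\ge\kappa\Vv-C_*$ with $\tilde q\ge0$. Localization by stopping times followed by Fatou's lemma handles the local martingale.

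For \eqref{gap:eq:uniform-terminal-exp}, take $r=0$. The Riccati equation then becomes $q'=-q(\kappa-\tfrac12\beta_*q)$, and since $q\le\sigma\le r_*\le\kappa/\beta_*$, this gives $q'\le-\tfrac{\kappa}{2}q$. Hence $q_T\le\sigma e^{-c_1T}$ with $c_1=\kappa/2$. The constant is $C_T=\exp\{C_*\int_0^Tq\}\le\exp\{C_*\sigma\cdot 2/\kappa\}$, which is uniform in $T$.

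The main obstacle is the semimartingale bookkeeping. It is needed to ensure that the It\^o correction uses the bound $\dd\langle M\rangle\le\beta_*\Vv\,\dd t$ with the correct sign, and to justify the supermartingale property through localization. Both steps are routine given \eqref{eq:coercive-qv} and \eqref{eq:finite-horizon-energy-moment}.
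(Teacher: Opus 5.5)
Your proposal is correct and follows the paper's proof essentially step for step. You use the same invariant interval for the Riccati flow and the same time-reversed exponent $p_t=q_{T-t}$ in $\exp\{p_t\Vv(X_t)+r\int_0^t\Vv(X_s)\,\dd s\}$, whose $\Vv$-drift (including the It\^o correction $\frac12\beta_*p_t^2\Vv$) is cancelled exactly by the Riccati equation. For $r=0$ you use the same bound $q'\le-\frac{\kappa}{2}q$, which gives $c_1=\kappa/2$ and the $T$-independent constant $\exp\{2C_*\sigma/\kappa\}$.

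The only difference is bookkeeping. You fold the $C_*$ term into $Z$ and use that a nonnegative local supermartingale is a supermartingale. The paper instead bounds $Z_T\le Z_0\,\e^{C_*\int_0^Tp_t\,\dd t}\,\mathcal E_T$ pathwise, with $\mathcal E$ the stochastic exponential of $\int p\,\dd M$, and uses $\E\mathcal E_T\le1$. The two arguments are equivalent.
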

\begin{proof}
Fixing any $0<r_*<\min\{\kappa/\beta_*,\kappa^2/(2\beta_*)\}$, then $[0,\kappa/\beta_*]$ is invariant under the flow of \eqref{gap:eq:riccati}, implying for $r,\sigma\in[0,r_*]$, 
\begin{align}\label{eq:qt-stable}
 0\le q_t(\sigma,r)\le \frac{\kappa}{\beta_*},\qquad t\ge0.
\end{align}
Fix $T>0$ and set $ p_t=q_{T-t}(\sigma,r)$.  Applying It\^o's
formula to
\begin{align*}
 Z_t=\exp\left\{
  p_t\Vv(X_t)+r\int_0^t\Vv(X_\tau)\,\dd\tau
 \right\}
\end{align*}
and using \eqref{eq:energy-semimartingale}, we obtain
\begin{align*}
 \dd Z_t
 &\le Z_t\left[
 \left( p_t'-\kappa p_t+r
 +\frac12\beta_* p_t^2\right)\Vv(X_t)
 +C_* p_t\right]\dd t
 + p_tZ_t\,\dd M_t\\
 &=C_* p_t Z_t \dd t
 + p_tZ_t\,\dd M_t.
\end{align*}
Therefore
\begin{align*}
 Z_T
 \le Z_0\exp\left\{C_*\int_0^T p_t\,\dd t\right\}
 \mathcal E_T, \text{ where } \mathcal E_T = \exp\left\{\int_0^T p_t\,\dd M_t-\frac12\int_0^T p_t^2\,\dd \langle M\rangle_t\right\}. 
\end{align*}
Since $\mathcal E$ is a nonnegative
local martingale, $\E\mathcal E_T\le1$.  Thus by \eqref{eq:qt-stable},
\begin{align}\label{eq:ZT}
 \E Z_T
 \le
 \exp\left\{C_*\int_0^Tq_t(\sigma,r)\,\dd t\right\}
 \exp\{q_T(\sigma,r)\Vv(\mathsf x)\}
 \le C_T\exp\{q_T(\sigma,r)\Vv(\mathsf x)\},
\end{align}
which proves \eqref{gap:eq:FK}. For $r=0$, from \eqref{gap:eq:riccati} and \eqref{eq:qt-stable} we deduce 
\begin{align*}
 q_t'(\sigma,0)\le-\frac\kappa2q_t(\sigma,0).
\end{align*}
Consequently, taking $r=0$ in \eqref{eq:ZT}  gives
\begin{align*}
 P_T\e^{\sigma\Vv}(\mathsf x)
 \le
 \exp\left\{\frac{2C_*}{\kappa}\sigma\right\}
 \exp\left\{\sigma\e^{-\kappa T/2}\Vv(\mathsf x)\right\},
\end{align*}
and \eqref{gap:eq:uniform-terminal-exp} follows with $c_1=\kappa/2$ and $C=\exp\left\{2C_*r_*/\kappa\right\}$.
\end{proof}

Recall that, for $\chi>0$, the weak phase premetric is
\begin{align}\label{eq:weak-premetric}
 \mathsf D_\chi(\mathsf x,\mathsf y)
 =\exp\{\chi[\Vv(\mathsf x)+\Vv(\mathsf y)]\}
 \norm{\mathsf x-\mathsf y}_{\Es{s}},
 \qquad \mathsf x,\mathsf y\in\Hh.
\end{align}
Let $X$ and $Y$ be the solutions to the wave equation issued from $\mathsf x$ and
$\mathsf y$ under the same noise.

\begin{proposition}\label{gap:prop:high}
There is $\chi_0<\infty$ such that for every fixed
$\chi\ge\chi_0$, there is $\alpha_0(\chi)\in(0,1]$ with the following
property.  Whenever $0<\alpha\le\alpha_0(\chi)$, for $T\geq T_0:=4\log2/\kappa$ there are $C_T\ge1$ and $c_\alpha>0$ such that
\begin{align}\label{gap:eq:high-sync}
 \E\mathsf D_\chi(X_T,Y_T)^\alpha
 \le C_T\e^{-c_\alpha[\Vv(\mathsf x)+\Vv(\mathsf y)]}
 \mathsf D_\chi(\mathsf x,\mathsf y)^\alpha,
\end{align}
where the constants may be chosen
so that
\begin{align}\label{gap:eq:high-constants}
 \log C_T\le C_\chi\alpha(1+T),
 \qquad
 c_\alpha = \frac{C_0\alpha}{4\kappa}. 
\end{align}
Consequently, with
\begin{align*}
 R_T=1\vee c_\alpha^{-1}\log(4C_T),
\end{align*}
the factor on the right-hand side of \eqref{gap:eq:high-sync} is at most
$1/4$ whenever
$\Vv(\mathsf x)+\Vv(\mathsf y)\ge R_T$ with 
\begin{align*}
 R_T\le C_\chi(1+T), \quad T\ge T_0\vee\alpha^{-1}. 
\end{align*}
\end{proposition}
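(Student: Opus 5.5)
The plan is to couple synchronously, bound the secant propagator pathwise by an exponential of the energy occupation integral, and then absorb everything with the Feynman--Kac/Riccati bound of \Cref{gap:lem:occupation}.

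\emph{Step 1: pathwise bound on the secant propagator.} Write $h=\mathsf y-\mathsf x$. By \eqref{gap:eq:weak-stable-compact} and its variation of constants form,
\begin{align*}
 \mathcal J_th=S_a(t)h+\int_0^tS_a(t-r)\binom0{(1-\mathscr W_r)z_r^h}\,\dd r .
\end{align*}
The trilinear bound \eqref{gap:eq:negative-trilinear} and \eqref{eq:periodic-V-equivalence} give
\begin{align*}
 \norm{(1-\mathscr W_r)z_r^h}_{H^{-1-s}}\le C_1\bigl[1+\Vv(X_r)+\Vv(Y_r)\bigr]\norm{\mathcal J_rh}_{\Es{s}} .
\end{align*}
With $M_{-1-s}$ from \Cref{lem:linear-semigroup}, Gronwall then yields
\begin{align*}
 \norm{\Phi_T(\mathsf y,\omega)-\Phi_T(\mathsf x,\omega)}_{\Es{s}}
 \le M\exp\Bigl\{K\int_0^T\bigl[1+\Vv(X_t)+\Vv(Y_t)\bigr]\dd t\Bigr\}\norm h_{\Es{s}},
 \qquad K=MC_1 .
\end{align*}
No decay is needed from the semigroup here, since contraction will come entirely from the energy weights.

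\emph{Step 2: splitting the weighted cost and applying the occupation estimate.} Raising to the power $\alpha$ and multiplying by the weight gives
\begin{align*}
 \mathsf D_\chi(X_T,Y_T)^\alpha\le M^\alpha\e^{\alpha KT}\norm h_{\Es{s}}^\alpha
 \prod_{Z\in\{X,Y\}}\exp\Bigl\{\alpha\chi\Vv(Z_T)+\alpha K\int_0^T\Vv(Z_t)\,\dd t\Bigr\}.
\end{align*}
By Cauchy--Schwarz we separate the two factors and apply \eqref{gap:eq:FK} to each with $\sigma=2\alpha\chi$ and $r=2\alpha K$. Choosing $\alpha_0(\chi)$ small enough guarantees $\sigma,r\le r_*$. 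The result is
\begin{align*}
 \E\mathsf D_\chi(X_T,Y_T)^\alpha\le C_T\norm h^\alpha_{\Es{s}}
 \exp\Bigl\{\tfrac12q_T(2\alpha\chi,2\alpha K)\bigl[\Vv(\mathsf x)+\Vv(\mathsf y)\bigr]\Bigr\}.
\end{align*}
Here $\log C_T\le \alpha KT+\alpha\log M+C_*\int_0^Tq_t\,\dd t$. Since $0\le q_t\le C\alpha$ on the invariant interval (for small parameters), this gives $\log C_T\le C_\chi\alpha(1+T)$, the first half of \eqref{gap:eq:high-constants}.

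\emph{Step 3: Riccati comparison (the main point).} We need to show
\begin{align*}
 \tfrac12q_T(\sigma,r)\le\alpha\chi-c_\alpha .
\end{align*}
Once this holds, the right-hand side becomes $C_T\e^{-c_\alpha[\Vv(\mathsf x)+\Vv(\mathsf y)]}\mathsf D_\chi(\mathsf x,\mathsf y)^\alpha$. For small parameters, $q\le\kappa/\beta_*$ implies $\tfrac12\beta_*q^2\le\tfrac\kappa2 q$. Comparison in \eqref{gap:eq:riccati} then gives
\begin{align*}
 q_T\le\sigma\e^{-\kappa T/2}+\frac{2r}{\kappa}.
\end{align*}
For $T\ge T_0=4\log2/\kappa$ we have $\e^{-\kappa T/2}\le\frac14$, hence
\begin{align*}
 \tfrac12q_T\le\tfrac14\alpha\chi+\frac{2\alpha K}{\kappa}.
\end{align*}
Taking $\chi\ge\chi_0$ with $\chi_0$ large compared with $K/\kappa$ makes this at most $\alpha\chi-c_\alpha$, with $c_\alpha$ proportional to $\alpha/\kappa$ as in \eqref{gap:eq:high-constants}.

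The delicate part is exactly this bookkeeping. The loss $2\alpha K/\kappa$ coming from the Gronwall growth must be beaten by the weight $\alpha\chi$, which the Riccati flow damps by only a factor $\frac14$. That is why $\chi$ is fixed first and only then $\alpha$ is taken small, so that $r,\sigma\le r_*$.

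\emph{Step 4: the threshold $R_T$.} The final claim is immediate from the above. Because $\log C_T=O(\alpha(1+T))$ and $c_\alpha\propto\alpha$, we get $R_T=1\vee c_\alpha^{-1}\log(4C_T)\le C_\chi(1+T)$, using $T\ge\alpha^{-1}$ to absorb the constant $\log 4/c_\alpha$. By the definition of $R_T$, the prefactor $C_T\e^{-c_\alpha[\Vv(\mathsf x)+\Vv(\mathsf y)]}$ is then at most $\frac14$ whenever $\Vv(\mathsf x)+\Vv(\mathsf y)\ge R_T$.
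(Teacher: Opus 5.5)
Your proposal is correct. Steps 1, 2 and 4 coincide with the paper's argument: a pathwise Gronwall bound for the exact difference in $\Es{s}$, then Cauchy--Schwarz with the Feynman--Kac bound \eqref{eq:ZT} at $\sigma=2\alpha\chi$, $r=2\alpha K$, then the elementary estimate for $R_T$. Step 3, the Riccati control, takes a genuinely different and simpler route.

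\textbf{The paper's Step 3.} It analyses $q'=2r_\alpha-\kappa q+\frac12\beta_*q^2$ through its equilibria $q_-(\alpha)=\frac{2C_0}{\kappa}\alpha+O(\alpha^2)$ and $q_+(\alpha)$. The factorization $q'=\frac{\beta_*}{2}(q-q_-)(q-q_+)$ shows that $q_t$ decreases monotonically from $2\alpha\chi$ to $q_-$ at rate at least $\kappa/4$. Controlling the $O(\alpha^2)$ corrections requires shrinking $r_*$ once more. In return the paper gets the smaller threshold $\chi_0=2C_0/\kappa$ and the monotone bound $q_t\le 2\alpha\chi$, which it reuses in \Cref{gap:cor:finite-time-weighted-bound}.

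\textbf{Your Step 3.} On the invariant interval $[0,\kappa/\beta_*]$ the quadratic term is at most $\frac\kappa2 q$, so a linear comparison gives $q_t\le\sigma\e^{-\kappa t/2}+2r/\kappa$. This needs no expansion of the equilibria and no further adjustment of $r_*$, and it works with the same $T_0=4\log 2/\kappa$. The only cost is a larger $\chi_0$; for instance $\chi_0=3K/\kappa$ gives $c_\alpha=K\alpha/(4\kappa)$. That is harmless, since only the existence of some $\chi_0$ matters downstream.

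\textbf{A small bookkeeping point.} In Step 2 you use $q_t\le C\alpha$ to bound $\log C_T$. This does not follow from invariance of $[0,\kappa/\beta_*]$ alone. It comes from your Step 3 comparison, $q_t\le 2\alpha\chi+4\alpha K/\kappa$, so that bound should be established first. The same bound also suffices for the finite-time corollary.
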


\begin{proof}
Write $U=X-Y=(w,\dot w)$ and $\mathscr W=u^2+u\widetilde u+\widetilde u^2$.  Then
\begin{align*}
 \ddot w-\Delta w+a\dot w+\mathscr Ww=0,
\end{align*}
or, relative to the massive damped-wave semigroup $S_a(t)$,
\begin{align*}
 U_t=S_a(t)U_0+\int_0^tS_a(t-\tau)\binom0{(1-\mathscr W_\tau)w_\tau}\,\dd\tau.
\end{align*}
By \eqref{eq:semigroup-decay-scale} and \eqref{gap:eq:negative-trilinear},
\begin{align*}
 \norm{U_t}_{\Es{s}}
 &\le C\norm{U_0}_{\Es{s}}+C\int_0^t\left[1+\norm{u_\tau}_{H^1}^2+\norm{\widetilde u_\tau}_{H^1}^2\right]\norm{U_\tau}_{\Es{s}}\,\dd\tau\\
 &\le C\norm{U_0}_{\Es{s}}+C\int_0^t[\Vv(X_\tau)+\Vv(Y_\tau)]\norm{U_\tau}_{\Es{s}}\,\dd\tau.
\end{align*}
Hence Gronwall's inequality gives
\begin{align}\label{gap:eq:high-weak-difference}
 \norm{U_T}_{\Es{s}}\le C_0\exp\left\{C_0\int_0^T[\Vv(X_t)+\Vv(Y_t)]\,\dd t\right\}\norm{\mathsf x-\mathsf y}_{\Es{s}}
\end{align}
for some structural constant $C_0\ge1$.  Set $\sigma_\alpha=\alpha\chi$ and $r_\alpha=C_0\alpha$.  Raising \eqref{gap:eq:high-weak-difference} to the power $\alpha$, using \eqref{eq:weak-premetric}, and applying Cauchy--Schwarz together with \eqref{eq:ZT} to the two solutions gives
\begin{align}\label{gap:eq:high-riccati-reduction}
 \E\mathsf D_\chi(X_T,Y_T)^\alpha
 \le C_T\exp\left\{\left[-\sigma_\alpha+\frac12q_T(2\sigma_\alpha,2r_\alpha)\right][\Vv(\mathsf x)+\Vv(\mathsf y)]\right\}\mathsf D_\chi(\mathsf x,\mathsf y)^\alpha,
\end{align}
where
\begin{align*}
 C_T=C_0^\alpha\exp\left\{C_*\int_0^Tq_t(2\sigma_\alpha,2r_\alpha)\,\dd t\right\}.
\end{align*}

It remains to control the Riccati term.  Let $q_-(\alpha)<q_+(\alpha)$ be the two equilibria of $q'=2r_\alpha-\kappa q+\frac12\beta_*q^2$.  Since $r_\alpha=C_0\alpha$,
\begin{align*}
 q_-(\alpha)=\frac{2C_0}{\kappa}\alpha+O(\alpha^2),\qquad q_+(\alpha)=\frac{2\kappa}{\beta_*}+O(\alpha).
\end{align*}
Set $\chi_0=2C_0/\kappa$ and $\delta=C_0/(4\kappa)$.  After decreasing $r_*$ in \Cref{gap:lem:occupation} once and for all, we may assume that $r_*\le\kappa/(2\beta_*)$ and, whenever $2C_0\alpha\le r_*$,
\begin{align*}
 q_-(\alpha)\le2\alpha(\chi_0-2\delta),\qquad q_+(\alpha)\ge\frac{\kappa}{\beta_*}.
\end{align*}
For fixed $\chi\ge\chi_0$, take
\begin{align*}
 \alpha_0(\chi)=\min\left\{\frac{r_*}{2\chi},\frac{r_*}{2C_0}\right\}.
\end{align*}
Then, for $0<\alpha\le\alpha_0(\chi)$,
\begin{align*}
 q_-(\alpha)\le2\alpha(\chi-2\delta)<q_0=2\alpha\chi<q_+(\alpha).
\end{align*}
Thus, writing $q_t=q_t(2\sigma_\alpha,2r_\alpha)$,
\begin{align*}
 q_t'=\frac{\beta_*}{2}[q_t-q_-(\alpha)][q_t-q_+(\alpha)]
\end{align*}
shows that $q_t$ decreases to $q_-(\alpha)$.  In particular, $0\le q_t\le2\alpha\chi$, and hence
\begin{align*}
 C_T\le C_0^\alpha\e^{2C_*\alpha\chi T},\qquad \log C_T\le C_\chi\alpha(1+T).
\end{align*}
Moreover, $q_+(\alpha)-q_t\ge\kappa/(2\beta_*)$, so
\begin{align*}
 \frac{\dd}{\dd t}[q_t-q_-(\alpha)]\le-\frac{\kappa}{4}[q_t-q_-(\alpha)].
\end{align*}
Therefore, for $T\ge T_0:=4\log2/\kappa$,
\begin{align*}
 q_T
 &\le q_-(\alpha)+\frac12[2\alpha\chi-q_-(\alpha)]
 \le2\alpha\chi-2\delta\alpha.
\end{align*}
Consequently, as $\sigma_{\alpha}=\alpha\chi$,
\begin{align*}
 -\sigma_\alpha+\frac12q_T(2\sigma_\alpha,2r_\alpha)\le-\delta\alpha.
\end{align*}
Substituting this into \eqref{gap:eq:high-riccati-reduction} proves \eqref{gap:eq:high-sync} with $c_\alpha=\delta\alpha$.

Finally, replacing $C_T$ by $C_0^\alpha\e^{2C_*\alpha\chi T}$ if necessary, take
\begin{align*}
 R_T=1\vee\frac{\log(4C_T)}{\delta\alpha}.
\end{align*}
Then the factor in \eqref{gap:eq:high-sync} is at most $1/4$ whenever $\Vv(\mathsf x)+\Vv(\mathsf y)\ge R_T$.  Moreover,
\begin{align*}
 R_T\le1+\frac{\log4}{\delta\alpha}+\frac{\log C_0}{\delta}+\frac{2C_*\chi}{\delta}T,
\end{align*}
so $R_T\le C_\chi(1+T)$ whenever $T\ge T_0\vee\alpha^{-1}$.  This proves \eqref{gap:eq:high-constants}.
\end{proof}

Fix $\chi\ge\chi_0$ and $0<\alpha\le\alpha_0(\chi)$ from \Cref{gap:prop:high} and $\lambda_0$ from \Cref{prop:lyapunov}. For any $r_0,\beta>0$ and  $0<\lambda\le\lambda_0$, let $d_0$
and $d$ be given by \eqref{gap:eq:basic-cost} and
\eqref{gap:eq:weighted-cost}.

\begin{corollary}
\label{gap:cor:finite-time-weighted-bound}
For every
$\tau<\infty$, there is $C_\tau<\infty$ such that,
for solutions $X,Y$ issued from $\mathsf x,\mathsf y$, one has 
\begin{align}\label{gap:eq:finite-time-weighted-bound}
 \E d(X_t,Y_t)\le C_\tau d(\mathsf x,\mathsf y),
 \qquad 0\le t\le\tau.
\end{align}
Consequently,
\begin{align}\label{gap:eq:finite-time-Wasserstein-bound}
 \Wass_d(\mu_1P_r,\mu_2P_r)
 \le C_\tau\Wass_d(\mu_1,\mu_2),
 \qquad 0\le r\le\tau,
\end{align}
for all probability measures with finite $\e^{\lambda\Vv}$ moment.
\end{corollary}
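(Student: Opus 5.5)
We use synchronous coupling: $X_t,Y_t$ are driven by the same noise from $\mathsf x,\mathsf y$. The Wasserstein bound \eqref{gap:eq:finite-time-Wasserstein-bound} then follows from \eqref{gap:eq:finite-time-weighted-bound}. Take a near-optimal coupling $\Gamma$ of $(\mu_1,\mu_2)$, and for each pair $(\mathsf x,\mathsf y)$ in its support run the synchronous coupling, gluing measurably through the Borel endpoint maps $\Phi_r$. Integrating \eqref{gap:eq:finite-time-weighted-bound} against $\Gamma$ gives the claim; finiteness of the $\e^{\lambda\Vv}$ moments guarantees that $\Wass_d(\mu_1,\mu_2)<\infty$, so the bound is meaningful. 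The work is therefore the pointwise estimate.

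For the pointwise bound, write $d=\sqrt{d_0\,\mathcal W}$ with $\mathcal W(\mathsf x,\mathsf y)=1+\beta\e^{\lambda\Vv(\mathsf x)}+\beta\e^{\lambda\Vv(\mathsf y)}$. By Cauchy--Schwarz,
\begin{align*}
 \E d(X_t,Y_t)\le\bigl(\E d_0(X_t,Y_t)\bigr)^{1/2}\bigl(\E\mathcal W(X_t,Y_t)\bigr)^{1/2}.
\end{align*}
The weight factor is handled by \eqref{eq:exp-Foster} (or \eqref{gap:eq:uniform-terminal-exp}) with $\lambda\le\lambda_0$, which gives $\E\mathcal W(X_t,Y_t)\le C\,\mathcal W(\mathsf x,\mathsf y)$ uniformly in $t\le\tau$. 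It then suffices to show $\E d_0(X_t,Y_t)\le C_\tau d_0(\mathsf x,\mathsf y)$.

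Split into two cases. If $d_0(\mathsf x,\mathsf y)=1$, the bound $d_0\le1$ makes this trivial. If $d_0(\mathsf x,\mathsf y)<1$, use $1\wedge a\le a$ and Jensen:
\begin{align*}
 \E d_0(X_t,Y_t)\le r_0^{-\alpha}\,\E\mathsf D_\chi(X_t,Y_t)^\alpha .
\end{align*}
Now repeat the proof of \Cref{gap:prop:high} at time $t\le\tau$. The Gronwall bound \eqref{gap:eq:high-weak-difference} holds for every $t$, and combining it with Cauchy--Schwarz and \eqref{eq:ZT} gives
\begin{align*}
 \E\mathsf D_\chi(X_t,Y_t)^\alpha\le C_t\exp\Bigl\{\bigl[-\sigma_\alpha+\tfrac12 q_t(2\sigma_\alpha,2r_\alpha)\bigr](\Vv(\mathsf x)+\Vv(\mathsf y))\Bigr\}\mathsf D_\chi(\mathsf x,\mathsf y)^\alpha .
\end{align*}
Since $q$ decreases from $q_0=2\alpha\chi$ (this is where $\chi\ge\chi_0$ and $\alpha\le\alpha_0(\chi)$ are used), the bracket is $\le0$ for every $t$, not only for $t\ge T_0$. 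Also $C_t\le C_0^\alpha\e^{2C_*\alpha\chi\tau}$. Hence $\E d_0(X_t,Y_t)\le C_\tau d_0(\mathsf x,\mathsf y)$ whenever $d_0(\mathsf x,\mathsf y)<1$.

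The main obstacle is this small-time uniformity. The high-energy estimate as stated is only for $T\ge T_0$, so one has to check that the Riccati monotonicity (the initial value $q_0$ lying strictly between the equilibria $q_-(\alpha)$ and $q_+(\alpha)$) already gives a nonpositive exponent for all $t\ge0$, and that the Gronwall constant $C_0$ is uniform for $t\le\tau$. Both follow from \Cref{lem:linear-semigroup} and the proof of \Cref{gap:prop:high}.
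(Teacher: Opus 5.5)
Your proposal is correct and follows essentially the same route as the paper. Cauchy--Schwarz separates $d_0$ from the exponential weight, and the exponential Foster bound controls the weight. For the $d_0$ part you use the Riccati reduction \eqref{gap:eq:high-riccati-reduction} together with the monotonicity bound $q_t(2\alpha\chi,2C_0\alpha)\le 2\alpha\chi$, valid for all $t\ge0$, to get $\E\mathsf D_\chi(X_t,Y_t)^\alpha\le C_\tau\mathsf D_\chi(\mathsf x,\mathsf y)^\alpha$ for $t\le\tau$; the Wasserstein bound then follows by integrating the synchronous estimate against a coupling of $\mu_1,\mu_2$.
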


\begin{proof}
Note that \eqref{gap:eq:high-riccati-reduction} and the bound
$q_t(2\alpha\chi,2C_0\alpha)\le2\alpha\chi$ established in the above proof give 
\begin{align*}
 \sup_{0\le t\le\tau}
 \E\mathsf D_\chi(X_t,Y_t)^\alpha
 \le C_\tau\mathsf D_\chi(\mathsf x,\mathsf y)^\alpha.
\end{align*}
As $d_0=1\wedge\left(r_0^{-1}\mathsf D_{\chi}\right)^\alpha$,
it follows that
\begin{align*}
 \sup_{0\le t\le\tau}\E d_0(X_t,Y_t)
 \le C_\tau d_0(\mathsf x,\mathsf y).
\end{align*}
Moreover, \Cref{prop:lyapunov} gives
\begin{align*}
 \sup_{0\le t\le\tau}P_t\e^{\lambda\Vv}(\mathsf x)
 \le C_\tau\e^{\lambda\Vv(\mathsf x)}.
\end{align*}
Therefore, Cauchy--Schwarz yields
\begin{align*}
 \E d(X_t,Y_t)
 \le
 \bigl(\E d_0(X_t,Y_t)\bigr)^{1/2}
 \left(
 1+\beta\E\e^{\lambda\Vv(X_t)}
 +\beta\E\e^{\lambda\Vv(Y_t)}
 \right)^{1/2}\le C_\tau d(\mathsf x,\mathsf y),
 \qquad 0\le t\le\tau,
\end{align*}
which proves \eqref{gap:eq:finite-time-weighted-bound}.  Integrating against an arbitrary coupling of $\mu_1,\mu_2$ and
taking the infimum proves \eqref{gap:eq:finite-time-Wasserstein-bound}.
\end{proof}

\begin{remark}\label{rem:highcontraction}
\Cref{gap:prop:high} verifies \Cref{ass:QS-synchronous} for the wave equation.
More generally, the same high energy contraction follows from an
exponential occupation estimate of the form \eqref{gap:eq:FK} together
with a synchronous difference bound of the form
\eqref{gap:eq:high-weak-difference}, provided that, for small $\alpha$,
the associated Riccati flow satisfies
\begin{align*}
 \frac12 q_T(2\alpha\chi,2C_0\alpha)<\alpha\chi
\end{align*}
for all sufficiently large $T$.
\end{remark}

\subsection{Technical estimates for the stable--compact structure}
We collect here the estimates needed for parts~\textup{(ii)} and~\textup{(iii)} of \Cref{ass:QS-stable-compact}.  We first establish the variational energy and fractional Cameron--Martin two-jet bounds used below.  We then prove operator norm continuity of the compact defect and the Malliavin derivative on compact weak-energy cores, followed by the moment and finite factor Taylor estimates required in part~\textup{(iii)}.  Let $X=(u,v)$ be a solution on $[0,T]$ and recall that $\Lambda=\sqrt{I-\Delta}$.

\begin{lemma}\label{lem:variational-energies}
For $f\in L^2(0,T;L^2(\mathbb T^3))$, let $R=(r,\partial_t r)$ solve
\begin{align}\label{eq:linear-variation-general}
 \partial_t^2 r+\Lambda^2r+a\partial_t r+(3u^2-1)r=f,\qquad R(0)=0.
\end{align}
Then
\begin{align}\label{eq:linear-variation-energy-bound}
 \sup_{t\le T}\norm{R(t)}_{\Hh}^2
 \le C\exp\left\{
 C\int_0^T(1+\Vv(X_s))\,\dd s
 \right\}\norm f_{L^2_tL^2_x}^2,
\end{align}
where $C$ is independent of $T$.
\end{lemma}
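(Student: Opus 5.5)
The plan is a direct energy estimate for the linear equation \eqref{eq:linear-variation-general}, closed by Gronwall. We work with the plain linear energy
\begin{align*}
 \mathcal L(t)=\tfrac12\norm{\partial_t r}_{L^2}^2+\tfrac12\norm{\Lambda r}_{L^2}^2=\tfrac12\norm{R(t)}_{\Hh}^2 .
\end{align*}
No modified cross term is needed, because we do not seek decay, only a growth bound whose rate is linear in $\int_0^T(1+\Vv)$. For smooth $f$ and a smooth approximation of $u$, we pair the equation with $\partial_t r$ in $L^2$. Then we pass to the limit by density, since the equation is linear with a potential in $L^\infty_tL^3_x$. This gives
\begin{align*}
 \frac{\dd}{\dd t}\mathcal L
 =-\int_{\mathbb T^3}a|\partial_t r|^2-\int_{\mathbb T^3}(3u^2-1)r\,\partial_t r+\int_{\mathbb T^3}f\,\partial_t r .
\end{align*}

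We drop the damping term, using $a\ge a_0>0$. For the potential we use H\"older with $L^3\cdot L^6\cdot L^2$ and $H^1\hookrightarrow L^6$:
\begin{align*}
 \Bigl|\int u^2r\,\partial_t r\Bigr|\le\norm u_{L^6}^2\norm r_{L^6}\norm{\partial_t r}_{L^2}\le C\norm u_{H^1}^2\,\mathcal L\le C\Vv(X_t)\,\mathcal L ,
\end{align*}
where the last step is \eqref{eq:periodic-V-equivalence}. The lower-order term $r\,\partial_t r$ is bounded by $C\mathcal L$. For the source, Young gives $|\ip f{\partial_t r}|\le\tfrac12\norm f_{L^2}^2+\mathcal L$. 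Altogether,
\begin{align*}
 \mathcal L'(t)\le C(1+\Vv(X_t))\mathcal L(t)+\tfrac12\norm{f(t)}_{L^2}^2,\qquad \mathcal L(0)=0 .
\end{align*}
Here $C$ depends only on the Sobolev constant and the constants in \eqref{eq:periodic-V-equivalence}. In particular it does not depend on $T$.

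Gronwall's inequality in integral form then yields
\begin{align*}
 \mathcal L(t)\le\tfrac12\exp\Bigl\{C\int_0^t(1+\Vv(X_s))\,\dd s\Bigr\}\int_0^t\norm f_{L^2}^2\,\dd s .
\end{align*}
Taking the supremum over $t\le T$ gives \eqref{eq:linear-variation-energy-bound}.

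The only point requiring care is justifying the energy identity at energy regularity, since $u^2r\in L^2$ but $\partial_t r$ is only in $L^2$. I would handle this as follows. First, obtain the solution as a mild solution via $S_a$, with $(3u^2-1)r$ treated as a bounded perturbation in $L^1_tL^2_x$. Then derive the identity for Galerkin truncations or mollified data, and pass to the limit. Alternatively, one can work directly with the mild formula, \eqref{eq:semigroup-decay-scale} and Gronwall. This gives the same bound, with the constant $M_0$ playing the role of $C$.
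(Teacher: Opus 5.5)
Your proof is correct, but it takes a different route from the paper's.

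The paper never differentiates an energy. It writes $R$ in mild form, $R_t=\int_0^tS_a(t-s)\bigl(0,f_s+(1-3u_s^2)r_s\bigr)\,\dd s$. It then uses the uniform bound $\sup_{t\ge0}\norm{S_a(t)}_{\cL(\Hh)}<\infty$ from \Cref{lem:linear-semigroup}, together with the same $L^3\cdot L^6$ product estimate you use, and applies Gronwall to $\norm{R_t}_{\Hh}$. This produces $\norm f_{L^1_tL^2_x}$ on the right. Only at the end does the paper pass to $\norm f_{L^2_tL^2_x}$ by Cauchy--Schwarz, absorbing the resulting factor $T$ into the exponential via $\Vv\ge1$.

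Your energy inequality for $\mathcal L=\tfrac12\norm{R}_{\Hh}^2$ controls the source through Young's inequality, so $f$ enters directly in $L^2_tL^2_x$. No factor of $T$ appears, and no stability property of $S_a$ is needed, since you simply discard the damping term. You even get the sharper kernel form $\mathcal L(t)\le\tfrac12\int_0^t\e^{C\int_s^t(1+\Vv)}\norm{f(s)}_{L^2}^2\,\dd s$.

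The price is justifying the energy identity at energy regularity. As you note, this is routine: the potential term lies in $C_tL^2_x$, so one regularizes and passes to the limit. The paper's Duhamel--Gronwall formulation needs no such justification. It is also the same template the paper reuses on the weak scales $\Es{s}$ and $\Hs{-\delta'}$ elsewhere in the appendix.

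One small correction to your fallback remark. A $T$-independent constant on the mild route is $\sup_{t\ge0}\norm{S_a(t)}_{\cL(\Hh)}$, which is finite by \eqref{eq:semigroup-decay-scale}. It is not $M_0$, which only controls $0\le r\le1$. That route also needs the extra Cauchy--Schwarz step.
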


\begin{proof}
Writing \eqref{eq:linear-variation-general} relative to the massive
damped-wave semigroup gives
\begin{align*}
 R_t=\int_0^tS_a(t-s)
 \binom0{f_s+(1-3u_s^2)r_s}\,\dd s.
\end{align*}
By $H^1(\mathbb T^3)\hookrightarrow L^6(\mathbb T^3)$ and
\eqref{eq:periodic-V-equivalence},
\begin{align*}
 \norm{(1-3u_s^2)r_s}_{L^2}
 \le C\bigl(1+\norm{u_s}_{H^1}^2\bigr)\norm{r_s}_{H^1}
 \le C(1+\Vv(X_s))\norm{R_s}_{\Hh}.
\end{align*}
Since $S_a(t)$ is uniformly bounded on $\Hh$ for $t\ge0$,
\begin{align*}
 \norm{R_t}_{\Hh}
 \le C\int_0^t\norm{f_s}_{L^2}\,\dd s
 +C\int_0^t(1+\Vv(X_s))\norm{R_s}_{\Hh}\,\dd s.
\end{align*}
Gronwall's inequality therefore gives
\begin{align*}
 \sup_{t\le T}\norm{R_t}_{\Hh}
 \le C\exp\left\{
 C\int_0^T(1+\Vv(X_s))\,\dd s
 \right\}
 \int_0^T\norm{f_s}_{L^2}\,\dd s.
\end{align*}
By Cauchy--Schwarz,
\begin{align*}
 \left(\int_0^T\norm{f_s}_{L^2}\,\dd s\right)^2
 \le T\norm f_{L^2_tL^2_x}^2.
\end{align*}
Since $\Vv\ge1$, the factor $T$ can be absorbed into the exponential
after increasing $C$.  This proves
\eqref{eq:linear-variation-energy-bound}.
\end{proof}

The following lemma proves the Malliavin differentiability with moment bounds. 

\begin{lemma}
\label{gap:lem:fractional-twojet}
There is  $p_0>0$ with the following property.
For every $T<\infty$ there exist measurable functions
\begin{align*}
 \mathcal C_T:\Hh\times E_T\to[1,\infty),
 \qquad
 r_T:\Hh\times E_T\to(0,1],
\end{align*}
such that, for every $(\mathsf x,\omega)\in\Hh\times E_T$, the map
\begin{align*}
 h\longmapsto\Phi_T(\mathsf x,\omega+h)
\end{align*}
is twice continuously Fr\'echet differentiable on $\norm h_{H_T}<r_T(\mathsf x,\omega)$ and
\begin{align*}
 \sup_{\norm h_{H_T}<r_T(\mathsf x,\omega)}
 \left(\norm{\mathcal D\Phi_T(\mathsf x,\omega+h)}
 +\norm{\mathcal D^2\Phi_T(\mathsf x,\omega+h)}\right)
 \le\mathcal C_T(\mathsf x,\omega).
\end{align*}
Moreover, for every $R<\infty$,
\begin{align}\label{gap:eq:fractional-twojet-moment}
 \sup_{\Vv(\mathsf x)\le R}
 \E\left[\mathcal C_T(\mathsf x)^{p_0}+r_T(\mathsf x)^{-p_0}\right]
 \le C_{T,R}<\infty.
\end{align}
\end{lemma}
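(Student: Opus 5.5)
The plan is to work pathwise through the shifted formulation \eqref{eq:pathwise-shifted-wave}. There the driver enters only through the additive term $B\omega$. Since $B$ has smooth finite rank range and $\norm{\omega}_{C([0,T])}\le C\sqrt T\norm h_{H_T}$ for $\omega\in\iota H_T$, the map $h\mapsto \Phi_T(\mathsf x,\omega+h)$ is the composition of a bounded linear map $H_T\to C([0,T];\Hs{1})$, namely $h\mapsto Bh$ placed in the velocity slot, with the solution map of a deterministic cubic wave equation driven by a continuous additive forcing.

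\emph{Step 1: differentiability.} I would show that the energy space solution map depends $C^2$ on the forcing path. The nonlinearity $u\mapsto u^3$ is $C^\infty$ from $H^1$ to $L^2$ on $\mathbb T^3$ because $H^1\hookrightarrow L^6$. The implicit function theorem applied to the mild formulation on $C([0,T];\Hh)$ then gives the derivatives. The first derivative in the direction $h$ solves \eqref{eq:linear-variation-general} with the driver term entering as a source. The cleanest way is to write $Bh$ inside $\bar v$, so that the source is $f=-aBh+\dots$ in $L^2_tL^2_x$ with norm $\lesssim\norm h_{H_T}$. Alternatively, one can use \eqref{eq:Malliavin-endpoint} directly. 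The second derivative solves the same linear equation with source $f=-6u\,r^{h}r^{k}$, and $\norm{u r^h r^k}_{L^2}\le C\norm u_{H^1}\norm{r^h}_{H^1}\norm{r^k}_{H^1}$.

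\emph{Step 2: quantitative bounds along the perturbed path.} \Cref{lem:variational-energies} gives
\begin{align*}
\norm{\mathcal D\Phi_T(\mathsf x,\omega+h)}\le C\exp\Bigl\{C\int_0^T(1+\Vv(X^{h}_s))\,\dd s\Bigr\},
\end{align*}
and the second derivative is bounded by the cube of this exponential times $\sup_s\Vv(X^h_s)^{1/2}$. It remains to control $\Vv(X^h)$ uniformly for $\norm h_{H_T}<r_T$. I would define $r_T(\mathsf x,\omega)=1$ and compare $X^h$ with $X$. A deterministic energy estimate for the cubic equation with forcing $B(\omega+h)$ is available: Itô is replaced by the chain rule on the shifted system, and the defocusing sign makes the cubic term harmless. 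It gives
\begin{align*}
\sup_{t\le T}\Vv(X^h_t)\le C\bigl(\Vv(\mathsf x)+\norm{\omega}_{C^{\gamma}}^2+\norm h_{H_T}^2+1\bigr)^{K}\e^{CT},
\end{align*}
with a polynomial in the driver's Hölder norm. The polynomial comes from integrating by parts the term $\langle v,B\dot\omega\rangle$. Then $\mathcal C_T$ is the exponential of $CT$ times this bound.

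\emph{Main obstacle: moments.} The difficulty is that $\mathcal C_T$ is an exponential of a quantity that grows with the driver, so a Gaussian moment of such a quantity need not be finite. The naive bound $\exp\{C\int\Vv\}$ involves $\exp$ of polynomials in $\norm{\omega}$, and this is not integrable. I would instead use the occupation estimate \eqref{gap:eq:FK}. For the unperturbed path, $\E\exp\{r\int_0^T\Vv(X_t)\dd t\}<\infty$ holds only for small $r\le r_*$. Hence I would choose $p_0$ so small that $p_0C\le r_*$, with $p_0$ independent of $T$ since $r_*$ and $C$ are. For the perturbed path, pathwise energy stability gives $\int_0^T\Vv(X^h)\le \int_0^T\Vv(X)+C_T(1+\norm h_{H_T}^2)(1+\sup\Vv(X))^{1/2}$, which holds for $\norm h_{H_T}\le1$. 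This comparison is the delicate estimate; I would prove it by an energy argument on the difference with linear source $Bh$. The extra term is absorbed by Young's inequality into $\epsilon\sup_t\Vv(X_t)$, and \eqref{gap:eq:uniform-terminal-exp} together with the exponential martingale bound controls $\E\exp\{\epsilon\sup_t\Vv\}$ for small $\epsilon$. If this absorption fails, the fallback is to let $r_T$ shrink like $(1+\sup\Vv(X))^{-1}$. Its negative moments are then polynomial and finite by \eqref{eq:finite-horizon-energy-moment}. Uniformity over $\Vv(\mathsf x)\le R$ follows since all the bounds are in terms of $\exp\{C\Vv(\mathsf x)\}$.
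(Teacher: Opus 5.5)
\textbf{There is a genuine gap.} Your Step 1 is fine, and so is applying \Cref{lem:variational-energies} along the perturbed path. You also correctly see that the real issue is controlling $\int_0^T\Vv(X^h_t)\,\dd t$ uniformly over the ball, in a form that has small exponential moments. The estimate you propose for this is where the argument fails:
\begin{align*}
 \int_0^T\Vv(X^h_t)\,\dd t\le\int_0^T\Vv(X_t)\,\dd t+C_T\bigl(1+\norm h_{H_T}^2\bigr)\bigl(1+\sup_t\Vv(X_t)\bigr)^{1/2},\qquad \norm h_{H_T}\le1.
\end{align*}
Your sketch does not justify it, and it is not clear that it holds.
\begin{itemize}
\item The difference $Z=X^h-X$ does not solve a linear equation with source $B\partial_th$. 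It solves $\ddot z+\Lambda^2z+a\dot z+(\widetilde u^3-u^3)-z=B\partial_th$. An energy estimate on it closes only through Gronwall, giving $\sup_t\norm{Z_t}_{\Hh}\le C\e^{C\Xi_T}\norm h_{H_T}$ with $\Xi_T=T+\int_0^T\Vv(X_t)\,\dd t$, and only while $\norm{Z}_{\Hh}\le1$. Nothing in the structure improves this to your square-root bound.
\item Estimating $\Vv(X^h)$ directly instead requires a pathwise bound on $\int\ip{v^h+\eta u^h}{B\,\dd W}$, uniformly over non-adapted $h$. The shifted-system energy argument then produces powers of $\norm\omega_{C([0,T])}$, up to the fourth after Young, whose exponentials are not Gaussian integrable. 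This is exactly the obstruction you noticed yourself.
\item Your fallback $r_T\sim(1+\sup_t\Vv(X_t))^{-1}$ does not help. The Gronwall factor $\e^{C\Xi_T}$ still dominates a polynomially small radius, so $Z$ is not kept bounded and $\Vv(X^h)$ remains uncontrolled.
\end{itemize}

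\textbf{The missing idea.} Since only a small negative moment $r_T^{-p_0}$ is required, the radius may be taken \emph{exponentially} small. The paper sets $r_T=1\wedge c\e^{-C\Xi_T}$. A continuity argument with the Gronwall bound above then keeps $\sup_t\norm{Z_t}_{\Hh}<1$ on the whole ball. By \eqref{eq:periodic-V-equivalence} this gives $\int_0^T(1+\Vv(X^h_t))\,\dd t\le C\Xi_T$. Consequently the first and second variations along $X^h$ are bounded by $C\e^{C\Xi_T}$, so one may take $\mathcal C_T=C\e^{C\Xi_T}$. Both $\mathcal C_T^{p_0}$ and $r_T^{-p_0}$ are then dominated by $C\e^{Cp_0\Xi_T}$. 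This is integrable uniformly on $\mathbb V_R$ by \Cref{gap:lem:occupation} once $Cp_0<r_*$, and both $C$ and $r_*$ are independent of $T$, so $p_0$ is too. No comparison with driver norms or with $\sup_t\Vv(X_t)$ is needed.
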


\begin{proof}
Put
\begin{align*}
 \Xi_T=T+\int_0^T\Vv(X_t)\,\dd t.
\end{align*}
Let $Y=(\widetilde{u},\widetilde{v})$ be the solution associated with $(\mathsf x,\omega+h)$ and set
$Z=Y-X$.  As long as $\sup_{t\le T}\norm{Z_t}_{\Hh}\le1,$ 
the difference equation and the same  estimate as in
\Cref{lem:variational-energies} give
\begin{align*}
 \sup_{t\le T}\norm{Z_t}_{\Hh}
 \le C\e^{C\Xi_T}\norm h_{H_T}.
\end{align*}
Thus, choosing $c>0$ sufficiently small and setting
\begin{align}\label{gap:eq:fractional-radius-choice}
 r_T(\mathsf x,\omega)=1\wedge c\e^{-C\Xi_T},
\end{align}
one has for $\norm h_{H_T}<r_T(\mathsf x,\omega)$,
\begin{align*}
 \sup_{t\le T}\norm{Z_t}_{\Hh}<1,
\end{align*}
which combined with \eqref{eq:periodic-V-equivalence}  gives 
\begin{align}\label{gap:eq:perturbed-occupation-control}
 \int_0^T\bigl(1+\Vv(Y_t)\bigr)\,\dd t
 \le C\left(T+\int_0^T\Vv(X_t)\,\dd t\right)
 =C\Xi_T.
\end{align}

Let $R=(r,\partial_t r)$ be the first variation of $Y$ in a
Cameron--Martin direction $k\in H_T$.  It has zero initial condition and
satisfies
\begin{align*}
 \partial_t^2r+\Lambda^2r+a\partial_tr+(3\widetilde{u}^2-1)r
 =B\partial_tk,
\end{align*}
where $\widetilde{u}$ denotes the displacement component of $Y$.  Applying
\Cref{lem:variational-energies} along $Y$ and using
\eqref{gap:eq:perturbed-occupation-control} gives
\begin{align}\label{gap:eq:fractional-first-variation}
 \sup_{t\le T}\norm{R_t}_{\Hh}
 \le C\e^{C\Xi_T}\norm k_{H_T}.
\end{align}

For two first variations $R_i=(r_i,\partial_t r_i)$, the mixed second
variation $Q=(q,\partial_tq)$ has zero initial condition and solves
\begin{align*}
 \partial_t^2q+\Lambda^2q+a\partial_tq+(3\widetilde{u}^2-1)q
 =-6\widetilde{u}r_1r_2.
\end{align*}
Since $H^1\hookrightarrow L^6$,
\begin{align*}
 \norm{\widetilde{u}r_1r_2}_{L^2}
 \le C\Vv(Y)^{1/2}
 \norm{R_1}_{\Hh}\norm{R_2}_{\Hh}.
\end{align*}
Hence \eqref{gap:eq:perturbed-occupation-control} and
\eqref{gap:eq:fractional-first-variation} give
\begin{align*}
 \norm{\widetilde{u}r_1r_2}_{L^2(0,T;L^2)}
 \le C\e^{C\Xi_T}\norm{k_1}_{H_T}\norm{k_2}_{H_T},
\end{align*}
after increasing $C$.  A second application of
\Cref{lem:variational-energies} therefore yields
\begin{align}\label{gap:eq:fractional-second-variation}
 \sup_{t\le T}\norm{Q_t}_{\Hh}
 \le C\e^{C\Xi_T}\norm{k_1}_{H_T}\norm{k_2}_{H_T}.
\end{align}

The preceding estimates are uniform for all
$\norm h_{H_T}<r_T(\mathsf x,\omega)$.  The usual difference-quotient argument identifies the preceding
variations with the first and second Fr\'echet derivatives in the Cameron--Martin variable.
Standard continuous dependence for the perturbed equation and its
variational equations shows that these derivatives depend continuously
on $h$ in the ball \eqref{gap:eq:fractional-radius-choice}.  Hence
\begin{align*}
 h\longmapsto\Phi_T(\mathsf x,\omega+h)
\end{align*}
is twice continuously Fr\'echet differentiable there, and
\eqref{gap:eq:fractional-first-variation}--\eqref{gap:eq:fractional-second-variation}
hold uniformly on this ball. We may therefore take
\begin{align*}
 \mathcal C_T(\mathsf x,\omega)=C\e^{C\Xi_T}
\end{align*}
by increasing $C$ if necessary. Then by \eqref{gap:eq:fractional-radius-choice}, 
\begin{align}\label{gap:eq:twojet-exponential-envelope}
 \mathcal C_T(\mathsf x,\omega)
 +r_T(\mathsf x,\omega)^{-1}
 \le C\e^{C\Xi_T}.
\end{align}
Both quantities are measurable by the Borel dependence of the solution
on $(\mathsf x,\omega)$.  All statements up to this point are
deterministic; only the moment estimate below uses Wiener measure.

Let $r_*>0$ be the exponent in \Cref{gap:lem:occupation} and choose
$p_0>0$ so small that
\begin{align*}
 Cp_0<r_*.
\end{align*}
Applying \Cref{gap:lem:occupation} with terminal exponent zero and
occupation exponent $Cp_0$ gives, for every $T,R<\infty$,
\begin{align*}
 \sup_{\Vv(\mathsf x)\le R}
 \E\exp\left\{
 Cp_0\int_0^T\Vv(X_t)\,\dd t
 \right\}<\infty.
\end{align*}
Since
\begin{align*}
 \e^{Cp_0\Xi_T}
 =\e^{Cp_0T}
 \exp\left\{
 Cp_0\int_0^T\Vv(X_t)\,\dd t
 \right\},
\end{align*}
\eqref{gap:eq:twojet-exponential-envelope} proves
\eqref{gap:eq:fractional-twojet-moment}.  The constant $C$ above and the
exponent $r_*$ are independent of $T$ and $R$, so the same $p_0$ works
for every finite $T$ and $R$.
\end{proof}

The next lemma gives deterministic core bounds and weak continuity with
respect to the Wiener driver.  Set
\begin{align}\label{eq:VR}
 \mathbb V_R=\{\mathsf x\in\Hh:\Vv(\mathsf x)\le R\}.
\end{align}

\begin{lemma}

Fix $0<s<1/2$ and $T,R<\infty$.  For every bounded $G\subset E_T$,
\begin{align}\label{gap:eq:pathwise-core-bounds}
 \sup_{(\mathsf x,\omega)\in\mathbb V_R\times G}
 \left[
 \sup_{t\le T}\Vv(X_t(\mathsf x,\omega))
 +\norm{u}_{L^5(0,T;L^{10})}
 \right]<\infty.
\end{align}
Moreover, if
$(\mathsf x_n,\omega_n),(\mathsf x,\omega)\in\mathbb V_R\times G$
satisfy
\begin{align*}
 \mathsf x_n\to\mathsf x\quad\text{in }\Es{s},
 \qquad
 \omega_n\to\omega\quad\text{in }E_T,
\end{align*}
then
\begin{align}\label{gap:eq:pathwise-weak-continuity}
 X(\mathsf x_n,\omega_n)
 \longrightarrow X(\mathsf x,\omega)
 \qquad\text{in }C([0,T];\Es{s}).
\end{align}
Consequently, for every $\delta>0$, the displacement components satisfy
\begin{align}\label{gap:eq:pathwise-interpolated-continuity}
 u_n\longrightarrow u
 \qquad\text{in }C([0,T];H^{1-\delta}).
\end{align}
\end{lemma}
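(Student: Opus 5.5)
Both assertions are proved through the shifted pathwise formulation \eqref{eq:pathwise-shifted-wave}, which removes the noise from the velocity: $v=\bar v+B\omega$. For a bounded $G\subset E_T$, $\sup_{t\le T}\abs{\omega(t)}$ is uniformly bounded, so the forcing terms $B\omega$ and $aB\omega$ are smooth in space and uniformly bounded in time.

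\emph{Core bounds.} I would first apply the deterministic energy identity to $\bar X=(u,\bar v)$. Differentiating $\cE(u,\bar v)$ along \eqref{eq:pathwise-shifted-wave} yields cross terms such as $\ip{\nabla u}{\nabla B\omega}$, $\ip{u^3}{B\omega}$ and $\ip{a\bar v}{B\omega}$. Each of these is bounded by $C_G(1+\cE)$, using Young's inequality and the fact that $\norm{u}_{L^3}^3\lesssim 1+\norm u_{L^4}^4$. Gronwall's inequality then gives
\begin{align*}
 \sup_{t\le T}\Vv(X_t)\le C_{T,G}(1+R),
\end{align*}
after converting back from $\bar v$ to $v$ via \eqref{eq:periodic-V-equivalence}. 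For the Strichartz bound, I would apply \eqref{eq:unit-block-Strichartz} on each of the finitely many unit blocks of $[0,T]$ to the equation for $u$. Here the source $u-u^3-aB\omega$ is controlled in $L^1L^2$ by $\sup\Vv^{3/2}$ plus a constant, and the $\partial_t(B\omega)$ contribution is absorbed by writing the initial data of $u$ in the shifted variables; alternatively one uses the decomposition in \Cref{prop:strichartz} with the stochastic convolution replaced by the deterministic convolution of the continuous path. This proves \eqref{gap:eq:pathwise-core-bounds}.

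\emph{Weak continuity.} Set $U_n=X_n-X$. It satisfies the mild equation relative to $S_a$ with source $(1-\mathscr W_n)w_n$ plus a driver term coming from $\omega_n-\omega$. The cleanest route is to work with the shifted velocities, so that the driver difference enters as $(B(\omega_n-\omega),\,-aB(\omega_n-\omega))$, which is small in $C([0,T];\Hh)\subset C([0,T];\Es{s})$. The trilinear estimate \eqref{gap:eq:negative-trilinear} and the uniform core bound give
\begin{align*}
 \norm{U_n(t)}_{\Es{s}}\le C\norm{\mathsf x_n-\mathsf x}_{\Es{s}}+C\norm{\omega_n-\omega}_{E_T}+C_{R,G}\int_0^t\norm{U_n}_{\Es{s}}\,\dd r,
\end{align*}
exactly as in \eqref{gap:eq:high-weak-difference}. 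Gronwall's inequality then yields \eqref{gap:eq:pathwise-weak-continuity}. The main point to check is that the Gronwall coefficient $1+\Vv(X_n)+\Vv(X)$ is bounded uniformly in $n$; this is precisely what the first part supplies, so \eqref{gap:eq:pathwise-core-bounds} must be established before the continuity argument.

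\emph{Interpolated continuity.} Finally, $u_n-u$ is bounded in $C([0,T];H^1)$ by \eqref{gap:eq:pathwise-core-bounds} and tends to zero in $C([0,T];H^{-s})$. Interpolating between $H^{-s}$ and $H^1$ gives, with $\theta=\delta/(1+s)$,
\begin{align*}
 \norm{u_n-u}_{H^{1-\delta}}\le\norm{u_n-u}_{H^{-s}}^{\theta}\norm{u_n-u}_{H^1}^{1-\theta}\le C\norm{u_n-u}_{H^{-s}}^{\theta},
\end{align*}
uniformly in $t\in[0,T]$. This yields \eqref{gap:eq:pathwise-interpolated-continuity}.
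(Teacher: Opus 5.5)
Your proposal is correct and is essentially the paper's own proof. The shared steps are: the shifted velocity $\bar v=v-B\omega$ with the energy identity and Gronwall for the core bound; unit-block Strichartz after removing the pathwise linear convolution of $\omega$ (your stated alternative is exactly what the paper does, and it is the cleaner of your two options, since $B\omega$ is only continuous in time); Duhamel--Gronwall in $\Es{s}$ via \eqref{gap:eq:negative-trilinear}, with the driver difference entering as $(B(\omega_n-\omega),-aB(\omega_n-\omega))$; and interpolation between $H^{-s}$ and $H^1$. The only omission is the trivial case $\delta\ge 1+s$ in the last step, where $\theta=\delta/(1+s)$ is not admissible and one uses the embedding $H^{-s}\hookrightarrow H^{1-\delta}$ directly.
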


\begin{proof}
Write $\bar X=(u,\bar v)$, where $\bar v=v-B\omega$, and set 
$\mathbb G=\mathbb A\mathbb B$.  The shifted equation \eqref{eq:pathwise-shifted-wave} has the deterministic
mild form
\begin{align}\label{gap:eq:pathwise-shifted-mild}
 \bar X_t
 =S_a(t)\mathsf x
 +\int_0^tS_a(t-r)\mathbb F(\bar X_r)\,\dd r
 +\int_0^tS_a(t-r)\mathbb G\omega(r)\,\dd r,
 \qquad
 \mathbb G\xi=\binom{B\xi}{-aB\xi}.
\end{align}
Set $f_\omega=B\omega$ and
\begin{align*}
 \bar{\cE}(u,\bar v)
 =1+\frac12\norm{\nabla u}_{L^2}^2
 +\frac12\norm{\bar v}_{L^2}^2
 +\frac14\norm{u}_{L^4}^4.
\end{align*}
The energy identity for the shifted equation gives
\begin{align*}
 \frac{\dd}{\dd t}\bar{\cE}(u,\bar v)
 =-\ip{a\bar v}{\bar v}_{L^2}
 +\ip{\nabla u}{\nabla f_\omega}_{L^2}
 -\ip{a\bar v}{f_\omega}_{L^2}
 +\ip{u^3}{f_\omega}_{L^2}\le C_G\bar{\cE}(u,\bar v),
\end{align*}
where $C_G<\infty$ since $G$ is bounded in $E_T$ and $B$ has smooth
finite dimensional range.  Gronwall's inequality, the relation
$v=\bar v+f_\omega$, and \eqref{eq:periodic-V-equivalence} therefore give
\begin{align*}
 \sup_{(\mathsf x,\omega)\in\mathbb V_R\times G}
 \sup_{t\le T}\Vv(X_t(\mathsf x,\omega))<\infty.
\end{align*}

For $\omega\in E_T$, define the pathwise linear convolution by
\begin{align*}
 Z_t^\omega
 =\mathbb B\omega(t)
 +\int_0^tS_a(t-r)\mathbb G\omega(r)\,\dd r.
\end{align*}
For the canonical Brownian motion, stochastic integration by parts and
$\mathbb G=\mathbb A\mathbb B$ show that
\begin{align*}
 Z_t^W=\int_0^tS_a(t-r)\mathbb B\,\dd W_r
 \qquad\text{almost surely}.
\end{align*}
For any fixed $\sigma\in(1/5,1)$, smoothness and finite dimensionality of
the range of $B$ give
\begin{align*}
 \sup_{\omega\in G}
 \norm{Z^\omega}_{C([0,T];\Hs{\sigma})}<\infty.
\end{align*}
Applying \eqref{eq:unit-block-Strichartz} to $X-Z^\omega$ on the finitely
many unit subintervals of $[0,T]$, and using
\begin{align*}
 \norm{u-u^3}_{L^2}
 \le \norm u_{L^2}+\norm u_{L^6}^3
 \le C\left(\norm u_{H^1}+\norm u_{H^1}^3\right)
 \le C\Vv(X)^{3/2}.
\end{align*}
proves the remaining bound in \eqref{gap:eq:pathwise-core-bounds}.

Let $\bar X_n$ and $\bar X$ correspond to
$(\mathsf x_n,\omega_n)$ and $(\mathsf x,\omega)$.  Since
\begin{align*}
 u_n^3-u^3=(u_n^2+u_nu+u^2)(u_n-u),
\end{align*}
\eqref{gap:eq:negative-trilinear} and
\eqref{gap:eq:pathwise-core-bounds} imply
\begin{align*}
 \norm{\mathbb F(\bar X_n(t))-\mathbb F(\bar X(t))}_{\Es{s}}
 \le C_{T,R,G}
 \norm{\bar X_n(t)-\bar X(t)}_{\Es{s}}.
\end{align*}
Subtracting \eqref{gap:eq:pathwise-shifted-mild} for the two solutions and
applying Gronwall's inequality gives
\begin{align*}
 \sup_{t\le T}\norm{\bar X_n(t)-\bar X(t)}_{\Es{s}}
 \le C_{T,R,G}\left(
 \norm{\mathsf x_n-\mathsf x}_{\Es{s}}
 +\norm{\omega_n-\omega}_{E_T}\right).
\end{align*}
Since $v=\bar v+B\omega$, this proves
\eqref{gap:eq:pathwise-weak-continuity}.  If $0<\delta<1+s$, interpolation
and the uniform energy bound give
\begin{align*}
 \sup_{t\le T}\norm{u_n(t)-u(t)}_{H^{1-\delta}}
 \le C_{T,R,G}
 \left(
 \sup_{t\le T}\norm{u_n(t)-u(t)}_{H^{-s}}
 \right)^{\delta/(1+s)}
 \longrightarrow0.
\end{align*}
For $\delta\ge1+s$, the same conclusion follows directly from
$H^{-s}\hookrightarrow H^{1-\delta}$.  This proves
\eqref{gap:eq:pathwise-interpolated-continuity}.
\end{proof}

We now verify part~\textup{(ii)} of
\Cref{ass:QS-stable-compact} and then complete part~\textup{(iii)}
using the preceding two-jet estimates.

\begin{lemma}
\label{gap:lem:weak-core-operators}
Fix $0<s<1/2$, $T,R<\infty$, and a compact $G\subset E_T$.  Let the
sublevel $\mathbb V_R$ defined in \eqref{eq:VR} carry the $\Es{s}$-topology.
Then
\begin{align*}
 (\mathsf x,\mathsf y,\omega)&\longmapsto
 \mathcal K_T(\mathsf x,\mathsf y,\omega)\in\cL(\Es{s}),\\
 (\mathsf y,\omega)&\longmapsto
 \mathcal D\Phi_T(\mathsf y,\omega)\in\cL(H_T,\Es{s})\nonumber
\end{align*}
are operator norm continuous on $\mathbb V_R^2\times G$ and
$\mathbb V_R\times G$, respectively.  Both maps are strongly measurable
on their full parameter spaces.
\end{lemma}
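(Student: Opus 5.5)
The plan is to reduce operator norm continuity of both maps to continuity, in suitable spaces, of the coefficient $\mathscr W$ (respectively $3u^2$) along the trajectories, using the preceding pathwise lemma. Two inputs will be used throughout: the uniform core bounds \eqref{gap:eq:pathwise-core-bounds} on $\mathbb V_R\times G$, and the convergence $u_n\to u$ in $C([0,T];H^{1-\delta})$ from \eqref{gap:eq:pathwise-interpolated-continuity}. Let $(\mathsf x_n,\mathsf y_n,\omega_n)\to(\mathsf x,\mathsf y,\omega)$ in $\Es{s}\times\Es{s}\times E_T$ within $\mathbb V_R^2\times G$. Write $\mathscr W^n,\mathscr W$ for the corresponding coefficients and $\mathcal J^n_t,\mathcal J_t$ for the propagators on $\Es{s}$.

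For $\mathcal K_T=\mathcal J_T-S_a(T)$ it suffices to show $\norm{\mathcal J^n_T-\mathcal J_T}_{\cL(\Es{s})}\to0$. By Duhamel relative to the frozen propagator,
\begin{align*}
 (\mathcal J^n_t-\mathcal J_t)h=\int_0^tS_a(t-r)\binom0{(1-\mathscr W^n_r)(\mathcal J^n_r-\mathcal J_r)h+(\mathscr W_r-\mathscr W^n_r)z_r^h}\,\dd r .
\end{align*}
The first term is handled by Gronwall exactly as in \Cref{prop:wave-stable-compact-structure}, using \eqref{gap:eq:negative-trilinear} and the uniform energy bounds. Hence everything reduces to the source estimate
\begin{align*}
 \sup_{\norm h_{\Es{s}}\le1}\int_0^T\norm{(\mathscr W_r-\mathscr W^n_r)z_r^h}_{H^{-1-s}}\,\dd r\to0 .
\end{align*}
Here $z^h_r\in H^{-s}$ is controlled uniformly by the Gronwall bound on $\mathcal J$. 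The difference $\mathscr W_r-\mathscr W_r^n$ is a sum of terms such as $(u_n-u)(u_n+u)$, with one factor in $H^{1-\delta}$ tending to zero and the other bounded in $H^1$. The key point is a slightly lossy version of \eqref{gap:eq:negative-trilinear},
\begin{align*}
 \norm{fgh}_{H^{-1-s}}\le C\norm f_{H^{1-\delta}}\norm g_{H^1}\norm h_{H^{-s}},
\end{align*}
valid for small $\delta>0$ since $s<1/2$ leaves room. I expect this to follow by duality from $\norm{fg\phi}_{H^{s}}\lesssim\norm f_{H^{1-\delta}}\norm g_{H^1}\norm\phi_{H^{1+s}}$, using $H^{1-\delta}\hookrightarrow L^{6/(1+2\delta)}$ and the fractional Leibniz rule, with the exponent margin coming from $H^{1+s}\hookrightarrow L^\infty\cap W^{s,p}$ for large $p$. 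This multiplier estimate is the main obstacle: one must check that the Sobolev exponents still close once the strict gap $s<1/2$ is spent on $\delta$. If the pure energy-level estimate fails, the fallback is to use the Strichartz bound $u\in L^5L^{10}$ from \eqref{gap:eq:pathwise-core-bounds}, integrated in time.

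For $\mathcal D\Phi_T$, use \eqref{eq:Malliavin-endpoint}: $\mathcal D\Phi_T(\mathsf y,\omega)k=\int_0^TJ_{r,T}\binom0{B\partial_rk}\dd r$. The map $k\mapsto B\partial_rk$ takes $H_T$ into $L^2(0,T;\text{finite dim})$ boundedly, so it suffices to prove
\begin{align*}
 \sup_{r\le T}\norm{J^n_{r,T}-J_{r,T}}_{\cL(\Es{s})}\to0 .
\end{align*}
This follows by the same Duhamel and Gronwall argument with $\mathscr W$ replaced by $3u^2$, now starting at time $r$, uniformly in $r$. Restricting to the smooth finite-dimensional range of $B$ even allows the energy-level propagator to be used if convenient.

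Strong measurability on the full parameter spaces then follows from the Borel dependence of the pathwise solution and the fact that, for fixed $h$ or $k$, the images $\mathcal K_T h$ and $\mathcal D\Phi_Tk$ are Borel in the parameters, by the same continuity argument applied on bounded sets together with separability. Pettis' theorem applies to the strong operator topology.
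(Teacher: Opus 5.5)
The step that fails is the one you flag as the main obstacle. The estimate $\norm{fgh}_{H^{-1-s}}\le C\norm f_{H^{1-\delta}}\norm g_{H^1}\norm h_{H^{-s}}$ is false for every $\delta>0$ and every $0<s<1/2$. The bound \eqref{gap:eq:negative-trilinear} is exactly scaling critical in dimension three, so $s<1/2$ leaves no room to spend on $\delta$; it is only the condition under which the critical bound holds. Also, the embedding $H^{1+s}\hookrightarrow L^\infty$ that you invoke fails for $s<1/2$.

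Concretely, take a nonzero bump $\psi\ge0$ and set $f=g=h=\psi(\lambda\,\cdot)$. Then
\[
\norm f_{H^{1-\delta}}\lesssim\lambda^{-1/2-\delta},\qquad \norm g_{H^1}\lesssim\lambda^{-1/2},\qquad \norm h_{H^{-s}}\lesssim\lambda^{-3/2-s}.
\]
Pairing $\psi(\lambda\,\cdot)^3$ with $\chi(\lambda\,\cdot)$, where $\chi=1$ on $\supp\psi$, gives $\norm{fgh}_{H^{-1-s}}\gtrsim\lambda^{-5/2-s}$, so the ratio grows like $\lambda^\delta$.

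More to the point, take $f_\lambda=g_\lambda=\lambda^{1/2}\psi(\lambda\,\cdot)$. Then $f_\lambda\to0$ in $H^{1-\delta}$ and $g_\lambda$ is bounded in $H^1$, yet $\norm{\mathsf M_{f_\lambda g_\lambda}}_{\cL(H^{-s},H^{-1-s})}\gtrsim1$. So the only fixed-time inputs you use (convergence in $C([0,T];H^{1-\delta})$ plus energy bounds) cannot make the source $(\mathscr W_r-\mathscr W^n_r)z^h_r$ small uniformly over the unit ball of $\Es{s}$. Your single-level Duhamel--Gronwall argument therefore does not close, neither for $\mathcal J_T$ nor for your reduction of $\mathcal D\Phi_T$ to $\sup_r\norm{J^n_{r,T}-J_{r,T}}_{\cL(\Es{s})}$.

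\textbf{How the paper handles $\mathcal K_T$.} It accepts a loss and recovers it from compactness. Choose $s<s_1$, $s_1+\varepsilon<1/2$ and $\delta<\varepsilon$. The subcritical bound $\norm{(f_n-f)gr}_{H^{-1-s_1-\varepsilon}}\lesssim\norm{f_n-f}_{H^{1-\delta}}\norm g_{H^1}\norm r_{H^{-s_1}}$ gives $\norm{D_n}_{\cL(\Es{s_1},\Es{s_1+\varepsilon})}\to0$ for $D_n=\mathcal K_T(\mathsf x_n,\mathsf y_n,\omega_n)-\mathcal K_T(\mathsf x,\mathsf y,\omega)$. Interpolating this against the uniform smoothing bound $\sup_n\norm{D_n}_{\cL(\Hh,\Hs{\theta_0})}<\infty$ from the proof of \Cref{prop:wave-stable-compact-structure} yields $D_n\to0$ in $\cL(\Es{s},\Es{s'})$ for some $s'<s$, once $\varepsilon$ is small compared with $\theta_0(s_1-s)$.

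\textbf{How the paper handles $\mathcal D\Phi_T$.} It keeps the first variations at the energy level; they are bounded in $C([0,T];\Hh)$ because $B\partial_tk\in L^2_tL^2_x$. It then runs Gronwall in $\Hs{-\delta'}\hookrightarrow\Es{s}$ with $\delta<\delta'$, where there is ample room.

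\textbf{Your fallback.} The Strichartz route would also work if carried out, because the time-averaged $L^5L^{10}$ bound is supercritical. By the fractional Leibniz rule, for $\delta\le1/5$,
\[
\norm{fg\phi}_{H^s}\lesssim\bigl(\norm f_{H^{1-\delta}}\norm g_{L^{10}}+\norm f_{L^6}\norm g_{H^1}\bigr)\norm\phi_{H^{1+s}} .
\]
Moreover, $\int_0^T\norm{u_n-u}_{L^6}\,\dd t\to0$ by interpolating \eqref{gap:eq:pathwise-interpolated-continuity} with the uniform $L^5L^{10}$ bound in \eqref{gap:eq:pathwise-core-bounds}. By duality this gives $\int_0^T\norm{\mathsf M_{\mathscr W^n_r-\mathscr W_r}}_{\cL(H^{-s},H^{-1-s})}\,\dd r\to0$, after which your Duhamel--Gronwall reduction at the single level $\Es{s}$ goes through for both maps.

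As submitted, however, that time-integrated multiplier estimate is the missing core of the proof, and the fixed-time estimate you propose in its place is false.
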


\begin{proof}
Let $X=(u,v)$ and $\widetilde X=(\widetilde u,\widetilde v)$ be the two
base solutions.  The weak propagator estimate used in the proof of
\Cref{prop:wave-stable-compact-structure} gives
\begin{align}\label{gap:eq:weak-variation-bound}
 \sup_{t\le T}
 \norm{\mathcal J_t(\mathsf x,\mathsf y,\omega)}_{\cL(\Es{s})}
 \le C_T\exp\left\{
 C\int_0^T[1+\Vv(X_t)+\Vv(\widetilde X_t)]\,\dd t
 \right\}.
\end{align}
By \eqref{gap:eq:pathwise-core-bounds}, this bound is uniform on
$\mathbb V_R^2\times G$.

Suppose
\begin{align*}
 (\mathsf x_n,\mathsf y_n,\omega_n)
 \longrightarrow(\mathsf x,\mathsf y,\omega)
 \qquad\text{in }\mathbb V_R^2\times G.
\end{align*}
By
\eqref{gap:eq:pathwise-weak-continuity}--\eqref{gap:eq:pathwise-interpolated-continuity},
for every $\delta>0$,
\begin{align}\label{gap:eq:base-interpolated-convergence}
 u_n\to u,\qquad
 \widetilde u_n\to\widetilde u
 \quad\text{in }C([0,T];H^{1-\delta}).
\end{align}
Fix $s<s_1<1/2$ and $0<\theta_0<1/5$, and choose
\begin{align*}
 0<\varepsilon<
 \min\left\{\frac12-s_1,
 \frac{\theta_0(s_1-s)}{1+s}\right\}.
\end{align*}
For sufficiently small
$0<\delta<\min\{\varepsilon,1/2-s_1\}$, the strict Sobolev multiplication
theorem gives
\begin{align}\label{gap:eq:lossy-coefficient}
 \norm{(f_n-f)g\,r}_{H^{-1-s_1-\varepsilon}}
 \le C\norm{f_n-f}_{H^{1-\delta}}
       \norm g_{H^1}\norm r_{H^{-s_1}},
\end{align}
whenever $f_n,f,g$ are bounded in $H^1$.  Indeed, this follows by first
multiplying $f_n-f$ and $g$, and then testing the resulting product
against an element of $H^{1+s_1+\varepsilon}$.

Set
\begin{align*}
 D_n
 =\mathcal K_T(\mathsf x_n,\mathsf y_n,\omega_n)
 -\mathcal K_T(\mathsf x,\mathsf y,\omega).
\end{align*}
The difference of the corresponding coefficients
$\mathscr W_n-\mathscr W$ is a finite sum of terms of the form
$(f_n-f)g$.  Subtracting the two exact difference propagator equations,
using \eqref{gap:eq:lossy-coefficient},
\eqref{gap:eq:base-interpolated-convergence}, and
\eqref{gap:eq:weak-variation-bound} at level $s_1+\varepsilon$, gives
\begin{align*}
 \norm{D_n}_{\cL(\Es{s_1},\Es{s_1+\varepsilon})}
 \longrightarrow0.
\end{align*}
On the other hand, the energy-endpoint regularity estimate from the proof
of \Cref{prop:wave-stable-compact-structure}, its uniformity on
$\mathbb V_R^2\times G$, and the triangle inequality give
\begin{align*}
 \sup_{n\ge1}
 \norm{D_n}_{\cL(\Es{-1},\Es{-1-\theta_0})}<\infty.
\end{align*}
Interpolating these two bounds with
\begin{align*}
 \lambda=\frac{1+s}{1+s_1},
 \qquad
 \gamma=(1-\lambda)\theta_0
 =\theta_0\frac{s_1-s}{1+s_1},
\end{align*}
we obtain
\begin{align*}
 \norm{D_n}_{\cL(\Es{s},\Es{s-\gamma+\lambda\varepsilon})}
 \longrightarrow0.
\end{align*}
The choice of $\varepsilon$ gives
$\gamma-\lambda\varepsilon>0$, and hence
\begin{align*}
 \Es{s-\gamma+\lambda\varepsilon}\hookrightarrow\Es{s}.
\end{align*}
This proves operator norm continuity of $\mathcal K_T$.

It remains to consider $\mathcal D\Phi_T$.  Suppose
$(\mathsf y_n,\omega_n)\to(\mathsf y,\omega)$ in
$\mathbb V_R\times G$.  For $\norm h_{H_T}\le1$, write
\begin{align*}
 R_n(t)&=(r_n(t),\partial_t r_n(t))
 =\mathcal D\Phi_t(\mathsf y_n,\omega_n)h,\\
 R(t)&=(r(t),\partial_t r(t))
 =\mathcal D\Phi_t(\mathsf y,\omega)h.
\end{align*}
By \eqref{gap:eq:pathwise-core-bounds} and
\Cref{lem:variational-energies}, the families $R_n$ and $R$ are uniformly
bounded in $C([0,T];\Hh)$.  Their difference solves the linearized equation
along the limiting base solution with source
\begin{align*}
 -3\bigl[(\widetilde u_n)^2-\widetilde u^2\bigr]r_n
 =-3(\widetilde u_n-\widetilde u)
   (\widetilde u_n+\widetilde u)r_n.
\end{align*}
Choose $0<\delta<\delta'<1$.  The strict three-factor Sobolev product
estimate gives
\begin{align*}
 &\norm{
 (\widetilde u_n-\widetilde u)
 (\widetilde u_n+\widetilde u)r_n
 }_{H^{-\delta'}}\\
 &\hspace{20mm}\le
 C\norm{\widetilde u_n-\widetilde u}_{H^{1-\delta}}
 \norm{\widetilde u_n+\widetilde u}_{H^1}
 \norm{r_n}_{H^1}.
\end{align*}
It follows from
\eqref{gap:eq:pathwise-interpolated-continuity} and the preceding uniform
bounds that
\begin{align*}
 \sup_{\norm h_{H_T}\le1}
 \norm{
 (\widetilde u_n-\widetilde u)
 (\widetilde u_n+\widetilde u)r_n
 }_{L^1(0,T;H^{-\delta'})}
 \longrightarrow0.
\end{align*}
The Duhamel--Gronwall estimate at level $\Hs{-\delta'}$ therefore gives
\begin{align*}
 \sup_{\norm h_{H_T}\le1}
 \norm{
 [\mathcal D\Phi_T(\mathsf y_n,\omega_n)
 -\mathcal D\Phi_T(\mathsf y,\omega)]h
 }_{\Hs{-\delta'}}
 \longrightarrow0.
\end{align*}
Since $\delta'<1<1+s$,
\begin{align*}
 \Hs{-\delta'}\hookrightarrow\Hs{-1-s}=\Es{s},
\end{align*}
which proves operator norm continuity of $\mathcal D\Phi_T$.

Strong measurability of the operator-valued maps follows from the Borel dependence of the pathwise solution and the corresponding variational equations.
\end{proof}

The next lemma verifies the regular-dependence requirements in
\Cref{ass:QS-stable-compact}\textup{(iii)}. Recall that
$\iota_T:H_T\hookrightarrow E_T$ is the canonical Cameron--Martin
embedding, which we suppress in shifts by identifying $H_T$ with its
image in $E_T$.

\begin{lemma}\label{gap:lem:wave-twojet-weak}
There is $p_0>0$ such that, for every $T,R<\infty$, the measurable
functions $\mathcal C_T,r_T$ in \Cref{gap:lem:fractional-twojet} satisfy
\begin{align}\label{gap:eq:wave-random-Taylor}
 \norm{\Phi_T(\mathsf x,\omega+h)-\Phi_T(\mathsf x,\omega)
 -\mathcal D\Phi_T(\mathsf x,\omega)h}_{\Es{s}}
 \le \mathcal C_T(\mathsf x,\omega)\norm h_{H_T}^2
\end{align}
for $\mathsf x\in\mathbb V_R$, $\gamma_T$-almost every $\omega$, and
$\norm h_{H_T}\le r_T(\mathsf x,\omega)$, and
\begin{align}\label{gap:eq:wave-QS3-moment}
 \sup_{(\mathsf x,\mathsf y)\in\mathbb V_R^2}
 \E\left[
 \norm{\mathcal J_T(\mathsf x,\mathsf y)}_{\cL(\Es{s})}^{p_0}
 +\norm{\mathcal D\Phi_T(\mathsf x)}_{\cL(H_T,\Es{s})}^{p_0}
 +\mathcal C_T(\mathsf x)^{p_0}
 +r_T(\mathsf x)^{-p_0}
 \right]<\infty.
\end{align}
Moreover, for every finite dimensional
$F\subset\iota_T^*(E_T^*)$ and every compact $G\subset E_T$, there are
$C=C(T,R,G,F)<\infty$ and $r=r(T,R,G,F)>0$ such that
\begin{align}\label{gap:eq:wave-driver-Taylor}
 \norm{\Phi_T(\mathsf x,\omega+h)-\Phi_T(\mathsf x,\omega)
 -\mathcal D\Phi_T(\mathsf x,\omega)h}_{\Es{s}}
 \le C\norm h_{H_T}^2
\end{align}
for $(\mathsf x,\omega)\in\mathbb V_R\times G$, $h\in F$, and
$\norm h_{H_T}\le r$.
\end{lemma}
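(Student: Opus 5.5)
The plan is to derive all three assertions from \Cref{gap:lem:fractional-twojet}, \Cref{lem:variational-energies}, the weak propagator bound \eqref{gap:eq:weak-variation-bound}, and the occupation estimate \Cref{gap:lem:occupation}.

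\emph{The random Taylor bound \eqref{gap:eq:wave-random-Taylor}.} For $\norm h_{H_T}\le r_T(\mathsf x,\omega)$ (interpreting the closed ball by slightly shrinking the constant $c$ in \eqref{gap:eq:fractional-radius-choice}), the map $g(\tau)=\Phi_T(\mathsf x,\omega+\tau h)$ is $C^2$ on $[0,1]$ with values in $\Hh$. Taylor's formula with integral remainder gives an $\Hh$-remainder bounded by $\tfrac12\sup_\tau\norm{\mathcal D^2\Phi_T(\mathsf x,\omega+\tau h)}\norm h^2\le\mathcal C_T\norm h_{H_T}^2$. Since $\Hh\hookrightarrow\Es{s}$ with norm at most one, the same bound holds in $\Es{s}$, after enlarging $\mathcal C_T$ by a harmless constant. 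The two-jet statement is pathwise, so this holds for every $\omega$, in particular $\gamma_T$-a.e.

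\emph{The moments \eqref{gap:eq:wave-QS3-moment}.} The terms $\mathcal C_T^{p_0}$ and $r_T^{-p_0}$ are exactly \eqref{gap:eq:fractional-twojet-moment}. For $\mathcal D\Phi_T$, apply \Cref{lem:variational-energies} with $f=B\partial_t h$; this bounds the $\Hh$-norm, hence the $\Es{s}$-norm, by $C\e^{C\Xi_T}\norm h_{H_T}$. For $\mathcal J_T$, use \eqref{gap:eq:weak-variation-bound}, which bounds the norm by $C_T\exp\{C\int_0^T(\Vv(X_t)+\Vv(\widetilde X_t))\,\dd t\}$. After Cauchy--Schwarz to separate the two trajectories, each factor has a $p_0$-moment provided $2Cp_0<r_*$, by \Cref{gap:lem:occupation} with $\sigma=0$ and occupation exponent $2Cp_0$. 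The bound is uniform over $\Vv(\mathsf x)\le R$ because $q_T(0,r)$ is bounded. I would therefore decrease $p_0$ once, with all structural constants independent of $T$ and $R$.

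\emph{The deterministic Taylor bound \eqref{gap:eq:wave-driver-Taylor}.} This is the main point, because the random radius and constant must be made uniform. For $h\in F\subset H_T$ finite dimensional, the elements of $F$ are bounded in $E_T$ by $C_F\norm h_{H_T}$. I would rerun the proof of \Cref{gap:lem:fractional-twojet} with $\Xi_T$ replaced by its supremum over $\mathbb V_R\times G$. By \eqref{gap:eq:pathwise-core-bounds} (with $G$ compact, hence bounded), $\sup_{t\le T}\Vv(X_t)$ is uniformly bounded on $\mathbb V_R\times G$, so $\Xi_T\le\Xi_*(T,R,G)<\infty$. Consequently $r_T\ge r:=1\wedge c\e^{-C\Xi_*}$ and $\mathcal C_T\le C\e^{C\Xi_*}$ pointwise on $\mathbb V_R\times G$. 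The first step then gives \eqref{gap:eq:wave-driver-Taylor} for every $\omega\in G$, not just almost every $\omega$.

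The subtle part is that the perturbed trajectory $\omega+h$ must remain controlled. This is handled inside \Cref{gap:lem:fractional-twojet} by the a priori smallness of $Z=Y-X$ on the ball of radius $r_T$, which only used $\Xi_T$ along the base trajectory. Hence no additional input is needed, and $F$ enters only harmlessly.
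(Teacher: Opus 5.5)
Your proposal is correct and follows the paper's proof essentially step for step. You obtain the random bound from Taylor's formula applied to the two-jet lemma together with $\Hh\hookrightarrow\Es{s}$, the moments from the weak propagator estimate combined with the occupation lemma after shrinking $p_0$, and the deterministic bound from uniform control of $\Xi_T$ on $\mathbb V_R\times G$ via \eqref{gap:eq:pathwise-core-bounds}. The only cosmetic difference is that the paper handles the boundary case $\norm h_{H_T}=r_T$ by continuity of the remainder in $h$ rather than by shrinking $c$, which keeps $r_T$ exactly as in \Cref{gap:lem:fractional-twojet}.
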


\begin{proof}
By \Cref{gap:lem:fractional-twojet}, Taylor's formula and the continuous
embedding $\Hh\hookrightarrow\Es{s}$ give
\eqref{gap:eq:wave-random-Taylor}; the boundary case follows by
continuity.  The same lemma also gives
\begin{align*}
 \norm{\mathcal D\Phi_T(\mathsf x,\omega)}_{\cL(H_T,\Es{s})}
 \le C\mathcal C_T(\mathsf x,\omega)
\end{align*}
and the required moments of $\mathcal C_T$ and $r_T^{-1}$.  The weak
propagator estimate \eqref{gap:eq:weak-variation-bound}, followed by
\Cref{gap:lem:occupation} with a sufficiently small exponent, gives
\begin{align*}
 \sup_{(\mathsf x,\mathsf y)\in\mathbb V_R^2}
 \E\norm{\mathcal J_T(\mathsf x,\mathsf y)}_{\cL(\Es{s})}^{p_0}<\infty.
\end{align*}
Decreasing $p_0$ once if necessary proves
\eqref{gap:eq:wave-QS3-moment}.

It remains to prove \eqref{gap:eq:wave-driver-Taylor}.  By
\eqref{gap:eq:pathwise-core-bounds},
\begin{align*}
 L:=
 \sup_{(\mathsf x,\omega)\in\mathbb V_R\times G}
 \left(T+\int_0^T\Vv(X_t(\mathsf x,\omega))\,\dd t\right)<\infty.
\end{align*}
The deterministic construction in the proof of
\Cref{gap:lem:fractional-twojet}, in particular
\eqref{gap:eq:fractional-radius-choice} and
\eqref{gap:eq:twojet-exponential-envelope}, therefore gives
$C=C(T,R,G)<\infty$ and $r=r(T,R,G)>0$ such that
\begin{align*}
 \sup_{(\mathsf x,\omega)\in\mathbb V_R\times G}
 \sup_{\norm h_{H_T}<2r}
 \norm{\mathcal D^2\Phi_T(\mathsf x,\omega+h)}
 _{\cL^2(H_T\times H_T,\Es{s})}\le C.
\end{align*}
Taylor's formula then gives
\eqref{gap:eq:wave-driver-Taylor} for every $h\in H_T$ with
$\norm h_{H_T}\le r$, and hence in particular for $h\in F$.
\end{proof}

\section{Useful probabilistic lemmas}\label{app:probabilistic}

The following elementary pathwise separation lemma is based on Brownian quadratic variation.  The exceptional set is fixed before the coefficients are chosen, so the coefficients may depend anticipatively on the entire Brownian path.

\begin{lemma}[Brownian coefficient separation]
\label{lem:Brownian-affine-separation}
There is a Borel set
$\Omega_{\mathrm{sep}}\subset C_0([0,\infty);\R^m)$ of full Wiener measure
such that the following holds.  Fix $W\in\Omega_{\mathrm{sep}}$ and a
nondegenerate interval $I=[a,b]$ with rational endpoints.  Let
$c_1,\ldots,c_m$ be absolutely continuous on $I$ and let
$c_0\in C^\alpha(I)$ for some $\alpha>1/2$.  If
\begin{align}\label{eq:affine-zero-identity}
 c_0(t)+\sum_{j=1}^m c_j(t)W^j(t)=0,
 \qquad t\in I,
\end{align}
then $c_j=0$ for every $0\le j\le m$.
All coefficients may depend anticipatively on the entire Brownian path.
\end{lemma}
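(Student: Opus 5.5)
The plan is to define $\Omega_{\mathrm{sep}}$ by a quadratic variation property and then show that any relation \eqref{eq:affine-zero-identity} forces all coefficients to vanish, by comparing quadratic variations along a fixed partition sequence. We fix the dyadic partitions $\pi_n$ of each rational interval $I$. We let $\Omega_{\mathrm{sep}}$ be the set of paths $W$ satisfying two conditions. First, for every rational $I$ and every rational subinterval $[c,d]\subset I$, the discrete covariations $\sum_{\pi_n\cap[c,d]}\Delta W^j\Delta W^k$ converge to $\delta_{jk}(d-c)$. Second, $W$ is locally $\beta$-H\"older for every $\beta<1/2$. Both conditions hold almost surely: the first by L\'evy's theorem along dyadic partitions, where summability of the variances gives a.s.\ convergence via Borel--Cantelli, and we take a countable intersection over rational intervals; the second by Kolmogorov's criterion. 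The point of this choice is that the set is fixed before the coefficients are chosen, so anticipative coefficients cause no problem.

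Now fix $W\in\Omega_{\mathrm{sep}}$, suppose \eqref{eq:affine-zero-identity} holds on $I$, and write $F_j=c_jW^j$. On each dyadic increment we expand
\[
0=\Delta c_0+\sum_j\bigl(c_j(t_i)\Delta W^j+W^j(t_{i+1})\Delta c_j\bigr).
\]
Then we square this identity and sum over a subinterval $[c,d]$. The term $\sum(\Delta c_0)^2\le C\sum|\Delta t|^{2\alpha}\to0$ because $\alpha>1/2$. The terms $\sum(W\Delta c_j)^2\le \|W\|_\infty^2\max|\Delta c_j|\sum|\Delta c_j|\to0$ because $c_j$ is absolutely continuous, hence of bounded variation and uniformly continuous. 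The cross terms vanish in the limit by Cauchy--Schwarz. The remaining main term $\sum_i\bigl(\sum_j c_j(t_i)\Delta W^j\bigr)^2$ converges to $\int_c^d|c(t)|^2\,\dd t$, where $c=(c_1,\ldots,c_m)$. To see this, approximate $c$ uniformly by a step function on a coarse rational grid and use the covariation convergence on each rational subinterval. Hence $\int_I|c|^2=0$, and by continuity $c_j\equiv0$ for $j\ge1$. Then \eqref{eq:affine-zero-identity} reduces to $c_0\equiv0$.

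The main obstacle is justifying the limit of the main term when the $c_j$ are anticipative and merely continuous. We handle it with a deterministic uniform-approximation argument. Choose a rational grid with mesh $\eta$ such that the oscillation of $c$ on each cell is at most $\epsilon$. Bound the error term $\sum_i\bigl(\sum_j(c_j(t_i)-c_j(s_k))\Delta W^j\bigr)^2\le m\epsilon^2\sum|\Delta W|^2$, which is $O(\epsilon^2)$ because the quadratic variation converges. On each grid cell, the frozen coefficients turn the sum into a finite combination of discrete covariations, and these converge by the defining property of $\Omega_{\mathrm{sep}}$. Since no probabilistic limit is taken after the coefficients are fixed, the anticipative dependence on the path is harmless.
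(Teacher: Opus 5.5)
Your proof is correct, and it follows a somewhat different route from the paper.

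The paper does not square the identity. It takes the discrete covariation of the increment identity with each $W^k$ separately, which is linear in the coefficients. For that it needs the weighted quadratic-variation measures $\sum_i\Delta_i^nW^j\Delta_i^nW^k\delta_{t_i^n}$ to converge weakly to $\delta_{jk}\,\dd t$ against arbitrary continuous test functions. It obtains this from Doob's maximal inequality for the discrete martingales $M^j_{n,r}$ together with Borel--Cantelli. It then deduces $\int_r^s c_k\,\dd t=0$ on every rational subinterval and concludes by continuity.

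You instead square the increment identity. This produces the positive-definite quantity $\int_I|c|^2\,\dd t$ in one step on the whole interval, so you never need to vary the subinterval. Your error analysis is purely $\ell^2$. Writing the identity as $A_i+B_i+C_i=0$, one has $\sum_iB_i^2=\sum_i(A_i+C_i)^2\le2\sum_iA_i^2+2\sum_iC_i^2$, so the cross-term estimate is not even needed. Your only probabilistic input is convergence of discrete covariations on countably many rational subintervals. The step-function approximation, with error controlled by $\sqrt m\,\epsilon\,(\sum_i|\Delta_iW|^2)^{1/2}$, replaces the paper's weak convergence of measures.

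For the $c_j$ terms, you rely on uniform continuity of $c_j$, via $\sum_i(\Delta_ic_j)^2\le\max_i|\Delta_ic_j|\operatorname{Var}_I(c_j)$. The paper instead relies on $\max_i|\Delta_iW|\to0$. Both facts are available, so either works.

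Two small remarks:
\begin{itemize}
\item The H\"older condition you build into $\Omega_{\mathrm{sep}}$ is never used and can be dropped.
\item If your coarse grid points are not dyadic points of $I$, the finitely many straddling partition intervals contribute $O(\max_i|\Delta_iW|^2)\to0$. Taking the grid among dyadic points of $I$ (which are rational) avoids this issue altogether.
\end{itemize}
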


\begin{proof}
Fix a nondegenerate rational interval $I=[a,b]$, set
$t_i^n=a+i(b-a)2^{-n}$, and write
$\Delta_i^nW=W_{t_{i+1}^n}-W_{t_i^n}$.  For each coordinate $W^j$,
\begin{align*}
 M_{n,r}^j
 =\sum_{i<r}\left((\Delta_i^nW^j)^2-(b-a)2^{-n}\right),
 \qquad0\le r\le2^n,
\end{align*}
is a discrete martingale and
\begin{align*}
 \E\abs{M_{n,2^n}^j}^2=2(b-a)^2\,2^{-n}.
\end{align*}
Doob's inequality and Borel--Cantelli therefore give
\begin{align*}
 \max_{r\le2^n}\abs{M_{n,r}^j}\longrightarrow0
 \qquad\text{almost surely}.
\end{align*}
The distribution functions of the signed measures
\begin{align*}
 \sum_i\left((\Delta_i^nW^j)^2-(b-a)2^{-n}\right)
 \delta_{t_i^n}
\end{align*}
therefore converge uniformly to zero.  Summation by parts, first against
piecewise linear functions and then by uniform approximation, yields
\begin{align*}
 \sum_i(\Delta_i^nW^j)^2\delta_{t_i^n}
 \rightharpoonup\dd t.
\end{align*}
Applying the same argument to $(W^j+W^k)/\sqrt2$ and
$(W^j-W^k)/\sqrt2$ and polarising gives
\begin{align}\label{eq:weighted-qv-weak-limit}
 \sum_i\Delta_i^nW^j\Delta_i^nW^k\delta_{t_i^n}
 \rightharpoonup\delta_{jk}\,\dd t,
\end{align}
where $\delta_{jk}=1$ for $j=k$ and vanishes otherwise.  Moreover,
\begin{align}\label{eq:qv-pathwise-bounds}
 \sup_n\sum_i\abs{\Delta_i^nW^k}^2<\infty,
 \qquad
 \max_i\abs{\Delta_i^nW}\longrightarrow0.
\end{align}
Taking the countable intersection over rational intervals and coordinate
pairs produces a Borel full measure set $\Omega_{\mathrm{sep}}$.  Once
$W\in\Omega_{\mathrm{sep}}$ is fixed,
\eqref{eq:weighted-qv-weak-limit} is ordinary weak convergence of
deterministic finite signed measures and therefore holds against every
continuous test function simultaneously, including anticipatively chosen
ones.

Fix now a nondegenerate rational subinterval $J=[r,s]\subset I$ and let
\begin{align*}
 t_i^{n,J}=r+i(s-r)2^{-n},
 \qquad0\le i\le2^n,
\end{align*}
be its dyadic partition, with mesh
$\abs{\pi_n(J)}=(s-r)2^{-n}$.  Write
$\Delta_i^{n,J}f=f(t_{i+1}^{n,J})-f(t_i^{n,J})$.  Taking discrete
covariation of \eqref{eq:affine-zero-identity} with $W^k$ along this
partition, use
\begin{align*}
 \Delta_i^{n,J}(c_jW^j)
 =c_j(t_i^{n,J})\Delta_i^{n,J}W^j
 +W^j(t_{i+1}^{n,J})\Delta_i^{n,J}c_j.
\end{align*}
By \eqref{eq:weighted-qv-weak-limit},
\begin{align*}
 \sum_i c_j(t_i^{n,J})
 \Delta_i^{n,J}W^j\Delta_i^{n,J}W^k
 \longrightarrow
 \delta_{jk}\int_r^s c_j(t)\,\dd t.
\end{align*}
Since each $c_j$ is absolutely continuous and therefore has finite
variation,
\begin{align*}
 &\abs{\sum_iW^j(t_{i+1}^{n,J})
 \Delta_i^{n,J}c_j\,\Delta_i^{n,J}W^k}\\
 &\qquad\le
 \norm{W^j}_{C(J)}\operatorname{Var}_J(c_j)
 \max_i\abs{\Delta_i^{n,J}W^k}
 \longrightarrow0.
\end{align*}
Also, since $\alpha>1/2$, Cauchy--Schwarz and
\eqref{eq:qv-pathwise-bounds} give
\begin{align*}
 \abs{\sum_i\Delta_i^{n,J}c_0\,\Delta_i^{n,J}W^k}
 &\le
 \left(\sum_i\abs{\Delta_i^{n,J}c_0}^2\right)^{1/2}
 \left(\sum_i\abs{\Delta_i^{n,J}W^k}^2\right)^{1/2}\\
 &\le
 C_J\abs{\pi_n(J)}^{\alpha-1/2}
 \left(\sum_i\abs{\Delta_i^{n,J}W^k}^2\right)^{1/2}
 \longrightarrow0.
\end{align*}
It follows that
\begin{align*}
 \int_r^s c_k(t)\,\dd t=0
\end{align*}
for every rational $r<s$ in $I$.  Since $c_k$ is continuous, approximation
of arbitrary subintervals by rational ones shows that its integral
vanishes on every subinterval of $I$, and hence $c_k\equiv0$.  This holds
for every $1\le k\le m$, and \eqref{eq:affine-zero-identity} then gives
$c_0\equiv0$.
\end{proof}

Next we prove a finite Gaussian transformation estimate. 
Let $(E,H_W,\gamma)$ be an abstract Wiener space with Cameron-Martin embedding $\iota: H_W\to E$. 
\begin{lemma}
\label{lem:Gaussian-entropy}
Let $u:E\to H_W$ be a smooth cylindrical map whose range is contained in a
finite dimensional subspace $F\subset\iota^*(E^*)$.  Suppose
\begin{align*}
 \sup_\omega\norm{\mathcal D u(\omega)}_{\rm op}\le\frac12,
 \qquad
 \sup_\omega\left(
 \norm{u(\omega)}_{H_W}
 +\norm{\mathcal D u(\omega)}_{\rm HS}\right)\le a.
\end{align*}
Then $T(\omega)=\omega+\iota u(\omega)$ acts nontrivially only on a
finite dimensional Gaussian factor, where it is a global $C^1$
diffeomorphism.  In particular, $T_\#\gamma$ and $\gamma$ are equivalent
and the relative entropy 
\begin{align}\label{eq:Gaussian-entropy-bound}
 \Ent(T_\#\gamma\mid\gamma)\le Ca^2.
\end{align}
If $L_T=\frac{\dd T_\#\gamma}{\dd\gamma}$, then, for every
$1\le p<\infty$,
\begin{align}\label{eq:Gaussian-density-moments}
 \norm{L_T}_{L^p(\gamma)}
 \le C_{p,\dim F,a}<\infty.
\end{align}
In particular, if $a\le1$, the right-hand side depends only on
$p$ and $\dim F$.
\end{lemma}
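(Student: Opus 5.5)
The plan is to reduce everything to a finite dimensional Gaussian computation. Since $F\subset\iota^*(E^*)$ is finite dimensional, choose an $H_W$-orthonormal basis $f_1,\dots,f_k$ of $F$ with $f_i=\iota^*\ell_i$, $\ell_i\in E^*$. Also write $u(\omega)=\varphi(\lambda_1(\omega),\dots,\lambda_N(\omega))$ with $\lambda_j\in E^*$. After Gram--Schmidt in $L^2(\gamma)$ (equivalently in $H_W$ via $\iota^*$), enlarge $\{\ell_i\}$ to a finite family $\ell_1,\dots,\ell_n$ whose images $\iota^*\ell_i$ are orthonormal and span $F$ together with all $\iota^*\lambda_j$. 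Then $\omega\mapsto\xi=(\ell_1(\omega),\dots,\ell_n(\omega))$ is standard Gaussian on $\R^n$, and $\omega=\sum_i\xi_i\iota f_i+\omega^\perp$ with $\omega^\perp$ independent of $\xi$. Both $u$ and the shift depend only on $\xi$. Since $\ell_i(\iota u)=\langle f_i,u\rangle_{H_W}$, the map $T$ acts as $\xi\mapsto\xi+g(\xi)$ with $g(\xi)=(\langle u,f_i\rangle)_i$ and leaves $\omega^\perp$ unchanged. Here $g$ is smooth, $|g|\le a$, and $\norm{Dg}_{\rm op}\le\frac12$, $\norm{Dg}_{\rm HS}\le a$, because $Dg$ is the matrix of $\mathcal Du$ in the orthonormal basis.

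First I will show that $\tau(\xi)=\xi+g(\xi)$ is a global $C^1$ diffeomorphism of $\R^n$. Injectivity and properness follow from $|\tau(\xi)-\tau(\eta)|\ge\frac12|\xi-\eta|$, and the Jacobian $I+Dg$ is invertible since $\norm{Dg}\le\frac12$. Hadamard's theorem (or Banach fixed point applied to $\xi\mapsto y-g(\xi)$) then gives surjectivity. Consequently $T_\#\gamma=(\tau_\#\mathcal N_n)\otimes\Law(\omega^\perp)$, and the change of variables gives the density
\begin{align*}
L_T(y)=\frac{\phi(\tau^{-1}y)}{\phi(y)}\,\abs{\det(I+Dg(\tau^{-1}y))}^{-1},
\end{align*}
where $\phi$ is the standard Gaussian density. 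This density is positive everywhere, which gives equivalence.

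For the entropy bound, I will compute $\Ent(T_\#\gamma\mid\gamma)=\int\log L_T\circ\tau\,d\mathcal N_n$. This equals
\begin{align*}
\E\bigl[\tfrac12|\xi+g|^2-\tfrac12|\xi|^2-\log\det(I+Dg)\bigr]=\E\bigl[\langle\xi,g\rangle+\tfrac12|g|^2-\log\det(I+Dg)\bigr].
\end{align*}
Gaussian integration by parts gives $\E\langle\xi,g\rangle=\E\Tr Dg$. For a matrix $A$ with $\norm A\le\frac12$ one has the estimate $\Tr A-\log\det(I+A)\le C\norm A_{\rm HS}^2$: if $A$ is not symmetric, write $\log\det(I+A)=\Tr\log(I+A)$ through the power series, whose remainder beyond the linear term is bounded by $\sum_{k\ge2}\norm A_{\rm HS}^2\norm A^{k-2}/k$. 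Together with $|g|^2\le a^2$ this gives $\Ent\le Ca^2$. This algebraic step is the main technical point. Note that the bound does not depend on $n$, because only traces and HS norms enter.

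For the $L^p$ bound, $\int L_T^p\,d\gamma=\E[L_T(\tau\xi)^{p-1}]$ is the expectation of
\begin{align*}
\exp\{(p-1)(\langle\xi,g\rangle+\tfrac12|g|^2)\}\,\abs{\det(I+Dg)}^{-(p-1)}.
\end{align*}
The determinant factor is at most $\exp\{(p-1)(C+\norm{Dg}_{\rm HS}^2)\}$ by the same matrix estimate (or by $\det(I+A)\ge 2^{-k}$, since the rank is at most $k=\dim F$ when $\Ran u\subset F$). The first factor is bounded by $e^{(p-1)a|\xi_F|+(p-1)a^2/2}$, where $\xi_F$ is the projection onto $F$ coordinates, because $g$ takes values in $F$. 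Its expectation is a finite constant depending on $p$, $\dim F$ and $a$, which gives \eqref{eq:Gaussian-density-moments}. When $a\le1$ the constant is uniform in $a$.
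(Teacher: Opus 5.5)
Your proposal is correct and follows essentially the same route as the paper: reduction to a finite Gaussian factor containing $F$ and the cylindrical dependence, global invertibility of $\xi\mapsto\xi+g(\xi)$ by the contraction argument, the explicit change-of-variables density, Gaussian integration by parts together with $\abs{\Tr A-\log\det(\Id+A)}\le C\norm{A}_{\rm HS}^2$ for the entropy, and the rank bound $\det(\Id+Dg)\ge2^{-\dim F}$ combined with $\ip{\xi}{g}=\ip{\xi_F}{g}$ for the density moments. Of your two determinant bounds, the rank one is the one to use; the ``same matrix estimate'' alternative also works, but only after bounding $-\Tr Dg\le\frac12\dim F$, and that bound itself relies on the rank.
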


\begin{proof}
Since $u$ is cylindrical, it depends on finitely many functionals
$\ell_1,\ldots,\ell_N\in E^*$.  Since
$F\subset\iota^*(E^*)$ is finite dimensional, there are
$\eta_1,\ldots,\eta_m\in E^*$, where $m=\dim F$, such that
\begin{align*}
 F=\spanop\{\iota^*\eta_1,\ldots,\iota^*\eta_m\}.
\end{align*}
Collecting the two finite families into
$\xi_1,\ldots,\xi_M\in E^*$ and setting
\begin{align*}
 H_0=\spanop\{\iota^*\xi_1,\ldots,\iota^*\xi_M\},
\end{align*}
we have $F\subset H_0$, and both the dependence and the range of $u$ are
contained in this finite Gaussian factor.

Choose an orthonormal basis of $H_0$ consisting of linear combinations
of the vectors $\iota^*\xi_j$.  The associated Gaussian coordinates
identify this factor with $(\R^d,\gamma_d)$, where $d=\dim H_0$.
Under this identification, $F$ is a fixed $m$-dimensional subspace of
$\R^d$ and $u$ is represented by a $C^1$ map
\begin{align*}
 U:\R^d\to F
\end{align*}
satisfying
\begin{align}\label{eq:Gaussian-finite-factor-bounds}
 \sup_z\norm{DU(z)}_{\rm op}\le\frac12,
 \qquad
 \sup_z\left(\abs{U(z)}+\norm{DU(z)}_{\rm HS}\right)\le a.
\end{align}
Thus it is enough to work on this finite dimensional factor.

Set $S(z)=z+U(z)$.  Since $U$ is $1/2$-Lipschitz, for every
$y\in\R^d$ the map
\begin{align*}
 z\longmapsto y-U(z)
\end{align*}
is a contraction.  Hence $S$ is bijective, and since
$\Id+DU(z)$ is invertible everywhere, $S$ is a global $C^1$
diffeomorphism.

Writing $y=S(z)$, the Gaussian change-of-variables formula gives
\begin{align}\label{eq:Gaussian-density-formula}
 L_T(y)
 =\frac{\exp\{\ip z{U(z)}+\frac12\abs{U(z)}^2\}}
 {\det(\Id+DU(z))}.
\end{align}
The determinant is positive, since
$\Id+tDU(z)$ is invertible for every $0\le t\le1$.
Therefore
\begin{align*}
 \Ent(T_\#\gamma\mid\gamma)
 &=\int_{\R^d}\left[
 \ip z{U(z)}+\frac12\abs{U(z)}^2
 -\log\det(\Id+DU(z))\right]\gamma_d(\dd z).
\end{align*}
Gaussian integration by parts gives
\begin{align*}
 \int_{\R^d}\ip z{U(z)}\,\gamma_d(\dd z)
 =\int_{\R^d}\Tr DU(z)\,\gamma_d(\dd z),
\end{align*}
while, for $\norm A_{\rm op}\le1/2$,
\begin{align*}
 \abs{\Tr A-\log\det(\Id+A)}
 \le C\norm A_{\rm HS}^2.
\end{align*}
Together with \eqref{eq:Gaussian-finite-factor-bounds}, this yields
\begin{align*}
 \Ent(T_\#\gamma\mid\gamma)
 \le C\int_{\R^d}
 \left(\abs{U(z)}^2+\norm{DU(z)}_{\rm HS}^2\right)
 \gamma_d(\dd z)
 \le Ca^2,
\end{align*}
which proves \eqref{eq:Gaussian-entropy-bound}.

It remains to prove the density moments.  Since
$\Ran DU(z)\subset F$, one has $\rank DU(z)\le m$.  Hence
\begin{align*}
 \det(\Id+DU(z))^{-1}\le2^m.
\end{align*}
Moreover, since $U(z)\in F$,
\begin{align*}
 \ip z{U(z)}=\ip{P_Fz}{U(z)},
\end{align*}
where $P_F$ denotes orthogonal projection onto $F$.  Thus
\eqref{eq:Gaussian-density-formula} gives
\begin{align*}
 L_T(Sz)
 \le2^m\exp\left\{a\abs{P_Fz}+\frac12a^2\right\}.
\end{align*}
For $p>1$, using $T_\#\gamma=L_T\gamma$,
\begin{align*}
 \int_E L_T^p\,\dd\gamma
 &=\int_E L_T(T\omega)^{p-1}\,\gamma(\dd\omega)\\
 &\le
 2^{m(p-1)}\e^{(p-1)a^2/2}
 \int_{\R^m}\e^{(p-1)a\abs z}\,\gamma_m(\dd z)
 <\infty.
\end{align*}
The resulting bound depends only on $p,m,a$.  The case $p=1$ is immediate,
which proves \eqref{eq:Gaussian-density-moments}.
\end{proof}

\section*{Statements and declarations}

\textbf{Competing interests}.
The authors declare that they have no competing interests.

\medskip

\textbf{Use of generative artificial intelligence}.
OpenAI ChatGPT  was used to assist with proof auditing and manuscript preparation. The authors take full responsibility for the mathematical content.


\begin{thebibliography}{99}


\bibitem{AgrachevSarychev2005}
A.~A. Agrachev and A.~V. Sarychev,
Navier--Stokes equations: controllability by means of low modes forcing,
\emph{J. Math. Fluid Mech.} \textbf{7} (2005), 108--152.

\bibitem{AgrachevSarychev2006}
A.~A. Agrachev and A.~V. Sarychev,
Controllability of $2$D Euler and Navier--Stokes equations by degenerate
forcing,
\emph{Comm. Math. Phys.} \textbf{265} (2006), 673--697.

\bibitem{BarbuDaPrato2002}
V. Barbu and G. Da Prato,
The stochastic nonlinear damped wave equation,
\emph{Appl. Math. Optim.} \textbf{46} (2002), 125--141.

\bibitem{Bogachev1998}
V.~I. Bogachev,
\emph{Gaussian Measures},
Mathematical Surveys and Monographs, vol.~62, American Mathematical
Society, Providence, RI, 1998.

\bibitem{BrzezniakOndrejatSeidler2016}
Z. Brze\'zniak, M. Ondrej\'at, and J. Seidler,
Invariant measures for stochastic nonlinear beam and wave equations,
\emph{J. Differential Equations} \textbf{260} (2016), 4157--4179.

\bibitem{CacciafestaDanesiMeng2024}
F. Cacciafesta, E. Danesi, and L. Meng,
Strichartz estimates for the half wave/Klein--Gordon and Dirac equations on
compact manifolds without boundary,
\emph{Math. Ann.} \textbf{389} (2024), 3009--3042.


\bibitem{DengHani2023}
Y. Deng and Z. Hani,
Full derivation of the wave kinetic equation,
\emph{Invent. Math.} \textbf{233} (2023), 543--724.

\bibitem{DengHani2026}
Y. Deng and Z. Hani,
Derivation of the wave kinetic equation: full range of scaling laws,
\emph{Mem. Amer. Math. Soc.} \textbf{320} (2026), no.~1631.

\bibitem{EckmannHairer2001}
J.-P. Eckmann and M. Hairer,
Uniqueness of the invariant measure for a stochastic PDE driven by
degenerate noise,
\emph{Comm. Math. Phys.} \textbf{219} (2001), 523--565.

\bibitem{EckmannRuelle1985}
J.-P. Eckmann and D. Ruelle,
Ergodic theory of chaos and strange attractors,
\emph{Rev. Modern Phys.} \textbf{57} (1985), 617--656.

\bibitem{ForlanoTolomeo2024}
L. Forlano and L. Tolomeo,
On the unique ergodicity for a class of $2$ dimensional stochastic wave equations,
\emph{Trans. Amer. Math. Soc.} \textbf{377} (2024), 345--394.

\bibitem{Frisch1995}
U. Frisch,
\emph{Turbulence: The Legacy of A. N. Kolmogorov},
Cambridge University Press, Cambridge, 1995.


\bibitem{GerasimovicsHairer2019}
A. Gerasimovi\v{c}s and M. Hairer,
H\"ormander's theorem for semilinear SPDEs,
\emph{Electron. J. Probab.} \textbf{24} (2019), Paper No.~132, 56 pp.

\bibitem{GlattHoltzHerzogMattingly2018}
N.~E. Glatt-Holtz, D.~P. Herzog, and J.~C. Mattingly,
Scaling and saturation in infinite-dimensional control problems with
applications to stochastic partial differential equations,
\emph{Ann. PDE} \textbf{4} (2018), no.~2, Paper No.~16, 103 pp.

\bibitem{GrandeHani2026}
R. Grande and Z. Hani,
Rigorous derivation of damped--driven wave turbulence theory,
\emph{Arch. Ration. Mech. Anal.} \textbf{250} (2026), Paper No.~27.

\bibitem{HairerMattingly2006}
M. Hairer and J.~C. Mattingly,
Ergodicity of the $2$D Navier--Stokes equations with degenerate stochastic
forcing,
\emph{Ann. of Math. (2)} \textbf{164} (2006), 993--1032.

\bibitem{HairerMattingly2008}
M. Hairer and J.~C. Mattingly,
Spectral gaps in Wasserstein distances and the $2$D stochastic Navier--Stokes equations,
\emph{Ann. Probab.} \textbf{36} (2008), 2050--2091.

\bibitem{HairerMattingly2011}
M. Hairer and J.~C. Mattingly,
A theory of hypoellipticity and unique ergodicity for semilinear stochastic
PDEs,
\emph{Electron. J. Probab.} \textbf{16} (2011), 658--738.

\bibitem{HairerMattinglyScheutzow2011}
M. Hairer, J.~C. Mattingly, and M. Scheutzow,
Asymptotic coupling and a general form of Harris' theorem with applications
to stochastic delay equations,
\emph{Probab. Theory Related Fields} \textbf{149} (2011), 223--259.

\bibitem{KatoPonce1988}
T. Kato and G. Ponce,
Commutator estimates and the Euler and Navier--Stokes equations,
\emph{Comm. Pure Appl. Math.} \textbf{41} (1988), 891--907.

\bibitem{Kim2004}
J.~U. Kim,
Periodic and invariant measures for stochastic wave equations,
\emph{Electron. J. Differential Equations} \textbf{2004} (2004),
No.~05, 1--30.

\bibitem{Kolmogorov1941a}
A.~N. Kolmogorov,
The local structure of turbulence in incompressible viscous fluid for very large Reynolds numbers,
\emph{Dokl. Akad. Nauk SSSR} \textbf{30} (1941), 299--303;
English transl., \emph{Proc. Roy. Soc. London Ser. A} \textbf{434} (1991), 9--13.

\bibitem{Kolmogorov1941b}
A.~N. Kolmogorov,
Dissipation of energy in locally isotropic turbulence,
\emph{Dokl. Akad. Nauk SSSR} \textbf{32} (1941), 16--18;
English transl., \emph{Proc. Roy. Soc. London Ser. A} \textbf{434} (1991), 15--17.

\bibitem{KuksinNersesyanShirikyan2020a}
S. Kuksin, V. Nersesyan, and A. Shirikyan,
Exponential mixing for a class of dissipative PDEs with bounded degenerate
noise,
\emph{Geom. Funct. Anal.} \textbf{30} (2020), 126--187.

\bibitem{KuksinNersesyanShirikyan2020b}
S. Kuksin, V. Nersesyan, and A. Shirikyan,
Mixing via controllability for randomly forced nonlinear dissipative PDEs,
\emph{J. \'Ec. polytech. Math.} \textbf{7} (2020), 871--896.

\bibitem{Lebeau1996}
G. Lebeau,
\'Equation des ondes amorties,
in \emph{Algebraic and Geometric Methods in Mathematical Physics}
(Kaciveli, 1993), Math. Phys. Stud., vol.~19,
Kluwer Academic Publishers, Dordrecht, 1996, 73--109.

\bibitem{LianLiuLu2026}
Z. Lian, R. Liu, and K. Lu,
Unique ergodicity for the projective process of the $2$D Navier--Stokes
equation with nondegenerate noise,
preprint (2026), arXiv:2608.18075.

\bibitem{LiuWeiXiangZhangZhao2024}
Z. Liu, D. Wei, S. Xiang, Z. Zhang, and J.-C. Zhao,
Exponential mixing for random nonlinear wave equations: weak dissipation
and localized control,
preprint (2024), arXiv:2407.15058.

\bibitem{Martirosyan2014}
D. Martirosyan,
Exponential mixing for the white-forced damped nonlinear wave equation,
\emph{Evol. Equ. Control Theory} \textbf{3} (2014), 645--670.

\bibitem{NewellRumpf2011}
A.~C. Newell and B. Rumpf,
Wave turbulence,
\emph{Annu. Rev. Fluid Mech.} \textbf{43} (2011), 59--78.


\bibitem{Pazy1983}
A. Pazy,
\emph{Semigroups of Linear Operators and Applications to Partial Differential Equations},
Applied Mathematical Sciences, vol.~44,
Springer-Verlag, New York, 1983.


\bibitem{Ruelle1978}
D. Ruelle,
What are the measures describing turbulence?,
\emph{Progr. Theoret. Phys. Suppl.} \textbf{64} (1978), 339--345.


\bibitem{Sinai1989}
Ya.~G. Sinai,
Kolmogorov's work on ergodic theory,
\emph{Ann. Probab.} \textbf{17} (1989), no.~3, 833--839.

\bibitem{Tolomeo2020}
L. Tolomeo,
Unique ergodicity for a class of stochastic hyperbolic equations with
additive space--time white noise,
\emph{Comm. Math. Phys.} \textbf{377} (2020), 1311--1347.

\bibitem{ZakharovLvovFalkovich1992}
V.~E. Zakharov, V.~S. L'vov, and G. Falkovich,
\emph{Kolmogorov Spectra of Turbulence I: Wave Turbulence},
Springer Series in Nonlinear Dynamics, Springer-Verlag, Berlin, 1992.

\end{thebibliography}
\end{document}